\theoremstyle{plain} \textwidth=430pt \textheight=620pt
\newtheorem{theorem}{Theorem}[section]
\newtheorem{definition}[theorem]{Definition}
\newtheorem{lemma}[theorem]{Lemma}
\newtheorem{example}[theorem]{Example}
\newtheorem{proposition}[theorem]{Proposition}
\newtheorem{claim}[theorem]{Claim}
\newtheorem{corollary}[theorem]{Corollary}
\newtheorem{conjecture}[theorem]{Conjecture}
\newcommand\ZZ{{\mathbb Z}}
\newcommand\RR{{\mathbb R}}
\newcommand\sA{{\mathcal A}}
\newcommand\LL{{\mathbb L}}
\newcommand\sD{{\mathcal D}}
\newcommand\HH{{\mathbb H}}
\newcommand\TT{{\mathbb T}}
\newcommand\df{\textbf}
\title{Constrained percolation, Ising model and XOR Ising model on planar lattices}
\author{Zhongyang Li}
\address{Department of Mathematics,
University of Connecticut,
Storrs, Connecticut 06269-3009, USA}
\email{zhongyang.li@uconn.edu}
\urladdr{\url{https://mathzhongyangli.wordpress.com}}
\begin{document}
\maketitle

\begin{abstract}We study site percolation models on planar lattices including the $[m,4,n,4]$ lattice and the square tilings on the Euclidean plane ($\RR^2$) or the hyperbolic plane ($\mathbb{H}^2$), satisfying certain local constraints on degree-4 faces. These models are closely related to Ising models and XOR Ising models (product of two i.i.d Ising models) on regular tilings of $\RR^2$ or $\mathbb{H}^2$. In particular, we obtain a description of the numbers of infinite ``$+$'' and ``$-$'' clusters of the ferromagnetic Ising model  on a vertex-transitive triangular tiling of $\HH^2$ for different boundary conditions and coupling constants. Our results show the possibility that such an Ising configuration has infinitely many infinite ``$+$'' and ``$-$'' clusters, while its random cluster representation has no infinite open clusters. Percolation properties of corresponding XOR Ising models are also discussed. 
\end{abstract}

\section{Introduction}

A constrained percolation model is a probability measure on subgraphs of a lattice satisfying certain local constraints. Each subgraph is called a configuration. These models are abstract mathematical models for ubiquitous phenomena in nature, and have been interesting topics in mathematical and scientific research for long.
Examples of constrained percolation models include the dimer model (see \cite{RK09}), the 1-2 model (see \cite{GrLReview}), the six-vertex model (or 6V model, see \cite{Bax08,BCG16,KMSW16}), and general vertex models (see \cite{Val,SB,ZLlv}). The study of these models may give deep insights to understand many natural phenomena, such as structure of matter, phase transition, limit shape, and critical behavior.

 We are interested in the classical percolation problem in a constrained model: under which probability measure does there exist an infinite connected set (infinite cluster) in which every vertex is present in the random configuration, or equivalently, included in the randomly-chosen subgraph? Such a question has been studied extensively in the unconstrained case - in particular the i.i.d Bernoulli percolation - see, for instance, \cite{ha60,he80,HS94,blps99,fh15,GrPc}. The major difference between the constrained percolation and the unconstrained percolation lies in the fact that imposing local constraints usually makes stochastic monotonicity, which is a crucial property when studying the unconstrained model, invalid. Therefore new techniques need to be developed to study constrained percolation models.
 
Some constrained percolation models, including the 1-2 model, the periodic plane dimer model, certain 6V models, are exactly solvable; see \cite{KOS06,ZLejp,ZL12, ZLct, GrL15,BCG16}. The integrability properties of these models make it possible to compute the correlations. When the parameters associated to the probability measure vary, different behaviors of the local correlations imply a phase transition from a microscopic point of view. If we consider phase transitions from a macroscopic, or geometric point of view, different approaches may be applied to study the existence of infinite clusters for a large class of constrained percolation models. 

In \cite{HL16}, we studied a constrained percolation model on the $\ZZ^2$ lattice, and showed that if the underlying probability measure satisfies mild assumptions like symmetry, ergodicity and translation-invariance, then with probability 0 the number of infinite clusters is nonzero and finite. The technique makes use of the planarity and amenability of the 2D square grid $\ZZ^2$.  As an application, we obtained percolation properties for the XOR Ising model (a random spin configuration on a graph in which each spin is the product of two spins from two i.i.d Ising models, see \cite{DBW11}) on $\ZZ^2$, with the help of the combinatorial correspondence between the XOR Ising model and the dimer model proved in \cite{Dub,bd14}.  In this paper, we further develop the technique to study constrained percolation models on a number of planar lattices, which may be amenable or non-amenable, including the $[m,4,n,4]$ lattice and the square tilings of the hyperbolic plane; see \cite{CFKP97} for an introduction to hyperbolic geometry.

The XOR Ising model was first introduced in \cite{DBW11} with interesting conformal invariance properties at criticality. For positive integers $m,n\geq 3$, the $[m,4,n,4]$ lattice is a vertex-transitive planar graph in which each vertex is incident to 4 faces with degrees $m,4,n,4$ in cyclic order.
The constrained percolation model on the  $[m,4,n,4]$ lattice is of special interest because there is a measure-preserving correspondence between its configurations and the XOR Ising configurations on the $m$-regular lattice or the $n$-regular lattice. The Euclidean-plane version of such a correspondence was introduced in \cite{bd14}. When $\frac{1}{m}+\frac{1}{n}<\frac{1}{2}$, the $[m,4,n,4]$ lattice is no longer amenable but can be embedded into the hyperbolic plane. Although phase transitions and conformal invariance for statistical mechanical models in the Euclidean plane have been studied extensively, statistical mechanical models, including the Ising model and the related random cluster model,  have been fascinating problems for mathematicians and physicists for a long time, however, a lot of things remain unknown.  For example, it is well-known that for statistical mechanical models in the hyperbolic plane, there is an ``intermediate'' phase between the non-percolation phase and unique-percolation phase, which usually does not exist for statistical mechanical models in the Euclidean plane; a lot of descriptions of the ``intermediate'' phase seem to be ``qualitative'' while not ``quantitative'' - for which values of the parameters does the model have such an ``intermediate'' phase?  Indeed, the general results we obtain in this paper can be used to prove further results concerning percolation properties of the XOR Ising model on the hexagonal and the triangular lattices, as well as on regular tilings of the hyperbolic plane. 

The specific geometric properties of non-amenable graphs make it an interesting problem to study percolation models on such graphs; and a set of techniques have been developed in the past few decades; see \cite{BS96,blps,blps99,HP,LS99,Sch99,HPS,PSN,Wu00,RS01,HJL02,NP12,LP} for an incomplete list.
In this paper, we also study the general automorphism-invariant percolation models on transitive planar graphs. 

One of the most classical percolation models is the \textbf{i.i.d Bernoulli site percolation} on a graph, in which the vertices are open (resp.\ closed) with probability $p$ (resp.\ $1-p$) independently, where $p\in[0,1]$.  The \textbf{critical probability $p_c$} is the supremum of $p$'s such that almost surely there are no infinite open clusters. A  graph $G=(V,E)$ is a \textbf{vertex-transitive graph} if there exists a subgroup $\Gamma\subseteq \mathbf{Aut}(G)$ of the automorphism group $G$ such that for any two vertices $v,w\in V$, there exist $\gamma\in \Gamma$ satisfying $\gamma v=w$.  The number of \textbf{ends} of a connected graph
is the supremum over its finite subgraphs of the number of infinite components that remain
after removing the subgraph.
 
 Our results may be related to the following two conjectures. More precisely, we prove the following conjectures for some special vertex-transitive planar graphs.  

\begin{conjecture}(Conjecture 7 of \cite{BS96})\label{cj1} Suppose that $G$ is a planar, connected graph, and the minimal vertex degree in $G$ is at least 7. In an i.i.d Bernoulli site percolation on $G$,  at every $p$ in the range $(p_c,1-p_c)$, there are infinitely many infinite open clusters in the i.i.d Bernoulli site percolation on $G$. Moreover, we conjecture that $p_c<\frac{1}{2}$, and the above interval is nonempty.
\end{conjecture}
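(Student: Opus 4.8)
The plan is to reduce Conjecture~\ref{cj1} to two inequalities---$p_c<\tfrac12$ and a lower bound on the uniqueness threshold---and to establish each with the standard apparatus for percolation on non-amenable planar graphs; I will be explicit about where the genuine difficulty sits. First I would organize the uniqueness phase. For a connected, locally finite graph with enough symmetry (in particular a quasi-transitive planar graph), the number of infinite open clusters in i.i.d.\ Bernoulli site percolation is almost surely a constant in $\{0,1,\infty\}$ (Newman--Schulman), it is positive exactly for $p>p_c$, and there is a uniqueness threshold $p_u=p_u(G)\in[p_c,1]$ with the cluster count equal to $\infty$ on $(p_c,p_u)$ and to $1$ on $(p_u,1]$ (H\"aggstr\"om--Peres monotonicity and Schonmann; see \cite{HP}). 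Granting this, Conjecture~\ref{cj1} is equivalent to the conjunction of (I) $p_c(G)<\tfrac12$, which also makes the interval nonempty since then $1-p_c>\tfrac12>p_c$, and (II) $p_u(G)\ge 1-p_c(G)$; indeed (I)--(II) give $(p_c,1-p_c)\subseteq(p_c,p_u)$, on which the count is $\infty$. So everything reduces to (I) and (II).

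For (II), let $G^{\times}$ be the matching graph of $G$---same vertex set, with an extra edge between any two vertices lying on a common face, so $G\subseteq G^{\times}$. The key input is the planar-duality dichotomy of Benjamini--Schramm: for a planar graph with a sufficiently tame embedding, a unique infinite open cluster in $G$ cannot coexist with an infinite closed cluster in $G^{\times}$, because a Burton--Keane-type argument shows the unique open cluster is coarsely space-filling while an infinite closed subset of $G^{\times}$ topologically separates it. In threshold terms: if $1-p>p_c^{\mathrm{site}}(G^{\times})$ then the i.i.d.\ closed set of density $1-p$ percolates in $G^{\times}$, hence the infinite open cluster in $G$ is not unique, hence $p\le p_u(G)$; thus $p_u(G)\ge 1-p_c^{\mathrm{site}}(G^{\times})$. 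Since $G\subseteq G^{\times}$ makes percolation easier, $p_c^{\mathrm{site}}(G^{\times})\le p_c(G)$, and therefore $p_u(G)\ge 1-p_c(G^{\times})\ge 1-p_c(G)$, which is (II). This is where planarity is used essentially, and it is the conceptual core of the conjecture.

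For (I), one must use the hypothesis of minimal degree $\ge 7$ quantitatively. A combinatorial Gauss--Bonnet / Euler-characteristic count gives, for every finite vertex set $S$, a bound of the form $|\partial S|\gtrsim\sum_{v\in S}(\deg v-6)\ge |S|$, so $G$ has a positive Cheeger constant and is non-amenable; the circle-packing theorem then embeds $G$ in the hyperbolic disc, which both rigorizes the hyperbolic picture and supplies the tame embedding needed above. Non-amenability already gives spectral radius $\rho(G)<1$ and hence $p_c(G)\le\rho(G)<1$, but sharpening this to $p_c<\tfrac12$ needs extra work: I would exploit the degree slack directly, for example greedily building inside $G$ a subtree that is supercritical for site percolation at $p=\tfrac12$ (e.g.\ of branching number $>2$; every vertex still has $\ge 6$ further neighbours, which is ample), or upgrading the isoperimetric estimate to a lower bound on the branching number and combining it with a second-moment or exploration argument.

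The main obstacle is that both reduced inputs are hard in the generality stated---arbitrary connected planar graphs, not quasi-transitive ones---which is exactly why the conjecture is open. Without transitivity there is no automatic $\{0,1,\infty\}$ trichotomy, no H\"aggstr\"om--Peres monotonicity, and no threshold $p_u$, so the first step must be replaced by a direct argument (a localized indistinguishability-type or exhaustion argument) showing that the open cluster is a.s.\ not a single cluster on $(p_c,1-p_c)$. The Benjamini--Schramm separation lemma needs genuine topological tameness of the embedding---no vertex accumulation points, and finite faces so that $G^{\times}$ is locally finite---which does \emph{not} follow from a degree lower bound for wild planar graphs; circle packing rescues the transitive case but not the general one. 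And the gap between ``$\rho<1$, hence $p_c<1$'' and the sharp ``$p_c<\tfrac12$'' is real, requiring planarity together with the bound $7$ rather than abstract non-amenability. The behaviour at the endpoints $p_c$ and $p_u$ (and whether $p_u$ is attained) is a minor technical point that does not affect the open-interval claim. A realistic route is therefore to prove the statement first for transitive, then quasi-transitive, planar graphs of minimal degree $\ge 7$---using circle packing for tameness and Benjamini--Schramm duality for (II)---and only afterwards attempt to remove transitivity by localizing the separation and isoperimetric arguments.
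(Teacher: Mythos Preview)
The statement is presented in the paper as an open \emph{conjecture}; the paper does not claim or attempt a proof in the stated generality. What the paper does is verify it in \Cref{exl2} for the special case where $G=\LL_2$ is a vertex-transitive triangular tiling of the hyperbolic plane with vertex degree $n\ge 7$. Your proposal, by contrast, is an outline for the full conjecture, and you yourself correctly flag that the non-transitive case is where the real obstacles lie.

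Restricted to the transitive triangulation case, your reduction to (I) $p_c<\tfrac12$ and (II) $p_u\ge 1-p_c$ is sound, and your route to (II) via the matching graph collapses (since $G=G^{\times}$ for a triangulation) to the Benjamini--Schramm duality $p_u=1-p_c$, which the paper simply quotes from \cite{BS20}. The substantive difference is how (I) is obtained. Your suggestions for (I)---Cheeger/spectral bounds, greedy subtrees of branching number $>2$, second-moment arguments---are plausible heuristics but none of them is carried to a proof of the sharp inequality $p_c<\tfrac12$; a Cheeger or spectral-radius bound yields only $p_c<1$, and the subtree construction needs a quantitative embedding that you do not supply.

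The paper bypasses (I) entirely by a different mechanism. At $p=\tfrac12$ the i.i.d.\ site percolation on $\LL_2$ is symmetric under swapping $0$'s and $1$'s, and its contour configuration on the dual graph $\LL_1$ induces a constrained percolation on the $[3,4,n,4]$ lattice satisfying Assumptions (A2), (A3), (A7), (A8). \Cref{m23} then gives directly that at $p=\tfrac12$ there are a.s.\ infinitely many infinite $0$-clusters and infinitely many infinite $1$-clusters. This single conclusion simultaneously forces $p_c<\tfrac12$ (percolation occurs) and $p_u>\tfrac12$ (uniqueness fails); combined with $p_u=1-p_c$ from \cite{BS20} one gets infinitely many infinite clusters on all of $(p_c,1-p_c)=(p_c,p_u)$. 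Thus the paper's argument is not a decomposition into (I) and (II) at all, but a direct ``many clusters at the symmetric point'' statement extracted from its constrained-percolation machinery---a genuinely different, and for this special class of graphs more efficient, route than the matching-graph duality you sketch.
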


In \Cref{exl2}, we explain why \Cref{cj1} is true for the i.i.d Bernoulli site percolation on vertex-transitive triangular tilings of the hyperbolic plane where each vertex has degree $n\geq 7$.

\begin{conjecture}(Conjecture 8 of \cite{BS96})\label{cj2} Let $G$ be a planar, connected graph. Let $p=\frac{1}{2}$ be the probability that a vertex is open and assume that a.s. percolation occurs in the site percolation on $G$. Then almost surely there are infinitely many infinite clusters.
\end{conjecture}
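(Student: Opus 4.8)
The plan is to prove \Cref{cj2} by contradiction, isolating a single hard case. Write $N$ for the number of infinite open clusters, and assume (as is standard in this setting) that $G$ is locally finite, so that changing the states of finitely many vertices alters $N$ by only a finite amount. Then $\{N=\infty\}$ and $\{N=0\}$ are invariant under finite changes, hence lie in the tail $\sigma$-algebra of the i.i.d.\ product measure, and by Kolmogorov's $0$--$1$ law each has probability $0$ or $1$. The hypothesis that percolation occurs almost surely gives $P(N=0)=0$. If $P(N=\infty)=1$ we are done, so I would suppose $P(N=\infty)=0$, which forces $P(1\le N<\infty)=1$. In particular the set of finite values attained by $N$ with positive probability is a nonempty set of positive integers; let $m\ge 1$ be its minimum.

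First I would dispose of the case $m\ge 2$, and this step requires \emph{no} homogeneity of $G$. Fix $R$ so large that on a positive-probability sub-event of $\{N=m\}$ at least two distinct infinite open clusters meet the finite ball $D_R$ (the probability of this tends to $P(N\ge 2)>0$ as $R\to\infty$). Since $G$ is connected, choose a finite path in $G$ joining these two clusters and open all of its vertices; this is a finite modification, so the finite-energy (insertion-tolerance) property of i.i.d.\ percolation assigns it positive conditional probability. The modification merges the two clusters and therefore strictly decreases $N$, while leaving it finite and at least $1$. Hence $P\bigl(1\le N\le m-1\bigr)>0$, contradicting the minimality of $m$. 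Thus $m=1$: with positive probability there is a \emph{unique} infinite open cluster.

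It remains to rule out $P(N=1)>0$, and here self-duality at $p=\tfrac12$ enters. The flip $\omega\mapsto 1-\omega$ preserves the law, so closed clusters are equidistributed with open ones and infinite closed clusters are present almost surely. The correct topological pairing for \emph{site} percolation, however, is between open clusters of $G$ and closed clusters of the matching graph $G^{\dagger}$ obtained by triangulating the faces of the embedding; reconciling the flip symmetry (which acts within $G$) with this $G$/$G^{\dagger}$ duality is the first piece of bookkeeping. Granting it, I would adapt Zhang's argument: a unique infinite open cluster would, together with the infinite closed clusters supplied by duality, be forced by the Jordan curve theorem into an impossible configuration --- open material connecting ``two opposite sides'' of a large region through its complement while closed material connects the other two --- yielding the contradiction that proves $m\ne 1$. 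Combined with the previous two paragraphs, this gives $P(N=\infty)=1$.

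The hard part will be exactly this last case, and precisely because $G$ is an arbitrary connected planar graph rather than a vertex-transitive one. Zhang's classical execution of the ``four-sides'' contradiction relies on a square symmetry and an FKG square-root trick to promote one crossing to four, and on ergodicity to know that uniqueness, once it has positive probability, holds almost surely; none of these is available here. The real content is therefore to replace the symmetry by a purely topological self-duality argument --- carried out through the ends of the putative unique cluster and the $G/G^{\dagger}$ matching structure --- that manufactures the separating open and closed arms on an arbitrary planar embedding, together with a symmetry-free substitute for the square-root trick. Establishing this robust, homogeneity-independent version of the uniqueness obstruction is the crux on which a complete proof of \Cref{cj2} rests.
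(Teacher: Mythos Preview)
The statement you are attempting to prove is \Cref{cj2}, which the paper records as an \emph{open conjecture} of Benjamini and Schramm; the paper does not prove it in the stated generality and does not claim to. What the paper does establish (see the sentence immediately following \Cref{cj2}) is that the conclusion holds in the special case where $G=\LL_2$ is a vertex-transitive triangular tiling of the hyperbolic plane with vertex degree $n\ge 7$, and even there the result is deduced for a broader class of measures (automorphism-invariant, ergodic, symmetric under $0\leftrightarrow 1$) via \Cref{p118}. The paper's route is entirely different from yours: it exploits the non-amenability and one-endedness of $\LL_2$ through \Cref{lbs}, the mass-transport threshold \Cref{tbng}, and the contour/cluster combinatorics of \Cref{nzz,ozz,nzi,oiz}, none of which are available for an arbitrary planar connected graph.

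Your proposal is therefore an attempt to resolve an open problem, and you yourself identify the gap: the elimination of the case $P(N=1)>0$ is only sketched, and the sketch leans on a Zhang-type argument whose essential ingredients (a transitive symmetry for the square-root trick, ergodicity to upgrade positive probability to probability one, and a clean matching-graph duality for general planar embeddings) you correctly note are absent. Without a concrete replacement for these, the ``robust, homogeneity-independent version of the uniqueness obstruction'' is not an argument but a restatement of the difficulty. In short, the first two steps (Kolmogorov $0$--$1$ and the merging argument to force $m=1$) are fine and standard, but the final step is the whole conjecture, and it remains open.
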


Our \Cref{p118} implies that \Cref{cj2} is true for automorphism-invariant site percolation (not necessarily independent, or insertion tolerant) on vertex-transitive triangular tilings of the hyperbolic plane where each vertex has degree $n\geq 7$ if the underlying measure is ergodic and invariant under switching state-1 vertices and state-0 vertices.

We then apply our results concerning the general automorphism-invariant percolation models on transitive planar graphs to study the infinite ``$+$''-clusters and ``$-$''-clusters for the Ising model on vertex-transitive triangular tilings of the hyperbolic plane where each vertex has degree $n\geq 7$, and describe the behaviors of such clusters with respect to varying coupling constants under the free boundary condition and the wired boundary condition. A surprising result we obtain is that it is possible that the random cluster representation of the Ising model has no infinite open clusters, while the Ising model has infinitely many infinite ``$+$''-clusters and infinitely many infinite ``$-$''-clusters - in contrast with the Ising percolation and its random cluster representation on the 2d square grid $\ZZ^2$ (see \cite{crpr76,Hig93,GrGrc}) where the Ising model has an infinite ``$+$'' or ``$-$''-cluster if and only if its random cluster representation has an infinite open cluster. 

The main tools to prove these results are the planar duality of graphs, ergodicity and symmetry of probability measures, as well as properties of amenablity and non-amenablity. One characteristic of the constrained percolation obtained from a natural correspondence with the XOR Ising model, which is not shared with the unconstrained percolation, is that given such a constraint, there are two sets of ``contours'' separating clusters of vertices of different states. These two sets of contours lie on two planar graphs dual to each other, and the present edges in these two different sets of contours never cross. As a result, there are four types of infinite components in our constrained percolation model: infinite ``0''-cluster, infinite ``1''-cluster, infinite planar contour and infinite dual contour. The geometric configurations of these infinite components, together with the ergodicity and symmetry of the probability measure, lead to interesting properties that are particular and unique to the constrained percolation model.

The organization of the paper is as follows. 

In \Cref{xorh}, we introduce the $[m,4,n,4]$ lattice and state the result concerning constrained percolation models on the $[m,4,n,4]$ lattice. In \Cref{Is}, we state the main results concerning infinite clusters in the Ising model on regular triangular tilings of the hyperbolic plane, and, in particular, provide a description of the numbers of infinite ``$+$'' and ``$-$'' clusters of the ferromagnetic Ising model with the free boundary condition, the ``$+$" boundary condition or the ``$-$" boundary condition on such a lattice for different values of coupling constants. In \Cref{xIs}, we state the main results concerning infinite clusters in the XOR Ising model on regular triangular tilings of the hyperbolic plane and its dual graph. In Section \ref{xori}, we state the result proved in this paper concerning the percolation properties of the XOR Ising model on the hexagonal lattice and the triangular lattice. In \Cref{sthp}, we introduce the square tilings of the hyperbolic plane, state and prove the main result concerning constrained percolation models on such a lattice.

The remaining sections are devoted to prove the theorems stated in preceding sections.
  In \Cref{p23}, we prove \Cref{m23}.  In \Cref{p212}, we prove \Cref{m21}. In \Cref{p211}, we prove \Cref{m22}.  In \Cref{peh}, we discuss the applications of the techniques developed in the proof of \Cref{m23}
 to prove results concerning unconstrained site percolation on vertex-transitive, triangular tilings of the hyperbolic plane in preparation of proving \Cref{ipl,coii,xorc}. In \Cref{pipl}, we prove \Cref{ipl}. In \Cref{pxorc}, we prove \Cref{coii} and \Cref{xorc}.
 In \Cref{p412}, we prove \Cref{chi,lth}.  In \Cref{ctcl}, we prove combinatorial results concerning contours and clusters in preparation to prove the main theorems.

\section{Constrained percolation on the $[m,4,n,4]$ lattice }\label{xorh}

In this section, we state the main result proved in this paper for the constrained percolation models on the $[m,4,n,4]$ lattice. We shall start with a formal definition of the $[m,4,n,4]$ lattice.

Let $m,n$ be positive integers satisfying
\begin{eqnarray}
&& m\geq 3,\qquad n\geq 3\label{cmn1}\\
&&\frac{1}{m}+\frac{1}{n}\leq \frac{1}{2}.\label{cmn2}
\end{eqnarray}

The $[m,4,n,4]$ lattice is a vertex-transitive graph which can be embedded into the Euclidean plane or the hyperbolic plane such that each vertex is incident to 4 faces with degrees $m,4,n,4$ in cyclic order. When $\frac{1}{m}+\frac{1}{n}=\frac{1}{2}$, the graph is amenable and can be embedded into the Euclidean plane. When $\frac{1}{m}+\frac{1}{n}<\frac{1}{2}$, the graph is non-amenable and can be embedded into the hyperbolic plane (\cite{DR}). Note that when $m=n=4$, the graph is the square grid embedded into the 2D Euclidean plane.  See Figure \ref{3464} for an illustration of the [3,4,6,4] lattice, \Cref{3474} for the [3,4,7,4] lattice, and \Cref{4646} for the [6,4,6,4] lattice.

\begin{figure}
\includegraphics[width=.5\textwidth]{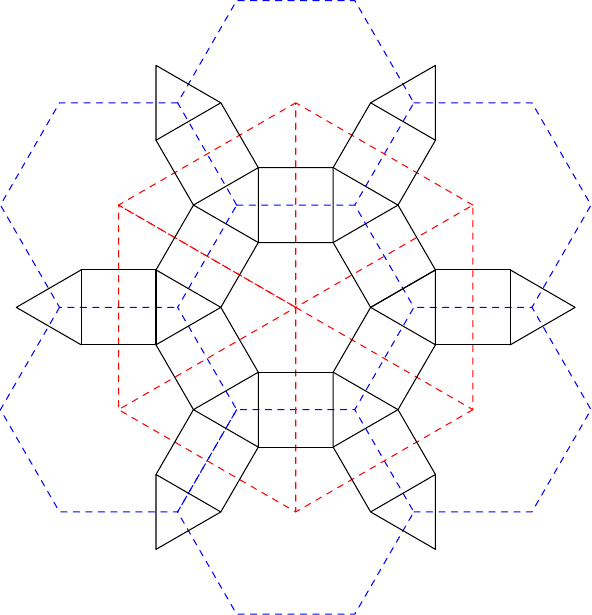}
\caption{The [3,4,6,4] lattice, the auxiliary hexagonal lattice and triangular lattice. Black lines represent the [3,4,6,4] lattice; dashed red lines represent the triangular lattice; dashed blue lines represent the hexagonal lattice.}
\label{3464}
\end{figure}

\begin{figure}
\includegraphics[width=.6\textwidth]{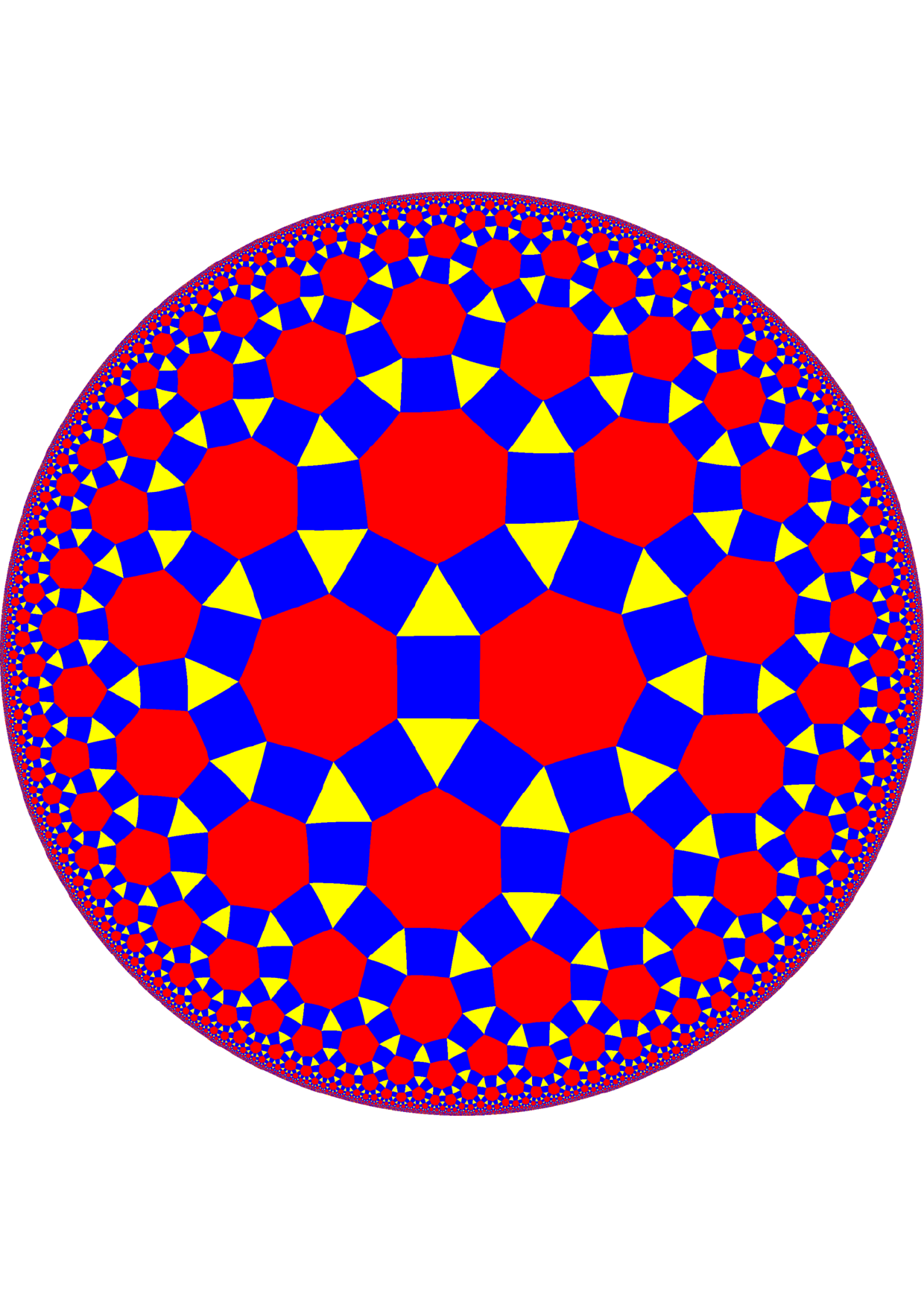}
\caption{The [3,4,7,4] lattice (picture from the wikipedia)}
\label{3474}
\end{figure}

\begin{figure}
\includegraphics[width=.6\textwidth]{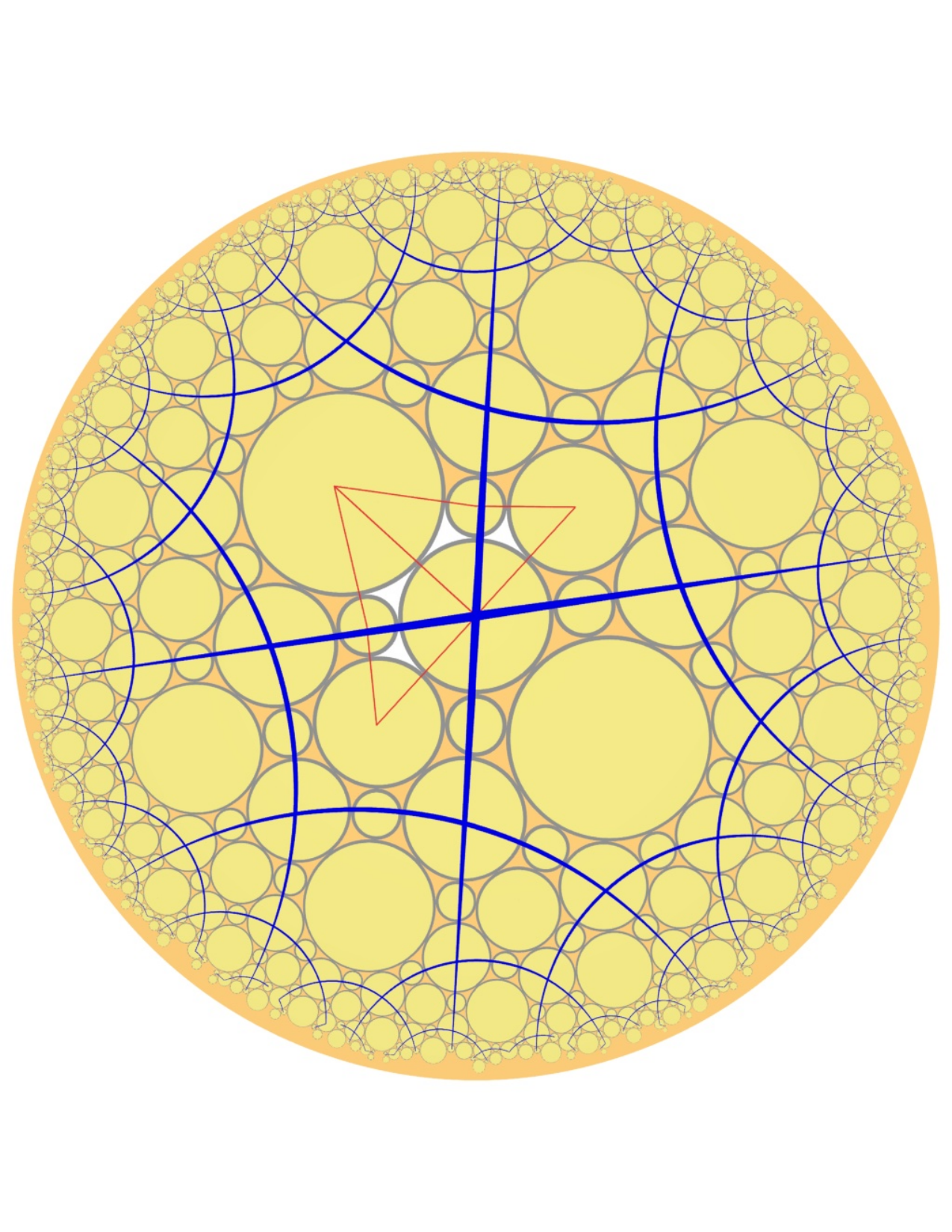}
\caption{The [6,4,6,4] lattice represented by blue lines (picture from \url{http://epinet.anu.edu.au/})}
\label{4646}
\end{figure}

Let $G=(V,E)$ be an $[m,4,n,4]$ lattice. We color all the faces of degree $m$ or $n$ with white and all the other faces with black, such that any two faces sharing an edge have different colors. We consider the site percolation on $V$ satisfying the following constraint (see Figure \ref{lcc}):
\begin{itemize}
\item around each black face, there are six allowed configurations $(0000)$, $(1111)$, $(0011)$, $(1100)$, $(0110)$, $(1001)$, where the digits from the left to the right correspond to vertices in clockwise order around the black face, starting from the lower left corner. See Figure \ref{lcc}.
\end{itemize}

\begin{figure}
\subfloat[0000]{\includegraphics[width=.12\textwidth]{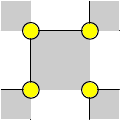}}\qquad\qquad\qquad\qquad
\subfloat[0011]{\includegraphics[width = .12\textwidth]{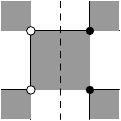}}\qquad\qquad\qquad\qquad
\subfloat[0110]{\includegraphics[width = .12\textwidth]{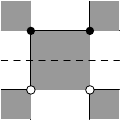}}\\
\subfloat[1111]{\includegraphics[width = .12\textwidth]{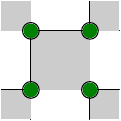}} \qquad\qquad\qquad\qquad
\subfloat[1100]{\includegraphics[width = .12\textwidth]{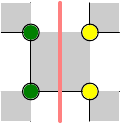}}\qquad\qquad\qquad\qquad
\subfloat[1001]{\includegraphics[width = .12\textwidth]{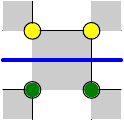}}
\caption{Local configurations of the constrained percolation around a black square. Red and blue lines mark contours separating 0's and 1's (in $\LL_1$ and $\LL_2$ respectively). Yellow (resp.\ green) disks represent 0's (resp.\ 1's).}
\label{lcc}
\end{figure}

Let $\Omega\subset\{0,1\}^V$ be the probability space consisting of all the site configurations on $G$ satisfying the constraint above.
To the $[m,4,n,4]$ lattice $G$, we associate two auxiliary lattices $\LL_1=(V(\LL_1),E(\LL_1))$ and $\LL_2=(V(\LL_2),E(\LL_2))$ as follows. Each vertex of $\LL_1$ (resp.\ $\LL_2$) is located at the center of each degree-$m$ face (resp. degree-$n$ face) of $G$. Two vertices of $\LL_1$ (resp.\ $\LL_2$) are joined by an edge of $\LL_1$ (resp.\ $\LL_2$) if and only if the two corresponding $m$-faces (resp.\ $n$-faces)  of  $G$ are adjacent to the same square face of $G$ through a pair of opposite edges (edges of a square face that do not share a vertex), respectively.

We say an edge $e\in E(\LL_1)\cup E(\LL_2)$ \textbf{crosses} a square face of the $[m,4,n,4]$ lattice if the edge $e$ crosses a pair of opposite edges of the square face.
 Note that 
\begin{enumerate}[label=\roman*]
\item $\LL_1$ (resp.\ $\LL_2$) is a planar lattice in which each face has degree $n$ (resp.\ $m$) and each vertex has degree $m$ (resp.\ $n$).
\item $\LL_1$ and $\LL_2$ are planar dual to each other.
\item Each edge in $E(\LL_1)\cup E(\LL_2)$ crosses a unique square face of the $[m,4,n,4]$ lattice. When $m\neq 4$ and $n\neq 4$, each square face of the $[m,4,n,4]$ lattice is crossed by a unique edge $e_1\in E(\LL_1)$ and a unique edge $e_2\in E(\LL_2)$; and moreover, $e_1$ and $e_2$ are dual to each other.
 \end{enumerate}
 
 When $m=n$, both $\LL_1$ and $\LL_2$ are lattices in which each face has degree $n$ and each vertex has degree $n$.
When $m=3$ and $n=6$, $\LL_1$ is the hexagonal lattice and $\LL_2$ is the triangular lattice; see \Cref{3464}. When $m=3$ and $n=7$, $\LL_2$ is a vertex-transitive triangular tiling of the hyperbolic plane, in which each vertex has degree 3; see the left graph of \Cref{3h7}; while $\LL_1$ is the $[7,7,7]$ lattice on the hyperbolic plane, see the right graph of \Cref{3h7}.

\begin{figure}
\includegraphics[trim=100 200 120 200,clip,width=.45\textwidth]{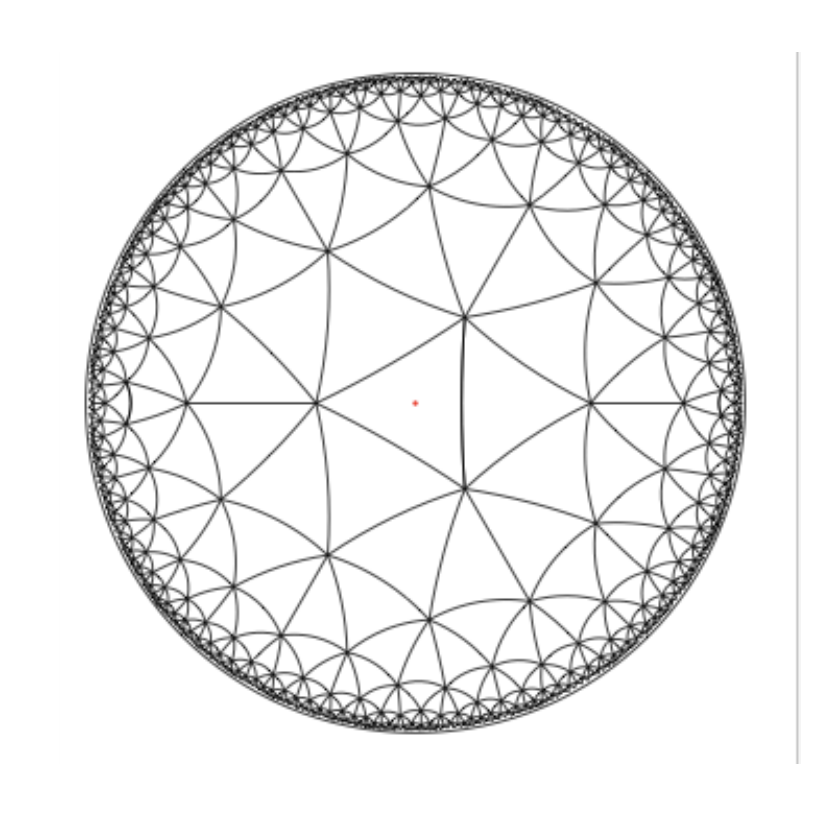}\qquad\includegraphics[trim=120 200 100 200, clip, width=.45\textwidth]{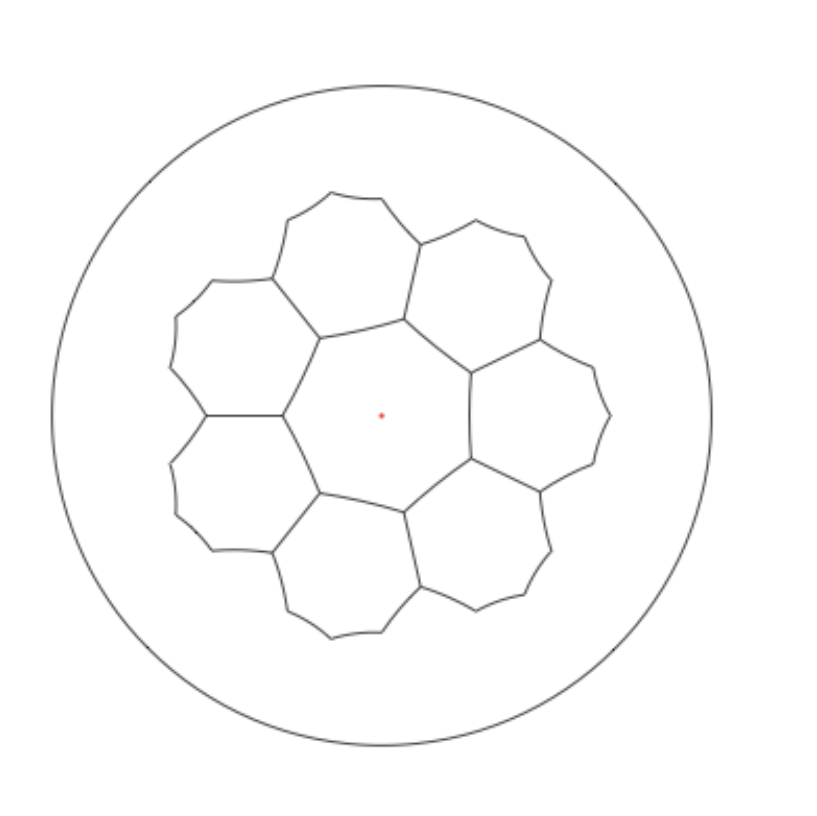}
\caption{The [3,3,3,3,3,3,3] lattice on the left and the [7,7,7] lattice on the right }
\label{3h7}
\end{figure}

Let $\Phi\subset \{0,1\}^{E(\LL_1)\cup E(\LL_2)}$ be the set of contour configurations satisfying the condition that each vertex of $V(\LL_1)$ and $V(\LL_2)$ is incident to an even number of present edges, and present edges in $E(\LL_1)$ and $E(\LL_2)$ never cross. Any constrained percolation configuration $\omega\in \Omega$ is mapped to a contour configuration $\phi(\omega)\in \Phi$, where an edge $e$ in $E(\LL_1)$ or $E(\LL_2)$ is present (i.e., have state 1) if and only if the following condition holds
\begin{itemize}
\item Let $S$ be the square face of $G$ crossed by $e$. Let $v_1,v_2,v_3,v_4$ be the four vertices of $S$, such that $v_1$ and $v_2$ are on one side of $e$ and $v_3$, $v_4$ are on the other side of $e$. Then $v_1$ and $v_2$ have the same state, $v_3$ and $v_4$ have the same state, and $v_1$ and $v_3$ have different states.
\end{itemize} 
See \Cref{3464contour} for a contour configuration obtained from a constrained percolation configuration on the $[3,4,6,4]$ lattice. Note that the mapping $\phi:\Omega\rightarrow \Phi$ is 2-to-1 since $\phi(\omega)=\phi(1-\omega)$. 

A \df{contour}  is a connected component of present edges in a contour configuration in $\Phi$. A contour may be finite or infinite depending on the number of edges in the contour. Since present edges of a contour configuration in $E(\LL_1)$ and in $E(\LL_2)$ never cross, either all the edges in a contour are edges of $\LL_1$, or all the edges in a contour are edges in $\LL_2$. We call a contour \df{primal contour} (resp.\ \df{dual contour}) if all the edges in the contour are edges of $\LL_1$ (resp.\ $\LL_2$).

\begin{figure}
\includegraphics[width=.5\textwidth]{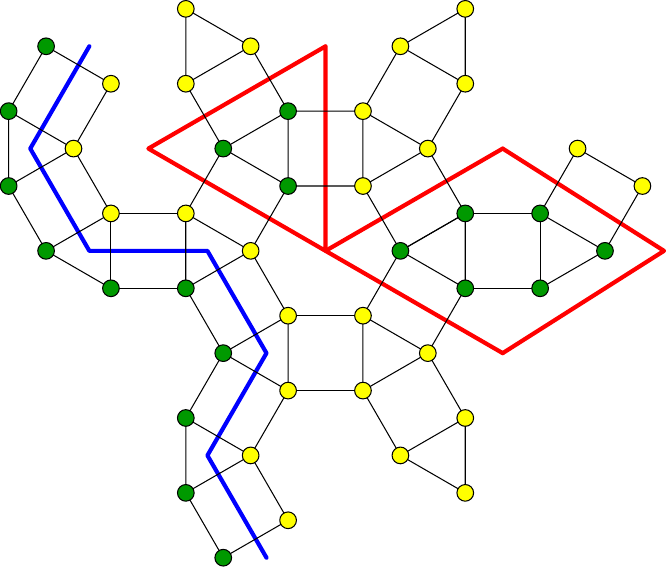}
\caption{A constrained percolation configuration on the [3,4,6,4] lattice. Red lines represent contours on the triangular lattice. Blue lines represent contours on the hexagonal lattice.}
\label{3464contour}
\end{figure}

 Let $\Gamma$ be the automorphism group $\mathrm{Aut}(G)$ of the graph $G$. For $i\in\{1,2\}$, let $\Gamma_i$ be the automorphism group $\mathrm{Aut}(\LL_i)$ of the graph $\LL_i$. Let $\mu$ be a probability measure on $\Omega$. The measure $\mu$ naturally induces a measure on $\Phi$ under the 2-to-1 mapping $\phi$; with a little abuse of notation, we still use $\mu$ to denote this induced measure on contour configurations $\Phi$. We may assume that $\mu$ satisfies the following conditions:
\begin{enumerate}[label=({A}{\arabic*})]
\item $\mu$ is $\Gamma$-invariant;
\item $\mu$ is $\Gamma_i$-ergodic for $i=1,2$; i.e.\ any $\Gamma_i$-invariant event has $\mu$-probability 0 or 1;
\item $\mu$ is symmetric: let $\theta:\Omega\rightarrow\Omega$ be the map defined by $\theta(\omega)(v)=1-\omega(v)$, for each vertex $v\in V$, then $\mu$ is invariant under $\theta$, that is, for any event $A$, $\mu(A)=\mu(\theta(A))$.
\end{enumerate}

Let $\Phi_1$ (resp.\ $\Phi_2$) be the set of all contour configurations on $\LL_1$ (resp.\ $\LL_2$) satisfying the condition that each vertex of $\LL_1$ (resp.\ $\LL_2$) has an even number of incident present edges. For each contour configuration $\psi\in\Phi$, we have $\psi=\psi_1\cup\psi_2$, where $\psi_1\in \Phi_1$ and $\psi_2\in\Phi_2$; moreover, $\psi_1\cap\psi_2=\emptyset$. 

Let $\nu_1$ (resp.\ $\nu_2$) be the marginal distribution of $\mu$ on $\Phi_1$ (resp.\ $\Phi_2$). When $\frac{1}{m}+\frac{1}{n}=\frac{1}{2}$, the $[m,4,n,4]$ lattice is amenable. 
It is not hard to see that if $\frac{1}{m}+\frac{1}{n}=\frac{1}{2}$, then $(m,n)\in \{(4,4),(3,6),(6,3)\}$. When $m=n=4$, the $[m,4,n,4]$ lattice is the 2D square grid, on which the constrained percolation models was discussed in \cite{HL16}. 

Now we consider the case when $(m,n)=(3,6)$. As discussed before, in this case $\LL_1$ is the hexagonal lattice $\HH$, and $\LL_2$ is the triangular lattice $\TT$. We first define the finite energy condition of a random contour configuration on a planar graph.

\begin{definition}Let $G=(V,E)$ be a vertex-transitive, planar graph. Let $\Phi$ be the set of all contour configurations on $G$, in which each contour configuration is a subset of edges such that each vertex is incident to an even number of present edges. Let $\nu$ be a probability measure on $\Phi$. We say $\nu$ has \textbf{finite energy} if for any face $S$ of $G$, let $\partial S\subset E$ consist of of all the sides of the polygon $S$. Define $\phi_S$ to be the configuration obtained by switching the states of each element of $\partial S$, i.e.\ $\phi_S(e)=1-\phi(e)$ if $e\in\partial S$, and $\phi_S(e)=\phi(e)$ otherwise. Let $E$ be an event, and define
\begin{eqnarray}
E_S=\{\phi_S:\phi\in E\}.\label{ef}
\end{eqnarray}
Then $\nu(E_S)>0$ whenever $\nu(E)>0$.
\end{definition}

 We may assume that $\nu_1$ or $\nu_2$ has \df{finite energy} as follows.
\begin{enumerate}[label=({A}{\arabic*})]
\setcounter{enumi}{3}
\item $\nu_1$  has finite energy.
\item $\nu_2$  has finite energy.
\end{enumerate}

See \Cref{feh,fet} for illustrations of the configuration-changing process on the hexagonal lattice $\HH$ and the triangular lattice $\TT$, respectively.

\begin{figure}
 \centering
  \includegraphics[width=0.2\textwidth]{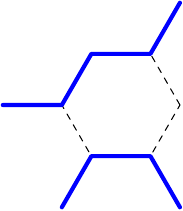}\qquad\qquad\qquad\qquad \includegraphics[width=0.2\textwidth]{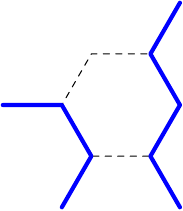}
 \caption{Change of contour configurations in $\LL_1=\HH$}\label{feh}
 \end{figure}
 
 \begin{figure}
 \centering
  \includegraphics[width=0.15\textwidth]{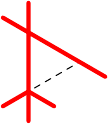}\qquad\qquad\qquad\qquad\qquad \includegraphics[width=0.15\textwidth]{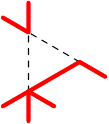}
 \caption{Change of contour configurations in $\LL_2=\TT$}\label{fet}
 \end{figure}

For a random contour configuration $\psi\in\Phi_1$ (resp.\ $\psi\in \Phi_2$) whose distribution is the marginal distribution $\nu_1$ (resp.\ $\nu_2$) of $\mu$ on $\Phi_1$ (resp.\ $\Phi_2$), $\psi$ induces a random constrained configuration $\omega\in \phi^{-1}(\psi)$ as follows. Let $v_0$ be a fixed vertex of $G$. Assume that $\omega(v_0)=1$ with probability $\frac{1}{2}$, and $\omega(v_0)=0$ with probability $\frac{1}{2}$, and is independent of $\psi$. For two vertices $v_1,v_2$ of $G$ joined by an edge $e$, $v_1$ and $v_2$ have different states if and only if $e$ crosses a present edge in $\psi$. Let $\lambda_1$ (resp.\ $\lambda_2$) be the distribution of $\omega$. We may further make the following assumptions
\begin{enumerate}[label=({A}{\arabic*})]
\setcounter{enumi}{5}
\item $\lambda_1$ is $\Gamma_1$-ergodic;
\item $\lambda_2$ is $\Gamma_2$-ergodic.
\end{enumerate}

Also we may sometimes assume that 
\begin{enumerate}[label=({A}{\arabic*})]
\setcounter{enumi}{7}
\item $\mu$ is $\Gamma_1$-invariant.
\end{enumerate}

The main theorems of this section are stated as follows.

\begin{theorem}\label{m23}Let $G$ be the $[3,4,n,4]$ lattice with $n\geq 7$. Let $s_0$ (resp.\ $s_1$) be the number of infinite 0-clusters (resp.\ 1-clusters). Let $t_1$ (resp.\ $t_2$) be the number of infinite $\LL_1$-contours (resp.\ $\LL_2$-contours).
\begin{enumerate}
\item Let $\mu$ be a probability measure on $\Omega$ satisfying (A2),(A3),(A7),(A8).  Then $\mu$-a.s. $(s_0,s_1,t_1)=(\infty,\infty,\infty)$.
\item Let $\mu$ be a probability measure on $\Omega$ satisfying (A2),(A3),(A6),(A7),(A8). Then $\mu$-a.s. $(s_0,s_1,t_1,t_2)\in\{(\infty,\infty,\infty,1),(\infty,\infty,\infty,\infty)\}$.
 \end{enumerate}
\end{theorem}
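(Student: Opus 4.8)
The plan is to combine the combinatorial dictionary between contour configurations and clusters established in \Cref{ctcl} with the structure theory of invariant percolation on non-amenable unimodular transitive planar graphs recalled in \Cref{pnc}, using the ergodicity and symmetry hypotheses to eliminate every undesired value. First I would record the routine reductions. Since $s_0,s_1,t_1,t_2$ are measurable functions of $\omega$ invariant under the relevant automorphism group, hypothesis (A2) makes each of them $\mu$-a.s.\ equal to a constant in $\{0,1,2,\dots\}\cup\{\infty\}$, and hypothesis (A3), which interchanges $0$- and $1$-clusters while fixing the contour configuration, gives $s_0=s_1$ $\mu$-a.s. So for part~(1) it is enough to rule out that $s_0=s_1$ or $t_1$ is finite, and for part~(2) additionally that $t_2\notin\{1,\infty\}$.

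The combinatorial facts I would import from \Cref{ctcl} are: the present contour edges $\psi(\omega)=\psi_1\cup\psi_2$ cut the plane into faces; all vertices of $G$ in one face share a state and constitute exactly one cluster of $\omega$, so infinite clusters correspond to infinite faces; and an infinite contour (in $\LL_1$ or $\LL_2$) has an infinite $0$-cluster along one side and an infinite $1$-cluster along the other. The crucial asymmetry is that $\LL_1$ is trivalent, so $\psi_1$ has even vertex-degree $0$ or $2$ and is a disjoint union of finite simple cycles and bi-infinite simple paths, whereas $\psi_2$ may branch. From the face picture one also gets the converse: if some infinite cluster had only finite contour components on its boundary it would be the \emph{unique} infinite face, forcing $s_0+s_1=1$, which is incompatible with $s_0=s_1$; hence every infinite cluster borders an infinite contour. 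The statements therefore reduce to (a) there exists an infinite contour; (b) any infinite contour forces $s_0=s_1=t_1=\infty$; (c) for part~(2), $t_2\ge 1$.

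For (a) I would argue by contradiction: if all contours are finite, then first $\psi\ne\emptyset$ a.s.\ (else $\omega$ is a.s.\ constant, contradicting (A2) and (A3)), all faces are finite, and the plane is exhausted by a nested family of pairwise-disjoint finite contours; the linear isoperimetric inequality of the non-amenable lattice, combined with invariance and the mass-transport principle, then rules this out. For (b) and (c) I would pass to the auxiliary site percolations: by planar duality an even subgraph of $\LL_1$ (resp.\ $\LL_2$) is a cut of $\LL_2$ (resp.\ $\LL_1$), so $\psi_1$ and $\psi_2$ encode $\{0,1\}$-colorings of $V(\LL_2)$ and $V(\LL_1)$, and the auxiliary measures $\lambda_1,\lambda_2$ appearing in (A6),(A7) are precisely the induced ergodic invariant site percolations on these non-amenable transitive graphs. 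Feeding these into the $\{0,1,\infty\}$-trichotomy for the number of infinite clusters, the indistinguishability of infinite clusters, the fact that an infinite cluster and its complement on a one-ended non-amenable transitive planar graph have infinite (hence, via the face picture, connectedly infinite) boundary, and the non-crossing of the two contour families, I would upgrade any finite positive count to $\infty$. The trivalence of $\LL_1$ --- hence simplicity of $\LL_1$-contours --- is what forces $t_1=\infty$ outright rather than merely $t_1\in\{1,\infty\}$; the branching allowed in $\LL_2$ leaves only the weaker $t_2\in\{1,\infty\}$, and hypothesis (A6) is what excludes $t_2=0$ in part~(2).

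The hard part is that the constrained measure $\mu$ is \emph{not} insertion tolerant: changing the state of a single vertex typically violates the face constraint, so the Newman--Schulman and Lyons--Schramm arguments cannot be applied to $\mu$ directly. The real work is to reroute these arguments through the two non-crossing contour families and the auxiliary ergodic site percolations $\lambda_1,\lambda_2$ --- in particular, to control how $\LL_1$- and $\LL_2$-contours nest inside one another's faces --- so that ``finitely many'' is promoted to ``infinitely many'' using only ergodicity, unimodularity and planarity; checking that each non-amenable-percolation input used is actually available, without covertly invoking a tolerance property of $\mu$, is where the argument must be handled with care.
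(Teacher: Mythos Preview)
Your outline is essentially the paper's proof: the same $\{0,1,\infty\}$ trichotomy (\Cref{lbs}), the same use of trivalence of $\LL_1$ together with \Cref{l2} to exclude $t_1=1$, the same mass-transport argument (\Cref{pp}) to exclude the all-finite case $(s_0,s_1,t_1,t_2)=(0,0,0,0)$, and the same passage to the auxiliary measures $\lambda_1,\lambda_2$ (encoded in \Cref{lm23}) to exclude $(t_1,t_2)=(0,k)$ via (A7) and $(t_1,t_2)=(k,0)$ via (A6). The only real difference is packaging: the paper first pins down $t_1\in\{0,\infty\}$ and then does a short case split, whereas you phrase it as ``(a) some infinite contour exists; (b) any infinite contour forces the rest''.

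One point to drop: you list \emph{indistinguishability of infinite clusters} among your tools, but (as you yourself warn) this requires insertion tolerance and is neither available nor needed here. The paper never invokes it in this proof. The one place where you might have been tempted to use it---ruling out $(s_0,s_1)=(1,1)$---the paper handles by a different trick (\Cref{n0111}): form the bond percolation $\omega_b$ on $G$ in which an edge is open iff its endpoints share a state; then $0$-clusters and $1$-clusters of $\omega$ are exactly the components of $\omega_b$, and \Cref{lbs} forbids $\omega_b$ from having exactly two infinite components. In fact even this is redundant once you have \Cref{lm23}: if $t_1=0$ then (A7) forces $t_2=0$, and then $s_0=s_1\geq 1$ contradicts \Cref{io}, while $t_1=\infty$ already gives infinitely many infinite clusters. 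So your route works without either indistinguishability or the $\omega_b$ trick; just remove the reference to indistinguishability.
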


The case $m=3,n\geq 7$ is of special interest, because in this case $\LL_1$ is a cubic graph (each vertex has degree 3), and $\LL_2$ is a triangular tiling of the hyperbolic plane. As a result, any infinite contour on $\LL_1$ must be a doubly infinite self-avoiding path. An application of Theorem \ref{m23} is illustrated in the following example.

\begin{example}\label{exl2}Consider the i.i.d.\ Bernoulli site percolation on the regular tiling $\LL_2$ of the hyperbolic plane with triangles, such that each vertex has degree $n\geq 7$. Assume that each vertex of $\LL_2$ takes value 1 with probability $\frac{1}{2}$. The corresponding contour configuration on the dual graph $\LL_1$ to the site percolation on $\LL_2$ induces a constrained configuration in the $[3,4,n,4]$ lattice satisfying (A8),(A2),(A3),(A7). Then by \Cref{m23} $\mu$-a.s.  $(s_0,s_1)=(\infty,\infty)$.
\end{example}

\begin{theorem}\label{m21}Let $G$ be the $[m,4,n,4]$ lattice such that
\begin{eqnarray}
&&m\geq 3,\qquad n\geq 3;\label{mn1}\\
&&\frac{1}{m}+\frac{1}{n}=\frac{1}{2}\notag
\end{eqnarray}
Let $\mu$ be a probability measure on $\Omega$. Then
\begin{enumerate}
\item if $\mu$ satisfies (A1)-(A6), then almost surely there are no infinite contours in $\LL_2$;
\item if $\mu$ satisfies (A1)-(A5) and (A7), then almost surely there are no infinite contours in $\LL_1$;
\item if $\mu$ satisfies (A1)-(A7), almost surely there are neither infinite contours nor infinite clusters.
\end{enumerate}
\end{theorem}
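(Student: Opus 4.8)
The strategy is to derive part (3) from parts (1) and (2), and to prove (1) and (2) by adapting the Burton--Keane non-coexistence argument to the contour configurations, using that when $\frac1m+\frac1n=\frac12$ the lattices $G$, $\LL_1$, $\LL_2$ are periodic planar Euclidean lattices --- hence amenable and one-ended. The first step is the elementary dictionary between contours and clusters visible in the six local patterns of \Cref{lcc}: along any contour the vertices of $G$ lying immediately on a fixed side form a self-avoiding path of the same length as the contour, so an infinite contour always produces an infinite cluster; conversely, if $\mathcal C$ is an infinite $1$-cluster whose complement is also infinite, one-endedness of $G$ forces the edge boundary of $\mathcal C$ to be infinite, and the edges of $\LL_1\cup\LL_2$ crossing it contain an infinite contour. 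Together with the symmetry (A3), under which an infinite $1$-cluster forces an infinite $0$-cluster, this shows that the absence of infinite $\LL_1$- and $\LL_2$-contours already implies the absence of infinite clusters; hence part (3) reduces to parts (1) and (2).

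For part (1), the $\LL_2$-contours of $\omega\sim\mu$ are distributed as $\psi_2\sim\nu_2$, a probability measure on even subgraphs of $\LL_2$ which is automorphism-invariant and $\Gamma_2$-ergodic (from (A1), (A2)) and has the triangle-flip finite energy (A5); in particular the number $N$ of infinite $\LL_2$-contours is $\nu_2$-a.s.\ a constant in $\{0,1,2,\dots,\infty\}$. Two surgeries, each realizable as a product of face flips (the triangular faces generate the cycle space of $\LL_2$), drive the argument: (i) merging two infinite $\LL_2$-contours that both enter a large box, by symmetric-differencing with a suitable cycle joining them --- legitimate because even subgraphs are bridgeless, so deleting an edge of a contour does not disconnect it; and (ii) detouring three infinite contours through a common vertex by three triangle flips, making that vertex an \emph{encounter vertex} --- one whose deletion leaves the infinite contour through it with at least three infinite arcs. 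Surgery (i) shows that a finite value $N\ge 2$ would be strictly decreased with positive conditional probability, contradicting the a.s.\ constancy of $N$, so $N\in\{0,1,\infty\}$; surgery (ii) shows that $N=\infty$ would produce encounter vertices of positive $\Gamma_2$-invariant density, which the amenability of $\LL_2$ rules out by the usual boundary-counting bound. Hence $N\le 1$.

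It remains to exclude $N=1$, and this is where \Cref{ctcl}, the planar duality between $\LL_1$ and $\LL_2$, and the auxiliary measures enter. A unique infinite $\LL_2$-contour is a bi-infinite interface, and through the contour/cluster correspondence of \Cref{ctcl} it would force the induced spin measure $\lambda_1$ (built from the $\LL_1$-contours, which is why (A6), the $\Gamma_1$-ergodicity of $\lambda_1$, is the relevant hypothesis) to possess a unique infinite $1$-cluster \emph{and} a unique infinite $0$-cluster; here one applies Burton--Keane to the clusters of $\lambda_1$, which inherits the hexagon-flip finite energy from (A4) and is $\theta$-symmetric by construction. But a one-ended amenable planar lattice cannot carry coexisting unique infinite $0$- and $1$-clusters under an automorphism-invariant finite-energy measure: their common boundary would be a bi-infinite interface, and a further finite-energy surgery would create a second such interface, hence a second infinite cluster of one colour, contradicting uniqueness. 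Therefore $N=0$, which is part (1); part (2) follows by the same scheme with $\LL_1$ and $\LL_2$, the finite-energy hypotheses (A4) and (A5), and the ergodicity hypotheses (A6) and (A7) interchanged.

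The main obstacle is the Burton--Keane step itself in this non-monotone constrained setting: $\psi_2$ is not a genuine percolation configuration, the even-degree constraint obstructs the standard ``open a box of edges'' surgeries, and every local modification must be written as a sum of face flips so that parity is preserved at the endpoints of any new arc. Equally delicate is making precise the correspondence of \Cref{ctcl} that transfers a statement about infinite $\LL_2$-contours to the cluster structure of the dual spin measure $\lambda_1$ --- this is what licenses the cross-use of (A6) for part (1) (and (A7) for part (2)) rather than a symmetry of $\nu_i$ alone. Once these ingredients are in place, the amenability input and the exclusion of a unique infinite interface run parallel to the two-dimensional arguments of \cite{HL16}.
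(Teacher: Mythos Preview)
Your overall architecture matches the paper: part (3) is deduced from (1) and (2) via the contour/cluster dictionary and (A2)--(A3), and the Burton--Keane step for $N\in\{0,1\}$ is essentially the paper's \Cref{l81} (merging in a box rules out $2\le N<\infty$; creating a three-armed vertex and invoking amenability rules out $N=\infty$). So far so good.

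The divergence, and the gap, is in your exclusion of $N=1$. The paper does \emph{not} argue inside $\lambda_1$ that a unique infinite $0$-cluster and a unique infinite $1$-cluster cannot coexist. Instead it bootstraps: from the unique infinite $\LL_2$-contour $C_1$ one gets (via \Cref{l83} and (A6)) infinite $0$- and $1$-clusters in $\phi^{-1}(\psi_1)$, hence by \Cref{io} an infinite $\LL_1$-contour; by \Cref{l82} (which uses (A4)) there is then exactly one infinite $\LL_1$-contour $C_2$. Now one is back in the original configuration $\omega$, with two distinct infinite contours $C_1,C_2$ living in dual lattices. \Cref{ct} produces an infinite cluster of $\omega$ incident to both; (A2)--(A3) upgrade this to an infinite $0$-cluster and an infinite $1$-cluster each incident to both $C_1$ and $C_2$; and the planar non-crossing lemma \Cref{2c2c} says this last configuration is impossible. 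That is the contradiction.

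Your substitute --- ``a further finite-energy surgery would create a second such interface, hence a second infinite cluster of one colour'' --- is not justified and, as stated, does not go through. The only surgery granted by (A4) is the hexagon flip on $\psi_1$; if the unique infinite $\HH$-contour (a bi-infinite self-avoiding path, since $\HH$ is cubic) passes through a hexagon, flipping that hexagon reroutes or pinches the path but does not in general split it into two bi-infinite pieces, and there is no mechanism in your sketch guaranteeing the topological outcome you need. More fundamentally, coexistence of a unique infinite $0$-cluster and a unique infinite $1$-cluster, with a single bi-infinite interface between them, is \emph{not} ruled out by amenability plus finite energy alone (two-ended infinite clusters are permitted on amenable transitive graphs). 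The paper's route avoids this by exploiting the \emph{pair} of dual contour systems and the purely deterministic planarity obstruction of \Cref{2c2c}; you should return to $\omega$, produce the second contour in $\LL_1$, and invoke \Cref{ct} and \Cref{2c2c} rather than attempt a non-coexistence argument in $\lambda_1$.
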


When $m=n$, the two lattice $\LL_1$ and $\LL_2$ are isomorphic to each other, this allows us to use symmetry to obtain the following theorem.

\begin{theorem}\label{m22}Let $G$ be the $[m,4,n,4]$ lattice satisfying
\begin{eqnarray*}
m=n\geq 5.
\end{eqnarray*}
Let $\mu$ be a probability measure on $\Omega$. Let $s_0$ (resp.\ $s_1$) be the number of infinite 0-clusters (resp.\ 1-clusters), and let $t_1$ (resp.\ $t_2$) be the number of infinite $\LL_1$-contours (resp.\ $\LL_2$-contours). If $\mu$ satisfies (A1)-(A3); then $\mu$-a.s. $(s_0,s_1,t_1,t_2)=(\infty,\infty,\infty,\infty)$.
\end{theorem}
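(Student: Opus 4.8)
The plan is to combine the combinatorial analysis of contours and clusters from \Cref{ctcl} with the rigidity of $\Gamma$-invariant percolation on the non-amenable transitive planar graph $G$ recorded in \Cref{pnc}; the point of the hypothesis $m=n$ is that it restores a full duality symmetry, which is what upgrades the partial conclusions available for $[3,4,n,4]$ (\Cref{m23}) to the complete statement here. Write $\HH$ for the hyperbolic plane carrying $G$. First I would reduce everything to deterministic constants: each of $s_0,s_1,t_1,t_2$ is $\Gamma_1$-invariant, so by (A2) it is $\mu$-a.s.\ equal to a fixed value in $\{0,1,2,\dots\}\cup\{\infty\}$; the spin flip $\theta$ of (A3) interchanges infinite $0$-clusters with infinite $1$-clusters and fixes every contour, so $s_0=s_1=:s$; and since the $\{n,n\}$ tiling is self-dual, $G$ admits an automorphism interchanging the two families of $n$-faces, hence interchanging $\LL_1$ with $\LL_2$, and by (A1) this automorphism preserves $\mu$, so $t_1=t_2=:t$. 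It then suffices to show $s=t=\infty$, and, these being constants, to exclude the two cases (i) $s=t=0$ and (ii) $s$ or $t$ finite and positive.

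Next I would set up the topological dictionary supplied by \Cref{ctcl}: the present contour edges form a disjoint union of finite loops and of bi-infinite simple curves in $\HH$, each curve lying entirely in $\LL_1$ or entirely in $\LL_2$; the complementary regions are monochromatic, two regions sharing a contour edge have opposite colours, and hence distinct regions meet distinct clusters; a finite cluster is enclosed by a finite contour; and $k$ bi-infinite contours dissect $\HH$ into $k+1$ unbounded regions, each of which therefore contains an infinite cluster, while between two adjacent infinite clusters of opposite colour there sits an infinite contour. From this I get immediately that an infinite cluster exists iff an infinite contour exists, so $s\ge 1\iff t\ge 1$; that $t=\infty$ forces $s=\infty$; and that if $t$ is finite then there are $2t$ bi-infinite contours and hence at least $2t+1$ infinite clusters. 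To dispose of case (ii): if $1\le t<\infty$ then there are finitely many but at least $3$ infinite clusters, whereas by the results recalled in \Cref{pnc} a unimodular $\Gamma$-invariant ergodic percolation on the non-amenable transitive planar graph $G$ has a.s.\ $0$, $1$, or $\infty$ infinite clusters, and $s_0=s_1$ excludes the value $1$; this contradiction — or, alternatively, the observation that a finite nonempty family of bi-infinite contours would induce a finite $\mu$-invariant configuration on the circle at infinity $\partial\HH$, which is incompatible with the non-amenability of $\Gamma$ — forces $t\in\{0,\infty\}$, whence $s\in\{0,\infty\}$ with $s=0\iff t=0$.

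What remains, and what I expect to be the main obstacle, is to exclude case (i): that $\mu$-a.s.\ every contour and cluster is finite. Since a finite cluster is enclosed by a finite contour, a.s.\ every vertex of $G$ is enclosed by some contour. The event that every vertex is enclosed by \emph{infinitely many} contours is $\Gamma$-invariant, so by (A2) it has probability $0$ or $1$. If it has probability $0$, then a.s.\ some vertex $v$ is enclosed by only finitely many contours and hence has a well-defined outermost enclosing contour $\Gamma(v)$, a finite loop enclosed by no contour; one then checks by a short planar-topology argument that the complementary region lying just outside $\Gamma(v)$ cannot be bounded — if it were, its outer boundary would be a contour enclosing $\Gamma(v)$ — so it is an infinite monochromatic region, forcing an infinite cluster and contradicting $s=0$. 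Hence a.s.\ every vertex is enclosed by infinitely many contours, and this configuration must be ruled out separately. For that I would use that the finite contours then form a $\Gamma$-invariant forest under the map taking a contour to the smallest contour enclosing it — a forest in which every node has a parent — which induces a $\Gamma$-invariant partition of $V(G)$ into finite regions every one of which has infinitely many ancestors, and then argue via mass transport, feeding in the non-amenability of $G$ through the contour isoperimetric inequality (a finite contour $\gamma$ encloses at most $O(|\gamma|)$ vertices of $G$) and the planar structure of the forest, that no $\Gamma$-invariant probability measure can be supported on such configurations. The subtlety in this last step is precisely that the naive mass-transport identities are tautological, so the non-amenability has to enter through the geometry of the contours; this is where most of the work lies. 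Once case (i) is excluded one has $s=t=\infty$, that is $(s_0,s_1,t_1,t_2)=(\infty,\infty,\infty,\infty)$.
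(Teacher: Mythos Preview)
Your overall architecture matches the paper's --- reduce to deterministic $(s,s,t,t)$ via (A2), (A3) and the $\LL_1\leftrightarrow\LL_2$ self-duality coming from $m=n$, then exclude $(0,0,0,0)$ and the finite positive case --- but there is a structural error in your topological dictionary that breaks your handling of case~(ii). For the $[n,4,n,4]$ lattice with $n\ge 5$, both $\LL_1$ and $\LL_2$ have vertex degree $n\ge 5$, so a contour (a connected even subgraph) may have vertices of degree $4,6,\dots$; contours are \emph{not} bi-infinite simple curves here, unlike in the $[3,4,n,4]$ setting where $\LL_1$ is cubic. In particular a single infinite contour can have infinitely many ends and hence infinitely many unbounded complementary regions, so your assertion ``if $t$ is finite then there are finitely many but at least $3$ infinite clusters'' is unjustified: the ``finitely many'' can fail, and without it you cannot play the $\{0,1,\infty\}$ trichotomy off against a finite count $\ge 3$. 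Your alternative via a finite invariant configuration on $\partial\HH^2$ is likewise predicated on finitely many ends per contour.

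The paper excludes $(s_0,s_1)=(1,1)$ and $(t_1,t_2)=(1,1)$ by a different mechanism that does not use any simple-curve structure (\Cref{l71,l72}). If $(s_0,s_1)=(1,1)$, \Cref{cac} furnishes an infinite contour incident to both infinite clusters; by the $\LL_1\leftrightarrow\LL_2$ symmetry (from (A1) with $m=n$) and ergodicity (A2) this must happen simultaneously for a primal and for a dual contour, and then \Cref{2c2c} gives a planar-topological contradiction (two distinct infinite clusters cannot both be incident to the same pair of infinite contours). The exclusion of $(t_1,t_2)=(1,1)$ is symmetric, using \Cref{ct} in place of \Cref{cac}. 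For case~(i) your sketch is the right idea and is essentially what the paper carries out in \Cref{pp}: pass to the auxiliary quasi-transitive nonamenable graph $\hat{G}$, define the nesting level $r(v)$ via generalized contours, and apply the BLPS threshold (\Cref{tbng}) to the bond percolation $\omega^r=\{e:r\text{ of both endpoints }\le r\}$; the nonamenability enters there directly through $\alpha(\hat G)$, with no need for a separate contour-isoperimetric inequality.
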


\Cref{m23} is proved in \Cref{p23}. \Cref{m21} is proved in \Cref{p212}, and \Cref{m22} is proved in \Cref{p211}.

\section{Ising model on transitive, triangular tilings of the hyperbolic plane}\label{Is}

In this section, we state the main result concerning the percolation properties of the Ising model on transitive, triangular tilings of the hyperbolic plane. These results, as given in \Cref{ipl}, will be proved in \Cref{pipl}.

The random cluster representation of an Ising model on a transitive, triangular tiling of the hyperbolic plane can be defined as in \cite{HJL02}. Here we briefly summarize basic facts about the Fortuin-Kasteleyn random cluster model, which is a probability measure on bond configurations of a graph, and the related Potts model. See \cite{GrGrc} for more information.

The \textbf{random cluster measure} $RC:=RC_{p,q}^{G_0}$ on a finite graph $G_0=(V_0,E_0)$ with parameters $p\in[0,1]$ and $q\geq 1$ is the probability measure on $\{0,1\}^{E_0}$ which to each $\xi\in \{0,1\}^{E_0}$ assigns probability
\begin{eqnarray}
RC(\xi):\propto q^{k(\xi)}\prod_{e\in E_0}p^{\xi(e)}(1-p)^{1-\xi(e)}.\label{drc}
\end{eqnarray}
where $k(\xi)$ is the number of connected components in $\xi$.

Let $G=(V,E)$ be an infinite, locally finite, connected graph. For each $q\in[1,\infty)$ and each $p\in (0,1)$, let $WRC_{p,q}^G$ be the random cluster measure with the wired boundary condition, and let $FRC_{p,q}^G$ be the random cluster measure with the free boundary condition. More precisely, $WRC_{p,q}^G$  (resp.\ $FRC_{p,q}^G$) is the weak limit of $RC$'s defined by (\ref{drc}) on larger and larger finite subgraphs approximating $G$, where we assume that all the edges outside each finite subgraph are present (resp.\ absent).

The Gibbs measure $\mu^{+}$ (resp.\ $\mu^{-}$) for the Ising model on $G$ with coupling constant $J\geq 0$ on each edge and ``$+$''-boundary conditions (resp.\ ``$-$''-boundary conditions) can be obtained by considering a random configuration of present and absent edges according to the law $WRC_{p,2}^G$, $p=1-e^{-2J}$, and assigning to all the vertices in each infinite cluster the state ``$+$'' (resp.\ ``$-$''), and to all the vertices in each finite cluster a state from $\{+,-\}$, chosen uniformly at random for each cluster and independently for different clusters. 

The Gibbs measure $\mu^{f}$ for the Ising model on $G$ with coupling constant $J\geq 0$ on each edge and free boundary conditions can be obtained by considering a random configuration of present and absent edges according to the law $FRC_{p,2}^G$, $p=1-e^{-2J}$, and assigning to all the vertices in each  cluster a state from $\{+,-\}$, chosen uniformly at random for each cluster and independently for different clusters. 

When there is no confusion, we may write $FRC_{p,q}^G$ and $WRC_{p,q}^G$ as $FRC_{p,q}$ and $WRC_{p,q}$ for simplicity.
Assume that $G$ is transitive. Then measures $FRC_{p,q}$ and $WRC_{p,q}$ are $\mathrm{Aut}(G)$-invariant, and $\mathrm{Aut}(G)$-ergodic; see the explanations on Page 295 of \cite{RS01}. 

Now we introduce the following definitions.

\begin{definition}A transitive graph $G=(V,E)$ is \textbf{unimodular}, if there exists a subgroup $\Gamma\subseteq \mathrm{Aut}(G)$ acting transitively on $G$, such that for any two vertices $u,v\in V$, we have
\begin{eqnarray*}
|\mathrm{Stab}_{u}(v)|=|\mathrm{Stab}_v(u)|;
\end{eqnarray*}
where $\mathrm{Stab}_{u}\subseteq \Gamma$ is the stabilizer of $u$, i.e. 
\begin{eqnarray}
\mathrm{Stab}_u=\{\gamma\in \Gamma: \gamma u=u\};\label{stab} 
\end{eqnarray}
and $|\cdot|$  is the cardinality of a set.
\end{definition}

\begin{definition}A graph $G=(V,E)$ is called \textbf{amenable}, if its edge isoperimetric constant
\begin{eqnarray}
\imath_E(G):=\mathrm{inf}_{K\subset V,|K|<\infty}\frac{|\partial_E K|}{|K|}=0.\label{eic}
\end{eqnarray}
where $\partial_E K$ is the set of edges with exactly one endpoint in $K$ and one endpoint not in $K$.

If the edge isoperimetric constant is strictly positive, the graph is called \textbf{nonamenable}.
\end{definition}

If we further assume that $G$ is unimodular, nonamenable and planar,
it is known that there exists $p_{c,q}^{w},\ p_{c,q}^{f},\ p_{u,q}^{w},\ p_{c,q}^{f}\in[0,1]$, such that $FRC_{p,q}$-a.s. the number of infinite clusters equals
\begin{eqnarray}
\left\{\begin{array}{cc}0&\mathrm{for}\ p\leq p_{c,q}^f\\ \infty&\mathrm{for}\ p\in (p_{c,q}^f,p_{u,q}^f)\\1&\mathrm{for}\ p> p_{u,q}^f; \end{array}\right.\label{frc}
\end{eqnarray}
and $WRC_{p,q}$-a.s. the number of infinite clusters equals
\begin{eqnarray}
\left\{\begin{array}{cc}0&\mathrm{for}\ p< p_{c,q}^w\\ \infty&\mathrm{for}\ p\in (p_{c,q}^w,p_{u,q}^w)\\1&\mathrm{for}\ p\geq p_{u,q}^w.\end{array}\right.;\label{wrc}
\end{eqnarray}
see expressions (17),(18), Theorem 3.1 and Corollary 3.7 of \cite{HJL02}.

It is well known that for the i.i.d Bernoulli percolation on a infinite, connected, locally finite transitive graph $G$, there exist $p_c,p_u$ such that
\begin{enumerate}[label=(\roman*)]
\item $0<p_c\leq p_u\leq 1$;
\item for $p\in[0,p_c)$ there is no infinite cluster a.s.
\item for $p\in(p_c,p_u)$ there are infinitely many infinite clusters, a.s.
\item for $p\in(p_u,1]$, there is exactly one infinite cluster, a.s.
\end{enumerate}
The monotonicity in $p$ of the uniqueness of the infinite cluster was proved in \cite{HP,Sch99}. Combining with Theorem 7.5 of \cite{LP} (proved first in \cite{ns81}), we obtain statements (i)-(iv) above. It is proved that $p_c=p_u$ for amenable transitive graphs (see \cite{BS96}); and conjectured that $p_c<p_u$ for transitive non-amenable graphs. The conjecture $p_c<p_u$ was proved for transitive hyperbolic planar graphs (see \cite{BS20}) and  non-amenable Cayley graphs with small spectral radii (see \cite{PSN,RS01,Th15}) or large girths (see \cite{NP12}).

\begin{theorem}\label{ipl}Let $\LL_2$ be a triangulation of the hyperbolic plane such that each vertex has degree $n\geq 7$. Consider the Ising model with spins located on vertices of $\LL_2$ and coupling constant $J\in \RR$ on each edge.  Let $p_c$, $p_u$ be critical i.i.d Bernoulli site percolation probabilities on $\LL_2$ as defined by (i)-(iv) above.
\begin{enumerate}
\item Let $h>0$ satisfy
\begin{eqnarray}
\frac{e^{-h}}{e^{h}+e^{-h}}=p_c\label{pch}
\end{eqnarray}
  Let $\mu^+$ (resp.\ $\mu^-$. $\mu^f$) be the infinite-volume Ising Gibbs measure with ``$+$''-boundary conditions (resp.\ ``$-$'' boundary conditions, free boundary conditions).  If 
 \begin{eqnarray}
 n|J|<h,\label{jh} 
 \end{eqnarray}
 then $\mu$-a.s. there are infinitely many infinite ``$+$''-clusters, infinitely many infinite ``$-$''-clusters and infinitely many infinite contours, where $\mu$ is an arbitrary $\mathrm{Aut}(\LL_2)$-invariant Gibbs measure for the Ising model on $\LL_2$ with coupling constant $J$.
 
  \item  Assume $J\geq 0$. If one of the following conditions 
 \begin{enumerate}[label=(\alph*)]
\item  $\mu^f$ is $\mathrm{Aut}(\LL_2)$-ergodic; 
\item $\mathrm{inf}_{u,v\in V(\LL_2)}\langle \sigma_{u}\sigma_{v}\rangle_{\mu^f}=0$, where $\sigma_{u}$ and $\sigma_{v}$ are two spins associated to vertices $u,v\in V(\LL_2)$ in the Ising model; 
\item $0\leq J<\frac{1}{2}\ln\left(\frac{1}{1-p_{u,2}^{f}}\right)$, where $p_{u,2}^{f}$ is the critical probability for the existence of a unique infinite open cluster of the corresponding random cluster representation of the Ising model on $\LL_2$, with free boundary conditions as given in (\ref{frc});  
\item  $0\leq J<\frac{1}{2}\ln\left(\frac{1}{1-p_{u,1}}\right)$, where $p_{u,1}$ is the critical probability for the existence of a unique infinite open cluster for the i.i.d Bernoulli bond percolation on $\LL_2$;
\end{enumerate} 
  holds, then $\mu^f$-a.s. there are infinitely many infinite ``$+$''-clusters and infinitely many infinite ``$-$''-clusters. Indeed, we have $(d)\Rightarrow(c)\Rightarrow(b)\Rightarrow (a)$.

  \item Assume
 \begin{eqnarray}
 J\geq\frac{1}{2}\ln\left(\frac{1}{1-p_{u,2}^{w}}\right).\label{jj1}
 \end{eqnarray}
 Let $\sA_+$ be the event that there is a unique infinite ``$+$''-cluster, no infinite ``$-$''-clusters and no infinite contours; and let $\sA_-$ be the event that there is a unique infinite ``$-$''-cluster, no infinite ``$+$''-clusters and no infinite contours.
  then 
 \begin{eqnarray}
 \mu^+(\sA_+)&=&1.\label{+1}\\
\mu^-(\sA_-)&=&1. \label{-1}
 \end{eqnarray}
  \item If 
 \begin{eqnarray}
 J>\frac{1}{2}\ln\left(\frac{1}{1-p_{u,2}^{f}}\right)\label{jj2}
 \end{eqnarray}
  then (\ref{+1}) and (\ref{-1}) hold, and moreover,
   \begin{eqnarray}
\mu^{f}(\sA_+)=\mu^{f}(\sA_-)=\frac{1}{2}.\label{hmf}
 \end{eqnarray}
  \end{enumerate}
\end{theorem}

From Theorem \ref{ipl}, we can also obtain the following corollary:

\begin{corollary}\label{c34}Let $\LL_2$ be a triangulation of the hyperbolic plane such that each vertex has degree $n\geq 7$. Consider the Ising model with spins located on vertices of $\LL_2$ and coupling constant $J\geq 0.$ on each edge. If 
  \begin{eqnarray}
  J< \frac{1}{2}\ln\left(\frac{1}{1-p_{c,2}^{w}}\right),\label{jj3}
  \end{eqnarray}
  then for any Gibbs measure $\mu$ for the Ising model on $\LL_2$ with coupling constant $J$,  $\mu$-a.s. there are infinitely many infinite ``$+$''-clusters, infinitely many infinite ``$-$''-clusters and infinitely many infinite contours. Here $p_{c,2}^{w}$ is defined as in (\ref{wrc}).
\end{corollary}

We can see that when the conditions of Corollary \ref{c34} are satisfied, almost surely there are no infinite open clusters in the corresponding random cluster representation of the Ising model, however, the conclusion of the corollary says that there are infinitely many infinite ``$+$''-clusters and infinitely many infinite ``$-$''-clusters in the Ising model.

\section{XOR Ising model on transitive, triangular tilings of the hyperbolic plane}\label{xIs}

In this section, we state the main result concerning the percolation properties of the XOR Ising model on transitive, triangular tilings of the hyperbolic plane. These results, as given in \Cref{coii,xorc}, will be proved in \Cref{pxorc} as applications of \Cref{m23}.

Throughout this section, we let $\LL_1$ be the $[n,n,n]$ regular tiling of the hyperbolic plane, such that each face has degree $n\geq 7$, and each vertex has degree 3. Let $\LL_2$ be the planar dual graph of $\LL_1$. More precisely,  $\LL_2$ is the vertex-transitive triangular tiling of the hyperbolic plane such that each vertex has degree $n\geq 7$. An \textbf{XOR Ising model} on $\LL_2$ is a probability measure on $\sigma_{XOR}\in \{\pm 1\}^{V(\LL_2)}$, such that 
\begin{eqnarray*}
\sigma_{XOR}(v)=\sigma_1(v)\sigma_2(v),\qquad\forall v\in V(\LL_2),
\end{eqnarray*}
where $\sigma_1$, $\sigma_2$ are two i.i.d.\ Ising models with spins located on $V(\LL_2)$. A \textbf{contour configuration} of the XOR Ising configuration on $\LL_2$ is a subset of edges of $\LL_1$ in which each edge has a dual edge in $E(\LL_2)$ joining two vertices $u,v\in V(\LL_2)$ satisfying $\sigma_{XOR}(u)=-\sigma_{XOR}(v)$. A connected component in a contour configuration is called a \textbf{contour}. Obviously each vertex of $\LL_1$ has 0 or 2 incident present edges in a contour configuration of an XOR Ising configuration, since $\LL_1$ has vertex degree 3. Each contour of an XOR Ising configuration on $\LL_2$  is either a self-avoiding cycle or a doubly-infinite self-avoiding path.

We can similarly define an XOR Ising model with spins located on vertices of $\LL_1$, and its contours to be even-degree subgraphs of $\LL_2$.

\begin{theorem}\label{coii} Let $\sigma_1$, $\sigma_2$ be two i.i.d.\ Ising models with spins located on vertices of $\LL_2$, coupling constant $J\in [0,\infty)$ and free boundary conditions. Let $\mu_1^f$ (resp.\ $\mu_2^f$) be the distribution of $\sigma_1$ (resp.\ $\sigma_2$). Assume that one of the following cases occurs
\begin{enumerate}
\item If $\mu_1^f\times \mu_2^f$ is $\mathrm{Aut}(\LL_2)$-ergodic; or
\item $\liminf_{|i-j|\rightarrow\infty}\langle \sigma_{1,i}\sigma_{1,j}\rangle_{\mu_{1,f}}=0$, where $\sigma_{1,i}$ and $\sigma_{1,j}$ are two spins in the Ising model $\sigma_1$ with distance $|i-j|$; or
\item $J$ satisfies Condition (c) of Theorem \ref{ipl} II. 
\item  $J$ sasifies Condition (d) of Theorem \ref{ipl} II.
\end{enumerate}
then $\mu_1^f\times \mu_2^f$-a.s. there are infinitely many infinite ``$+$''-clusters and infinitely many infinite ``$-$''-clusters.
\end{theorem}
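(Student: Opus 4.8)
The plan is to realize $\sigma_{XOR}$ inside the constrained percolation model on the $[3,4,n,4]$ lattice $G$ (taking $m=3$, so that $\LL_1$ is the present $[n,n,n]$ tiling and $\LL_2$ the present triangular tiling) and then to invoke the first part of \Cref{m23}. The contour configuration of $\sigma_{XOR}$ on $\LL_2$ is an even subgraph of $\LL_1$, hence an element $\psi_1\in\Phi_1$. Pairing it with an $\LL_2$-contour configuration $\psi_2\in\Phi_2$ supported off the square faces crossed by $\psi_1$ — so that $\psi:=\psi_1\cup\psi_2\in\Phi$ — and applying the $2$-to-$1$ map $\phi$ together with a uniform random choice of the state at a fixed vertex produces a constrained percolation configuration $\omega=\phi^{-1}(\psi)\in\Omega$, whose $\LL_1$-contour marginal is the law of the $\sigma_{XOR}$-contours. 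The configuration $\psi_2$ is to be built from an independent auxiliary field supported inside the clusters of $\sigma_{XOR}$, chosen so that the law $\mu$ of $\omega$ is non-degenerate while the correspondence described next is preserved.

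Next I would match infinite objects on the two sides by inspecting the six allowed local pictures of \Cref{lcc}. On the one hand, $\omega$ is constant along the boundary of each $n$-face away from the (sparse) $\LL_2$-contours, and this constant value, identified with the $\sigma_{XOR}$-spin at the centre of that face, is unchanged when passing through a square whose two $\LL_2$-vertices carry equal $\sigma_{XOR}$-spin; an infinite ``$1$''-cluster of $\omega$ therefore visits the boundaries of infinitely many $n$-faces all carrying the same spin, and these lie in a common infinite ``$+$''-cluster of $\sigma_{XOR}$. On the other hand, each infinite ``$+$''-cluster of $\sigma_{XOR}$ is traced out by an infinite ``$1$''-cluster of $\omega$, and likewise for ``$-$'' and ``$0$''. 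Consequently the number of infinite ``$+$''-clusters of $\sigma_{XOR}$ equals $s_1$ and the number of infinite ``$-$''-clusters equals $s_0$ (one can also note, as a cross-check, that every infinite $\LL_1$-contour of $\omega$ is an infinite contour of $\sigma_{XOR}$ flanked on its two sides by an infinite ``$+$''- and an infinite ``$-$''-cluster).

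The heart of the matter is to verify that, under each of the hypotheses (1)--(4), the measure $\mu$ satisfies (A2), (A3), (A7), (A8). Symmetry (A3) is immediate from the uniform choice of state, and, since $\mathrm{Aut}(\LL_1)=\mathrm{Aut}(\LL_2)$ and $\mu_1^f\times\mu_2^f$ is $\mathrm{Aut}(\LL_2)$-invariant, the invariance assertions in (A2) and (A8) follow from the equivariance of the encoding. The ergodicity requirements in (A2) and (A7) are exactly where (1)--(4) come in: under (1), $\mu_1^f\times\mu_2^f$ is $\mathrm{Aut}(\LL_2)$-ergodic, and ergodicity is transported along the measurable equivariant encoding to $\mu$ and (with an equivariant choice of auxiliary field) to $\lambda_2$; under (2), the vanishing of $\liminf\langle\sigma_{1,i}\sigma_{1,j}\rangle$ forces the tail $\sigma$-field of $\mu_1^f$, hence of $\mu_1^f\times\mu_2^f$, to be trivial; under (3) resp.\ (4), the bound on $J$ places the free random-cluster representation, resp.\ the corresponding Bernoulli bond percolation on $\LL_2$, strictly below its uniqueness threshold, whence — just as in the proof of Part II of \Cref{ipl}, using the facts recalled in \Cref{rcq} — $\mu_i^f$ is extremal and $\mu_1^f\times\mu_2^f$ (hence $\mu$) is $\mathrm{Aut}(\LL_2)$-ergodic. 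Once the hypotheses are in place, the first part of \Cref{m23} gives $s_0=s_1=\infty$ $\mu$-almost surely, and the previous step translates this into the assertion that $\mu_1^f\times\mu_2^f$-a.s.\ $\sigma_{XOR}$ has infinitely many infinite ``$+$''- and infinitely many infinite ``$-$''-clusters.

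The main obstacle is this third step together with the choice of encoding in the first: one must design $\psi_2$ so that $\mu$ is simultaneously non-degenerate (as required for (A7)) and compatible with the cluster correspondence of the second step, and then package the four rather different-looking hypotheses — ergodicity of a product measure, decay of two-point correlations, and two threshold inequalities for $J$ — into the single list of invariance, symmetry and ergodicity properties of $\mu$ and $\lambda_2$ demanded by \Cref{m23}.
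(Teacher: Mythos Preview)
Your route through \Cref{m23} is more elaborate than necessary and the obstacle you flag at the end is real and, as far as I can see, not resolvable in the form you propose. If $\psi_2\equiv\emptyset$, then $\nu_2$ is a point mass and $\lambda_2$ puts mass $\tfrac12$ on each of the constant configurations, which is \emph{not} $\Gamma_2$-ergodic; so (A7) fails and \Cref{m23} does not apply. On the other hand, any non-trivial $\psi_2$ inserts $\LL_2$-contours inside $\sigma_{XOR}$-clusters, and those contours separate vertices of different state in the constrained configuration $\omega$; a single infinite $\sigma_{XOR}$-cluster can then be cut into several (possibly all finite) constrained clusters, so the cluster correspondence of your second step breaks. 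There is no evident choice of auxiliary field that simultaneously makes $\lambda_2$ ergodic and keeps the bijection between infinite $\sigma_{XOR}$-clusters and infinite constrained clusters intact.

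The paper avoids this difficulty entirely by working directly with $\sigma_{XOR}$ as an $\mathrm{Aut}(\LL_2)$-invariant site percolation on $\LL_2$, without embedding it into the $[3,4,n,4]$ constrained model. Under hypothesis (1), ergodicity, $\mathrm{Aut}(\LL_2)$-invariance, the $+\leftrightarrow-$ symmetry of $\mu_1^f\times\mu_2^f$, and \Cref{lbs} force $(s_+,s_-)\in\{(0,0),(1,1),(\infty,\infty)\}$ almost surely. Then \Cref{nzz} rules out $(0,0)$ and \Cref{ozz} rules out $(1,1)$; these two lemmas need only $\mathrm{Aut}(\LL_2)$-invariance and ergodicity of the site measure, not the full list (A2),(A3),(A7),(A8), so the problematic (A7) never enters. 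The reduction of hypotheses (2)--(4) to (1) is handled separately: (2)$\Rightarrow$(1) by Schonmann's theorem applied to the product of two i.i.d.\ copies, and (4)$\Rightarrow$(3)$\Rightarrow$(2) by \Cref{labc}. Your treatment of the implication chain is essentially the same as the paper's; the divergence is in the main step, where the paper's direct use of \Cref{nzz} and \Cref{ozz} sidesteps the encoding problem you were unable to close.
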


\begin{theorem}\label{xorc} Let $\sigma_1$, $\sigma_2$ be two i.i.d.\ Ising models with spins located on vertices of $\LL_1$, and coupling constant $K\geq 0$. For $i=1,2$, let $\mu_{i,+}$ (resp.\ $\mu_{i,-}$) be the distribution of $\sigma_i$ with ``$+$''-boundary conditions (resp.\ ``$-$''-boundary conditions). Let $J\geq 0$ be given by
\begin{eqnarray}
e^{-2J}=\frac{1-e^{-2K}}{1+e^{-2K}},\label{jkr}
\end{eqnarray}
and let $t$ be the number of infinite contours. Let $\mu_{++}$ (resp.\ $\mu_{--}$, $\mu_{+-}$) be the product measure of $\mu_{1,+}$ and $\mu_{2,+}$ (resp.\ $\mu_{1,-}$ and $\mu_{2,-}$, $\mu_{1,+}$ and $\mu_{2,-}$). Assume $J$ satisfies the assumption of \Cref{coii}, then we have
\begin{eqnarray*}
\mu_{++}(t\in\{0,\infty\})=\mu_{--}(t\in\{0,\infty\})=\mu_{+-}(t\in\{0,\infty\})=1.
\end{eqnarray*}
\end{theorem}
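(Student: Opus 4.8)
The plan is to identify the infinite contours of the XOR Ising model on $V(\LL_1)$ with the infinite $\LL_2$-contours of an associated constrained percolation configuration on the $[3,4,n,4]$ lattice, and then read off the $\{0,\infty\}$ dichotomy from \Cref{m23} and \Cref{coii}. Taking $m=3$ in \Cref{xorh}, the two auxiliary lattices of the $[3,4,n,4]$ lattice $G$ are exactly $\LL_1$ (the $[n,n,n]$ tiling, vertex degree $3$) and $\LL_2$ (the triangular tiling, vertex degree $n$), and they are planar dual to one another; since $n\ge 7$, \Cref{m23} applies. In this correspondence, the number $t$ of infinite contours of the XOR Ising model on $V(\LL_1)$ equals the number $t_2$ of infinite $\LL_2$-contours.

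First I would make the combinatorial correspondence precise. Given i.i.d.\ Ising models $\sigma_1,\sigma_2$ on $V(\LL_1)$ with coupling $K$ and boundary condition $bc\in\{(+,+),(-,-),(+,-)\}$, let $\gamma_i\subset E(\LL_2)$ be the low-temperature (interface) configuration of $\sigma_i$, an even subgraph of $\LL_2$ weighted proportionally to $e^{-2K|\gamma_i|}$. Since for neighbouring $u,v\in V(\LL_1)$ one has $\sigma_{XOR}(u)\neq\sigma_{XOR}(v)$ iff the dual edge of $uv$ lies in exactly one of $\gamma_1,\gamma_2$, the XOR contour configuration is precisely $\gamma_1\triangle\gamma_2$. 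I would then show that $\gamma_1\triangle\gamma_2$, together with a suitable even subgraph of $\LL_1$ built from $\sigma_1,\sigma_2$ and an independent fair coin at a base vertex $v_0\in V(G)$, is the image $\phi(\omega)$ of a constrained percolation configuration $\omega\in\Omega$: the non-crossing condition defining $\Phi$ holds because the local constraint around each black (square) face of $G$ permits only $0$ or $2$ domain walls, hence at most one crossing edge of $E(\LL_1)\cup E(\LL_2)$. This is the hyperbolic counterpart of the correspondence used in \Cref{xori} on the $[3,4,6,4]$ lattice, and the Kramers--Wannier relation $e^{-2J}=\tanh K$ (equivalently $\sinh 2J\,\sinh 2K=1$) is exactly what makes the induced law of the $\LL_1$-part of $\phi(\omega)$ match that of the contours of an XOR Ising model on $V(\LL_2)$ at coupling $J$.

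Next I would check, for the resulting law $\mu$ of $\omega$, the hypotheses of \Cref{m23}: invariance under $\mathrm{Aut}(\LL_1)$, i.e.\ (A8), is inherited from the $\mathrm{Aut}(\LL_1)$-invariance of the Ising Gibbs measures $\mu_{i,\pm}$; the symmetry (A3) is automatic because the fibre $\phi^{-1}(\phi(\omega))=\{\omega,1-\omega\}$ is resolved by a fair coin; and the ergodicity statements (A2),(A6),(A7) are to be deduced from ergodicity/tail-triviality of $\mu_{i,\pm}$ and of the XOR measures on $\LL_1$ and on $\LL_2$ --- which is precisely the information packaged by the hypothesis ``$J$ satisfies the assumption of \Cref{coii}'' (its four alternatives are an ergodicity condition, a correlation-decay condition, and two uniqueness thresholds, each forcing the relevant marginals to be ergodic). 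When these hold, the second part of \Cref{m23} gives $\mu$-a.s.\ $(s_0,s_1,t_1,t_2)\in\{(\infty,\infty,\infty,1),(\infty,\infty,\infty,\infty)\}$, i.e.\ $t=t_2\in\{1,\infty\}$. It then remains to exclude $t=1$: a single infinite $\LL_2$-contour $\Gamma$ is a doubly-infinite self-avoiding path crossing no $\LL_1$-contour, so it separates the hyperbolic plane into pieces each containing infinite $0$- and $1$-clusters and, by the first part of \Cref{m23}, infinitely many infinite $\LL_1$-contours; combining the combinatorial relations between infinite clusters and infinite contours from \Cref{ctcl} with \Cref{coii} --- which, through the duality above, forces the XOR Ising model on $V(\LL_2)$ at coupling $J$ to have infinitely many infinite ``$+$''- and ``$-$''-clusters --- contradicts uniqueness of $\Gamma$, so $t=\infty$. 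In the complementary regime, where $\sigma_{XOR}$ has no infinite contour, $t=0$ and there is nothing to prove; there the induced $\omega$ has a unique infinite cluster whose sign is $\mathrm{Aut}(\LL_2)$-invariant and non-degenerate, so (A7) fails and the second part of \Cref{m23} is (correctly) inapplicable. Combining the two regimes yields $\mu_{bc}(t\in\{0,\infty\})=1$ for every $bc\in\{(+,+),(-,-),(+,-)\}$.

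The main obstacle is the first step: making the weighted combinatorial correspondence between the pair of i.i.d.\ Ising models (with each of the three boundary-condition types) and a constrained percolation measure on $[3,4,n,4]$ fully rigorous, and checking that $e^{-2J}=\tanh K$ is the relation matching them. The secondary difficulty is excluding $t=1$ without the spin-flip symmetry of the two underlying Ising copies --- broken by the $(+,+)$, $(-,-)$ and $(+,-)$ boundary conditions, even though it survives for $\mu$ --- so that \Cref{coii} together with the non-amenable percolation facts collected in \Cref{pnc} must do the work the symmetry argument does in the free, symmetric setting.
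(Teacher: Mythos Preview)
Your route via \Cref{m23} is genuinely different from the paper's, and as written it has real gaps.

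The paper does \emph{not} pass through \Cref{m23}. It first makes the duality precise at the level of the joint contour law: using the high-temperature/low-temperature expansion (following \cite{bd14}) it produces a probability measure on pairs $(P,P_*)\in\Phi_1\times\Phi_2$ with $P\cap P_*=\emptyset$ whose $\Phi_1$-marginal is the contour law of the XOR Ising model on $\LL_2$ with coupling $J$ and \emph{free} boundary conditions, and whose $\Phi_2$-marginal is the contour law of the XOR Ising model on $\LL_1$ with coupling $K$ and any of the $++$, $--$, $+-$ boundary conditions (the relation $e^{-2J}=\tanh K$ is exactly what matches the weights). The key step is then \emph{indistinguishability of infinite clusters with scenery} (\Cref{idc}, from \cite{LS99}): view the XOR spins on $\LL_2$ as an insertion-tolerant site percolation, with scenery the $\LL_2$-contours lying inside each cluster. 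Since $P\cap P_*=\emptyset$, every infinite $\LL_2$-contour is contained in a single infinite $\LL_2$-spin cluster; by \Cref{coii} there are infinitely many infinite ``$+$''- and ``$-$''-clusters, and indistinguishability forces either all of them or none of them to contain an infinite $\LL_2$-contour. That gives $t\in\{0,\infty\}$ directly, with no need to rule out $t=1$ by separate combinatorics.

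Your argument, by contrast, hinges on (A7), the $\Gamma_2$-ergodicity of $\lambda_2$. The hypothesis ``$J$ satisfies the assumption of \Cref{coii}'' is a statement about the XOR model on $\LL_2$ at coupling $J$ with \emph{free} boundary conditions; it says nothing about ergodicity of the XOR model on $\LL_1$ at coupling $K$ under $\mu_{++}$, $\mu_{--}$ or $\mu_{+-}$, which is what (A7) amounts to here. Your proposed dichotomy---``either (A7) holds and \Cref{m23} II applies, or (A7) fails and then $t=0$''---is circular: knowing that (A7) fails does not by itself force $t=0$. Note also that if \Cref{m23} II \emph{did} apply in full generality you would conclude $t\in\{1,\infty\}$ and then, after excluding $t=1$, obtain $t=\infty$ always; since the theorem only claims $t\in\{0,\infty\}$, this is a signal that the hypotheses of \Cref{m23} II cannot be expected to hold throughout.

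There is also a concrete error in your exclusion of $t=1$: an infinite $\LL_2$-contour is an even subgraph of $\LL_2$, whose vertices have degree $n\ge 7$, so it need \emph{not} be a doubly-infinite self-avoiding path (that conclusion is valid for $\LL_1$-contours, since $\LL_1$ is cubic). The separation argument you sketch therefore does not go through, and the appeal to ``combinatorial relations from \Cref{ctcl} with \Cref{coii}'' remains a placeholder rather than a proof. The indistinguishability argument the paper uses is precisely the missing mechanism: it converts ``at least one infinite cluster carries an infinite $\LL_2$-contour'' into ``all of them do'', and then \Cref{coii} supplies infinitely many clusters.
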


\section{XOR Ising models on the hexagonal and triangular lattices}\label{xori}

In this section, we define the XOR Ising models on the hexagonal and triangular lattices, and state the main results proved in this paper concerning the percolation properties of these models.

Let $\sigma_1$, $\sigma_2$ be two i.i.d.\ ferromagnetic Ising models with spins located on vertices of the hexagonal lattice $\HH=(V_{\HH}, E_{\HH})$. The hexagonal lattice has edges in three different directions. Assume that both $\sigma_1$ and $\sigma_2$ have nonnegative coupling constants $J_a$, $J_b$, $J_c$ on edges of $\HH$ with the three different directions, respectively. Assume also that the distributions of both $\sigma_1$ and $\sigma_2$ are weak limits of Gibbs measures under periodic boundary conditions. Recall that the XOR Ising model $\sigma_{XOR}(v)=\sigma_1(v)\sigma_2(v)$, for $v\in V_{\HH}$. 

A \df{contour configuration} for an XOR Ising configuration, $\sigma_{XOR}$, defined on $\HH$ (resp.\ $\TT$), is a subset of $\{0,1\}^{E(\TT)}$ (resp.\ $\{0,1\}^{E(\HH)}$), whose state-1-edges (present edges) are edges of $\TT$ (resp.\ $\HH$) separating neighboring vertices of $\HH$ (resp.\ $\TT$) with different states in $\sigma_{XOR}$. (Note that $\HH$ and $\TT$ are planar duals of each other.)  Contour configurations of the XOR Ising model were first studied in \cite{DBW11}, in which the scaling limits of contours of the critical XOR Ising model are conjectured to be level lines of Gaussian free field. It is proved in \cite{bd14} that the contours of the XOR Ising model on a plane graph correspond to level lines of height functions of the dimer model on a decorated graph, inspired by the correspondence between Ising model and bipartite dimer model in \cite{Dub}. We will study  the percolation properties of the XOR Ising model on $\HH$ and $\TT$, as an application of the main theorems proved in this paper for the general constrained percolation process.

Let
\begin{eqnarray}
&&f(x,y,z)=e^{-2x}+e^{-2y}+e^{-2z}+e^{-2(x+y)}+e^{-2(x+z)}+e^{-2(y+z)}-e^{-2(x+y+z)}-1.\label{fxyz}\\
&&g(x,y,z)=e^{2x}+e^{2y}+e^{2z}-e^{2(x+y+z)}.\label{gxyz}
\end{eqnarray}

We say the XOR Ising model on $\HH$ with coupling constants $(J_a,J_b,J_c)$ is in the \df{high-temperature state} (resp.\ \df{low-temperature state}, \df{critical state}) if $f(J_a,J_b,J_c)>0$ (resp.\ $f(J_a,J_b,J_c)<0$, $f(J_a,J_b,J_c)=0$). Note that $f(J_a,J_b,J_c)=0$ is the well-known condition that an Ising model on the 2D hexagonal lattice $\HH$ is critical; see, for example, \cite{ZL12} for a rigorous proof. The XOR Ising model $\sigma_1\cdot \sigma_2$ on $\HH$ is in the high-temperature state (resp.\ low-temperature state, critical state), if and only if both $\sigma_1$ and $\sigma_2$ are in the high-temperature state (resp.\ low-temperature state, critical state).

Let $\TT=(V_{\TT},E_{\TT})$ be the dual triangular lattice of $\HH$. We also consider the XOR Ising model with spins located on $V_{\TT}$. Assume that the coupling constants on edges with 3 different directions are $K_a$, $K_b$ and $K_c$, respectively, such that $K_a, K_b, K_c\geq 0$. Also for $i\in\{a,b,c\}$, assume that $K_i$ is the coupling constant on an edge of $\TT$ dual to an edge of $\HH$ with coupling constant $J_i$. We say the XOR Ising model on the triangular lattice is in the \df{low-temperature state} (resp.\ \df{high-temperature state}, \df{critical state}) if $g(K_a,K_b,K_c)<0$ (resp. $g(K_a,K_b,K_c)>0$, $g(K_a,K_b,K_c)=0$). Again these come from the known fact that  if $g(K_a,K_b,K_c)<0$ (resp. $g(K_a,K_b,K_c)>0$, $g(K_a,K_b,K_c)=0$), both Ising models, each of which is a factor of the XOR Ising model, are in the low-temperature state (resp.\ high-temperature state, critical state).

Similar to the square grid case, in the high temperature state, the Ising model on the hexagonal lattice or the triangular lattice has a unique Gibbs measure, and the spontaneous magnetization vanishes; while in the low temperature state, the Gibbs measures are not unique and the spontaneous magnetization is strictly positive under the ``$+$''-boundary condition. See  \cite{JL72, Ai80,ZL12,DC13}.

If 
\begin{eqnarray}
e^{-2K_{\tau}}=\frac{1-e^{-2J_{\tau}}}{1+e^{-2J_{\tau}}},\qquad\mathrm{for}\ \tau=a,b,c,\label{dua}
\end{eqnarray}
then the XOR Ising model on $\HH$ with coupling constants $(J_a,J_b,J_c)$ is in the low-temperature state (resp.\ high-temperature state, critical state) if and only if the XOR Ising model on the triangular lattice with coupling constants $(K_a,K_b,K_c)$ is in the high-temperature state (resp.\ low-temperature state, critical state).

We define clusters and contours with respect to  an XOR Ising configuration on $\HH$ or $\TT$ in the usual way. Then we have the following theorems.

\begin{theorem}\label{chi}Consider the critical XOR Ising model on $\HH$ or $\TT$. Then  
\begin{enumerate}
\item almost surely there are no infinite clusters;
\item almost surely there are no infinite contours.
\end{enumerate}
\end{theorem}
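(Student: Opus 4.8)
The plan is to reduce \Cref{chi} to the constrained percolation results already established for the $[3,4,6,4]$ lattice, exploiting that $(m,n)=(3,6)$ satisfies $\frac{1}{m}+\frac{1}{n}=\frac{1}{2}$, so that this lattice is amenable and \Cref{m21} (equivalently \Cref{m31} for the $[4,6,12]$ dimer model) applies. A critical XOR Ising model on $\HH$ with couplings $J_a,J_b,J_c$ (i.e.\ $f(J_a,J_b,J_c)=0$) corresponds, via the XOR Ising--bipartite dimer correspondence of \cite{Dub,bd14} together with the discussion of \Cref{db}, to a dimer measure $\mu_D$ on the $[4,6,12]$ lattice and hence, restricting to Type-II edges, to a probability measure $\mu$ on $\Omega$. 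The first task is to match parameters: comparing (\ref{fxyz}) with (\ref{hxyz}) one has the identity $f(x,y,z)=h(e^{-2x},e^{-2y},e^{-2z})$, so criticality of the XOR Ising model on $\HH$ is exactly the closure relation (B4) with $t_{e_\tau}=e^{-2J_\tau}$; and using the duality (\ref{dua}) one checks $g(K_a,K_b,K_c)\prod_\tau(1-e^{-2J_\tau})=-2\,h(e^{-2J_a},e^{-2J_b},e^{-2J_c})$, so the same conclusion holds for the critical XOR Ising model on $\TT$. Thus a critical XOR Ising model (on $\HH$ or on $\TT$) produces a measure $\mu$ on $\Omega$ whose associated $[4,6,12]$ edge weights satisfy (B1)--(B4).

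Next I would set up the dictionary of events. By \Cref{xorh} and \Cref{db}, the contour configuration on $\TT$ of the XOR Ising model on $\HH$ is the $\LL_2$-marginal of $\phi(\omega)$ and its contours are exactly the $\LL_2$-contours; symmetrically, the contours of the XOR Ising model on $\TT$ are the $\LL_1$-contours. With this in hand, \Cref{m31}(1) (equivalently \Cref{m21}(3)) — once $\mu$ is shown to satisfy (A1)--(A7) — gives $\mu$-a.s.\ no infinite $\LL_1$-contours and no infinite $\LL_2$-contours, which is part (2) of \Cref{chi} for both $\HH$ and $\TT$. The verification of (A1)--(A7) is routine (using the translation-invariance and ergodicity of $\mu_D$ from \cite{KOS06} and the fact that every automorphism of $[3,4,6,4]$ preserves the degree-$3$ and degree-$6$ faces, hence $\LL_1$ and $\LL_2$ separately) except for two points: the symmetry (A3), which is precisely what conditions (B2)--(B3) are designed to force; and the finite-energy conditions (A4)--(A5), which must be deduced from (B4) — under (B4) the operation of switching the six (resp.\ three) sides of a hexagon of $\HH$ (resp.\ a triangle of $\TT$) in a contour configuration corresponds to a local rearrangement of the $[4,6,12]$ dimer cover that changes the product of edge weights by a factor bounded away from $0$ and $\infty$, so conditional densities stay positive.

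For part (1) I would use the ``no infinite clusters'' conclusion of the same theorem. An infinite monochromatic XOR cluster on $\HH$ is a region of the complement of the $\LL_2$-contours that is enclosed by no finite $\LL_2$-contour, so its absence is equivalent to saying that $\mu$-a.s.\ every vertex is surrounded by some (hence by arbitrarily large nested) $\LL_2$-contours. To extract this one applies \Cref{m31}(1)/\Cref{m21}(3) to the measure obtained by reconstructing the constrained configuration from the $\LL_2$-contour marginal alone — a measure supported on $\Omega$ for which the $\LL_1$-contour family is empty, so that its $0$- and $1$-clusters coincide exactly with the monochromatic XOR clusters on $\HH$ — after checking this measure inherits the hypotheses needed; the argument for $\TT$ is symmetric with the roles of $\LL_1,\LL_2$ exchanged.

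The step I expect to be the main obstacle is the deduction of the finite-energy conditions (A4)--(A5) from criticality: one must exhibit, for a prescribed hexagon of $\HH$ (resp.\ triangle of $\TT$), an explicit bounded modification of the Type-I and Type-II edges of a $[4,6,12]$ dimer cover that realizes the corresponding change of induced contour configuration, and verify that the total weight changes by a bounded multiplicative factor precisely when $h(t_{e_a},t_{e_b},t_{e_c})=0$ — i.e.\ exactly at criticality, which is the reason \Cref{chi} is stated for the critical model. A secondary, bookkeeping, difficulty is tracking which subset of (A1)--(A8) each conclusion actually needs and confirming that the no-infinite-cluster statement genuinely passes through the auxiliary contour-marginal measures, since absence of infinite contours alone does not in general preclude infinite clusters.
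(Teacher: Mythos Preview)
Your overall strategy --- route everything through the $[3,4,6,4]$ constrained percolation via the $[4,6,12]$ dimer correspondence --- is exactly what the paper does. Your verification that criticality on $\HH$ or $\TT$ corresponds to (B4) is correct, and part (2) then follows from \Cref{m31}(1) just as in the paper.

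However, you have misidentified where criticality enters. The finite-energy conditions (A4)--(A5) do \emph{not} require (B4): the marginals $\nu_1,\nu_2$ are, by \cite{bd14}, distributions of contours of XOR Ising models on $\TT$ and $\HH$, and these inherit finite energy from the Ising models at any temperature. What criticality actually buys is the simultaneous ergodicity (A6) \emph{and} (A7). Off criticality one of the two dual XOR Ising models is in the low-temperature phase, where the weak-limit measure need not be ergodic; at criticality both are critical, and the paper proves (\Cref{ec}) their ergodicity from the vanishing of the two-point function $\langle\sigma(u)\sigma(v)\rangle\to 0$, using the random-cluster representation and star--triangle. Your proposed mechanism for (A4)--(A5) via ``bounded modification precisely when $h=0$'' is not the right one.

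Your route to part (1) has a genuine gap. The ``measure reconstructed from the $\LL_2$-contour marginal alone'' places no $\LL_1$-contours, so its $\nu_1$ is the Dirac mass at the empty configuration, which violates (A4); hence \Cref{m21}(3) does not apply to it. The paper instead argues directly: once (2) is established, assume an infinite XOR cluster on $\HH$ exists with positive probability; ergodicity of the critical XOR measure (\Cref{ec}) upgrades this to probability $1$, and symmetry under $+\leftrightarrow -$ forces both an infinite $+$-cluster and an infinite $-$-cluster a.s. Passing to any $\omega\in\phi^{-1}(\phi_{\TT})$ yields coexisting infinite $0$- and $1$-clusters in the $[3,4,6,4]$ configuration, whence \Cref{io} produces an infinite contour in $\phi_{\TT}$, contradicting (2). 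The missing ingredient in your sketch is precisely this ergodicity lemma for the critical XOR measure; once you have it, the auxiliary ``$\LL_1$-free'' measure is unnecessary.
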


\begin{theorem}\label{lth}In the low-temperature XOR Ising model on $\HH$ or on $\TT$, almost surely there are no infinite contours.
\end{theorem}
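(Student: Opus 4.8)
The plan is to reduce the statement to the dimer/constrained-percolation picture already set up for the $[4,6,12]$ and $[3,4,6,4]$ lattices, and then invoke \Cref{m31}(1). By the duality (\ref{dua}), it suffices to treat one of the two cases; say the XOR Ising model on $\TT$ is in the low-temperature state, so $g(K_a,K_b,K_c)<0$, equivalently the XOR Ising model on $\HH$ with the dual couplings $(J_a,J_b,J_c)$ is in the high-temperature state, $f(J_a,J_b,J_c)>0$. First I would recall from \cite{bd14,Dub} that the contours of an XOR Ising model on a planar graph are distributed as the level lines of the height function of a dimer model on an associated decorated (Fisher-type) graph; in our situation, the relevant decorated graph is exactly the $[4,6,12]$ lattice $\mathbb{A}$, and the contour configuration on $\TT$ (resp.\ $\HH$) arising from $\sigma_{XOR}$ is precisely the $\TT$-part (resp.\ $\HH$-part) of the contour configuration $\phi(\omega)$ induced by the constrained percolation configuration $\omega\in\Omega$ coming from the restriction of a dimer configuration on $\mathbb{A}$ to its Type-II edges, as described in \Cref{db}.

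The second step is to check that the dimer measure $\mu_D$ produced from the XOR Ising couplings satisfies the hypotheses of \Cref{m31}(1), i.e.\ (B1)--(B4). Conditions (B1)--(B3) are the defining requirements that make the $[4,6,12]$ dimer model encode the symmetric constrained percolation measure on $\Omega$; they are arranged by construction once the edge weights are read off from the Ising couplings. The content is condition (B4): I would verify that, with $w_{e_1},w_{e_2}$ the parallel/perpendicular Type-I weights attached to an edge $e$ of $\HH$ and $t_e=(1-w_{e_1})/w_{e_2}$, one has $t_e=e^{-2J_e}$ (or the appropriate reciprocal), so that the algebraic identity $h(t_{e_a},t_{e_b},t_{e_c})=0$ from (B4) becomes exactly $f(J_a,J_b,J_c)=0$ after substituting $t=e^{-2J}$ and clearing denominators — here $h(x,y,z)=x+y+z+xy+xz+yz-xyz-1$ is literally $f$ written in the variables $t$. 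Thus the criticality hypothesis (B4) corresponds to the critical XOR Ising model, and the high-temperature (resp.\ low-temperature) regime corresponds to $h(t_{e_a},t_{e_b},t_{e_c})>0$ (resp.\ $<0$). For \Cref{lth} I only need the part of \Cref{m31} that excludes infinite contours; so I would either (a) observe that \Cref{m31}(1) is proved under (B4) and separately handle the off-critical weights by a monotonicity/domination or a direct combinatorial argument using the contour structure, or better (b) note that \Cref{m31}(2), which holds under (B1)--(B3) alone, already forbids more than one infinite contour, and then rule out the remaining possibility of a single infinite contour in the low-temperature (equivalently high-temperature on the other lattice) state.

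To rule out a unique doubly-infinite contour I would use the following planar/topological argument, which is the genuine crux of the proof. A contour of an XOR Ising configuration on $\TT$ is either a finite self-avoiding cycle or a doubly-infinite self-avoiding path in $\HH$; a single such doubly-infinite path separates the plane into exactly two regions, and on each side the XOR spin is locally constant along the contour. Under a high-temperature (unique-Gibbs, exponentially-decaying-correlation) hypothesis for one of the two underlying Ising factors — which is exactly what $f(J_a,J_b,J_c)>0$ guarantees for $\sigma_{XOR}$ on $\HH$, hence via (\ref{dua}) for the low-temperature state on $\TT$ — one shows by a Burton–Keane style argument (already the backbone of \Cref{m23,m31}) that the event "$t=1$" is not compatible with the symmetry (A3) and ergodicity: were there a.s.\ a unique infinite contour it would be fixed by the automorphism group, forcing it to be invariant and hence, by the usual ``no invariant two-ended curve in a one-ended transitive planar setting'' obstruction, contradicting either ergodicity or the exponential decay that prevents an infinite interface from forming at all. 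I expect this last step — converting the weak-correlation/high-temperature input into the geometric statement "no infinite contour", as opposed merely to "at most one" — to be the main obstacle, and the natural route is to run the coupling/contour combinatorics of \Cref{ctcl} together with the amenability of $\HH$ and $\TT$ (both Euclidean, hence amenable), so that the uniqueness-monotonicity and Burton–Keane arguments apply and collapse the count from $\{0,1\}$ to $0$.
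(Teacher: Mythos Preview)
Your overall framework---passing through the \cite{bd14} correspondence to the $[4,6,12]$ dimer model and the induced constrained percolation on the $[3,4,6,4]$ lattice---is exactly what the paper does. But your execution takes a wrong turn, and the place where you end up stuck is precisely the obstacle the paper's argument avoids.

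You try to invoke \Cref{m31}(1), notice correctly that (B4) encodes \emph{criticality} (it is literally $f(J_a,J_b,J_c)=0$ in the $t=e^{-2J}$ variables) and therefore does \emph{not} hold in the low-temperature regime, and then retreat to \Cref{m31}(2) plus an ad hoc exclusion of a single doubly-infinite contour. That last step is where your proposal becomes hand-wavy: the ``no invariant two-ended curve'' heuristic and the vague appeal to exponential decay do not, as stated, give a proof that the count collapses from $\{0,1\}$ to $0$; you yourself flag this as the main obstacle, and it is.

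The paper never goes through \Cref{m31} for \Cref{lth}. It applies \Cref{m21}(a) or (b) \emph{directly}. These parts require only (A1)--(A6) or (A1)--(A5) together with (A7), not the full list (A1)--(A7); that asymmetry is the whole point. Conditions (A1)--(A5) are routine. For the remaining ergodicity hypothesis, use the duality (\ref{dua}): if the XOR Ising model is in the low-temperature state on one of $\HH,\TT$, then it is in the \emph{high}-temperature state on the other. By \Cref{he}, the high-temperature XOR Ising measure is ergodic; this is exactly the content of (A6) (resp.\ (A7)), since $\lambda_1$ (resp.\ $\lambda_2$) is, up to a global sign flip, the XOR Ising spin measure on the lattice whose contour marginal is $\nu_1$ (resp.\ $\nu_2$). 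Then \Cref{m21}(a) (resp.\ (b)) immediately gives ``no infinite contours in $\LL_2$'' (resp.\ ``no infinite contours in $\LL_1$''), which is precisely the absence of infinite contours for the low-temperature XOR Ising model on the lattice in question. No separate exclusion of a unique infinite contour is needed.
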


\Cref{chi,lth} are proved in \Cref{p412}.

\section{Square tilings of the hyperbolic plane}\label{sthp}

In this section, we introduce the square tilings of the hyperbolic plane, and then state and prove properties of the constrained percolation models on such graphs. We first discuss known results about percolation on non-amenable graphs that will be used to prove main theorems of the paper.

The following lemma is proved in \cite{BS20,blps}.

\begin{lemma}\label{lbs}Let G be a quasi-transitive, non-amenable, planar graph with one end,
and let $\omega$ be an invariant percolation on G. Then a.s. the number of infinite 1-clusters
of $\omega$ is 0, 1, or $\infty$.
\end{lemma}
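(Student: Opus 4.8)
The plan is to adapt the classical Burton--Keane-type argument for non-amenable planar graphs, combining the standard "$0$, $1$, or $\infty$" dichotomy for invariant percolation on unimodular transitive graphs with a planar-duality argument that rules out the case of finitely many infinite clusters with $2 \leq k < \infty$. First I would recall that, since $G$ is quasi-transitive and the percolation $\omega$ is invariant, the number $N$ of infinite $1$-clusters is an $\mathrm{Aut}(G)$-invariant random variable; by ergodic decomposition it suffices to treat the ergodic case, so $N$ is a.s. a constant in $\{0, 1, 2, \dots\} \cup \{\infty\}$. The goal is to exclude every finite value $k$ with $2 \leq k < \infty$.

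The key step is to use the \emph{one end} hypothesis together with planarity. Suppose for contradiction that $N = k$ with $2 \leq k < \infty$. The standard argument (as in Burton--Keane, adapted to the non-amenable setting by Lyons--Schramm and in the form used in \cite{BS20}) considers the boundary structure of the infinite clusters: because $G$ is planar and has one end, the infinite clusters are cyclically ordered "at infinity" in a way compatible with the planar embedding, and $\mathrm{Aut}(G)$ acts on this finite cyclic set of $k$ clusters. One then produces an \emph{encounter point} or, in the planar formulation, a vertex (or a bounded region) whose removal would disconnect the union of the infinite clusters into a prescribed number of infinite pieces, and shows that such configurations occur with positive density. In the non-amenable setting one cannot directly count as in the amenable case, but the unimodularity — which here follows automatically since $G$ is planar, transitive and non-amenable (planar transitive graphs are unimodular) — lets one run the mass-transport principle: send unit mass from each encounter point distributed among the clusters it separates, and derive a contradiction with the finiteness of $k$, exactly as in the proof of the analogous statement for Bernoulli percolation but now using only invariance rather than insertion tolerance. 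The one-end hypothesis is what guarantees that these separating structures have the right topological form (a single "hub" rather than infinitely many disjoint ones), which is precisely where planarity with one end is used.

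The main obstacle I expect is handling the fact that $\omega$ is only assumed invariant, not insertion-tolerant: the classical Burton--Keane and Lyons--Schramm arguments lean on insertion tolerance to create encounter points by local modifications. Circumventing this requires the genuinely planar input — using the cyclic order of infinite clusters around the one end and a topological/combinatorial argument (the "trifurcation at infinity" in \cite{BS20}) that produces the separating structure directly from the geometry of $\omega$ rather than by modification. I would therefore organize the proof as: (1) reduce to the ergodic case and fix $N = k$; (2) assume $2 \leq k < \infty$ and set up the cyclic ordering of infinite clusters at the unique end, using planarity and one-endedness; (3) extract a positive-density family of separating regions via this ordering; (4) apply the mass-transport principle (valid by unimodularity of $G$) to reach a contradiction; (5) conclude $N \in \{0, 1, \infty\}$ a.s. Since the statement is quoted as already proved in \cite{BS20}, it is legitimate to present steps (2)--(4) at the level of a sketch and cite \cite{BS20} for the technical core.
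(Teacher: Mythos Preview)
The paper does not supply its own proof of this lemma: it simply states that the result ``is proved in \cite{BS20}'' and moves on. So there is nothing to compare against beyond the citation itself, and your proposal already goes further than the paper by sketching the underlying argument. Your instinct at the end of the proposal --- that it is legitimate to cite \cite{BS20} for the technical core --- is exactly what the paper does, in full.

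That said, a remark on your sketch: the outline is in the right spirit (reduce to ergodic, assume $2\le k<\infty$, exploit planarity and one-endedness to organize the infinite clusters, and invoke unimodularity/mass transport), but some of the mechanisms you describe are borrowed from the insertion-tolerant setting and do not quite match how \cite{BS20} actually proceeds. In particular, the phrases ``encounter point'' and ``positive-density family of separating regions'' suggest a Burton--Keane counting, which --- as you yourself note --- is unavailable without insertion tolerance, and the workaround is not really ``trifurcation at infinity'' in the sense you hint at. The argument in \cite{BS20} is more directly topological: with finitely many infinite clusters one selects one uniformly at random to obtain an invariant percolation with a single infinite component, and then uses planarity plus one-endedness (and the nonamenability input that a single invariant infinite component must itself be ``large'', cf.\ Lemma~\ref{l2}) to reach a contradiction with the existence of the remaining clusters. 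If you want to present a sketch rather than a bare citation, that is the line to follow; otherwise, citing \cite{BS20} as the paper does is entirely adequate.
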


\begin{proof}See Lemma 3.5 of \cite{BS20}. 
\end{proof}

\begin{lemma}(Threshold for bond percolation on non-amenable graphs)\label{tbng} Let $G=(V,E)$ be a non-amenable graph. Let $\Gamma\subseteq \mathrm{Aut}(G)$ be a closed unimodular quasi-transitive subgroup, and let $o_1,\ldots,o_L$ be a complete set of representatives in $V$ of the orbits of $G$. For $1\leq i\leq L$, let $\mathrm{Stab}_{o_i}$ is defined as in (\ref{stab}) and
\begin{eqnarray*}
\eta_i:&=&|\mathrm{Stab}_{o_i}|.
\end{eqnarray*}
 Let $\mathbb{P}$ be a bond percolation on $G$ whose distribution is $\Gamma$-invariant. Let $D_i$ be the random degree of $o_i$ in the percolation subgraph, and let $d_i$ be the degree of $o_i$ in $G$. Write $p_{\infty,v}$ for the probability that $v\in V$ is in an infinite component. Let $p_{\infty,i}$ be the probability that $o_i$ is in an infinite cluster. Then
\begin{eqnarray}
\sum_{i=1}^{L}\frac{(d_i-\alpha(G))p_{\infty,i}}{\eta_i}\geq \sum_{j=1}^{L}\frac{\mathbb{E}D_j-\alpha(G)}{\eta_j}\label{cic}
\end{eqnarray}
where $\alpha(G)$ is a constant depending on the structure of the graph $G$ defined by
\begin{eqnarray*}
\alpha_K:&=&\frac{1}{|K|}\sum_{x\in K}\deg_K(x)\\
\alpha(G):&=&\sup\{\alpha_K: K\subset G\ \mathrm{is\ finite}\}
\end{eqnarray*}
In particular, if the right-hand side of (\ref{cic}) is positive, then there is an infinite component in the percolation subgraph with positive probability.
\end{lemma}
\begin{proof}See Theorem 4.1 of \cite{blps}.
\end{proof}

Let $G=(V,E)$ be a graph corresponding to a square tiling of the hyperbolic plane. Assume that 
\begin{enumerate}
\item each face of $G$ has 4 edges; and
\item each vertex of $G$ is incident to $2n$ faces, where $n\geq 3$.
\end{enumerate}
See Figure \ref{fig:46} for an example of such a graph when $n=3$.

\begin{figure}
\includegraphics[trim=0 220  100 220,clip,width=.6\textwidth]{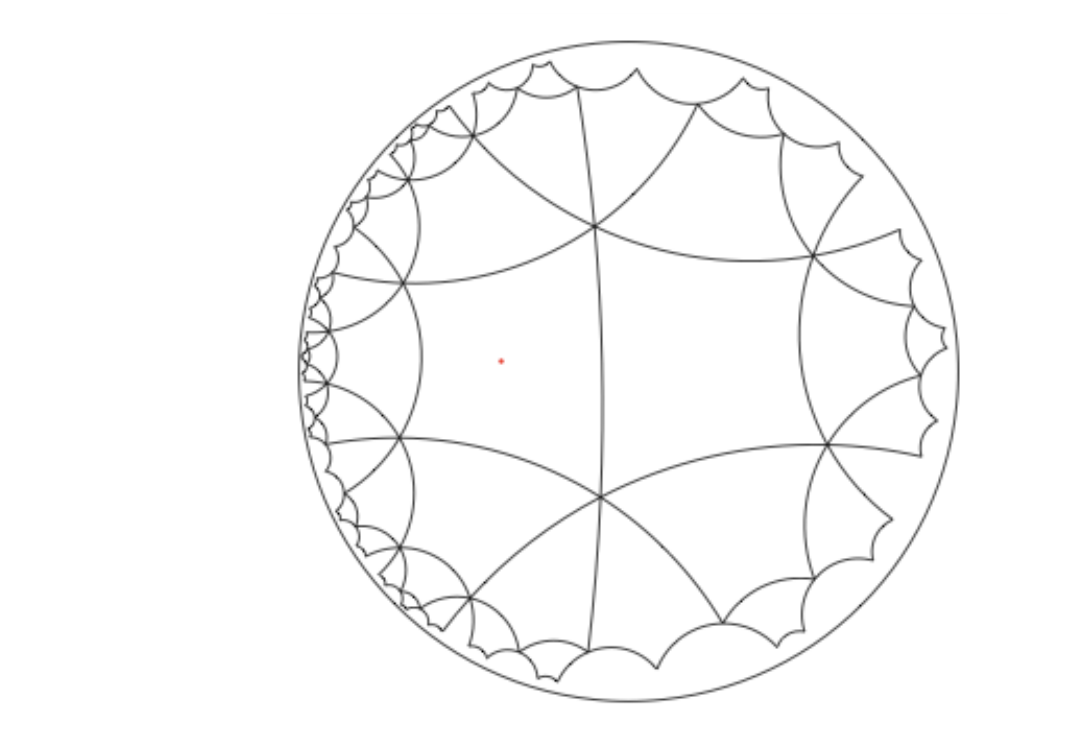}
\caption{The [4,4,4,4,4,4] lattice: each face has degree 4, and each vertex have degree 6}
\label{fig:46}
\end{figure}

We can color all the faces of $G$ by black and white such that black faces can share edges only with white faces and vice versa. Let $G=(V,E)$ denote the graph embedded into the hyperbolic plane as described above. 

We consider the site configurations in $\{0,1\}^V$. We impose the following constraint on site configurations
\begin{itemize}
\item Around each black face, there are six allowed configurations $(0000)$, $(1111)$, $(0011)$, $(1100)$, $(0110)$, $(1001)$, where the digits from the left to the right correspond to vertices in clockwise order around the black face, starting from the lower left corner. See Figure \ref{lcc}.
\end{itemize}
Let $\Omega\subset\{0,1\}^V$ be the set of all configurations satisfying the constraint above. We use $\Omega$ to denote the sample space throughout this paper, however, $\Omega$ have different meanings in different sections.

Note that $G$ is a vertex-transitive graph. Since each face of $G$ has an even number of edges, $G$ itself is a bipartite graph - we can color the vertices of $G$ by red and green such that red vertices are adjacent only to green vertices and vice versa. We assign an integer in $1,2,\ldots, n$ to each white face of $G$ according to the following rules
\begin{enumerate}
\item around each red vertex of $G$, white faces are assigned integers $1,2,\ldots,n$ clockwise; and
\item around each green vertex of $G$, white faces are assigned integers $1,2,\ldots,n$ counterclockwise; and
\item any two white faces adjacent to the same black face along two opposite edges have the same assigned integer.
\end{enumerate}
See Figure \ref{fig:label} for an example of assignments of integers $1,2,3$ to the white faces of the $[4,4,4,4,4,4]$ lattice.

\begin{figure}
\includegraphics[width=.4\textwidth]{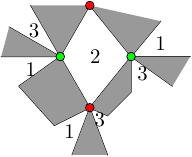}
\caption{Labels of white faces of the [4,4,4,4,4,4] lattice}
\label{fig:label}
\end{figure}

For $1\leq i\leq n$, we construct a graph $\LL_i$ as follows. The vertex set of $\LL_i$ consists of all the white faces of $G$ whose assigned integers are $i$. Two vertices of $\LL_i$ are joined by an edge of $\LL_i$ if and only if they correspond to two white faces of $G$ adjacent to the same black face along two opposite edges. We have the following proposition regarding the connected components of $\LL_i$

\begin{proposition}\label{l63}
When $n\geq 3$, each component of $\LL_i$ ($1\leq i\leq n$) is a regular tree of degree 4. For any integer $i$ satisfying $1\leq i\leq n$, the edges of $\LL_{i-1}$ (if $i=1$, $\LL_{i-1}:=\LL_n$) and $\LL_i$ cross; the edges of $\LL_{i+1}$ (if $i=n$, $\LL_{i+1}:=\LL_1$) and $\LL_i$ cross.
\end{proposition}

\begin{proof}We consider a doubly infinite sequence of edges in $G$ consisting of edges $\ldots, e_{-1}, e_0,e_1,e_2,\ldots,$ such that
\begin{itemize}
\item For each $k\in \ZZ$, $e_k$ and $e_{k+1}$ share a vertex $v$, such that there are exactly $(n-1)$ edges incident to $v$ to the left of $e_k$ and $e_{k+1}$, and $(n-1)$ edges incident to $v$ to the right of $e_k$ and $e_{k+1}$.
\end{itemize}
Then $\ldots, e_{-1},e_0,e_1,\ldots$ form a doubly infinite self-avoiding path in $G$ because its left side and right side are symmetric. Indeed, if the path crosses itself, starting from $e_0$, we move the path along both the positive direction $e_1,e_2,\ldots$ and the negative direction $e_{-1}, e_{-2},\ldots$, until the first time the movements along the two directions meet, and form a cycle $\mathcal{C}_{ab}:=e_{-a},e_{-a+1},\ldots, e_0, \ldots, e_{b-1},e_b$, where $a,b\in\{0,1,2,\ldots\}$. Then $G\setminus \mathcal{C}_{ab}$ has a finite component and an infinite component; moving from $e_{-a}$ to $e_b$ along $\mathcal{C}_{ab}$, the finite component is either on the left or on the right, but this is a contradiction to the fact that on the left and right side of $\ldots, e_{-1}, e_0,e_1,e_2,\ldots$, $G$ is symmetric. We call the infinite self-avoiding path obtained this way a \textbf{central path}.

Assume there is a cycle in $\LL_i$ for some $1\leq i\leq n$, then we can find a face in $\LL_i$. Let $(u,v)$ be an edge of $\LL_i$. Moving from $u$ to $v$, at $v$ there are 3 other incident edges except the edge $(u,v)$; since the graph is embedded in the hyperbolic plane, we may label the three incident edges at $v$ other than $(u,v)$ by the left edge, the middle edge, and the right edge, in such a way that starting from the edge $(u,v)$ and moving around $v$ clockwise along a small circle, one will cross the left edge first, then the middle edge, and finally the right edge.   If we can find a face in $\LL_i$, then the face can be found by always moving along the right edge at each vertex for finitely many times, and the face is on the right of an oriented cycle obtained this way. But when $n\geq 3$, this is not possible since any oriented path in $\LL_i$ obtained by always moving along the right edge at each vertex has a central path on its right, which is infinite.

Note that each black face of $G$ has two pairs of opposite edges. There exists $1\leq i\leq n$, such that along one pair of opposite edges the black face is adjacent to two white faces labeled by $i$, and along the other pair of opposite edges the black face is adjacent to two white faces labeled by $(i+1)$ (if $i=n$, then $i+1=1$). Then from the construction of $\LL_i$'s we can see that an edge of $\LL_i$ and an edge of $\LL_{i+1}$ cross at the black face of $G$.
\end{proof}

Any constrained percolation configuration in $\Omega$ gives rise to a contour configuration on $\cup_{i=1}^n\LL_{i}$. An edge $e$ in $\cup_{i=1}^n\LL_i$ is present in the contour configuration if and only if it crosses a black face $b$ in $G$, such that the states of the vertices of $b$ on the two sides separated by $e$ in the configuration are different, and any two vertices of $b$ on the same side of $e$ have the same state. This is a contour configuration satisfying the condition that each vertex in $\cup_{i=1}^n\LL_i$ has an even number of incident present edges. For any $1\leq i<j\leq n$, present edges in $\LL_i$ and $\LL_j$ can never cross. 

A \df{cluster} is a maximal connected set of vertices in $G$ in which every vertex has the same state in a constrained percolation configuration. A \df{contour} is a maximal connected set of edges in $\cup_{i=1}^n\LL_i$ in which every edge is present in the contour configuration. Note that each contour must be a connected subgraph of $\LL_i$, for some $1\leq i\leq n$. Hence by Proposition \ref{l63}, each contour must be a tree. Since each vertex in a contour has an even number of incident present edges in the contour, each contour must be an infinite tree.

Let $\mu$ be a probability measure on $\Omega$. We may assume that $\mu$ satisfies the following conditions
\begin{enumerate}[label=(D\arabic*)]
\item $\mu$ is $\mathrm{Aut}(G)$-invariant; 
\item $\mu$ is $\mathrm{Aut}(\LL_i)$-ergodic, for $1\leq i\leq n$; 
\item $\mu$ is symmetric, i.e.\ let $\theta:\Omega\rightarrow\Omega$ be the map defined by $\theta(\omega)(v)=1-\omega(v)$, for each $v\in V$, then $\mu$ is invariant under $\theta$, that is, for any event $A$, $\mu(A)=\mu(\theta(A))$.
\end{enumerate}

Note that when $n\geq 3$, the graph $G$ is a non-amenable group. Recall that the number of \df{ends} of a connected graph is the supremum over its finite subgraphs of the number of infinite components that remain after removing the subgraph. 

Here is the main theorem concerning the properties of constrained percolations on the square tilings of the hyperbolic plane.

\begin{theorem}\label{m51}
\begin{enumerate}[label=(\alph*)]
\item Let $\mu$ be a probability measure on $\Omega$ satisfying (D1). Let $n_0$ (resp.\ $n_1$) be the number of infinite 0-clusters (resp. 1-clusters). Then $\mu$-a.s. $(n_0,n_1)\in\{(0,1),(1,0),(1,\infty),(\infty,1),(\infty,\infty)\}$.
\item Let $\nu$ be a probability measure on $\Omega$ satisfying (D1) - (D3). Then $\nu$-a.s. there are infinitely many infinite 0-clusters and infinitely many infinite 1-clusters.
\end{enumerate}
\end{theorem}

In order to prove \ref{m51}, we first prove a few lemmas.

\begin{lemma}\label{cit}In a contour configuration in $\cup_{i=1}^n\LL_i$ as described above, any contour must be an infinite tree (a tree consisting of infinite many edges of $\cup_{i=1}^n \LL_i$) in which each vertex has degree 2 or 4.
\end{lemma}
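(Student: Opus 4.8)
The plan is to deduce the statement from three facts that are either elementary or already recorded in the text: (i) every connected component of each $\LL_i$ is a $4$-regular tree; (ii) in a contour configuration arising from a constrained percolation configuration in $\Omega$, every vertex of $\cup_{i=1}^n\LL_i$ is incident to an even number of present edges; and (iii) a finite tree with at least one edge has a vertex of degree $1$ (equivalently, a finite graph in which every vertex has degree at least $2$ contains a cycle).

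First I would check that a contour $C$ is contained in a single connected component of a single $\LL_i$. A contour is, by definition, a maximal connected set of present edges, where two present edges are regarded as adjacent when they share a vertex of $\cup_{i=1}^n\LL_i$. Since the vertices of $\LL_i$ are centers of white faces of $G$ labeled $i$, a vertex of $\LL_i$ and a vertex of $\LL_j$ coincide only if $i=j$; hence no present edge of $\LL_i$ is adjacent to a present edge of $\LL_j$ for $i\neq j$, and a connected set of present edges uses edges of only one $\LL_i$. Being connected, $C$ then lies in one component $T$ of that $\LL_i$, which by (i) is a $4$-regular tree. As a subgraph of a tree, $C$ is acyclic, and since it is connected it is itself a tree.

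Next I would compute the degrees in $C$. Let $x$ be a vertex of $C$, i.e.\ an endpoint of some edge of $C$. Every present edge incident to $x$ is adjacent to an edge of $C$ (they share the vertex $x$), so by maximality of $C$ it belongs to $C$; therefore the degree of $x$ in $C$ equals the total number of present edges incident to $x$, which is even by (ii). This degree is at least $1$ (since $x$ is a vertex of $C$) and at most $\deg_T(x)=4$, so it equals $2$ or $4$.

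Finally, $C$ contains at least one edge, so it is a tree in which every vertex has degree $2$ or $4$; by (iii) such a tree cannot be finite, since a finite tree with an edge has a leaf, contradicting the degree bound just established. Hence $C$ is an infinite tree all of whose vertices have degree $2$ or $4$, as claimed. I do not expect a genuine obstacle here; the only points deserving a moment's care are the observation that a single contour cannot combine edges from different $\LL_i$'s and the standard fact that finite trees have leaves.
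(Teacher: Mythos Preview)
Your proof is correct and follows essentially the same approach as the paper, which simply cites the three facts you list (contours lie in a single $\LL_i$, components of $\LL_i$ are $4$-regular trees, and vertices have even degree in a contour) and declares the result straightforward. Your version is more explicit---in particular you spell out why a contour cannot mix edges from different $\LL_i$'s and why the evenness of degrees forces infiniteness via the leaf argument---but the underlying logic is identical.
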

\begin{proof}This lemma is straightforward from the facts that each contour is a connected subgraph of $\LL_i$ for some $i\in\{1,\ldots,n\}$; each component of $\LL_i$ $1\leq i\leq n$ is a regular tree of degree 4, and each vertex in a contour has an even number of incident present edges.
\end{proof}

\Cref{l2} below is proved in \cite{blps} and \cite{blps99} using the mass transport principle.

\begin{lemma}\label{l2}Let G be a nonamenable graph whose automorphism group has a closed subgroup acting transitively and unimodularly on $G$, and
let $\omega$ be an invariant percolation on G which has a single component a.s. Then
$p_c(\omega) < 1$ a.s., where $p_c(\cdot)$ is the critical i.i.d.\ Bernoulli percolation probability on a graph.
\end{lemma}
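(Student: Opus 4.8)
The plan is to reduce the lemma to a single isoperimetric property of $\omega$ and then close with a routine Peierls estimate. The property I would establish is: $\mu$-almost surely $\omega$ has positive \emph{anchored expansion} — from some (equivalently, any) vertex $o$ there is $\kappa=\kappa(\omega)>0$ such that $|\partial_{\omega}S|\ge\kappa\,|S|$ for every sufficiently large finite connected $S\ni o$ in $\omega$. Once this is in hand the lemma follows immediately, because any graph of bounded degree with positive anchored expansion has critical probability $<1$, and $\omega\subseteq G$ has degrees bounded by $\max_i d_i$ since $G$ is quasi-transitive. That last implication is the classical cutset-counting argument: for i.i.d.\ $p$-percolation on a graph $H$ with anchored expansion $\kappa$ and maximal degree $\Delta$, a minimal closed edge-cutset of size $n$ around $o$ is the edge-boundary $\partial_H S$ of a finite connected $S\ni o$ with $|\partial_H S|=n$, hence (for $n$ large) $|S|\le n/\kappa$, so the number of such cutsets of size $n$ is at most the number of connected subgraphs through $o$ with $\le n/\kappa$ vertices, which is $\le\lambda^{n}$ for some $\lambda=\lambda(\kappa,\Delta)$; each such cutset is closed with probability $(1-p)^{n}$, so once $\lambda(1-p)<1$ the expected number of closed cutsets around $o$ is finite and tends to $0$ as $p\uparrow1$, whence $o$ lies in an infinite open cluster with positive probability and $p_c(H)\le 1-\lambda^{-1}<1$ (the site version is identical, with vertex-cutsets).

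The whole content of the lemma is therefore the anchored-expansion statement, and this is the step I expect to be the main obstacle. A tempting shortcut — applying the threshold criterion \Cref{tbng} to $\omega$ (or to an i.i.d.\ thinning of $\omega$) — does not work in general, because connectivity does not force $\mathbb{E}[\deg_{\omega}(o)]>\alpha(G)$: on $G=T_3\,\Box\,\ZZ$, which is transitive, unimodular and nonamenable with $\alpha(G)=4$, one can invariantly delete all tree-edges lying at a uniformly random parity of the $\ZZ$-levels while keeping $\omega$ connected, and then $\mathbb{E}[\deg_{\omega}(o)]=\frac{7}{2}<\alpha(G)$, so the right side of (\ref{cic}) stays negative for every $p\le1$. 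One must instead extract an isoperimetric, not a degree, consequence of ``$\omega$ has one component'', and this is precisely what the mass-transport principle supplies: if on a set of positive $\mu$-probability $\omega$ contained connected finite sets $S_k\ni o$ with $|\partial_{\omega}S_k|/|S_k|\to0$, then averaging these near-F{\o}lner sets of $\omega$ through the mass-transport principle — legitimate because the transitive subgroup is unimodular and $\mu$ is invariant — together with the connectedness of $\omega$, would produce a F{\o}lner sequence in $G$ itself, contradicting the nonamenability of $G$. This is exactly the type of argument carried out via the mass-transport principle in \cite{blps,blps99}, and it is where all of the hypotheses (invariance, unimodularity, nonamenability, and the single-component assumption) are genuinely used.

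An equivalent, sometimes more convenient, packaging of the same step is to produce — invariantly, inside $\omega$ — a subforest of branching number strictly larger than $1$, since then $p_c(\omega)\le 1/\mathrm{br}<1$ is immediate from the tree case; but the isoperimetric formulation above is the most robust and is the one I would pursue. With the anchored-expansion property established, the Peierls estimate of the first paragraph completes the proof that $p_c(\omega)<1$ $\mu$-a.s.
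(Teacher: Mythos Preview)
The paper does not supply its own proof of this lemma; it simply cites Theorem~1.5 of \cite{blps}. Your sketch---deduce anchored expansion of the single component via the mass-transport principle, then run a Peierls/cutset count to get $p_c<1$---is exactly the line of argument in \cite{blps,blps99}, so your proposal is correct and coincides with the source the paper invokes.
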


\begin{proof}See Theorem 1.5 of \cite{blps}.
\end{proof}

\bigskip

\noindent\textbf{Proof of Theorem \ref{m51}} First we show that Part (a) of the theorem together with Assumptions (D2), (D3) implies Part (b). Let $\nu$ be a probability measure on $\Omega$ satisfying (D1) - (D3). By Assumption (D2) and (D3), there exists a positive integer $k$ (possibly infinite), such that $\nu(n_0=n_1=k)=1$. Then Part (b) follows from Part (a).

Now we prove Part (a). Obviously $(n_0,n_1)\in\{(0,1),(1,0)\}$ if there are no contours. Now assume that contours do exist. By \Cref{lbs}, $n_0,n_1\in\{0,1,\infty\}$. By \Cref{01c}, $n_0,n_1\in\{1,\infty\}$. Let $\phi$ be the contour configuration. If there are infinitely many contours in $\phi$, or there exists a contour of $\phi$ in which infinitely many vertices have degree 4, then $\HH^2\setminus \phi$ has infinitely many unbounded components. By \Cref{ifc}, $n_0+n_1=\infty$. Therefore $\{n_0,n_1\}\in\{(1,\infty),(\infty,1),(\infty,\infty)\}$ in this case.

 Now consider the case that the number of contours is finite and nonzero, and on each contour only finitely many vertices have degree 4. Fix an $i$ satisfying $1\leq i\leq n$, and conditional on the event that the number of contours on $\LL_i$ is finite and nonzero. Choose a contour $\tau$ on $\LL_i$ uniformly at random; then $\tau$ forms an invariant bond percolation on $\LL_i$ which has a single component. By \Cref{l2}, almost surely $\tau$ has infinitely many vertices with degree 4 - since otherwise $p_c(\tau)=1$. Therefore this case does not occur a.s.
$\hfill\Box$

\section{Proof of \Cref{m23}}\label{p23}

In this section, we prove \Cref{m23}. The idea of the proof is to consider all the possible values of $(s_0,s_1,t_1,t_2)$ and exclude those with probability 0 to occur using the symmetry and ergodicity of the probability measure. In Lemma \ref{nooc}, we exclude the case $t_1=t_2=1$; the proof is based on constructing a superimpostion $\hat{G}$ of the lattice $\LL_1$ and its dual lattice $\LL_2$; and the union of contour configurations on $\LL_1$ and $\LL_2$ form an invariant bond percolation on $\hat{G}$, in which the number of infinite clusters can only be $0,1,\infty$ by \Cref{lbs} a.s.; however, if $t_1=t_2=1$, since the contour configurations on $\LL_1$ and $\LL_2$ do not cross each other, the number of infinite clusters in the union would be $2$. Proposition \ref{pp} excludes the case when $(s_0,s_1,t_1,t_2)=(0,0,0,0)$, the proof applies planarity to obtain an infinite sequence of contours, one surrounding another, and then obtain a contradiction with non-amenability. \Cref{lm23} excludes the case $(t_1,t_2)=(0,k)$ and $(t_1,t_2)=(k,0)$ for $1\leq k\leq \infty$ by ergodicity, symmetry and planarity. \Cref{n0111} excludes the case that $(s_0,s_1)=(1,1)$ again by constructing an invariant bound percolation on $G$ with 2 infinite clusters and obtaining a contradiction to \Cref{lbs}. In the proof of \Cref{m23}, we use symmetry, ergodicity, \Cref{lbs} and \Cref{n0111} to obtain that a.s. $(s_0,s_1)\in \{(0,0),(\infty,\infty)\}$; to rule out the case $(s_0,s_1)=(0,0)$, we apply \Cref{lbs} again to show that if $(s_0,s_1)=(0,0)$, then $(s_0,s_1)\in \{(0,0,0),(0,0,1),(0,0,\infty)\}$, we then show that each of the cases has probability 0 to occur by applying \Cref{nooc,lm23} and \Cref{pp}.

 We start with \Cref{nooc}.

\begin{lemma}\label{nooc}Let $G$ be the $[m,4,n,4]$ lattice satisfying (\ref{cmn1}) $m\geq 3, n\geq 3$ and
\begin{eqnarray}
\frac{1}{m}+\frac{1}{n}<\frac{1}{2}.\label{cmn3}
\end{eqnarray}
 Let $\mu$ be a probability measure on $\Omega$ satisfying (A1). Let $t_1$ (resp.\ $t_2$) be the number of infinite $\LL_1$-contours (resp.\ $\LL_2$-contours). Then 
 \begin{eqnarray*}
 \mu((t_1,t_2)=(1,1))=0.
 \end{eqnarray*}
\end{lemma}

\begin{proof}

The proof is inspired by the proof of Corollary 3.6 of \cite{BS20}.

We embed $\LL_1$ and $\LL_2$ in the hyperbolic plane in such a way that every edge $e$ intersects its dual edge $e^*$ at one point $v_e$, and there are no other intersections of $\LL_1$ and $\LL_2$. We define a new graph $\hat{G}=(\hat{V},\hat{E})$, where $\hat{V}=V(\LL_1)\cup V(\LL_2)\cup\{v_e,e\in E(\LL_1)\}$, and an edge in $\hat{E}$ is either a half-edge of $E(\LL_1)$ joining a vertex in $V(\LL_1)$ and a vertex in $\cup\{v_e,e\in E(\LL_1)\}$, or a half-edge of $E(\LL_2)$ joining a vertex in $V(\LL_2)$ and a vertex in $\cup\{v_e,e\in E(\LL_1)\}$.

For $i\in\{1,2\}$, let $\phi_i\in \Phi_i$ be the random contour configuration restricted on $\LL_i$. Let
\begin{eqnarray*}
\hat{\phi}:=\{[v,v_e]\in \hat{E}: v\in V(\LL_1),e\in \phi_1\}\cup\{[v_*,v_e]\in \hat{E}: v_*\in V(\LL_2),e_*\in \phi_2\}
\end{eqnarray*}
We say $\hat{\phi}$ is a \textbf{contour configuration} on $\hat{G}$, and each connected component of $\hat{\phi}$ is called a \textbf{contour}.
Then $\hat{\phi}$ is an invariant bond percolation on the quasi-transitive, non-amenable, planar, one-ended graph $\hat{G}$. Note that the number of infinite components of $\hat{\phi}$ is the number of infinite contours of $\phi_1$ plus the number of infinite contours of $\phi_2$. If there is a positive probability that $(t_0,t_1)=(1,1)$, then the number of infinite components in $\hat{\phi}$ is 2. This contradicts \Cref{lbs}, which says that the number of infinite components in the invariant percolation $\hat{\phi}$ on the quasi-transitive, one-ended, nonamenable, planar graph $\hat{G}$ can only be $0,1$ or $\infty$.
\end{proof}

\begin{proposition}\label{pp}Let $G$ be the $[m,4,n,4]$ lattice with $m,n$ satisfying (\ref{cmn1}) $m\ge3,\ n\geq 3$ and (\ref{cmn3}) $\frac{1}{m}+\frac{1}{n}<\frac{1}{2}$. Let $\omega\in \Omega$ be a $\Gamma$-invariant, $\Gamma_1$-ergodic constrained percolation on $G$. Let $s_0$ (resp.\ $s_1$) be the number of infinite 0-clusters (resp.\ 1-clusters) in $\omega$, and let $t_1$ (resp.\ $t_2$) be the number of infinite $\LL_1$-contours (resp.\ infinite $\LL_2$-contours) in $\omega$. Then almost surely $(s_0,s_1,t_1,t_2)\neq (0,0,0,0)$.
\end{proposition}

\begin{proof}The proof is inspired by Lemma 3.3 of \cite{BS20}. Let $\hat{G}=(\hat{V},\hat{E})$, $\hat{\phi}$ be defined as in the proof of \Cref{nooc}. Note that when $m,n$ satisfy (\ref{cmn1}) and (\ref{cmn3}), $\hat{G}$ is a quasi-transitive, non-amenable, planar and one-ended graph; and that the $[m,4,n,4]$ lattice is exactly the dual graph of $\hat{G}$. It is also known that quasi-transitive planar graphs with one end are unimodular; see \cite{LP}.

Define a \textbf{generalized contour} in a contour configuration $\hat{\phi}$ of $\hat{G}$ to be either a single vertex in $\hat{V}$ which has no incident present edges in $\hat{\phi}$, or a contour in $\hat{\phi}$. This way each vertex $v\in \hat{V}$ has a unique generalized contour in $\hat{\phi}$ passing through the vertex $v$.

Suppose that $(s_0,s_1,t_1,t_2)=(0,0,0,0)$ a.s. Then a.s.\ given a generalized contour $C$ of $\hat{\phi}$, there is a cluster $C'$ of $\omega$ surrounding it. Similarly, for every cluster $C$ in $\omega$, there is a  contour $C'$ in $\hat{\phi}$ that surrounds it. Let $\mathcal{C}_0$ denote the set of all generalized contours of $\hat{\phi}$. We set
\begin{eqnarray*}
\mathcal{C}_{j+1}:=\{C'':C\in\mathcal{C}_j\};
\end{eqnarray*}
in which $C$ is a generalized contour, $C'$ is a cluster, and $C''$ is a contour.
For $C\in \mathcal{C}_0$ and $v\in \hat{V}$, let $r(C):=\sup\{j:C\in \mathcal{C}_j\}$, and define $r(v):=r(C)$ if $C$ is the generalized contour of $v$ in $\hat{\phi}$. Intuitively, we may consider $r(C)$ as the maximal length of sequences of nesting contours, in which $C$ is the outermost contour. 

 Then there exist $i\in\{1,2\}$ and a sequence of finite contours $C_1,C_2,\ldots,C_n,\ldots$ in $\LL_i$, such that $C_{n+1}$ surrounds $C_n$, and
\begin{eqnarray*}
\lim_{n\rightarrow\infty}r(C_n)=\infty.
\end{eqnarray*}

For each $r$ let $\omega^r$ be the set of edges in $E(\LL_i)$ whose both endpoints  $u,v\in V(\LL_i)$ satisfy $r(v)\leq r$ and $r(u)\leq r$. Then $\omega^r$ is an invariant bond percolation and for any $v\in V(\LL_i)$,
\begin{eqnarray*}
\deg_{\LL_i}v=\mathbf{E}\lim_{r\rightarrow\infty}[\deg_{\omega^r}v]\leq \liminf_{r\rightarrow\infty}\mathbf{E}[\deg_{\omega^r}v]\leq\limsup_{r\rightarrow\infty}\mathbf{E}[\deg_{\omega^r}v] \deg_{\LL_i}v.
\end{eqnarray*}
Note that $\LL_i$ is a transitive, non-amenable graph. We have
\begin{eqnarray*}
\alpha(\LL_i)=\mathrm{deg}_{\LL_i}v-\imath_{E}(\LL_i)<\mathrm{deg}_{\LL_i}v
\end{eqnarray*}
where $\imath_E(\LL_i)$ is the edge isoperimetric constant defined as in (\ref{eic}), and $\alpha(\LL_i)$ is defined in \Cref{tbng}.
 By Lemma \ref{tbng}, the right hand side of (\ref{cic}) is strictly positive for sufficiently large $r$; we deduce that $\omega^r$ has infinite components with positive probability for all sufficiently large $r$. 

However, since $(s_0,s_1,t_1,t_2)=(0,0,0,0)$, by the arguments above each vertex in $\LL_i$ is surrounded by infinitely many finite contours in $\LL_i$. This implies that for any $r\in \mathbb{N}$, for any vertex $v\in V(\LL_i)$, there exists a finite contour $C$ surrounding $v$, such that $r(C)>r$, and therefore $C\cap \omega^r=\emptyset$. As a result, the components in $\omega^r$ including $v$ is finite. Then the proposition follows from the contradiction.
\end{proof}

\begin{lemma}\label{lm23}Let $G$ be the $[m,4,n,4]$ lattice with $m,n$ satisfying (\ref{cmn1}), (\ref{cmn3}). Let $(s_0,s_1,t_1,t_2)$ be given as in \Cref{m23}.
\begin{enumerate}
\item Let $\mu$ be a probability measure on $\Omega$ satisfying (A2)(A7).  Then
\begin{eqnarray*}
\mu((t_1,t_2)=(0,k))=0.
\end{eqnarray*} 
for any integer $1\leq k\leq \infty$.
\item Let $\mu$ be a probability measure on $\Omega$ satisfying (A2)(A6).  Then
\begin{eqnarray*}
\mu((t_1,t_2)=(k,0))=0.
\end{eqnarray*} 
for any integer $1\leq k\leq \infty$.
\end{enumerate}
\end{lemma}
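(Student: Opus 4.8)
\textbf{Proof proposal for \Cref{lm23}.}

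The plan is to argue by contradiction using the combinatorial lemmas established in \Cref{ctcl} together with the finite-energy assumptions (A6), (A7) and the ergodicity assumption (A2). I will treat Part (1) in detail; Part (2) is entirely symmetric after exchanging the roles of $\LL_1$ and $\LL_2$, of (A6) and (A7), and of the hexagon-flip and triangle-flip finite-energy operations. So suppose, for contradiction, that $\mu\bigl((t_1,t_2)=(0,k)\bigr)>0$ for some $1\le k\le\infty$. Since this event is $\Gamma_i$-invariant for $i=1,2$ (the number of infinite contours of each type is an automorphism-invariant quantity) and $\mu$ is ergodic, it has full measure: $\mu$-a.s.\ there are no infinite $\LL_1$-contours and exactly $k\ge 1$ infinite $\LL_2$-contours. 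First I would dispose of the case $k\ge 2$ (or $k=\infty$): if there are at least two infinite $\LL_2$-contours, then by \Cref{cc} there exists an infinite $0$- or $1$-cluster incident to one of them, and then by \Cref{io} (in the direction: an infinite $0$-cluster and an infinite $1$-cluster force an infinite contour) together with the symmetry (A3) one can produce both an infinite $0$-cluster and an infinite $1$-cluster — which by \Cref{io} again would force an infinite contour, but one must check it can be forced to be an $\LL_1$-contour. More robustly, I would instead reduce directly to $k=1$: by \Cref{cac}/\Cref{icazo}-type arguments, having a finite positive number of infinite contours, all of type $\LL_2$, lets one select one of them uniformly at random (conditionally) to obtain an invariant bond percolation on $\LL_2$ with a single infinite component; but that is a detour. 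The cleanest route is:

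\textbf{Step 1.} Show that the hypothesis forces the existence of an infinite $\LL_2$-contour $C$ that is incident to an infinite cluster, and in fact to both an infinite $0$-cluster and an infinite $1$-cluster. When $k=1$ this is \Cref{icic} (each infinite component of $G\setminus C$ contains an infinite cluster incident to $C$) combined with the fact that the two sides of a contour edge carry opposite states, exactly as in the proof of \Cref{01c} for the square-tiling case; when $k\ge 2$ use \Cref{ct} and \Cref{cc}. In all cases we obtain: $\mu$-a.s., there is at least one infinite $0$-cluster and at least one infinite $1$-cluster, while $t_1=0$.

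\textbf{Step 2.} Derive a contradiction from (A7) (the finite-energy / triangle-flip assumption on $\nu_2$, or rather the $\Gamma_2$-ergodicity of $\lambda_2$ — I need to be careful which of (A6),(A7) is the relevant one here; from the statement, Part (1) uses (A2) and (A7), and (A7) is the $\Gamma_2$-ergodicity of $\lambda_2$). The idea: the configuration $\omega$ restricted to the $0/1$ states, conditioned on the $\LL_2$-contour configuration $\psi_2$, is (up to the global flip) determined, and the $\lambda_2$-description says $\omega$ itself is $\Gamma_2$-ergodic. Now, an infinite $\LL_2$-contour $C$ together with its interfaces (\Cref{rl}) carves the plane; because $t_1=0$, the region "between" the $\LL_1$-contours is essentially all of $P$ minus finite sets, so the infinite $0$- and $1$-clusters produced in Step 1 are, in the $\lambda_2$-picture, genuinely determined by $\psi_2$. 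The contradiction should come from counting: under $\lambda_2$, the number of infinite $0$-clusters and infinite $1$-clusters are $\Gamma_2$-invariant, hence a.s.\ constant; Step 1 gives each $\ge 1$; but then \Cref{io} forces an infinite contour crossing a path between them, and — since all infinite contours are $\LL_2$-contours — this is consistent, so I must instead push for a stronger statement, e.g.\ that there must then be infinitely many infinite $\LL_1$-contours, contradicting $t_1=0$. This is where the planar duality of $\LL_1$ and $\LL_2$ enters: an $\LL_2$-contour and the boundary of an infinite cluster it bounds should, via the interface construction and the structure of the $[3,4,n,4]$ lattice, force present $\LL_1$-edges along the "other" pair of opposite edges of the relevant square faces.

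\textbf{The main obstacle} I anticipate is Step 2: precisely ruling out the scenario where there are infinitely many infinite $\LL_2$-contours and infinitely many infinite clusters but \emph{no} infinite $\LL_1$-contour. The combinatorial lemmas of \Cref{ctcl} (\Cref{io}, \Cref{icic}, \Cref{2c2c}) control how clusters and contours of a \emph{single} type interleave, but the asymmetry between primal and dual contours — and the fact that in the $[3,4,n,4]$ lattice $\LL_1$ is the degree-$3$ tiling while $\LL_2$ is the degree-$n$ triangulation — means one must use the $\Gamma_2$-ergodicity of $\lambda_2$ (A7) to upgrade "positive probability of no $\LL_1$-contour" to an a.s.\ statement about the induced $0/1$-configuration, and then invoke a threshold/indistinguishability argument à la Lemmas \ref{tbng} and \ref{l2}, applied to the auxiliary graph $\hat G$ of \Cref{nooc}, to show that the resulting invariant percolation on $\hat G$ would have too few infinite components to be consistent with the degree constraint. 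I expect the actual proof in the paper to run this last argument on $\hat G$, mirroring the proof of \Cref{pp}, so I would structure Step 2 as: build the $\hat G$-percolation recording generalized contours of $\hat\phi$; observe that "no infinite $\LL_1$-contour" makes the nesting of contours around a typical vertex one-sided; apply \Cref{tbng} to get an infinite $\hat G$-component with positive probability; and contradict this with the infinite nesting forced by $t_1 = 0$ together with $t_2\ge 1$, exactly as in \Cref{pp}.
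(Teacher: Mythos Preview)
Your outline is not the paper's argument, and the gap you identify in your Step~2 is real for the route you chose --- the paper avoids it by a different, much shorter device that you do not mention at all: the auxiliary configuration with law $\lambda_2$.

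Recall from \Cref{xorh} that $\lambda_2$ is the distribution of the $G$-configuration obtained by sampling $\psi\in\Phi_2$ according to $\nu_2$ (the $\LL_2$-marginal of the contour law under $\mu$) and taking $\omega'\in\phi^{-1}(\psi)$ together with an independent fair global flip at a fixed vertex. The paper's proof runs entirely through this object: since $t_2\ge 1$ a.s., the $\lambda_2$-configuration carries an infinite $\LL_2$-contour and hence (interface argument) an infinite cluster; assumption~(A7) is precisely the $\Gamma_2$-ergodicity of $\lambda_2$ --- not a finite-energy condition --- and together with the built-in symmetry of $\lambda_2$ it upgrades this to the existence of both an infinite $0$-cluster and an infinite $1$-cluster in the $\lambda_2$-configuration. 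The $\lambda_2$-configuration is then regarded as a site configuration in $\{0,1\}^{V(\LL_2)}$, which in turn induces a constrained configuration in $\Omega$ with the same contour configuration; applying \Cref{io} to \emph{that} induced configuration yields an infinite $\LL_1$-contour, contradicting $t_1=0$. No case split on $k$, no use of \Cref{cc}, \Cref{ct}, \Cref{2c2c}, \Cref{icazo}, and no $\hat G$/threshold argument.

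Your Step~1, working directly with the original $\omega$, does produce infinite clusters of both types, but then \Cref{io} only returns \emph{some} infinite contour, and with $t_2\ge 1$ already assumed that contour can perfectly well be an $\LL_2$-contour --- exactly the obstruction you flag. The paper sidesteps this by passing to $\lambda_2$ rather than arguing on $\omega$. Your proposed $\hat G$/\Cref{pp}-style fix in Step~2 is not what the paper does and is not needed here.
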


\begin{proof}We prove Part I here; Part II can be proved using exactly the same technique.
 By (A2) $\mu$ is $\Gamma_i$ ergodic, either $\mu((t_1,t_2)=(0,k))=0$ or $\mu((t_1,t_2)=(0,k))=1$. Assume that $\mu((t_1,t_2)=(0,k))=1$; we shall obtain a contradiction. Since there exists an infinite $\LL_2$-contour; hence there exists an infinite cluster in $\lambda_2$ containing the infinite $\LL_2$-contour.  By (A7) $\lambda_2$ is $\Gamma_2$-ergodic, and the symmetry of $\lambda_2$, there exist an infinite 0-cluster and an infinite 1-cluster in $\lambda_2$ a.s.. Note that the configuration in $\lambda_2\in\{0,1\}^{V(\LL_2)}$ naturally induces a configuration $\omega\in \Omega$ by the condition that the contour configurations corresponding to $\lambda_2$ and $\omega$ are the same. We can see that if in $\lambda_2$ there exist both an infinite 0-cluster and an infinite 1-cluster, then in the induced constrained configuration $\omega\in\Omega$, there is both an infinite 0-cluster and an infinite 1-cluster. By \Cref{io}, there exist an infinite $\LL_1$-contour. But this is a contradiction to the fact that $t_1=0$. 
\end{proof}

\begin{lemma}\label{n0111}Let $G=(V,E)$ be the $[m,4,n,4]$ lattice with $m,n$ satisfying (\ref{cmn1}) $m\geq 3, n\geq 3$ and (\ref{cmn3}) $\frac{1}{m}+\frac{1}{n}<\frac{1}{2}$. Let $\mu$ be a probability measure on $\Omega$ satisfying (A2), (A8). Let $(s_0,s_1,t_1,t_2)$ be given as in \Cref{m23}. Then
\begin{eqnarray*}
\mu((s_0,s_1)=(1,1))=0.
\end{eqnarray*} 
\end{lemma}

\begin{proof}By (A2) $\mu$ is $\Gamma_i$-ergodic, either $\mu((s_0,s_1)=(1,1))=0$ or $\mu((s_0,s_1)=(1,1))=1$. Assume that $\mu((s_0,s_1)=(1,1))=1$; we shall obtain a contradiction. 

Let $\omega\in\Omega$.  We first construct a bond configuration $\omega_b\in \{0,1\}^E$ by letting an edge $e\in E$ to be present if and only if it joins two edges in $\omega$ with the same state; i.e.\ either both its endpoints have state 0; or both its endpoints have state 1. It is easy to check that the (0 or 1) clusters in $\omega$ are exactly the components in $\omega_b$. Then $\omega_b$ forms a $\Gamma_1$-invariant percolation on $G$. If $(s_0,s_1)=(1,1)$, then $\omega_b$ has exactly two infinite components. But this is a contradiction to \Cref{lbs}.
\end{proof}
 
\noindent\textbf{Proof of \Cref{m23} I.} Assume that $\mu$ is a probability measure on $\Omega$ satisfying (A2),(A3),(A7),(A8).

 Let $(s_0,s_1,t_1,t_2)$ be given as in the theorem. By Lemma \ref{lbs}, we have $\mu$-a.s.\ $s_0\in\{0,1,\infty\}$, $s_1\in\{0,1,\infty\}$ and $t_1\in\{0,1,\infty\}$.
By (A2) $\mu$ is $\Gamma_i$-ergodic and (A3) $\mu$ is symmetric with respect to interchanging state ``0'' and state ``1'', we have $\mu$-a.s.\ $(s_0,s_1)\in\{(0,0),(1,1),(\infty,\infty)\}$. Hence we need to rule out the case that $(s_0,s_1)=(1,1)$ and the case that $(s_0,s_1)=(0,0)$. Almost surely we have $(s_0,s_1)\neq (1,1)$ by \Cref{n0111}. Now we show that almost surely $(s_0,s_1)\neq (0,0)$.

We claim that $\mu$-a.s.\ $t_1\in \{0,\infty\}$. Assume that $\mu$-a.s.\ $t_1=1$, we shall obtain a contradiction. Let $\tau$ be the unique infinite $\LL_1$-contour. Then $\tau$ forms an invariant bond percolation on $\LL_1$ which has a single component a.s.. By Lemma \ref{l2}, $p_c(\tau)<1$ a.s. However, $\tau$ is an even-degree subgraph of $\LL_1$ and $\LL_1$ has vertex-degree 3; as a result, $\tau$ must be a doubly-infinite self-avoiding path. This is a contradiction to the fact that $p_c(\tau)<1$. Therefore we have either $\mu$-a.s.\ $t_1=0$ or $\mu$-a.s.\ $t_1=\infty$.

If $\mu$-a.s.\ $t_1=\infty$, let $\phi$ be the contour configuration on $\LL_1\cup\LL_2$ corresponding to the constrained percolation configuration. Since each infinite contour in $\phi$ is a doubly-infinite self-avoiding path, if there are infinitely many infinite contours,  then $\HH^2\setminus \phi$ has infinitely many unbounded components. Note also that there exists an infinite cluster in each infinite component of $\HH^2\setminus \phi$; hence $\mu$-a.s. $(s_0,s_1,t_1)=(\infty,\infty,\infty)$ in this case.

Now consider the case that $\mu$-a.s. $t_1=0$. 

We assume that $\mu$-a.s.\ $(s_0,s_1,t_1)=(0,0,0)$ and shall again obtain a contradiction. By \Cref{pp}, a.s.\ $(s_0,s_1,t_1,t_2)\neq(0,0,0,0)$. Moreover, it is impossible to have $(s_0,s_1,t_1,t_2)=(0,0,0,\infty)$ since if $t_2=\infty$, then there are infinitely many infinite clusters. By \Cref{lm23}, a.s.\ $(s_0,s_1,t_1,t_2)\neq(0,0,0,1)$. Therefore $\mu((s_0,s_1,t_1)=(0,0,0))=0$.

We next assume that $\mu$-a.s.\ $(s_0,s_1,t_1)=(\infty,\infty,0)$. By \Cref{lm23}, $\mu$-a.s.\ $(s_0,s_1,t_1,t_2)=(\infty,\infty,0,0)$. Since there exists an infinite 0-cluster and an infinite 1-cluster, by \Cref{io}, there exists an infinite contour, and $s_0+s_1>0$. The contradiction implies that $\mu((s_0,s_1,t_1)=(\infty,\infty,0))=0$. This completes the proof of Part I of \Cref{m23}. $\hfill\Box$

\bigskip

\noindent\textbf{Proof of \Cref{m23} II.} Assume that $\mu$ is a probability measure on $\Omega$ satisfying (A2),(A3),(A6),(A7),(A8). By \Cref{m23} I, $\mu$-a.s.\ $(s_0,s_1,t_1)=(\infty,\infty,\infty)$. Part II of \Cref{m23} then follows from \Cref{lm23}.
$\hfill\Box$

\bigskip

\section{Proof of \Cref{m21} }\label{p212}

In this section, we prove  \Cref{m21}.

We first prove that Parts (a) and (b) implies Part (c). If $\mu$ satisfies (A1)-(A7), then by (a) and (b), $\mu$-a.s. there are neither infinite primal contours nor infinite dual contours. Therefore $\mu$-a.s. there are no infinite contours.

Let $\mathcal{E}_0$ (resp.\ $\mathcal{E}_1$) be the event that there exists an infinite 0-cluster (resp.\ infinite 1-cluster). Assume that $\mu(\mathcal{E}_0\cup\mathcal{E}_1)>0$. Then by (A2) $\mu$ is $\Gamma_i$ ergodic,
\begin{eqnarray}
\mu(\mathcal{E}_0\cup\mathcal{E}_1)=1.\label{e01}
\end{eqnarray}
By (A3) $\mu$ is symmetric with respect to exchanging state ``0'' and state ``1'', $\mu(\mathcal{E}_0)=\mu(\mathcal{E}_1)$. By (A2), either $\mu(\mathcal{E}_0)=\mu(\mathcal{E}_1)=1$ or $\mu(\mathcal{E}_0)=\mu(\mathcal{E}_1)=0$. By (\ref{e01}), we have $\mu(\mathcal{E}_0)=\mu(\mathcal{E}_1)=1$. By \Cref{io}, $\mu$-a.s. there exists an infinite contour. But this is a contradiction to the fact that $\mu$-a.s. there are no infinite contours. Therefore $\mu$-a.s. there are no infinite clusters.

Next we prove (a) and (b). Note that the $[m,4,n,4]$ lattice $G$ is amenable if and only if 
\begin{eqnarray}
\frac{1}{m}+\frac{1}{n}=\frac{1}{2}.\label{am2}
\end{eqnarray}
 When $m,n$ are positive integers greater than or equal to 3, the only pairs of $(m,n)$ satisfying (\ref{am2}) are $(m,n)=(4,4)$, $(m,n)=(3,6)$ and $(m,n)=(6,3)$. When $(m,n)=(4,4)$, $G$ is the square grid embedded into $\RR^2$. In this case (a) and (b) were proved in \cite{HL16}. Then cases $(m,n)=(3,6)$ and $(m,n)=(6,3)$ can be proved in the same way. We write down the proof of the case when $(m,n)=(3,6)$ here.
 
 When $(m,n)=(3,6)$, $\LL_1$ is the hexagonal lattice $\HH=(V(\HH),E(\HH))$ and $\LL_2$ is the triangular lattice $\TT=(V(\TT),E(\TT))$.

 \begin{lemma}\label{l81}Assume that $(m,n)=(3,6)$. When $\mu$ satisfies (A1), (A2) and (A5), almost surely there exists at most one infinite contour in $\TT$.
\end{lemma}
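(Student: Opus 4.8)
The plan is to run a Burton--Keane type uniqueness argument on $\TT$, which for $(m,n)=(3,6)$ is the amenable triangular tiling of the Euclidean plane, using the finite-energy hypothesis (A5) in place of the usual insertion tolerance. Write $k\in\{0,1,2,\dots\}\cup\{\infty\}$ for the number of infinite $\TT$-contours. Since $k$ depends on $\phi$ only through the $\TT$-contour configuration and is invariant under $\Gamma_2=\mathrm{Aut}(\TT)$, ergodicity (A2) forces $k$ to be a $\mu$-a.s.\ constant, so it suffices to rule out $2\le k\le\infty$. The first point is that (A5) provides enough flexibility: over $\ZZ/2$ the $3$-cycles $\partial S$, as $S$ ranges over the triangular faces of $\TT$, span the whole finitely-supported cycle space of $\TT$, so every finite even-degree subgraph $B$ of $\TT$ equals $\partial S_1\oplus\cdots\oplus\partial S_m$ for some faces $S_1,\dots,S_m$; iterating (A5) then gives $\nu_2\bigl(\{\phi\oplus B:\phi\in E\}\bigr)>0$ whenever $\nu_2(E)>0$. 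Here $\phi\oplus B$ again lies in $\Phi_2$, since toggling an even-degree subgraph changes every vertex degree by an even amount.

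To rule out $2\le k<\infty$, fix a vertex $o$ and choose $R$ so that with positive probability at least two infinite $\TT$-contours meet the ball $B_R(o)$. Decompose this event first according to which two vertices of $B_R(o)$ lie on two distinct infinite contours, then according to the (finitely many possible) restriction of $\phi$ to a much larger ball $B_N(o)$; one obtains a positive-probability event $E$ on which the configuration on $B_N(o)$, and hence the precise way two distinguished infinite contours $C_1,C_2$ and each of the other (finitely many) infinite contours cross $\partial B_N(o)$, is prescribed. Using this explicit knowledge, I would then pick a single finite even-degree subgraph $B\subset B_N(o)$ whose toggling joins an arm of $C_1$ to an arm of $C_2$ --- because of the even-degree constraint this ``bridge'' has to be a cycle running through a vertex of $C_1$ and a vertex of $C_2$ --- while leaving intact the internal arm-connectivity of every other infinite contour; the non-crossing property of distinct contours together with the planarity of $\TT$ is what makes such a $B$ available once $N$ is large. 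By the flexibility above, $\{\phi\oplus B:\phi\in E\}$ has positive probability, yet on it there are exactly $k-1$ infinite $\TT$-contours, contradicting that $k$ is a.s.\ constant.

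To rule out $k=\infty$, I would use the standard amenability argument. If $\mu$-a.s.\ there are infinitely many infinite $\TT$-contours, the same kind of finite-energy surgery near $o$ shows that with positive probability $o$ lies in a bounded ``trifurcation region'' $D$ --- a finite set such that an allowed (finite-energy) modification supported in $D$ splits a single infinite $\TT$-contour into three infinite pieces lying in pairwise distinct contours. By invariance the expected number of trifurcation regions inside $B_n(o)$ is then at least $c\,|B_n(o)|$ for some $c>0$. On the other hand, in any fixed configuration the trifurcation regions inside $B_n(o)$ behave like the branch vertices of a forest whose leaves lie on $\partial B_n(o)$, so there are at most $|\partial B_n(o)|$ of them; since $\TT$ is amenable, $|\partial B_n(o)|/|B_n(o)|\to 0$, a contradiction. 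Hence $k\in\{0,1\}$, as claimed.

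The step I expect to be the main obstacle is the finite-energy surgery of the second paragraph, and its variant producing the trifurcation regions of the third: one must exhibit, using only the restricted moves permitted by (A5) and respecting the even-degree constraint, a bounded modification that provably changes the number of infinite $\TT$-contours in exactly the intended way --- neither creating spurious infinite contours nor fragmenting existing ones uncontrollably. This is a purely combinatorial construction; the corresponding argument for the square lattice is carried out in \cite{HL16}, and here it need only be transcribed to the triangular geometry, the bookkeeping of arms and the planar routing of the bridge cycle being the delicate part.
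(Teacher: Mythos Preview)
Your strategy is sound and would go through, but it diverges from the paper's argument in two places worth noting.

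For the case $k=\infty$, the paper does \emph{not} run a Burton--Keane trifurcation count. Instead it invokes, as a black box, the theorem from \cite{blps} that any $\mathrm{Aut}(\TT)$-invariant percolation on the amenable transitive graph $\TT$ has all infinite components with at most two ends. The surgery then merges three or more infinite contours meeting a box into a single contour with at least three ends, and the ends theorem gives the contradiction immediately. This is considerably shorter than building the trifurcation-region machinery from scratch (your ``forest with leaves on $\partial B_n$'' bound requires care when the objects are even-degree subgraphs rather than ordinary clusters).

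For the surgery itself, the paper does not argue abstractly with cycle toggles as you do. It observes that $\TT$ is a square grid $\mathbb{S}$ with added diagonals, and then lifts the explicit configuration-merging construction already carried out for $\mathbb{S}$ in Lemma~4.2 of \cite{HL16}: given any boundary data on a box $\widetilde{B}_n$, one can fill in the interior so that \emph{all} outgoing arms belong to a single contour. This yields directly (for $2\le k_0<\infty$) a configuration with exactly one infinite contour, and (for $k_0=\infty$) a configuration with a $\ge 3$-ended infinite contour. Your version, merging just two contours to get $k-1$, is a legitimate alternative, but---as you yourself flag---controlling the side-effects of toggling a cycle on the other infinite contours is exactly the delicate part, and the paper sidesteps it by replacing the whole interior at once rather than editing the existing configuration locally.
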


\begin{proof}Let $\mathbb{N}$ be the set of all nonnegative integers. Let $\mathcal{N}$ be the number of infinite contours in $\TT$. By (A2), there exists $k_0\in\mathbb{N}\cup\{\infty\}$, s.t. $\mu(\mathcal{N}=k_0)=1$. 

By \cite{blps} (see also Exercise 7.24 of \cite{LP}), (A1) and the fact that the triangular lattice $\TT$ is transitive and amenable, $\mu$-a.s. no infinite contours has more than 2 ends.

The triangular lattice $\TT$ can be obtained from a square grid $\mathbb{S}$ by adding a diagonal in each square face of $\mathbb{S}$.

Let $B_n$ be an $n\times n$ box of $\mathbb{S}$. Let $\widetilde{B}_n$ be the corresponding box in $\TT$, i.e.\ $\widetilde{B}_n$ can be obtained from $B_n$ by adding a diagonal edge on each square face of $B_n$.

Let $\phi$ (resp.\ $\widetilde{\phi}$) be a contour configuration on $\mathbb{S}$ (resp.\ $\TT$), such that $\phi$ and $\widetilde{\phi}$ satisfy the following conditions (note that the vertices in $\partial B_n$ and $\partial\widetilde{B}_n$ are in 1-1 correspondence)
\begin{itemize}
\item for each vertex $v\in\partial B_n$, no edges incident to $v$ outside $\widetilde{B}_n$ are present in $\widetilde{\phi}$ if and only if no edges incident to $v$ outside $B_n$ are present in $\phi$;
\item for each vertex $v\in\partial B_n$, if there are incident present edges of $v$ in $\widetilde{\phi}$ outside $\widetilde{B}_n$, then the parity of the number of incident present edges of $v$ outside $\widetilde{B}_n$ in $\widetilde{\phi}$ is the same as the parity of the number of incident present edges of $v$ outside $B_n$ in $\phi$; i.e.\ either both numbers are even or both are odd.
\end{itemize}

Let $n\geq 2$. Given $\phi$, we can find a configuration $\xi$ in $B_n$, such that $[\phi\setminus B_n]\cup\xi$ is a contour configuration on $\mathbb{S}$ (i.e.\ each vertex of $\mathbb{S}$ has an even number of incident present edges in $[\phi\setminus B_n]\cup\xi$), and all the incident present edges of $\partial B_n$ outside $B_n$ are in the same contour; see Lemma 4.2 of \cite{HL16}. If $\widetilde{\phi}$ and $\phi$ satisfy the conditions described above, then $[\widetilde{\phi}\setminus \widetilde{B}_n]\cup\xi$ is a contour configuration on $\TT$, and all the incident present edges of $\partial\widetilde{B}_n$ outside $\widetilde{B}_n$ are in the same contour.

Note that $\xi$ can be obtained from $\widetilde{\phi}\cap \widetilde{B}_n$ by changing configurations on finitely many triangles in $\widetilde{B}$ as described in (A6). That is because any contour configuration on $\TT$ naturally induces two site configurations $\omega$, $1-\omega$, in $\{0,1\}^{V(\HH)}$, such that two adjacent vertices in $\HH$ have different states if and only if the edge in $\TT$ separating the two vertices are present in the contour configuration. Any two site configurations in $\{0,1\}^{V(\HH)}$ differ only in $\widetilde{B_n}$ can be obtained from each other by changing states on finitely many vertices in $V(\HH)\cap\widetilde{B}_n$. Changing the state at a vertex in $V(\HH)$ corresponds to changing the states on all the edges of the dual triangle face including the vertex in the contour configuration of $\TT$.

We claim that $k_0\in\{0,1\}$. Indeed, if $2\leq k_0<\infty$, we can find a box $\widetilde{B}_n$ in $\TT$, such that $\widetilde{B}_n$ intersects all the $k_0$ infinite contours. Then we can change configurations on finitely many triangles in $\widetilde{B}_n$, such that after the configuration change, there is exactly one infinite contour. By the finite energy assumption (A5), with positive probability, there exists exactly one infinite contour, but this is a contradiction to $\mu(\mathcal{N}=k_0)=1$, where $2\leq k_0<\infty$.

If $k_0=\infty$, we can find a box $\widetilde{B}_m$ in $\TT$, such that $\widetilde{B}_m$ intersects at least 3 infinite contours. Then we can change configurations on finitely many triangles in $\tilde{B}_n$, such that after the configuration change, all the infinite contours intersecting $\tilde{B}_m$ merge into one infinite contour, which has at least 3 ends. By (A5), with positive probability there exists an infinite contour with more than 2 ends. But this is a contradiction to the fact that almost surely no infinite contours have more than two ends. 
\end{proof}

\begin{lemma}\label{l82}Assume that $(m,n)=(3,6)$. When $\mu$ satisfies (A1) and (A4), almost surely there exists at most one infinite contour in $\HH$.
\end{lemma}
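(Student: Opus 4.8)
The plan is to reduce \Cref{l82} to Burton--Keane uniqueness on the dual triangular lattice $\TT$ together with a planar separation argument, exploiting that---in contrast to contours in $\TT$, where the box-merging argument of \Cref{l81} is used---every contour in $\HH$ has all its vertices of degree $2$ (so it is a self-avoiding cycle or a doubly-infinite self-avoiding path), which is exactly why merging several infinite contours inside a box is impossible here. First I would set up the dual dictionary: a contour configuration $\phi_1\in\Phi_1$ on $\HH$ is precisely the edge-boundary of a $2$-colouring of the faces of $\HH$, and since the faces of $\HH$ are in bijection with the vertices of $\TT$, such a colouring is a configuration $\eta\in\{0,1\}^{V(\TT)}$, well-defined up to the global flip $\eta\mapsto 1-\eta$; the contours of $\phi_1$ are exactly the connected components of the set of $\HH$-edges separating faces of opposite colour. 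The operation in (A4)---flipping the six sides of a hexagon of $\HH$---corresponds to flipping $\eta$ at the single vertex of $\TT$ dual to that hexagon. Hence, lifting the $\HH$-contour marginal $\nu_1$ of $\mu$ by recolouring one reference face with an independent fair coin produces a probability measure $\widetilde{\nu}_1$ on $\{0,1\}^{V(\TT)}$ that is invariant under the action on $\TT$ induced by $\Gamma=\mathrm{Aut}(G)$ (which is quasi-transitive) and that has finite energy at every site, by (A1) and (A4) respectively; and the number of infinite contours, being a function of $\phi_1$ alone, has the same law under $\nu_1$ as under $\widetilde{\nu}_1$.

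Then, since $\TT$ is amenable and $\widetilde{\nu}_1$ is an invariant finite-energy percolation on it, the Burton--Keane uniqueness theorem in the form valid for finite-energy measures on amenable quasi-transitive graphs (see \cite{LP}) gives $\widetilde{\nu}_1$-a.s.\ at most one infinite $0$-cluster and at most one infinite $1$-cluster; equivalently, $\eta$ a.s.\ has at most one infinite monochromatic region of each colour. Suppose now, for contradiction, that with positive probability $\phi_1$ has two distinct infinite contours $C_1,C_2$; being distinct connected components they are disjoint, and each is a properly embedded copy of $\RR$ in the plane. Each $C_i$ separates $\RR^2$ into two unbounded pieces, and along each side the faces adjacent to $C_i$ are monochromatic and infinite in number, so each piece contains an infinite cluster of $\eta$, of opposite colours on the two sides; moreover an infinite monochromatic cluster lies entirely on one side of any contour. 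Combining these facts with the topology of $\RR^2\setminus(C_1\cup C_2)$ forces two distinct infinite clusters of a common colour (the cluster hugging $C_1$ on a given side and the cluster hugging $C_2$ on a given side cannot coincide once one tracks which component of $\RR^2\setminus C_1$ contains $C_2$), contradicting the Burton--Keane conclusion. Therefore $\mu$-a.s.\ there is at most one infinite contour in $\HH$.

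The dual dictionary of the first step and the Burton--Keane input are routine; I expect the main obstacle to be making the planar separation step fully rigorous, namely verifying---for possibly irregular simple curves $C_1,C_2$---both that removing $C_1\cup C_2$ leaves the right number of unbounded complementary regions with alternating colours and that each relevant region genuinely meets an infinite cluster of $\eta$. A cleaner alternative that avoids ambient point-set topology is to argue entirely inside $\TT$: each infinite contour borders, along its two sides, an infinite $0$-cluster and an infinite $1$-cluster of $\eta$; by Burton--Keane these are the \emph{same} pair $(A,B)$ for every infinite contour, so all infinite contours lie on the common boundary of the connected infinite clusters $A$ and $B$, and one checks directly (finite clusters contribute only finite boundary) that such a boundary can have at most one infinite connected component.
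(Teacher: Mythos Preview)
Your proposal is correct and follows essentially the same route as the paper: pass from $\HH$-contours to a site configuration on $\TT$ (well-defined up to a global flip), observe that (A4) becomes single-site finite energy on $\TT$, apply Burton--Keane to get at most one infinite $0$-cluster and one infinite $1$-cluster, and derive a contradiction from two disjoint doubly-infinite contours. The only cosmetic difference is that the paper phrases the last step as ``two infinite contours force at least three infinite clusters on $\TT$'' and gets the contradiction by pigeonhole, rather than tracking colours explicitly as you do; your alternative boundary-of-$A$-and-$B$ argument is a nice variant but not needed.
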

\begin{proof}Recall that a contour is a connected set of edges in which each vertex has an even number of incident present edges. Since the hexagonal lattice $\HH$ is a cubic graph, i.e.\ each vertex has 3 incident edges; each vertex in a contour of $\HH$ has 2 incident present edges. As a result, each contour in $\HH$ is either a self-avoiding cycle or a doubly-infinite self-avoiding path. In particular, each infinite contour in $\HH$ is a doubly-infinite self-avoiding path.

Each contour configuration in $\HH$ naturally induces two site configurations $\omega,1-\omega$ in $\{0,1\}^{V(\TT)}$, in which two adjacent vertices of $\TT$ have the same state if and only if the dual edge in $\HH$ separating the two vertices is absent in the contour configuration. The finite energy assumption (A4) implies the finite energy in the induced site configuration in $\{0,1\}^{V(\TT)}$; see \cite{bk89} for a definition. When $\mu$ satisfies (A1) (A4), by the result in \cite{bk89}, almost surely there exists at most one infinite 1-cluster and at most one infinite 0-cluster. In particular, there exist at most two infinite clusters. However, if in $\HH$ there are more than one infinite contour, then there are at least two doubly-infinite self-avoiding paths in $\HH$. As a result, the number of infinite clusters in the induced site configuration on $\TT$ is at least 3. The contradiction implies the lemma.
\end{proof}

\begin{lemma}\label{l83}Let $\omega\in\Omega$ be a constrained percolation configuration on the $[3,4,6,4]$ lattice $G$. Let $\psi=\phi(\omega)\in\Phi$ be the corresponding contour configuration in $E(\HH)\cup E(\TT)$. Assume that $\psi=\psi_1\cup\psi_2$, where $\psi_1$ (resp.\ $\psi_2$) is the contour configuration in $\HH$ (resp. $\TT$). If there is an infinite contour in $\psi_1$ (resp.\ $\psi_2$), then there is an infinite cluster in $\phi^{-1}(\psi_2)$ (resp.\ $\phi^{-1}(\psi_1)$).
\end{lemma}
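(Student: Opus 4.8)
The plan is to prove the two symmetric statements by the same argument; I will describe the case in which $\psi_1$ (a contour configuration on $\HH$) contains an infinite contour $C$ and we must produce an infinite cluster in any $\omega'\in\phi^{-1}(\psi_2)$, the other case being obtained by interchanging the roles of $\HH$ and $\TT$, of triangular faces and hexagonal faces of $G$, and of $\psi_1$ and $\psi_2$. First I would record the relevant local geometry of the $[3,4,6,4]$ lattice $G$: every vertex of $G$ is incident to faces of degrees $3,4,6,4$ in cyclic order, so it lies on exactly one triangular and exactly one hexagonal face; each square face $S$ of $G$ has one pair of opposite sides shared with triangular faces and the other pair shared with hexagonal faces; the $\LL_1=\HH$-edge crossing $S$ passes through the two triangle-sides of $S$ while the dual $\LL_2=\TT$-edge crossing $S$ passes through the two hexagon-sides; and, directly from the list of allowed configurations around a black face, at most one of these two edges is present in any configuration in $\Omega$. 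In particular an edge $e\in E(\HH)$ joins the centres $w,w'$ of the two triangular faces bordering the unique square face $S_e$ it crosses, and if $e$ is present then the $\TT$-edge dual to $e$ (the unique $\TT$-edge crossing $S_e$) is absent.

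Next I would analyse the clusters of a configuration $\omega'\in\phi^{-1}(\psi_2)$. Since $\phi(\omega')=\psi_2$ has empty $\HH$-part, no $\HH$-edge is present in the contour configuration of $\omega'$. Each side of a triangular face of $G$ bounds a unique square face, of which it is one of the two triangle-sides, and is therefore crossed only by that square's $\HH$-edge; hence the two endpoints of every side of every triangular face have equal state in $\omega'$, so every triangular face of $G$ is monochromatic in $\omega'$. Because every vertex of $G$ lies on a triangular face, it follows that two triangular faces $T,T'$ sharing a square face $S$ lie in the same $\omega'$-cluster precisely when the (unique) $\TT$-edge crossing $S$ is absent from $\psi_2$.

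Finally I would use the hypothesis: for each edge $e$ of the infinite $\HH$-contour $C\subseteq\psi_1$, with $\HH$-endpoints $w,w'$, the edge $e$ is present, and hence by the first step the $\TT$-edge crossing $S_e$ is absent from $\psi$ and a fortiori from $\psi_2$; by the second step the triangular faces $T_w$ and $T_{w'}$ lie in a common $\omega'$-cluster. Since $C$ is connected and infinite it has infinitely many vertices, so the triangular faces $\{T_w:w\in V(C)\}$ are infinitely many and pairwise linked through such edges, whence their boundary vertices all belong to a single infinite cluster of $\omega'$. Interchanging $\HH$ with $\TT$ (so triangular faces with hexagonal faces and $\psi_1$ with $\psi_2$) gives the statement with $\psi_1$ and $\psi_2$ swapped, since the configuration in $\phi^{-1}(\psi_1)$ has empty $\TT$-part and therefore monochromatic hexagonal faces.

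I do not anticipate a real obstacle; the only delicate point is the bookkeeping in the first step — verifying which pair of opposite sides of a square face carries the $\HH$-edge and which carries the $\TT$-edge, and the consequent fact that a contour configuration with empty $\HH$-part (resp.\ empty $\TT$-part) renders all triangular (resp.\ hexagonal) faces monochromatic. One could instead argue via the interfaces of \Cref{rl}, but the face-colouring argument above is more direct and avoids any discussion of ends.
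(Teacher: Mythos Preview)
Your proof is correct and is essentially the same argument as the paper's: both rest on the observations that (i) every triangular face of $G$ is monochromatic in any $\omega'\in\phi^{-1}(\psi_2)$ because its sides are crossed only by $\HH$-edges, and (ii) an edge of $C\subseteq\psi_1$ present in a square $S$ forces the dual $\TT$-edge absent, so the two triangular faces bordering $S$ lie in the same $\omega'$-cluster. The paper phrases the connectivity step by walking along the faces of $G$ crossed by $C$ (squares and triangles) and chaining through shared vertices, whereas you walk along the edges of $C$ and chain the triangular faces at their $\HH$-endpoints; the content is identical.
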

\begin{proof}Assume that there is an infinite contour $C$ in $\HH$ (resp.\ $\TT$). Let $V_{C}$ be the set consisting of all the vertices of $G$ such that
\begin{itemize}
\item $v\in V_C$ if and only if $v$ is a vertex of a face of $G$ crossed by an edge present in the contour $C$.
\end{itemize}

Let $F$ be a square face of $G$ crossed by $C$; then all the vertices in $F$ are in the same cluster of $\phi^{-1}(\psi_2)$ (resp.\ $\phi^{-1}(\psi_1)$).  That is because $\psi_1\cap\psi_2=\emptyset$, if $F$ is crossed by $C\subseteq\psi_1$ (resp.\ $C\subseteq\psi_2$), then $F\cap \psi_2=\emptyset$ (resp. $F\cap\psi_1=\emptyset$).

Let $F'$ be a triangle (resp.\ hexagon) face of $G$ crossed by $C$; then all the vertices in $F$ are also in the same cluster of $\phi^{-1}(\psi_2)$ (resp.\ $\phi^{-1}(\psi_1)$). That is because the boundary edges of $F'$ cannot be crossed by edges of $\TT$ (resp.\ $\HH$) at all.

We claim that all the vertices in $V_C$ are in the same cluster in $\phi^{-1}(\psi_2)$ (resp.\ $\phi^{-1}(\psi_1)$). Indeed, for any two vertices $u,v\in V_C$, we can find a sequence of faces $F_0,F_1,\ldots,F_k$, such that
\begin{itemize}
\item $u\in F_0$ and $v\in F_k$; and
\item for $0\leq i\leq k$, $F_i$ is crossed by $C$;
\item for $1\leq j\leq k$, $F_j$ and $F_{j-1}$ share a vertex.
\end{itemize}
Then $u$ and $v$ are in the same cluster in $\phi^{-1}(\psi_2)$ (resp.\ $\phi^{-1}(\psi_1)$) since all the vertices in $\cup_{i=0}^{n}F_i$ are in the same cluster in $\phi^{-1}(\psi_2)$ (resp.\ $\phi^{-1}(\psi_1)$).
 Moreover, $|V_C|=\infty$ since $C$ is an infinite contour. Therefore, $\phi^{-1}(\psi_2)$ (resp.\ $\phi^{-1}(\psi_1)$) has an infinite cluster.
\end{proof}

Parts (a) and (b) can be proved using similar techniques; we write down the proof of (a) here.

Let $\mu$ be a probability measure on $\Omega$ satisfying (A1)-(A6). Assume that with strictly positive probability, there exist infinite contours in $\TT$. Then by (A2), $\mu$-a.s. there exist infinite contours in $\TT$. By \Cref{l81}, $\mu$-a.s.\ there exists exactly one infinite contour $C_1$ in $\TT$. By \Cref{l83}, $\mu$-a.s. there exist infinite clusters in $\phi^{-1}(\psi_1)$. Let $\mathcal{F}_0$ (resp.\ $\mathcal{F}_1$) be the event that there exists an infinite 0-cluster (resp.\ infinite 1-cluster) in $\phi^{-1}(\psi_1)$, then
\begin{eqnarray}
\lambda_1(\mathcal{F}_0\cup\mathcal{F}_1)=1\label{f01},
\end{eqnarray}
where the probability measure $\lambda_1$ is defined before (A6). By (A6), and the symmetry of $\lambda_1$, either $\lambda_1(\mathcal{F}_0)=\lambda_1(\mathcal{F}_1)=0$, or $\lambda_1(\mathcal{F}_0)=\lambda_1(\mathcal{F}_1)=1$. By (\ref{f01}), we have $\lambda_1(\mathcal{F}_0\cap\mathcal{F}_1)=1$. By \Cref{io}, $\mu$-a.s. there are infinite contours in $\HH$. By \Cref{l82}, $\mu$-a.s. there is exactly one infinite contour $C_2$ in $\HH$.

Hence there is exactly one infinite contour $C_2$ in $\HH$ and exactly one infinite contour $C_1$ in $\TT$. By \Cref{ct}, there exists
 an infinite cluster incident to both $C_1$ and $C_2$.

Let $\mathcal{H}_0$ (resp.\ $\mathcal{H}_1$) be the event that there exists an infinite 0-cluster (resp.\ infinite 1-cluster) in $\omega$ incident to both the infinite contour in $\HH$ and the infinite contour in $\TT$. Then
\begin{eqnarray}
\mu(\mathcal{H}_0\cup\mathcal{H}_1)=1.\label{eq:h01}
\end{eqnarray}
By (A3), $\mu(\mathcal{H}_0)=\mu(\mathcal{H}_1)$. By (A2), either $\mu(\mathcal{H}_0)=\mu(\mathcal{H}_1)=0$, or $\mu(\mathcal{H}_0)=\mu(\mathcal{H}_1)=1$. By (\ref{eq:h01}), $\mu(\mathcal{H}_0)=\mu(\mathcal{H}_1)=1$, therefore $\mu(\mathcal{H}_0\cap\mathcal{H}_1)=1$, i.e.\ there exist an infinite 0-cluster $\xi_0$ and an infinite 1-cluster $\xi_1$, such that $\xi_0$ is incident to both $C_1$ and $C_2$, and $\xi_1$ is incident to both $C_1$ and $C_2$. But this is a contradiction to \Cref{2c2c}. Therefore we conclude that $\mu$-a.s. there are no infinite contours in $\TT$; this completes the proof of Part (a).

\section{Proof of \Cref{m22}}\label{p211}

In this section, we prove \Cref{m22}.

\begin{lemma}\label{l71}Let $G$ be an $[m,4,m,4]$ lattice with $m\geq 5$. Let $\mu$ be a probability measure on $\Omega$ satisfying (A1)-(A3). Then the distribution of infinite clusters can only be one of the following 2 cases.
\begin{enumerate}
\item There are no infinite clusters $\mu$-a.s.
\item There are infinitely many infinite 1-clusters and infinitely many infinite 0-clusters $\mu$-a.s.
\end{enumerate}
\end{lemma}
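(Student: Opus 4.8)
The plan is to derive the dichotomy from a single application of \Cref{lbs}, combined with the symmetry assumption (A3) and the ergodicity assumption (A2). First I would attach to each $\omega\in\Omega$ the bond configuration $\omega_b\in\{0,1\}^E$ in which an edge $e\in E$ is present precisely when its two endpoints carry the same state in $\omega$, exactly as in the proof of \Cref{n0111}. The connected components of $\omega_b$ are then precisely the $0$-clusters and the $1$-clusters of $\omega$, so the number of infinite components of $\omega_b$ equals $s_0+s_1$. Since $m\geq 5$ we have $\frac{1}{m}+\frac{1}{m}<\frac{1}{2}$, so $G$ is a vertex-transitive, non-amenable, planar graph with one end; and since $\omega\mapsto\omega_b$ is $\Gamma$-equivariant, assumption (A1) makes the law of $\omega_b$ a $\Gamma$-invariant bond percolation on $G$. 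Hence \Cref{lbs} yields that $\mu$-a.s.\ $s_0+s_1\in\{0,1,\infty\}$.

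Next I would use (A2) and (A3) to split this count between $0$-clusters and $1$-clusters. By (A2), each of $s_0$ and $s_1$ is $\mu$-a.s.\ equal to a deterministic constant. By (A3), the involution $\theta$ interchanges the $0$-clusters with the $1$-clusters while preserving $\mu$, so $(s_0,s_1)$ and $(s_1,s_0)$ have the same law under $\mu$; in particular $s_0$ and $s_1$ have the same distribution, and being $\mu$-a.s.\ constant they are $\mu$-a.s.\ equal, say $s_0=s_1=k$. Combining with the previous step gives $2k=s_0+s_1\in\{0,1,\infty\}$, and since $k$ is a cardinality this forces $k\in\{0,\infty\}$. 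Therefore either $s_0=s_1=0$ $\mu$-a.s., or $s_0=s_1=\infty$ $\mu$-a.s., which is exactly the asserted dichotomy.

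Once \Cref{lbs} is available the argument is essentially bookkeeping, and I do not expect a genuine obstacle. The two points that deserve care are: that $G$ meets the hypotheses of \Cref{lbs} for $m\geq 5$, i.e.\ that $\frac{1}{m}+\frac{1}{m}<\frac{1}{2}$ yields a non-amenable, one-ended tiling of the hyperbolic plane (the same fact already recorded above for $\LL_1$ and $\LL_2$); and that the counts $s_0,s_1$ are measurable and invariant enough for (A2) to be applied to them, which is handled exactly as in the proof of \Cref{m23}. The whole content of the lemma is the elementary remark that two equal cardinals whose sum lies in $\{0,1,\infty\}$ must both be $0$ or both be $\infty$.
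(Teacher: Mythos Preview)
Your proof is correct. It coincides with the first route the paper mentions (``\Cref{l71} can be obtained from \Cref{n0111}''), but the paper then writes out a genuinely different argument: after reducing to $s_0=s_1=k\in\{0,1,\infty\}$ via \Cref{lbs} applied separately to each color, it excludes $k=1$ by the contour machinery (\Cref{cac} produces an infinite contour adjacent to both infinite clusters, assumption (A1) and the $m=n$ symmetry force this to happen in both $\LL_1$ and $\LL_2$ simultaneously, and \Cref{2c2c} gives the contradiction). Your route is shorter because a single application of \Cref{lbs} to the bond process $\omega_b$ already rules out a total of exactly two infinite clusters, so no structural information about contours is needed; the paper's written-out proof, by contrast, is self-contained with respect to the contour lemmas developed in \Cref{ctcl} and does not reuse the $\omega_b$ trick.
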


\Cref{l71} can be obtained from \Cref{n0111}; it may also be proved using different arguments below.

\begin{proof}Let $s_0$ (resp.\ $s_1$) be the number of infinite 0-clusters (resp.\ 1-clusters). By \Cref{lbs} and (A2) (A3), there exist $k\in\{0,1,\infty\}$, such that $\mu((s_0,s_1)=(k,k))=1$. It suffices to show that $k\neq 1$. 

Let $\sA$ be the event that $(s_0,s_1)=(1,1)$. Assume that $\mu(\sA)=1$, we will obtain a contradiction.

As explained before the constrained site configurations on $G$ correspond to contour configurations in $\LL_1\cup\LL_2$.

Since $\mu$-a.s.\ there exists exactly one infinite 0-cluster and exactly one infinite 1-cluster simultaneously,  then by \Cref{cac},  $\mu$-a.s.\ there exists an infinite primal or dual contour incident to both the infinite 0-cluster and the infinite 1-cluster. Let $\sD_1$ (resp.\ $\sD_2$) be the event that there exists an infinite primal (resp.\ dual) contour in $\LL_1$ (resp.\ $\LL_2$), incident to both the infinite 0-cluster and the infinite 1-cluster. So we have 
\begin{eqnarray}
\mu(\sD_1\cup\sD_2)=1.\label{c1}
\end{eqnarray}
By (A1) $\mu$ is $\Gamma_i$ invariant, we have
\begin{eqnarray}
\mu(\sD_1)=\mu(\sD_2).\label{c2}
\end{eqnarray}
Moreover, by (A2), we have either
\begin{eqnarray}
\mu(\sD_1)=0\ \mathrm{or}\ \mu(\sD_1)=1.\label{c3}
\end{eqnarray}
Combining (\ref{c1}), (\ref{c2}) and (\ref{c3}), we have
\begin{eqnarray}
\mu(\sD_1\cap\sD_2)=1. \label{dcd}
\end{eqnarray}

Thus, by  (\ref{dcd}), we have exactly  one infinite 1-cluster on $G$, denoted by $\xi_1$ and exactly one infinite 0-cluster on $G$, denoted by $\xi_0$. There is an infinite primal contour incident to both $\xi_0$ and $\xi_1$, denoted by $C_1$; as well as an infinite dual contour incident to both $\xi_0$ and $\xi_1$, denoted by $C_2$. But this is impossible by \Cref{2c2c}. The contradiction implies the lemma.
\end{proof}

\begin{lemma}\label{l72}Let $\mu$ be a probability measure on $\Omega$ satisfying (A1)-(A3). Then the distribution of infinite contours can only be one of the following 2 cases.
\begin{enumerate}
\item There are neither infinite primal contours nor infinite dual contours $\mu$-a.s..
\item There are infinitely many infinite primal contours and infinitely many infinite dual contours $\mu$-a.s..
\end{enumerate}
\end{lemma}
\begin{proof}The primal (resp.\ dual) contours form an invariant bond percolation on $\LL_1$ (resp.\ $\LL_2$) under $\mu$. Let $t_1$ (resp.\ $t_2$) be the number of infinite primal (resp.\ dual) contours. By \Cref{lbs} and (A1)-(A3), only 3 cases may occur:
\begin{enumerate}[label=\roman*.]
\item $\mu$-a.s. $(t_1,t_2)=(0,0)$;
\item $\mu$-a.s. $(t_1,t_2)=(\infty,\infty)$;
\item $\mu$-a.s. $(t_1,t_2)=(1,1)$.
\end{enumerate} 

It remains to exclude in Case iii.. Assume that Case iii.\ occurs. Let $C_1$ (resp. $C_2$) be the unique infinite primal (resp.\ dual) contour. By \Cref{ct}, there exists an infinite cluster incident to both $C_1$ and $C_2$. Moreover, by (A2)-(A3), $\mu$-a.s.\ there exists an infinite 0-cluster incident to both $C_1$ and $C_2$, as well as an infinite 1-cluster incident to both $C_1$ and $C_2$. But this is impossible by \Cref{2c2c}.
\end{proof}

\bigskip
\noindent\textbf{Proof of \Cref{m22}.} By \Cref{l71,l72,pp}, it suffices to show that there exists an infinite cluster $\mu$-a.s.\ if and only if there exists an infinite contour $\mu$-a.s.\ if $\mu$ satisfies (A1)-(A3).

First assume that there exists an infinite cluster $\mu$-a.s. By (A2)-(A3), $\mu$-a.s.\ there exist both an infinite 0-cluster and an infinite 1-cluster. By \Cref{io}, $\mu$-a.s.\ there exists an infinite contour.

Now assume that there exists an infinite contour $\mu$-a.s. By (A1)-(A2), $\mu$-a.s.\ there exist both an infinite primal contour and an infinite dual contour. By \Cref{cc}, $\mu$-a.s. there exists an infinite cluster. 
$\hfill\Box$

\section{Percolation on transitive, triangular tilings of hyperbolic plane}\label{peh}

In this section, we discuss the applications of the techniques developed in the proof of \Cref{m23}
 to prove results concerning unconstrained site percolation on vertex-transitive, triangular tilings of the hyperbolic plane. We also discuss results about Bernoulli percolation on such graphs. These results will be used to prove \Cref{ipl,coii,xorc}.

\begin{lemma}\label{nzz}\label{lnz}Let $\LL_2$ a vertex-transitive, regular tiling of the hyperbolic plane with triangles, such that each vertex has degree $n\geq 7$. Consider an $\mathrm{Aut}(\LL_2)$-invariant site percolation $\omega$ on $\LL_2$ with distribution $\mu$. Assume that $\mu$ is $\mathrm{Aut}(\LL_2)$-ergodic. Let $s_0$ (resp.\ $s_1$) be the number of infinite 0-clusters (resp.\ infinite 1-clusters) in the percolation. Then
\begin{eqnarray*}
\mu((s_0,s_1)=(0,0))=0.
\end{eqnarray*}
\end{lemma}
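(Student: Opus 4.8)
The statement says: for an ergodic, $\mathrm{Aut}(\LL_2)$-invariant site percolation on a triangular tiling of the hyperbolic plane with vertex degree $n \geq 7$, it cannot happen that there are neither infinite $0$-clusters nor infinite $1$-clusters. The model has a built-in self-duality: the $0$-set and the $1$-set partition $V(\LL_2)$, and contours live on the dual graph $\LL_1$ (the $[n,n,n]$ tiling with vertex degree $3$). I would mimic the architecture of the proof of \Cref{m23}, especially \Cref{pp} and its auxiliary graph $\hat G$.

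**Plan via the auxiliary graph and the degree/isoperimetry inequality.** The plan is to argue by contradiction: suppose $\mu(s_0 = s_1 = 0) = 1$. Then by \Cref{io}-type reasoning (an analogue: if there is no infinite $0$-cluster and no infinite $1$-cluster there is no infinite contour on $\LL_1$, and conversely), one also gets that a.s. there are no infinite contours. So a.s. every vertex is separated from infinity by infinitely many nested finite contours on $\LL_1$ surrounding it, and dually every finite contour is surrounded by a unique finite cluster. This is exactly the situation engineered in the proof of \Cref{pp}. I would then build the graph $\hat G$ obtained from $\LL_1 \cup \LL_2$ by subdividing each crossing pair of dual edges at their intersection point — so $\hat G$ is quasi-transitive, non-amenable, planar, one-ended, and unimodular — and define the generalized-contour configuration $\hat\phi$ on it. Following \Cref{pp} verbatim: set $\mathcal C_0$ = all generalized contours, $\mathcal C_{j+1} = \{C'' : C \in \mathcal C_j\}$, let $r(v) = \sup\{j : C \in \mathcal C_j\}$ for the generalized contour $C$ of $v$, and let $\omega^r$ be the bond percolation on $\hat G$ keeping edges whose endpoints both have $r$-value $\le r$. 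Then $\omega^r$ is $\mathrm{Aut}$-invariant and $\mathbf E[\deg_{\omega^r} v] \to \deg_{\hat G} v$ as $r \to \infty$, so \Cref{tbng} forces $\omega^r$ to percolate for large $r$; but the "infinitely many nested contours" property shows each $\omega^r$-cluster is finite — contradiction. The only new point relative to \Cref{pp} is that here we are not assuming the full symmetry/finite-energy package, just ergodicity and invariance; but ergodicity is exactly what lets us pass from "positive probability" to "a.s." in the dichotomies, and invariance is all \Cref{tbng} needs.

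**The step I expect to be the obstacle.** The delicate part is establishing the clean duality dictionary "$(s_0,s_1)=(0,0)$ $\Rightarrow$ every vertex is surrounded by infinitely many finite contours and every finite contour is surrounded by a unique finite cluster," since in the \emph{unconstrained} site percolation setting the contour structure on $\LL_1$ is not given a priori but must be extracted from the $0/1$-coloring of $\LL_2$, and one must check that contours are closed, non-self-crossing, and (being on a degree-$3$ graph) have all vertices of degree $0$ or $2$ — hence are disjoint self-avoiding cycles or doubly-infinite paths, exactly as in \Cref{cit}. Once contours are cycles/bi-infinite paths and there are no infinite ones, the "nested infinitely often" claim follows from planarity and one-endedness of $\LL_1$ together with the assumption $s_0 = s_1 = 0$: no vertex can be in an infinite cluster, so each vertex is enclosed by a contour, and iterating (each cluster enclosing that contour is finite, hence itself enclosed, etc.) gives the infinite nesting. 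I would also need the easy converse direction of \Cref{io} on $\LL_2$ (no infinite $0$- or $1$-cluster $\iff$ no infinite contour) to know $\hat\phi$ has no infinite component, which is where \Cref{lbs}-style facts and the planar duality between $\LL_1$ and the enclosing-cluster structure enter. Modulo these combinatorial checks, the percolation-theoretic heart is a direct transcription of \Cref{pp}.
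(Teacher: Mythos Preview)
Your strategy is the paper's own: assume $(s_0,s_1)=(0,0)$ a.s., deduce there are no infinite contours (since an infinite contour on the degree-$3$ graph $\LL_1$ is a bi-infinite self-avoiding path with an infinite $0$-cluster on one side and an infinite $1$-cluster on the other), then run the nesting argument---define $r(v)$ via iterated ``surrounding contour of the surrounding cluster,'' form the truncated bond percolation $\omega^r$, and derive a contradiction from \Cref{tbng} since $\mathbf{E}[\deg_{\omega^r}v]\to\deg v$ while every $\omega^r$-component is trapped inside some contour of level $>r$.

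The one unnecessary detour is your use of the auxiliary graph $\hat G$. That construction in \Cref{pp} exists because on the $[m,4,n,4]$ lattice there are \emph{two} families of contours (primal on $\LL_1$ and dual on $\LL_2$), and clusters live on the $[m,4,n,4]$ lattice itself, which is the planar dual of $\hat G$. Here the situation is simpler: there is only one contour family (on $\LL_1$), and the clusters live on $\LL_2$, which is already the planar dual of $\LL_1$. The paper therefore runs the entire argument on $\LL_1$ alone: generalized contours are subsets of $\LL_1$, $r(v)$ is defined for $v\in V(\LL_1)$, and $\omega^r$ is a bond percolation on $\LL_1$. This avoids having to make sense of the ``surrounding cluster'' for the three different vertex types in $\hat G$ (in particular for vertices $v\in V(\LL_2)\subset\hat V$, which are always isolated in your $\hat\phi$ since there are no $\LL_2$-contours here). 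Your $\hat G$ route can be made to work---either by carefully extending the surrounding relation to all of $\hat V$, or by first mapping to the constrained percolation on the $[3,4,n,4]$ lattice as in \Cref{ozz} and then invoking \Cref{pp} verbatim---but the direct argument on $\LL_1$ is cleaner and is what the paper does.
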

\begin{proof}Since the event $\{(s_0,s_1)=(0,0)\}$ is $\mathrm{Aut}(\LL_2)$-invariant, and $\mu$ is $\mathrm{Aut}(\LL_2)$-ergodic, either $\mu((s_0,s_1)=(0,0))=0$ or  $\mu((s_0,s_1)=(0,0))=1$. Assume that $\mu((s_0,s_1)=(0,0))=1$; we shall obtain a contradiction.

Note that the dual graph $\LL_1$ of $\LL_2$ is a vertex-transitive, non-amenable, planar graph in which each vertex has degree 3. A contour configuration $\phi(\omega)\subset E(\LL_1)$ is a subset of edges of $\LL_1$ in which each present edge has a dual edge in $E(\LL_2)$ joining exactly one vertex with state 0 and one vertex with state 1 in $\omega$. As usual, a contour is a maximal connected component of present edges in a contour configuration. Each contour configuration, by definition, must be an even-degree subgraph of $\LL_1$. Given that $\LL_1$ has vertex-degree 3, each vertex in $\LL_1$ is incident to zero or two present edges in a contour configuration. Let $t$ be the number of infinite contours. Each infinite contour on $\LL_1$ must be a doubly infinite self-avoiding path. 

If $t\geq 1$, let $C_{\infty}$ be an infinite contour. Then $\HH^2\setminus C_{\infty}$ has exactly two unbounded components, since $C_{\infty}$ is a doubly-infinite self-avoiding path. Let $V_{\infty}$ be the set of all vertices of $\LL_2$ lying on a face crossed by $C_{\infty}$. Given $C_{\infty}$ a fixed orientation. Let $V_{\infty}^{+}$ (resp.\ $V_{\infty}^{-}$) be the subset of $V_{\infty}$ consisting of all the vertices in $V_{\infty}$ on the left hand side (resp.\ right hand side) of $C_{\infty}$ when traversing $C_{\infty}$ along the given orientation. Then exactly one of $V_{\infty}^{+}$ and $V_{\infty}^{-}$ is part of an infinite 1-cluster, and the other is part of an infinite 0-cluster. Therefore we have $s_0\geq1$ and $s_1\geq 1$ if $t\geq 1$.

The rest of the proof is an adaptation of the proof of \Cref{pp} to different graphs.
Define a \textbf{generalized contour} in a contour configuration $\phi\subset E(\LL_1)$ to be either a single vertex in $V(\LL_1)$ which has no incident present edges in $\phi$, or a contour in $\phi$. This way for each vertex $v\in V(\LL_1)$, and each contour configuration $\phi\subset E(\LL_1)$, there is a unique generalized contour passing through $v$.
By the arguments in the last paragraph, if $(s_0,s_1)=(0,0)$, then $t=0$. 

Now consider the case when $(s_0, s_1, t)=(0,0,0)$. Given a cluster $C$ in $\omega$, there is a unique contour $C'$ of $\phi(\omega)$ surrounding $\xi$. Similarly, for every generalized contour $C'$, there is a cluster $C''$ that surrounds $C'$. Let $\mathcal{C}_0$ denote the set of all generalized contours of $\omega$. We set
\begin{eqnarray*}
\mathcal{C}_{j+1}:=\{C'':K\in\mathcal{C}_j\}.
\end{eqnarray*}
For $C\in \mathcal{C}_0$ and $v\in V(\LL_1)$, let $r(C):=\sup\{j:C\in \mathcal{C}_j\}$, and define $r(v):=r(C)$ if $C$ is the generalized contour of $v$ in $\phi(\omega)$. For each $r$ let $\omega^r$ be the set of edges in $E(\LL_1)$ whose both end-vertices $u,v\in V(\LL_1)$ satisfy $r(v)\leq r$ and $r(u)\leq r$. Then $\omega^r$ is an invariant bond percolation and
\begin{eqnarray*}
\lim_{r\rightarrow\infty}\mathbf{E}[\deg_{\omega^r}v]=3.
\end{eqnarray*}
By Lemma \ref{tbng}, we deduce that $\omega^r$ has infinite components with positive probability for all sufficiently large $r$. 
 
However, since $(s_0,s_1,t)=(0,0,0)$, by the arguments above each vertex in $v\in V(\LL_1)$ is surrounded by infinitely many contours. This implies that for any $r\in \mathbb{N}$, for any vertex $v\in V(\LL_1)$, there exists a contour $C$ surrounding $v$, such that $r(C)>r$, and therefore $C\cap \omega^r=\emptyset$. As a result, the components in $\omega^r$ including $v$ is finite. The contradiction implies the lemma.
\end{proof}

\begin{lemma}\label{t01}Let $\LL_2$ be the regular tiling of the hyperbolic plane with triangles, such that each vertex has degree $n\geq 7$. Consider an $\mathrm{Aut}(\LL_2)$-invariant site percolation $\omega$ on $\LL_2$ with distribution $\mu$. Let $t$ be the number of infinite contours. Then $\mu$-a.s.\ $t\in\{0,\infty\}$
\end{lemma}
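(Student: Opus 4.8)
\bigskip
\noindent\textbf{Proof proposal for \Cref{t01}.} The plan is to first reduce the possible values of $t$ to $\{0,1,\infty\}$ using \Cref{lbs}, and then to exclude $t=1$ by exploiting that every infinite contour on $\LL_1$ is a doubly-infinite self-avoiding path, hence a copy of $\ZZ$, for which the critical Bernoulli percolation probability equals $1$.

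First, the site configuration $\omega$ induces a contour configuration $\phi(\omega)\subseteq E(\LL_1)$ exactly as in the proof of \Cref{nzz}: a present edge of $\LL_1$ has a dual edge separating a state-$0$ and a state-$1$ vertex of $\LL_2$. Since $\mathrm{Aut}(\LL_2)$ acts on $\LL_1$ through planar duality and $\mu$ is $\mathrm{Aut}(\LL_2)$-invariant, the law of $\phi(\omega)$ is an invariant bond percolation on the vertex-transitive, non-amenable, planar, one-ended graph $\LL_1$. Its infinite $1$-clusters are precisely the infinite contours, so \Cref{lbs} gives $\mu$-a.s.\ $t\in\{0,1,\infty\}$.

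It remains to show $\mu(t=1)=0$. Suppose not. The event $\{t=1\}$ is $\mathrm{Aut}(\LL_2)$-invariant, so $\bar\mu:=\mu(\,\cdot\mid t=1)$ is again invariant under the action of $\mathrm{Aut}(\LL_2)$ on $\LL_1$, and under $\bar\mu$ there is $\bar\mu$-a.s.\ a unique infinite contour $C_\infty$. Because $\LL_1$ has vertex degree $3$, each vertex of a contour carries $0$ or $2$ present edges, so $C_\infty$ is a doubly-infinite self-avoiding path and thus graph-isomorphic to $\ZZ$. Let $\omega'\subseteq E(\LL_1)$ be the edge set of $C_\infty$; on the event $\{t=1\}$ the map $\omega\mapsto\omega'$ is equivariant for the $\mathrm{Aut}(\LL_2)$-action, so the $\bar\mu$-law of $\omega'$ is an invariant bond percolation on $\LL_1$ that $\bar\mu$-a.s.\ has a single connected component. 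Since $\LL_1$ is non-amenable and, being a one-ended transitive planar graph, unimodular (see \cite{LP}), \Cref{l2} yields $p_c(\omega')<1$ $\bar\mu$-a.s.; but $\omega'\cong\ZZ$ forces $p_c(\omega')=1$, a contradiction. Hence $\mu(t=1)=0$, and together with the previous paragraph we conclude $t\in\{0,\infty\}$ $\mu$-a.s.

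The argument is short given the tools already in place; the only points needing care are that conditioning on the invariant event $\{t=1\}$ preserves invariance (so that no ergodicity of $\mu$ is required), and the verification that $\LL_1$ satisfies the one-endedness and unimodularity hypotheses of \Cref{lbs} and \Cref{l2} — this is where I expect the minor friction to lie. An alternative that avoids \Cref{lbs} altogether is to rule out every finite $k\ge1$ simultaneously: on $\{t=k\}$, select one of the $k$ infinite contours uniformly at random to obtain an invariant single-component percolation on $\LL_1$, and reach the same contradiction with $p_c=1$ via \Cref{l2}, exactly as in the proof of \Cref{m51}.
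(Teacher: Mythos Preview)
Your proof is correct and follows essentially the same approach as the paper: reduce to $t\in\{0,1,\infty\}$ via \Cref{lbs}, then exclude $t=1$ by observing that the unique infinite contour would be a doubly-infinite self-avoiding path (since $\LL_1$ is cubic), hence $p_c=1$, contradicting \Cref{l2}. Your added care about conditioning on the invariant event $\{t=1\}$ to preserve invariance, and your verification of the unimodularity hypothesis for $\LL_1$, are welcome details that the paper's proof leaves implicit.
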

\begin{proof}By \Cref{lbs}, $\mu$-a.s.\ $t\in \{0,1,\infty\}$. Let $\sA$ be the event that $t=1$. Assume $\mu(\sA)>0$, we shall obtain a contradiction. Conditional on the event $\sA$, let $\tau$ be the unique infinite contour. Since $\tau$ is a infinite, connected, even-degree subgraph of $\LL_1$, and each vertex of $\LL_1$ has degree 3, $\tau$ must be a doubly infinite self-avoiding path. Then $p_c(\tau)=1$. But this is a contradiction to \Cref{l2}. Then the lemma follows.
\end{proof}

\begin{lemma}\label{ozz}Let $\LL_2$ be the regular tiling of the hyperbolic plane with triangles, such that each vertex has degree $n\geq 7$. Consider an $\mathrm{Aut}(\LL_2)$-invariant site percolation $\omega$ on $\LL_2$ with distribution $\mu$. Assume that $\mu$ is $\mathrm{Aut}(\LL_2)$-ergodic. Let $s_0$ (resp.\ $s_1$) be the number of infinite 0-clusters (resp.\ infinite 1-clusters) in the percolation. Then
\begin{eqnarray*}
\mu((s_0,s_1)=(1,1))=0.
\end{eqnarray*}
\end{lemma}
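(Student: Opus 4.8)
The plan is to recode the site percolation $\omega$ as a bond percolation on $\LL_2$ and then invoke \Cref{lbs}, exactly as in the proof of \Cref{n0111}. Since the event $\sA:=\{(s_0,s_1)=(1,1)\}$ is $\mathrm{Aut}(\LL_2)$-invariant, by the ergodicity hypothesis $\mu(\sA)\in\{0,1\}$, and I would argue by contradiction assuming $\mu(\sA)=1$ (in fact, as the argument will show, ergodicity is not even needed for this particular statement).

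From $\omega$ I would build the bond configuration $\omega_b\in\{0,1\}^{E(\LL_2)}$ by declaring an edge $e=\{x,y\}\in E(\LL_2)$ to be open if and only if $\omega(x)=\omega(y)$. Because $\mu$ is $\mathrm{Aut}(\LL_2)$-invariant, the law of $\omega_b$ is an $\mathrm{Aut}(\LL_2)$-invariant bond percolation on $\LL_2$. The key (elementary) observation is that the connected components of $\omega_b$ coincide with the monochromatic clusters of $\omega$: every vertex set spanned by open edges of $\omega_b$ is monochromatic for $\omega$, and conversely any two $\LL_2$-adjacent vertices of the same state are joined by an open edge of $\omega_b$, so each maximal connected monochromatic set of vertices is a component of $\omega_b$. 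Consequently the number of infinite components of $\omega_b$ equals $s_0+s_1$.

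Now $\LL_2$ is vertex-transitive (hence quasi-transitive), non-amenable, planar, and one-ended (being a regular triangulation of $\HH^2$), so \Cref{lbs} applies to the invariant bond percolation $\omega_b$ and gives that $\mu$-a.s.\ the number of its infinite components lies in $\{0,1,\infty\}$. On the event $\sA$ this number is $2$, a contradiction; hence $\mu(\sA)=0$.

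I do not expect a genuine obstacle here: the only points requiring a line of care are verifying that $\LL_2$ meets the hypotheses of \Cref{lbs} (non-amenability is part of the standing assumptions on $\LL_2$; one-endedness is immediate for a tiling of the plane) and the routine verification that components of $\omega_b$ are exactly the monochromatic clusters of $\omega$. Everything else is a direct citation of \Cref{lbs}.
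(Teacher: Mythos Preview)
Your proof is correct and uses the same underlying idea as the paper: recode the site configuration as the bond configuration $\omega_b$ of ``monochromatic'' edges, observe that its components are exactly the $0$- and $1$-clusters, and apply \Cref{lbs} to rule out exactly two infinite components. The paper's own proof of \Cref{ozz} is slightly more roundabout---it first passes to an induced constrained configuration on the $[3,4,n,4]$ lattice and then cites \Cref{n0111}---but the proof of \Cref{n0111} is precisely the bond-recoding argument you wrote down, so your direct application to $\LL_2$ is in fact a small simplification with no loss.
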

\begin{proof}We may construct a $[3,4,n,4]$ lattice embedded into the hyperbolic plane $\HH^2$, such that the $[3,4,n,4]$ lattice, $\LL_1$ and $\LL_2$ satisfy the conditions as described in \Cref{xorh}. Then each percolation configuration $\omega$ in $\{0,1\}^{V(\LL_2)}$ induces a constrained configuration $\widetilde{\omega}\in \Omega$ by the condition that $\omega$ and $\widetilde{\omega}$ has the same contour configuration; and that $\omega(v)=1$ for $v\in V(\LL_2)$ if and only if all the vertices of the $[3,4,n,4]$ lattice in the degree-$n$ face of $\LL_1$ containing $v$ have state 1 in $\widetilde{\omega}$.
Then the lemma follows from \Cref{n0111}.
\end{proof}

\begin{lemma}\label{nzi}\label{lnz}Let $\LL_2$ be the regular tiling of the hyperbolic plane with triangles, such that each vertex has degree $n\geq 7$. Consider a site percolation $\omega$ on $\LL_2$ with distribution $\mu$. Let $s_0$ (resp.\ $s_1$) be the number of infinite 0-clusters (resp.\ infinite 1-clusters) in the percolation. Then
it is not possible that 
\begin{eqnarray*}
(s_0,s_1)=(0,k);\ \mathrm{for}\ 2\leq k\leq \infty.
\end{eqnarray*}
\end{lemma}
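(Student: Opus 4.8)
Since the statement carries no invariance or ergodicity hypothesis, the plan is to prove it as a purely combinatorial-topological fact about an arbitrary fixed configuration $\omega\in\{0,1\}^{V(\LL_2)}$. Write $\LL_1$ for the planar dual of $\LL_2$, which is a cubic tiling of $\HH^2$, and let $\phi=\phi(\omega)\subseteq E(\LL_1)$ be the associated contour configuration: an edge of $\LL_1$ is present iff its dual edge of $\LL_2$ joins two vertices of opposite state. Because $\LL_1$ is $3$-regular, every vertex of $\LL_1$ is incident to $0$ or $2$ present edges, so each contour is either a finite self-avoiding cycle or a doubly-infinite self-avoiding path. The first step is the reduction: if $s_0=0$ then $\phi$ has no infinite contour. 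Indeed, an infinite contour is a doubly-infinite self-avoiding path, and the two sides of it (among the vertices of $\LL_2$ lying on faces crossed by it) form two infinite connected monochromatic sets, one of state $0$ and one of state $1$ -- exactly the argument already used in the proof of \Cref{nzz} -- so $s_0\geq 1$, a contradiction. Hence from now on I may assume that all contours are finite cycles.

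Next, suppose for contradiction that $s_1=k\geq 2$, and pick two distinct infinite $1$-clusters $\xi_1\neq\xi_2$. The key observation is that any cluster, taken together with the monochromatic edges of $\LL_2$ joining its vertices, is a connected subset of $\HH^2$ disjoint from $\phi$ (a monochromatic edge of $\LL_2$ is never crossed by a present edge of $\LL_1$), hence lies inside a single connected component of $\HH^2\setminus\phi$. Since the tiling is locally finite, every bounded component of $\HH^2\setminus\phi$ contains only finitely many vertices of $\LL_2$, so $\xi_1$ and $\xi_2$ each lie in an \emph{unbounded} component of $\HH^2\setminus\phi$. I would then show that $\HH^2\setminus\phi$ has at most one unbounded component: as $\phi$ is a locally finite disjoint union of finite simple closed curves, its outermost cycles bound pairwise disjoint closed topological disks, and the complement $W$ of the union of these disks is open, connected (the complement of a locally finite disjoint family of closed disks in $\HH^2$ is connected), unbounded, and disjoint from $\phi$ with $\partial W\subseteq\phi$; thus $W$ is the unique unbounded component. (Equivalently, and more in the spirit of the paper, one reroutes a path between points of two putative distinct unbounded components around the finitely many finite contours it meets, exactly as in the proof of \Cref{isc}, to obtain a path avoiding $\phi$.) Consequently $\xi_1,\xi_2\subseteq W$.

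To conclude, I would join a vertex of $\xi_1$ to a vertex of $\xi_2$ by a path of $\LL_2$ that stays inside the open connected set $W$ -- possible because $W$ is open and connected and the tiling is locally finite, again by pushing a path just outside each of the finitely many finite contours it would otherwise meet, as in the proofs of \Cref{rl} and \Cref{isc}. Such a path crosses no present edge of $\LL_1$, so all its edges are monochromatic and all its vertices lie in one cluster; therefore $\xi_1=\xi_2$, a contradiction. Hence $s_0=0$ forces $s_1\leq 1$, which rules out $(s_0,s_1)=(0,k)$ for $2\leq k\leq\infty$.

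The reduction in the first paragraph is essentially contained in the proof of \Cref{nzz}, and the ``monochromatic path implies single cluster'' bookkeeping is routine. The part needing genuine care is the topology of $\HH^2\setminus\phi$ when there are infinitely many finite contours: one must rule out that the interiors of a nested family of contours exhaust $\HH^2$ (automatic here, since an infinite cluster exists, every cluster lying in a bounded component if the interiors exhaust the plane), and one must verify both the uniqueness of the unbounded component and the existence of a $W$-confined lattice path joining $\xi_1$ to $\xi_2$. I expect this last rerouting step -- pushing a path just outside each finite contour and invoking local finiteness so that only finitely many contours are involved -- to be the main obstacle to a fully rigorous write-up, though it is the same standard device already used in \Cref{rl} and \Cref{isc}.
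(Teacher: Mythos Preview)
Your proposal is correct and follows essentially the same approach as the paper: both arguments boil down to (i) an infinite contour would force an infinite $0$-cluster, so $s_0=0$ implies all contours are finite, and (ii) with only finite contours, a path between two putative infinite $1$-clusters can be rerouted to avoid $\phi$ entirely, collapsing the clusters. The paper simply orders these two steps differently and asserts the rerouting in one sentence, while you expand it via the topology of $\HH^2\setminus\phi$; your direct alternative---a finite lattice path meets only finitely many finite contours, so push around each as in \Cref{isc}---is both simpler and exactly what the paper has in mind, so you can safely drop the unbounded-component discussion.
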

\begin{proof}Assume that $s_0=0$, and that there exist at least two distinct infinite 1-clusters $C_1$ and $C_2$. Let $\ell$ be a path consisting of edges of $\LL_2$ joining a vertex $x\in C_1$ and a vertex $y\in C_2$. If $\ell$ does not cross infinite contours, then we can find another path $\ell'$ joining $x$ and $y$ such that $\ell'$ does not cross contours at all. Then $C_1=C_2$. The contradiction implies that there exists at least one infinite contour in $\LL_1$. Since each infinite contour in $\LL_1$ is a doubly infinite self-avoiding path, if there exists an infinite contour, then there exist at least one infinite 0-cluster and at least one infinite 1-cluster.  But this is a contradiction to the fact that $s_0=0$.
\end{proof}

\begin{definition}Let $G=(V,E)$ be a graph. Given a set $A\in 2^V$, and a vertex $v\in V$, denote $\Pi_{v}A=A\cup\{v\}$. For $\sA\subset 2^V$, we write $\Pi_v\sA=\{\Pi_v A: A\in \mathcal{A}\}$. A site percolation process $(\mathbf{P},\omega)$ on $G$ is \textbf{insertion-tolerant} if $\mathbf{P}(\Pi_v\sA)>0$ for every $v\in V$ and every event $\sA\subset 2^V$ satisfying $\mathbf{P}(\sA)>0$.
\end{definition}

We can similarly define an insertion tolerant bond percolation by replacing a vertex with an edge in the above definition. A bond percolation is \textbf{deletion tolerant} if $\mathbf{P}[\Pi_{\neg e}\sA]>0$ whenever $e\in E(G)$ and $\mathbf{P}(\sA)>0$, where $\Pi_{\neg e}\omega=\omega\setminus\{e\}$.

\begin{lemma}\label{icie}Let $G$ be a graph with a transitive, unimodular, closed automorphism group $\Gamma\subset \mathrm{Aut}(G)$. If $(\mathbf{P},\omega)$ is a $\Gamma$-invariant, insertion-tolerant percolation process on $G$ with  infinitely many infinite clusters a.s., then a.s. every infinite cluster has infinitely many ends.
\end{lemma}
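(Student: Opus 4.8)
The plan is to reduce the statement to the cluster indistinguishability theorem of Lyons and Schramm, which asserts that for a $\Gamma$-invariant, insertion-tolerant percolation on a graph whose automorphism group has a transitive, unimodular, closed subgroup $\Gamma$, every Borel cluster property is a.s.\ shared by all infinite clusters or by none (see \cite{LP}). Applying this to the property ``has exactly $k$ ends'' for each $k\in\{1,2,3,\dots\}$, we obtain that a.s.\ there is a well-defined $M=M(\omega)\in\{1,2,3,\dots\}\cup\{\infty\}$ such that every infinite cluster of $\omega$ has exactly $M$ ends (with $M=\infty$ meaning infinitely many ends). Since $\Gamma$ is transitive, $\mathbf{P}$ is $\Gamma$-invariant, and there are infinitely many infinite clusters a.s., it suffices to prove $\mathbf{P}(M=m)=0$ for every finite $m$; the conclusion $M=\infty$ a.s.\ then follows.

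Suppose, for contradiction, that $\mathbf{P}(M=m)>0$ for some finite $m\geq 1$. On $\{M=m\}$ there are infinitely many infinite clusters, each with exactly $m$ ends. Fix an enumeration of $V$ and of the countably many finite paths in $G$, and on $\{M=m\}$ define measurably: $C_1$ to be the infinite cluster containing the first enumerated vertex lying in an infinite cluster, $C_2$ the infinite cluster containing the first enumerated vertex lying in an infinite cluster other than $C_1$, and $p$ the first enumerated path in $G$ joining the first vertex of $C_1$ to the first vertex of $C_2$. Partitioning $\{M=m\}$ according to the value of $p$, we may fix a finite path $p_0$ with $\mathbf{P}(\{M=m\}\cap\{p=p_0\})>0$. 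Opening the vertices of $p_0$ one at a time and iterating the insertion-tolerance inequality, the resulting event still has positive $\mathbf{P}$-probability.

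On this resulting event the path $p_0$ is entirely open, so $C_1$ and $C_2$ have been merged into a single infinite cluster $C'$. Because the clusters of $\omega$ met by $p_0$ are pairwise non-adjacent in $\omega$, removing the finite vertex set $V(p_0)$ from $C'$ leaves exactly those clusters of $\omega$ that $p_0$ meets, with $V(p_0)$ deleted; the infinite ones among these are the $r\geq 2$ original infinite clusters met by $p_0$ (with a finite set removed), and the rest are finite. Using the standard facts that deleting a finite vertex set does not change the number of ends of an infinite graph, and that if a finite vertex set disconnects an infinite graph then its number of ends equals the sum of the numbers of ends of the resulting infinite components, we conclude that $C'$ has $r\cdot m\geq 2m>m$ ends. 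On the other hand, infinitely many infinite clusters are disjoint from $V(p_0)$ and hence unchanged, each still with exactly $m$ ends. Thus, with positive $\mathbf{P}$-probability there exist two infinite clusters with different numbers of ends, contradicting cluster indistinguishability. Hence $\mathbf{P}(M=m)=0$ for every finite $m$, so $M=\infty$ a.s., i.e.\ a.s.\ every infinite cluster has infinitely many ends. For bond percolation the argument is identical, with $p_0$ a connecting path of edges.

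The main obstacle is the appeal to cluster indistinguishability: one must check its hypotheses in the present setting (that $\Gamma$ is transitive, unimodular and closed, and that $\mathbf{P}$ is $\Gamma$-invariant and insertion-tolerant) and that ``number of ends of a cluster'' is an admissible Borel cluster property; everything after that is a routine insertion-tolerance manipulation. It appears difficult to avoid indistinguishability here, since a purely mass-transport argument would need to control the number of ends of the other infinite clusters with which $C_1$ is merged, and a priori these could already have infinitely many ends, which would destroy the counting in the previous paragraph.
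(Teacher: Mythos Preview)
Your argument is correct. The paper itself does not give a proof of this lemma; it simply cites Proposition~3.10 of \cite{LS99} (Lyons--Schramm). What you have written is essentially the proof that Lyons and Schramm give there: reduce to indistinguishability of infinite clusters, note that ``number of ends'' is an invariant Borel cluster property, and then use insertion tolerance to merge two $m$-ended clusters through a fixed finite path, producing a cluster with at least $2m$ ends while leaving infinitely many other $m$-ended clusters untouched. So there is no methodological difference to discuss; you have supplied the standard argument that the paper outsources to the reference.

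One minor expository point: in your end-count step you implicitly use that when $V(p_0)$ is removed from the merged cluster $C'$, the pieces coming from distinct original clusters $D_i$ of $\omega$ are pairwise non-adjacent in $G$ (since distinct open clusters in $\omega$ cannot be adjacent), so the infinite components of $C'\setminus V(p_0)$ really are distributed among the $D_i\setminus V(p_0)$ without any recombination. You say this, but it is the place a careful reader will pause, so it is worth stating explicitly. With that made precise, the counting $\mathrm{ends}(C')\geq r\cdot m\geq 2m>m$ goes through exactly as you wrote.
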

\begin{proof}See Proposition 3.10 of \cite{LS99}; see also \cite{HP} and \cite{HPS}.
\end{proof}

\begin{lemma}\label{oiz}\label{lnz}Let $\LL_2$ be the regular tiling of the hyperbolic plane with triangles, such that each vertex has degree $n\geq 7$. Consider an $\mathrm{Aut}(\LL_2)$-invariant, insertion-tolerant site percolation $\omega$ on $\LL_2$ with distribution $\mu$. Assume that $\mu$ is $\mathrm{Aut}(\LL_2)$-ergodic. Let $s_0$ (resp.\ $s_1$) be the number of infinite 0-clusters (resp.\ infinite 1-clusters) in the percolation. Then
\begin{eqnarray*}
\mu((s_0,s_1)=(1,\infty))=0.
\end{eqnarray*}
\end{lemma}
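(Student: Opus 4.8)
The plan is to argue by contradiction, using ergodicity to promote the hypothesized positive-probability event to a full-measure event, and then to combine the insertion-tolerance structure of the state-$1$ configuration with \Cref{icie} and \Cref{l147}. The key observation is that here it is the $1$-clusters that are infinitely many, and insertion-tolerance is stated precisely for the addition of state-$1$ vertices, so the two hypotheses line up directly.

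First I would suppose, for contradiction, that $\mu((s_0,s_1)=(1,\infty))>0$. Since the event $\{(s_0,s_1)=(1,\infty)\}$ is $\mathrm{Aut}(\LL_2)$-invariant and $\mu$ is $\mathrm{Aut}(\LL_2)$-ergodic, this event then has $\mu$-measure $1$. In particular, $\mu$-a.s. there are infinitely many infinite $1$-clusters and exactly one infinite $0$-cluster.

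Next I would view $\omega$ as a percolation process whose open vertices are the state-$1$ vertices. This process is $\Gamma$-invariant with $\Gamma=\mathrm{Aut}(\LL_2)$, which is transitive (as $\LL_2$ is vertex-transitive) and, being the automorphism group of a one-ended transitive planar graph, unimodular by the discussion recalled earlier (see \cite{LP}); moreover, with respect to state-$1$ vertices the process is insertion-tolerant by hypothesis. Since a.s. there are infinitely many infinite $1$-clusters, \Cref{icie} applies and shows that a.s. every infinite $1$-cluster has infinitely many ends. Now a.s. there is at least one infinite $1$-cluster $\eta$, and $\eta$ has infinitely many ends by the previous step. By \Cref{l147}, the presence of an infinite $1$-cluster with infinitely many ends forces the existence of at least two infinite $0$-clusters, i.e. $s_0\geq 2$ a.s. This contradicts $s_0=1$ a.s., completing the proof.

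The only points requiring care are bookkeeping rather than analysis. First, one must confirm that insertion-tolerance is the correct hypothesis to invoke: because it is stated for adjoining state-$1$ vertices, it is \Cref{icie} applied to the $1$-clusters (the side on which there are infinitely many infinite clusters) that yields infinitely many ends, and it is \Cref{l147} — which manufactures infinite $0$-clusters from an end-rich infinite $1$-cluster — that produces the contradiction with $s_0=1$. Second, one must verify that $\mathrm{Aut}(\LL_2)$ is transitive and unimodular so that \Cref{icie} is applicable, which follows from the one-endedness and planarity of the transitive graph $\LL_2$. No delicate geometric estimate beyond the content already packaged in \Cref{icie} and \Cref{l147} is needed. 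I expect the main conceptual subtlety to be exactly this matching of conventions: the companion statement $\mu((s_0,s_1)=(\infty,1))=0$ is genuinely different, since insertion-tolerance of state-$1$ vertices does not directly control the $0$-clusters, whereas the present orientation $(1,\infty)$ is the one for which insertion-tolerance applies on the infinitely-many side.
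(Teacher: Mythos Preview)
Your proof is correct and follows essentially the same approach as the paper: assume $(s_0,s_1)=(1,\infty)$ with positive probability (hence full probability by ergodicity), invoke \Cref{icie} to deduce that every infinite $1$-cluster has infinitely many ends, and then apply \Cref{l147} to obtain at least two infinite $0$-clusters, contradicting $s_0=1$. Your write-up is more explicit about verifying the hypotheses of \Cref{icie} (transitivity, unimodularity, and the correct alignment of insertion-tolerance with the $1$-clusters), but the argument is the same.
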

\begin{proof}Assume that $\mu(s_0,s_1)=(1,\infty)=1$; we shall obtain a contradiction. By \Cref{icie}, a.s. every infinite 1-cluster has infinitely many ends. Then the lemma follows from \Cref{l147}.
\end{proof}

\begin{proposition}\label{p118}Let $\LL_2$ be the regular tiling of the hyperbolic plane with triangles, such that each vertex has degree $n\geq 7$. Consider an $\mathrm{Aut}(\LL_2)$-invariant, $\mathrm{Aut}(\LL_2)$-ergodic, insertion-tolerant site percolation $\omega$ on $\LL_2$ with distribution $\mu$. Let $s_0,s_1,t$ be given as in \Cref{nzz,t01}, then
\begin{eqnarray*}
(s_0,s_1,t)\in\{(0,1,0),(1,0,0),(\infty,\infty,\infty)\}\ a.s.
\end{eqnarray*}
\end{proposition}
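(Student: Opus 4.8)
The plan is to first pin down the admissible values of the pair $(s_0,s_1)$ using the lemmas already proved in this section, and then, on each surviving case, to determine $t$. As a coarse first step, the infinite $0$-clusters and the infinite $1$-clusters each constitute an $\mathrm{Aut}(\LL_2)$-invariant site percolation on the quasi-transitive, non-amenable, planar, one-ended graph $\LL_2$, so \Cref{lbs} yields $s_0,s_1\in\{0,1,\infty\}$ a.s.; likewise the infinite contours form an invariant bond percolation on $\LL_1$, and \Cref{t01} gives $t\in\{0,\infty\}$ a.s.

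Next I would eliminate all but three of the nine a priori possibilities for $(s_0,s_1)$. The pair $(0,0)$ is excluded by \Cref{nzz}; $(1,1)$ by \Cref{ozz}; $(0,\infty)$ by \Cref{nzi}, and $(\infty,0)$ by the same lemma applied to the configuration $1-\omega$ (note that \Cref{nzi} is a deterministic statement, symmetric in the two states); $(1,\infty)$ by \Cref{oiz}, and $(\infty,1)$ by \Cref{oiz} applied to $1-\omega$ (insertion-tolerance and $\mathrm{Aut}(\LL_2)$-ergodicity are both preserved under $\omega\mapsto 1-\omega$). This leaves a.s. $(s_0,s_1)\in\{(0,1),(1,0),(\infty,\infty)\}$.

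To fix $t$ I would use the two implications relating clusters and contours. First, as observed in the proof of \Cref{nzz}, every infinite contour is a doubly-infinite self-avoiding path in $\LL_1$ whose complement in $\HH^2$ has exactly two unbounded components, and the $\LL_2$-vertices on faces crossed by it fall into an infinite $0$-cluster on one side and an infinite $1$-cluster on the other; hence $t\geq 1$ forces $s_0\geq 1$ and $s_1\geq 1$, so on the events $(s_0,s_1)=(0,1)$ and $(s_0,s_1)=(1,0)$ we must have $t=0$. Second, on $(s_0,s_1)=(\infty,\infty)$, I would pick $x$ in an infinite $0$-cluster, $y$ in an infinite $1$-cluster, and a path $\ell_{xy}$ in $\LL_2$: the number of crossings of $\ell_{xy}$ with present edges of $\LL_1$ is congruent mod $2$ to the number of state changes along $\ell_{xy}$, hence odd since $\omega(x)\neq\omega(y)$; each finite contour is a simple cycle bounding a disk containing only finitely many $\LL_2$-vertices, no monochromatic cluster crosses a contour, so the infinite clusters of $x$ and $y$ lie in the unbounded component of the complement of every finite contour, and by the Jordan curve theorem $\ell_{xy}$ crosses each finite contour an even number of times; therefore it crosses infinite contours an odd number of times and $t\geq 1$, which with \Cref{t01} gives $t=\infty$. (Alternatively, one can lift $\omega$ to a constrained configuration $\widetilde\omega\in\Omega$ on a $[3,4,n,4]$ lattice with the same contour configuration, as in the proof of \Cref{ozz}, and invoke \Cref{io} directly.) Assembling the three cases yields $(s_0,s_1,t)\in\{(0,1,0),(1,0,0),(\infty,\infty,\infty)\}$ a.s.

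The only step needing genuine care is the implication ``$s_0\geq 1$ and $s_1\geq 1\Rightarrow t\geq 1$'': one must make the parity/Jordan-curve bookkeeping airtight — that finite contours are simple cycles enclosing finitely many vertices, that a monochromatic cluster, drawn inside $\LL_2$, is disjoint from every contour, and hence that $x$ and $y$ sit in the unbounded face of the complement of each finite contour. The cleanest route is the reduction to the constrained model on the $[3,4,n,4]$ lattice together with \Cref{io}, where exactly this accounting has already been carried out; the rest is routine combination of the lemmas of this section.
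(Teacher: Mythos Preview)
Your approach mirrors the paper's proof almost exactly: both invoke \Cref{lbs} and \Cref{t01} to pin down the possible values, then quote \Cref{nzz,ozz,nzi,oiz} to reduce $(s_0,s_1)$ to three cases, and finally use the two implications ``an infinite contour forces $s_0\geq 1$ and $s_1\geq 1$'' and ``$s_0\geq 1$ and $s_1\geq 1$ force $t\geq 1$'' to nail down $t$ on each case. The parity/Jordan argument you spell out for the last implication is exactly what the paper packages (more tersely) as ``if $s_0+s_1\geq 2$, then $t\geq 1$'', and your suggested reduction to \Cref{io} via the $[3,4,n,4]$ lattice is the same device used in the proof of \Cref{ozz}.

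There is, however, one genuine slip. You write that ``insertion-tolerance \dots [is] preserved under $\omega\mapsto 1-\omega$'' in order to apply \Cref{oiz} to rule out $(s_0,s_1)=(\infty,1)$. This is false in general: insertion-tolerance of $1-\omega$ is precisely \emph{deletion}-tolerance of $\omega$, which is a separate hypothesis. \Cref{oiz} as stated uses \Cref{icie}, and \Cref{icie} is applied to the $1$-clusters of an insertion-tolerant process; flipping to $0$-clusters requires the dual hypothesis. The paper's own proof simply lists \Cref{nzz,ozz,nzi,oiz} without spelling out how $(\infty,1)$ is handled, so it does not make this step explicit either; in all the applications in the paper (Ising and XOR Ising measures) the underlying measures have full finite energy, hence are both insertion- and deletion-tolerant, and the symmetry goes through. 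If you want a clean self-contained argument under insertion-tolerance alone, you should either add deletion-tolerance to the hypotheses, or supply a separate argument for $(\infty,1)$ that does not pass through \Cref{icie} applied to $0$-clusters.
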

\begin{proof} By \Cref{lbs}, we have $\mu$-a.s.\ $s_0,s_1,t\in\{0,1,\infty\}$. By \Cref{nzz,ozz,nzi,oiz}, we have $\mu$-a.s.\ $(s_0,s_1)\in\{(0,1),(1,0),(\infty,\infty)\}$. By \Cref{t01}, $\mu$-a.s.\ $t\in\{0,\infty\}$. Moreover, since each infinite contour must be a doubly infinite self-avoiding path, if $t=\infty$, then $s_0+s_1=\infty$. This implies that if $s_0+s_1=1$, then $t=0$, a.s. Moreover, if $s_0+s_1\geq 2$, then $t\geq1$. Then the proposition follows.
\end{proof}

\begin{example}Consider Example \ref{exl2}. We have 
\begin{eqnarray*}
\frac{1}{n-1}\leq p_c<\frac{1}{2}<p_u=1-p_c\leq \frac{n}{n-1}
\end{eqnarray*}
 by Theorems 1.1, 1.2. and 1.3 of \cite{BS20}. 
By \Cref{p118}, we have
\begin{itemize}
\item if $p\in [0,p_c]$, a.s. $(s_0,s_1,t)=(1,0,0)$;
\item if $p\in (p_c,p_u)$, a.s. $(s_0,s_1,t)=(\infty,\infty,\infty)$;
\item if $p\in[p_u,1]$, a.s. $(s_0,s_1,t)=(0,1,0)$.
\end{itemize}
\end{example}

\section{Proof of \Cref{ipl} and \Cref{c34}}\label{pipl}

In this section, we prove \Cref{ipl} and \Cref{c34}. The proof of \Cref{ipl} I. is based on a stochastic domination between the i.i.d. Bernoulli site percolation and the Ising model on the same graph, which correspond to random cluster models with $q=1$ and $q=2$, respectively. The proof of of \Cref{ipl} II(a) is based on the ergodicity and symmetry of the measure $\mu^f$, as well as \Cref{p118}, which lists all the possible numbers of infinite 0-clusters and infinite-1 clusters which have strictly positive probability to occur. \Cref{ipl} II (b)(c)(d) then follow from the fact that any one of the conditions (b)(c) and (d) implies (a). We then prove \Cref{c34} by an inequality between $p_{c,2}^w$ and $p_{u,2}^f$, and \Cref{ipl} II(c). \Cref{ipl} III is proved by the well-known combinatorial correspondence between the Ising model and its random-cluster representation. \Cref{ipl} IV then follows from the decomposition of $\mu^f$ as a convex combination of $\mu^+$ and $\mu^-$ and an inequality between $p_{u,2}^f$ and $p_{u,2}^w$.

  We shall first review the stochastic domination we use to prove these results.

\begin{definition}(Stochastic Domination) Let $G=(V,E)$ be a graph. Let $\Omega=\{0,1\}^E$ (resp.\ $\Omega=\{0,1\}^V$). Then the configuration space $\Omega$ is a partially ordered set with partial order given by $\omega_1\leq \omega_2$ if $\omega_1(e)\leq \omega_2(e)$ for all $e\in E$ (resp.\ $\omega_1(v)\leq \omega_2(v)$ for all $v\in V$). A random variable $X:\Omega\rightarrow\RR$ is  called increasing if $X(\omega_1)\leq X(\omega_2)$ whenever $\omega_1\leq \omega_2$. An  event $A\subset\Omega$ is called increasing (respectively, decreasing) if its indicator function $1_A$ is increasing  (respectively, decreasing). Given two probability measures $\mu_1$, $\mu_2$ on $\Omega$, we write $\mu_1\prec \mu_2$, and we say that $\mu_2$ \textbf{stochastically dominates} $\mu_1$, if $\mu_1(A)\leq \mu_2(A)$ for all increasing events $A\subset \Omega$.
\end{definition}

\begin{lemma}(Holley inequality)\label{hln} Let $G=(V,E)$ be a finite graph. Let $\Omega=\{0,1\}^E$ (resp.\ $\Omega=\{0,1\}^V$). Let $\mu_1$ and $\mu_2$ be strictly positive probability measures on $\Omega$ such that
\begin{eqnarray}
\mu_2(\max\{\omega_1,\omega_2\})\mu_1(\min(\omega_1,\omega_2))\geq \mu_1(\omega_1)\mu_2(\omega_2),\qquad \omega_1,\omega_2\in\Omega,\label{hli}
\end{eqnarray}
Then 
\begin{eqnarray*}
\mu_1\prec \mu_2.
\end{eqnarray*}
\end{lemma}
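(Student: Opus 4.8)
The plan is to establish the Holley inequality by constructing an explicit coupling of two Markov chains — one with stationary distribution $\mu_1$ and one with stationary distribution $\mu_2$ — in such a way that the coordinatewise order is preserved along the coupled dynamics. First I would set up the single-site (Glauber-type) update dynamics: for each $i\in E$ (resp.\ $i\in V$) and each configuration $\omega$, resample the coordinate $\omega(i)$ according to the conditional distribution $\mu_k(\,\cdot\mid \omega(j),\,j\neq i)$; since $\mu_1,\mu_2$ are strictly positive on the finite product space $\Omega$, these conditional distributions are well defined, the chains are irreducible and aperiodic, and each has $\mu_k$ as its unique stationary distribution. The hypothesis (\ref{hli}) is exactly the lattice (FKG-type) condition that one needs: by a standard argument it implies that for $\omega_1\le\omega_2$ and any site $i$,
\begin{eqnarray*}
\mu_1\big(\omega(i)=1\mid \omega(j)=\omega_1(j),\ j\neq i\big)\ \le\ \mu_2\big(\omega(i)=1\mid \omega(j)=\omega_2(j),\ j\neq i\big).
\end{eqnarray*}
I would derive this inequality by applying (\ref{hli}) to the four configurations obtained from $\omega_1,\omega_2$ by forcing the value at site $i$ to be $0$ or $1$.

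Next I would build the monotone coupling. Run both chains simultaneously: at each step pick a site $i$ uniformly at random and pick a single uniform random variable $U\in[0,1]$, shared by both chains; in chain $k$ set the new value of coordinate $i$ to $1$ if $U$ is below the corresponding conditional probability of a $1$ at site $i$, and to $0$ otherwise. The key claim is that this coupling preserves the partial order: if $(\omega_1,\omega_2)$ satisfies $\omega_1\le\omega_2$ before an update, then it still does afterward. The only coordinate that changes is $i$, and the displayed inequality above guarantees that the threshold for putting a $1$ in chain $1$ is at most the threshold for chain $2$ — so we can never end up with a $1$ in chain $1$ and a $0$ in chain $2$ at site $i$, while all other coordinates are untouched. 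Starting both chains from the common bottom configuration $\omega\equiv 0$ (which is $\le$ everything), the order $X_t^{(1)}\le X_t^{(2)}$ is maintained for all $t$.

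Finally I would pass to the limit. By ergodicity of the finite-state chains, the marginal law of $X_t^{(k)}$ converges to $\mu_k$ as $t\to\infty$; hence along a subsequence the joint law of $(X_t^{(1)},X_t^{(2)})$ converges (compactness of probability measures on the finite set $\Omega\times\Omega$) to a coupling $(\xi_1,\xi_2)$ of $\mu_1$ and $\mu_2$ with $\xi_1\le\xi_2$ almost surely. For any increasing event $A$, $\xi_1\le\xi_2$ and $\xi_1\in A\Rightarrow\xi_2\in A$ give $\mu_1(A)=\mathbf P(\xi_1\in A)\le\mathbf P(\xi_2\in A)=\mu_2(A)$, which is precisely $\mu_1\prec\mu_2$. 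The main obstacle — really the only nontrivial point — is verifying that hypothesis (\ref{hli}) yields the site-wise conditional monotonicity above and that this in turn makes the single-site update order-preserving; once that lemma is in hand, the coupling and limiting arguments are routine. An alternative to the limiting step is to note that the coupled chain $(X_t^{(1)},X_t^{(2)})$ restricted to the order-preserving region $\{\omega_1\le\omega_2\}$ is itself an irreducible chain whose stationary distribution is a monotone coupling of $\mu_1$ and $\mu_2$; I would mention this as a remark but carry out the convergence argument as the primary route.
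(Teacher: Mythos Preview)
Your argument is correct and is precisely the classical Markov-chain coupling proof due to Holley. The paper does not give its own proof of this lemma; it simply cites Theorem~(2.1) of \cite{GrGrc} (and \cite{HR74}), and the argument presented there is exactly the one you have written out: single-site Glauber dynamics for $\mu_1$ and $\mu_2$, the observation that the lattice condition~(\ref{hli}) forces the conditional probability of a $1$ at any site to be monotone in the boundary configuration (your displayed inequality), a monotone coupling via a shared uniform variable, and passage to stationarity. So your proposal is not merely consistent with the paper's approach---it \emph{is} the approach the paper defers to, spelled out in full.

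One small remark: your derivation of the conditional inequality from~(\ref{hli}) is correct (apply~(\ref{hli}) to $\omega_1$ with a $1$ at site $i$ and $\omega_2$ with a $0$ at site $i$; when $\omega_1\le\omega_2$ off $i$, the max is $\omega_2$ with a $1$ at $i$ and the min is $\omega_1$ with a $0$ at $i$, giving exactly the cross-ratio inequality you need). The alternative you mention at the end---viewing the coupled chain on $\{(\omega_1,\omega_2):\omega_1\le\omega_2\}$ as irreducible with a stationary monotone coupling---is also the route Grimmett takes, so either ending is fine.
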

\begin{proof}See Theorem (2.1) of \cite{GrGrc}; see also \cite{HR74}.
\end{proof}

\begin{lemma}\label{shln}Let $G=(V,E)$ be a finite graph. Let $\Omega=\{0,1\}^E$. Let $\mu_1$ and $\mu_2$ be strictly positive probability measures on $\Omega$ such that
\begin{eqnarray}
\mu_2(\omega\cup\{e\})\mu_1(\omega\setminus\{e\})\geq \mu_1(\omega\cup\{e\})\mu_2(\omega\setminus\{e\}),\qquad\omega\in \Omega, e\in E.\label{sc1}
\end{eqnarray}
Here $\omega$ is interpreted as the subset of $E$ consisting of all the edges with state 1. If either $\mu_1$ or $\mu_2$ satisfies
\begin{eqnarray}
\mu(\omega\cup\{e,f\})\mu(\omega\setminus\{e,f\})\geq \mu(\omega\cup{e}\setminus \{f\})\mu(\omega\cup\{f\}\setminus\{e\}),\label{sc2}
\end{eqnarray}
then (\ref{hli}) holds.
\end{lemma}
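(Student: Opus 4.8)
The plan is to check Holley's condition (\ref{hli}) directly and reduce it, through two short inductions, to the single-edge inequality (\ref{sc1}) and the lattice condition (\ref{sc2}). Throughout I would identify a configuration in $\{0,1\}^E$ with the subset of $E$ on which it equals $1$, and write $\vee,\wedge$ for the coordinatewise maximum and minimum (i.e.\ union and intersection). I would also use the standard fact that a strictly positive measure $\mu$ on $\{0,1\}^E$ satisfying (\ref{sc2}) is \emph{log-supermodular}, meaning $\mu(\alpha\vee\beta)\mu(\alpha\wedge\beta)\ge\mu(\alpha)\mu(\beta)$ for all $\alpha,\beta\in\{0,1\}^E$; this follows from (\ref{sc2}) by induction on $|\{e:\alpha(e)\neq\beta(e)\}|$, the cases where this quantity is $0$, $1$, or $2$ being immediate, and the inductive step peeling off one coordinate on which $\alpha$ and $\beta$ differ and applying (\ref{sc2}) there. (If the equivalence of the two-point and general lattice conditions is already available from \cite{GrGrc}, it can simply be quoted.)

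Next I would fix $\omega_1,\omega_2\in\{0,1\}^E$, set $A=\{e:\omega_1(e)=1,\omega_2(e)=0\}$, $B=\{e:\omega_1(e)=0,\omega_2(e)=1\}$, and $\eta=\omega_1\wedge\omega_2$, so that $\omega_1=\eta\cup A$, $\omega_2=\eta\cup B$, $\omega_1\vee\omega_2=\eta\cup A\cup B$, with $\eta,A,B$ pairwise disjoint. When $A=\varnothing$ (that is, $\omega_1\le\omega_2$), (\ref{hli}) is the trivial equality $\mu_2(\omega_2)\mu_1(\omega_1)=\mu_1(\omega_1)\mu_2(\omega_2)$. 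When $B=\varnothing$, (\ref{hli}) reads $\mu_2(\eta\cup A)\mu_1(\eta)\ge\mu_1(\eta\cup A)\mu_2(\eta)$; adding the edges of $A$ to $\eta$ one at a time and multiplying the corresponding instances of (\ref{sc1}), rewritten via strict positivity as $\mu_2(\zeta\cup\{e\})/\mu_2(\zeta)\ge\mu_1(\zeta\cup\{e\})/\mu_1(\zeta)$ for $e\notin\zeta$, yields $\mu_2(\eta\cup A)/\mu_2(\eta)\ge\mu_1(\eta\cup A)/\mu_1(\eta)$, which is exactly what is needed.

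For a general pair $(\omega_1,\omega_2)$ I would split according to which hypothesis of (\ref{sc2}) is assumed. If $\mu_2$ satisfies (\ref{sc2}), argue by induction on $|B|$, the base case $|B|=0$ being the previous paragraph. For $|B|\ge1$ choose $f\in B$ and apply the induction hypothesis to $(\omega_1,\omega_2\setminus\{f\})$, whose meet is again $\eta$ and whose join is $(\omega_1\vee\omega_2)\setminus\{f\}$; dividing, (\ref{hli}) for $(\omega_1,\omega_2)$ reduces to $\mu_2(\omega_1\vee\omega_2)\mu_2(\omega_2\setminus\{f\})\ge\mu_2(\omega_2)\mu_2((\omega_1\vee\omega_2)\setminus\{f\})$, which is log-supermodularity of $\mu_2$ applied to $\omega_2$ and $(\omega_1\vee\omega_2)\setminus\{f\}$ (their join is $\omega_1\vee\omega_2$, their meet $\omega_2\setminus\{f\}$). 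If instead $\mu_1$ satisfies (\ref{sc2}), argue by induction on $|A|$, the base case $A=\varnothing$ being trivial; for $|A|\ge1$ choose $e\in A$, apply the induction hypothesis to $(\omega_1\setminus\{e\},\omega_2)$ (meet $\eta$, join $(\omega_1\vee\omega_2)\setminus\{e\}$), and divide, so that (\ref{hli}) reduces to $\mu_1(\omega_1)/\mu_1(\omega_1\setminus\{e\})\le\mu_2(\omega_1\vee\omega_2)/\mu_2((\omega_1\vee\omega_2)\setminus\{e\})$. By (\ref{sc1}) the right-hand side is at least $\mu_1(\omega_1\vee\omega_2)/\mu_1((\omega_1\vee\omega_2)\setminus\{e\})$, and the remaining inequality $\mu_1(\omega_1)/\mu_1(\omega_1\setminus\{e\})\le\mu_1(\omega_1\vee\omega_2)/\mu_1((\omega_1\vee\omega_2)\setminus\{e\})$ is log-supermodularity of $\mu_1$ applied to $\omega_1$ and $(\omega_1\vee\omega_2)\setminus\{e\}$.

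I expect the substance of the argument to lie entirely in the bookkeeping: in each inductive step one must verify that the meets and joins of the modified pairs are exactly the sets claimed, and that the two configurations fed into log-supermodularity have the stated meet and join, so that every appeal to (\ref{sc1}) and (\ref{sc2}) is legitimate. The only genuinely separate ingredient is the passage from the two-point lattice condition (\ref{sc2}) to general log-supermodularity; if this is not simply cited from \cite{GrGrc}, it is the one place where a small self-contained induction has to be carried out.
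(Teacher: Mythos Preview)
Your argument is correct. The paper does not prove this lemma at all; it simply cites Theorem~2.6 of \cite{GrGrc}, so there is no in-paper proof to compare against. What you have written is essentially the standard argument behind that cited theorem: upgrade the two-point condition (\ref{sc2}) to full log-supermodularity by induction on the number of disagreeing coordinates, and then verify (\ref{hli}) by peeling off one edge at a time from $A$ or $B$, using (\ref{sc1}) for the cross-measure comparison and log-supermodularity of whichever $\mu_i$ satisfies (\ref{sc2}) for the within-measure comparison. The bookkeeping on meets and joins that you flag as the main worry checks out in each case. One stylistic point: the phrase ``dividing, (\ref{hli}) reduces to'' is logically loose, since you are not dividing a target inequality by an established one; what you actually do (and what works) is multiply the induction hypothesis by the appropriate ratio and then bound that ratio via (\ref{sc1}) and log-supermodularity. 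Writing it that way would make the chain of inequalities explicit.
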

\begin{proof}See Theorem 2.6 of \cite{GrGrc}.
\end{proof}

For a planar graph $G$, let $G_*$ be its planar dual graph. The following lemmas concerning planar duality, are proved in \cite{BS20,HJL02}.
 
 \begin{lemma}\label{pd1}Let $G$ be a planar nonamenable quasi-transitive graph, and let $p,p_*\in (0,1)$ satisfy
 \begin{eqnarray}
 p_*=\frac{(1-p)q}{p+(1-p)q}\label{pdp}
 \end{eqnarray}
In the natural coupling of $FRC_{p,q}^G$ and $WRC_{p_*,q}^{G_*}$ as dual measures (i.e.\ a dual edge is present if and only if the corresponding primal edge is absent), the number of infinite clusters with respect to each is a.s.\ one of the following: $(0,1)$, $(1,0)$, $(\infty,\infty)$.
 \end{lemma}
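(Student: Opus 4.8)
The plan is to reduce the statement to a few structural facts about the coupled dual pair $(\omega,\omega^{*})$ — a dual edge present iff the corresponding primal edge is absent — most of which are recorded in the excerpt or in \cite{BS20,HJL02}. First I would note that $G$ is $3$-connected (as in all our applications), so its embedding is essentially unique and $\mathrm{Aut}(G)=\mathrm{Aut}(G_{*})$; hence the law of $(\omega,\omega^{*})$ is $\mathrm{Aut}(G)$-invariant, and it is $\mathrm{Aut}(G)$-ergodic since $FRC_{p,q}^{G}$ is $\mathrm{Aut}(G)$-ergodic (as recalled in \Cref{rcq}) and $\omega^{*}$ is a deterministic function of $\omega$. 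Therefore the number $n$ of infinite primal clusters and the number $n^{*}$ of infinite dual clusters are each a.s. a deterministic constant; and since $\omega,\omega^{*}$ are invariant percolations on the quasi-transitive nonamenable planar graphs $G$ and $G_{*}$ (one-ended in our applications), \Cref{lbs} gives $n,n^{*}\in\{0,1,\infty\}$. I will also use that, because the conditional probabilities in (\ref{fs})--(\ref{ws}) lie strictly in $(0,1)$ for $p\in(0,1)$, $q\ge 1$, both measures are insertion- and deletion-tolerant.

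It then remains to exclude the six pairs $(0,0),(1,1),(1,\infty),(\infty,1),(0,\infty),(\infty,0)$, and by the symmetry of the conclusion under interchanging $G$ and $G_{*}$ it suffices to show: (i) if there are at least two infinite primal clusters then there is at least one infinite dual cluster; (ii) if there is exactly one infinite primal cluster then there is no infinite dual cluster; (iii) $n$ and $n^{*}$ are not both $0$. For (i): two distinct infinite primal clusters $C\neq C'$ are disjoint connected subgraphs, so for $x\in C$, $y\in C'$ the fact that $x\not\leftrightarrow y$ in $\omega$ yields, by planar duality, a cutset of present dual edges separating $x$ from $y$; since $C$ and $C'$ are infinite this cutset contains a doubly-infinite dual path, hence $n^{*}\ge1$. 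Combining (i), (ii) and the $G\leftrightarrow G_{*}$ symmetry with $n,n^{*}\in\{0,1,\infty\}$ forces $(n,n^{*})\in\{(0,0),(0,1),(1,0),(\infty,\infty)\}$, and (iii) removes $(0,0)$, leaving exactly the three asserted possibilities.

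The substantive inputs are (ii) and (iii), which are precisely the planar-duality facts established in \cite{BS20,HJL02}. For (ii) one uses insertion tolerance together with the indistinguishability / ``ubiquity at infinity'' theory for invariant percolation on nonamenable planar graphs (\cite{LS99,HP,BS20}): the unique infinite primal cluster accumulates on the whole circle at infinity, so it cannot be avoided by an infinite dual cluster, which would itself have to reach infinity while remaining disjoint from every primal edge of that cluster. For (iii) one must rule out the configuration pair in which every primal and every dual cluster is finite; on a nonamenable planar graph this is where a counting input enters — e.g. \Cref{tbng} applied to $\omega$ and to $\omega^{*}$ together with Euler's relation for the proper embedding, in the spirit of \cite{BS20} (cf. \Cref{pp}) — and it genuinely uses nonamenability, the analogous statement being false on $\ZZ^{2}$ at $p=\frac{1}{2}$. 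I expect (ii), the ``ubiquity at infinity'' step, to be the main obstacle; everything else is the case check over the nine possibilities for $(n,n^{*})$ and routine planar topology.
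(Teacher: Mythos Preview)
The paper does not prove this lemma at all: its ``proof'' is a one-line citation to Proposition~3.5 of \cite{HJL02}, noting that the argument there follows the technique of Theorem~3.7 of \cite{BS20}. Your outline is a faithful reconstruction of exactly that cited argument --- ergodicity plus \Cref{lbs} to reduce to nine cases, the planar cutset argument for (i), the ``limit set on the boundary circle'' input from \cite{BS20} for (ii), and the threshold/Euler argument (as in \Cref{pp}/\Cref{tbng}) for (iii) --- so there is nothing substantive to compare. One small caveat: you invoke $3$-connectedness and one-endedness as though they were hypotheses, whereas the lemma as stated assumes only ``planar nonamenable quasi-transitive''; these extra properties do hold in all the applications in the paper, and in \cite{HJL02} one-endedness is indeed part of the standing hypotheses, so this is harmless, but strictly speaking you are proving the lemma under slightly stronger assumptions than the paper states.
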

 
\begin{proof}See Proposition 3.5 of \cite{HJL02}, which is proved using the same technique as the proof of Theorem 3.7 of \cite{BS20}.
\end{proof}

\begin{lemma}\label{pd2}Let
\begin{eqnarray*}
h(x):=\frac{x}{1-x}.
\end{eqnarray*}
For any planar non-amenable quasi-transitive graph $G$,
\begin{eqnarray*}
&&h(p_{c,q}^w(G))h(p_{u,q}^f(G_*))=h(p_{u,q}^w(G_*))h(p_{c,q}^f(G))=1,\\
&&0<p_{c,q}^w(G)\leq p_{c,q}^f(G)<1,\ \mathrm{and}\ 0<p_{u,q}^w(G)\leq p_{u,q}^f(G)<1
\end{eqnarray*}
\end{lemma}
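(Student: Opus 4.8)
The plan is to read everything off planar duality (\Cref{pd1}), the two phase diagrams \eqref{frc} and \eqref{wrc}, and the stochastic ordering $FRC_{p,q}\prec WRC_{p,q}$. First I would record the elementary algebra of the duality map $p\mapsto p_*$ of \eqref{pdp}: a one-line computation gives $1-p_*=\frac{p}{p+(1-p)q}$, so the product $h(p)\,h(p_*)$ is independent of $p$, and $p\mapsto p_*$ is a strictly decreasing bijection of $(0,1)$ onto itself which is its own inverse and which sends values near $0$ to values near $1$ and conversely. Also, by standard planar duality, $G_*$ is again planar, nonamenable and quasi-transitive and $G_{**}=G$, so \Cref{pd1}, \eqref{frc} and \eqref{wrc} all apply with $G_*$ in place of $G$.

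The core step is a dichotomy. Applying \Cref{pd1} with $G_*$ in place of $G$, and choosing the parameter there so that its dual parameter equals $p$, produces a coupling of $WRC_{p,q}^{G}$ with $FRC_{p_*,q}^{G_*}$ in which a primal edge is open iff the corresponding dual edge is closed; \Cref{pd1} then says that a.s.\ the pair (number of infinite open clusters of the primal configuration, number of infinite open clusters of the dual configuration) lies in $\{(0,1),(1,0),(\infty,\infty)\}$. Two equivalences follow at once from this trichotomy: $WRC_{p,q}^{G}$ has a.s.\ no infinite open cluster iff $FRC_{p_*,q}^{G_*}$ has a.s.\ a unique infinite open cluster (primal count $0$ forces the pair to be $(0,1)$, and dual count $1$ also forces $(0,1)$); and $WRC_{p,q}^{G}$ has a.s.\ a unique infinite open cluster iff $FRC_{p_*,q}^{G_*}$ has a.s.\ no infinite open cluster. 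Feeding \eqref{wrc} and \eqref{frc} into the first equivalence gives $p<p_{c,q}^{w}(G)\iff p_*>p_{u,q}^{f}(G_*)$; since $p\mapsto p_*$ is a decreasing bijection, this forces it to carry the threshold $p_{c,q}^{w}(G)$ exactly to the threshold $p_{u,q}^{f}(G_*)$, and the constancy of $h(p)h(p_*)$ then yields the first displayed product identity of the lemma. The second equivalence becomes $p\ge p_{u,q}^{w}(G)\iff p_*\le p_{c,q}^{f}(G_*)$, so $p\mapsto p_*$ carries $p_{u,q}^{w}(G)$ to $p_{c,q}^{f}(G_*)$; relabelling $G\leftrightarrow G_*$ turns this into the second displayed identity.

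For the remaining assertions, comparing the conditional probabilities \eqref{fs} and \eqref{ws} shows that on each finite subgraph the wired-boundary random-cluster measure dominates the free-boundary one via the Holley inequality (\Cref{hln}, using the sufficient condition \Cref{shln} and $q\ge1$); passing to the weak limit gives $FRC_{p,q}^{H}\prec WRC_{p,q}^{H}$ for every such graph $H$, and since the event that an infinite open cluster exists is increasing we get $p_{c,q}^{w}(H)\le p_{c,q}^{f}(H)$. The inequality $p_{u,q}^{w}(H)\le p_{u,q}^{f}(H)$ then follows by feeding $p_{c,q}^{w}\le p_{c,q}^{f}$ for $H$ into the duality identity already established and using that $p\mapsto p_*$ is decreasing. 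Strictness is similar: from \eqref{fs}, \eqref{ws} and $q\ge1$, both $FRC_{p,q}^{G}$ and $WRC_{p,q}^{G}$ are dominated by Bernoulli$(p)$ bond percolation, so $p_{c,q}^{w}(G)$ and $p_{c,q}^{f}(G)$ are bounded below by the positive Bernoulli critical probability of $G$; dualizing, $p_{u,q}^{w}(G)$ and $p_{u,q}^{f}(G)$ are the $p\mapsto p_*$ images of positive numbers and hence lie strictly below $1$. Thus all four thresholds lie in $(0,1)$ and $h$ is finite and positive at each of them.

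The main difficulty is bookkeeping rather than a single hard idea: one must make the duality coupling of \Cref{pd1} precise at the level of boundary conditions (free on the primal corresponding to wired on the dual) when $G$ is exhausted by finite subgraphs, and one must verify that $G_*$ genuinely inherits all the standing hypotheses so that \Cref{pd1}, \eqref{frc} and \eqref{wrc} apply to it verbatim. Once that is granted, the argument is just the trichotomy of \Cref{pd1} together with the elementary algebra of the map $h$.
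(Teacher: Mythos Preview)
The paper does not prove this lemma at all; it merely cites Corollary~3.6 of \cite{HJL02}. Your proposal is therefore not competing with an argument in the paper but rather supplying one, and the sketch you give is essentially the standard duality argument behind that corollary: use \Cref{pd1} to get the trichotomy for the pair of cluster counts, read off the two threshold correspondences from \eqref{frc} and \eqref{wrc}, and handle the inequalities via $FRC_{p,q}\prec WRC_{p,q}$ and comparison with Bernoulli percolation. That line is sound.

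One point deserves correction rather than handwaving. You write that $h(p)h(p_*)$ is ``independent of $p$'' and then invoke this constancy to obtain the displayed product identity. If you actually carry out the one-line computation you allude to, you get
\[
h(p)\,h(p_*)\;=\;\frac{p}{1-p}\cdot\frac{(1-p)q}{p}\;=\;q,
\]
not $1$. So your argument in fact proves $h(p_{c,q}^w(G))\,h(p_{u,q}^f(G_*))=h(p_{u,q}^w(G_*))\,h(p_{c,q}^f(G))=q$, which is the statement in \cite{HJL02}; the ``$=1$'' in the paper's transcription is a typo (irrelevant to the application in \Cref{1l2}, which only uses the inequality $p_{u,q}^w\le p_{u,q}^f$). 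You should state the constant explicitly rather than leave it implicit, since as written your proof does not literally establish the lemma as displayed.

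A small completeness remark: you show $p_{c,q}^w,\,p_{c,q}^f>0$ and $p_{u,q}^w,\,p_{u,q}^f<1$, but the lemma also asserts $p_{c,q}^f<1$ and $p_{u,q}^w>0$. These follow immediately once you note $p_{c,q}^f\le p_{u,q}^f$ and $p_{c,q}^w\le p_{u,q}^w$ from the shape of the phase diagrams \eqref{frc}, \eqref{wrc}; it is worth saying so.
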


\begin{proof}See Corollary 3.6 of \cite{HJL02}.
\end{proof}

We start the proof of \Cref{ipl} with the following lemma.

\begin{lemma}\label{sz}Let $\LL_2$ be a regular tiling of the hyperbolic plane such that each vertex has degree $n$ and each face has degree $m$. Let $q\geq 1$.
Assume that $WRC^{\LL_2}_{p,q}$-a.s. there is a unique infinite open cluster for the random cluster model on $\LL_2$. Let $\tau$ be the unique infinite open cluster in the random cluster configuration $\omega$. We define a site percolation configuration $\xi$ on $V(\LL_2)$, by letting all the vertices in $\tau$ have state 1, and all the other vertices have state 0. Then a.s. $\xi$ has no infinite 0-cluster.
\end{lemma}

\begin{proof}Let $\sA_0$ be the event that $\xi$ has an infinite 0-cluster. By ergodicity of $WRC_{p,q}^{\LL_2}$, either $WRC_{p,q}^{\LL_2}(\sA_0)=0$ or $WRC_{p,q}^{\LL_2}(\sA_0)=1$. Assume that $WRC_{p,q}^{\LL_2}(\sA_0)=1$; we shall obtain a contradiction. 

The dual configuration to the random cluster configuration on $\LL_2$ is a bond configuration on $\LL_1$ such that an edge in $\LL_1$ is present if and only if its dual edge in $\LL_2$ is absent in the random cluster configuration of $\LL_2$.  Note also that the free boundary condition is dual to the wired boundary condition by the relation between dual configurations described above. Moreover, if the random cluster configuration on $\LL_2$ has distribution $WRC_{p,q}^{\LL_2}$, then its dual configuration on $\LL_1$ has distribution $FRC_{p_*,q}^{\LL_1}$; where $p,p_*$ satisfy (\ref{pdp}).

 Let $k$ be the number of infinite clusters in the bond configuration in $\LL_2$, and let $k^{*}$ be the number of infinite clusters in the corresponding dual configuration in $\LL_1$. 
 By \Cref{pd1}, a.s.\ $(k,k^*)\in \{(0,1),(1,0),(\infty,\infty)\}$. (Indeed this is true for any $\mathrm{Aut}(\LL_2)$-invariant, insertion tolerant and deletion tolerant bond configuration.) Hence if $WRC_{p,q}^{\LL_2}$-a.s. there is a unique infinite open cluster, then $FRC_{p_*,q}^{\LL_1}$-a.s. there is no infinite cluster in the corresponding dual configuration.

Since $\tau$ is an infinite cluster, there exists an infinite 1-cluster in $\xi$ by construction. If there exists an infinite 0-cluster in $\xi$ as well, by \Cref{io}, there exists an infinite contour $C$ consisting of edges of $\LL_1$ in which each edge has a dual edge joining a vertex of $V(\LL_2)$ with state 1 in $\xi$ and a vertex of $V(\LL_2)$ with state 0 in $\xi$. Moreover, all the edges in $C$ must be present in the dual configuration of $\omega$, since every edge in $C$ is dual to an edge of $V(\LL_2)$ not open in $\omega$.
Then we have $k=1$ and $k^*\geq 1$. But this is a contradiction to \Cref{pd1}. Hence a.s.\ there are no infinite 0-clusters in $\xi$.
\end{proof}

\begin{lemma}\label{1l2}Let $\LL_2$ be a regular tiling of the hyperbolic plane such that each vertex has degree $n$ and each face has degree $m$.
Then for the graph $\LL_2$,
\begin{eqnarray*}
p_{u,1} \leq p_{u,2}^{w}\leq p_{u,2}^{f}
\end{eqnarray*}
\end{lemma}

\begin{proof}The fact that $p_{u,2}^w\leq p_{u,2}^f$ follows from \Cref{pd2}. 

Now we prove that $p_{u,1}\leq p_{u,2}^w$.  Note that the following stochastic monotonicity result holds:
\begin{eqnarray}
WRC_{p,2}\prec WRC_{p,1}=FRC_{p,1},\label{st21}
\end{eqnarray}
by (4.1) of \cite{RS01}.  

Let $\mathcal{F}_1$ be the event that there exists a unique infinite cluster in the random cluster configuration in $\LL_2$.
By ergodicity of $WRC_{p,2}$ and $WRC_{p,1}$ and the monotonicity of $WRC_{p,2}(\mathcal{F}_1)$ and $WRC_{p,1}(\mathcal{F}_1)$ with respect to $p$, to show that $p_{u,1}\leq p_{u,2}^{w}$, it suffices to show that whenever $WRC_{p,2}(\mathcal{F}_1)=1$, then $WRC_{p,1}(\mathcal{F}_1)=1$.

Let $\mathcal{F}_{1,0}\subset \mathcal{F}_1$ be the event consisting of all the configurations in which both of the following two cases occur
\begin{itemize}
\item there exists a unique infinite cluster $\tau$ in the random cluster configuration on $\LL_2$; and
\item let $\xi\in \{0,1\}^{V(\LL_2)}$ be the site configuration obtained by assigning the state 1 to all the vertices in $\tau$, and the state 0 to all the vertices not in $\tau$; then there exists no infinite 0-cluster in $\xi$.
\end{itemize}

By \Cref{sz}, if $WRC_{p,2}(\mathcal{F}_1)=1$, then $WRC_{p,2}(\mathcal{F}_{1,0})=1$. Since $\mathcal{F}_{1,0}$ is an increasing event, by (\ref{st21}) we have $WRC_{p,2}(\mathcal{F}_{1,0})=1$, then $WRC_{p,1}(\mathcal{F}_{1,0})=1$. Since $\mathcal{F}_{1,0}\subset \mathcal{F}_1$, we have $WRC_{p,1}(\mathcal{F}_1)=1$. This completes the proof.
 \end{proof}

\begin{lemma}\label{labc} Let $\LL_2$ be a vertex-transitive, triangular tiling of the hyperbolic plane such that each vertex has degree $n\geq 7$. Consider the following Conditions (a),(b),(c) and (d) in Part II of \Cref{ipl}. We have
\begin{eqnarray*}
(d)\Rightarrow(c)\Rightarrow(b)\Rightarrow (a);
\end{eqnarray*}
 where $A\Rightarrow B$ means that if $A$ holds, then $B$ holds.
\end{lemma}

\begin{proof}The statement $(b)\Rightarrow (a)$ follows from Theorem 4.1 of \cite{RS01}.

The fact that $(c)\Rightarrow (a)$ follows from Theorem 3.2 (v) of \cite{HJL02}; while the fact that $(c)\Rightarrow (b)$ follows from Theorem 4.1 and Lemma 6.4 of \cite{LS99}.

The fact that $(d)\Rightarrow (c)$ follows from \Cref{1l2}.
\end{proof}

\subsection{Proof of Part I of \Cref{ipl}.}

First note that if (\ref{pch}) hold, then 
\begin{eqnarray}
\frac{e^{h}}{e^{h}+e^{-h}}=p_u.\label{puh}
\end{eqnarray}
by the fact that $p_c+p_u=1$.

Let $\nu_1$ (resp.\ $\nu_2$) be the probability measure for the i.i.d.\ Bernoulli site percolation on $\LL_2$ in which each vertex takes the value ``$+$'' with probability $p_1$ (resp.\ $p_2$) satisfying 
\begin{eqnarray*}
&&\frac{e^{nJ}}{e^{nJ}+e^{-nJ}}<p_1<p_u\\
&& p_c<p_2<\frac{e^{-nJ}}{e^{nJ}+e^{-nJ}}
\end{eqnarray*}
and the value ``$-$'' with probability $1-p_1$ (resp.\ $1-p_2$). Such $p_1$ and $p_2$ exist by (\ref{jh}).

Fix a triangle face $F_0$ of $\LL_2$. Let $B_R=(V(B_R),E(B_R))$ be the finite subgraph of $\LL_2$ consisting of all the faces of $\LL_2$ whose graph distance to $F_0$ is at most $R$. Let $\nu_{1,R}$ (resp.\ $\nu_{2,R}$) be the restriction of $\nu_1$ (resp.\ $\nu_2$) on $B_R$. Let $\mu_{R}^{+}$ (resp. $\mu_{R}^{-})$ be the probability measure for the Ising model on $B_R$ with respect to the coupling constant $J$ and the ``+'' boundary condition (resp.\  the ``$-$'' boundary condition). Let $\omega_1$, $\omega_2$ be two configurations in $\{-1,1\}^{V(B_R)}$.  Then by \Cref{hln,shln}, we can check the F.K.G. lattice conditions below
\begin{eqnarray*}
\nu_{1,R}(\max\{\omega_1,\omega_2\})\mu_{R}^+(\min\{\omega_1,\omega_2\})\geq \nu_{1,R}(\omega_1)\mu_R^+(\omega_2)\\
\mu_{R}^-(\max\{\omega_1,\omega_2\})\nu_{2,R}(\min\{\omega_1,\omega_2\})\geq \mu_R^-(\omega_1)\nu_{2,R}(\omega_1).
\end{eqnarray*}
Then we obtain the following stochastic domination result:
\begin{eqnarray*}
\nu_{2,R}\prec\mu_R^{-}\prec \mu_R^+\prec\nu_{1,R}.
\end{eqnarray*}
Letting $R\rightarrow\infty$, for any $\mathrm{Aut}(\LL_2)$-invariant Gibbs measure $\mu$ for the Ising model on $\LL_2$ with coupling constant $J$, we have
\begin{eqnarray*}
\nu_2\prec \mu^{-}\prec \mu\prec \mu^+\prec\nu_{1}.
\end{eqnarray*}
Since $\nu_2$-a.s. there are infinite ``$+$''-clusters, $\mu$-a.s. there are infinite ``$+$''-clusters. Similarly,  $\mu$-a.s. there are infinite ``$-$''-clusters, since $\nu_1$-a.s. there are infinite ``$-$''-clusters. By \Cref{p118}, we conclude that when (\ref{jh}) hold, $\mu$-a.s. there are infinitely many infinite ``$+$''-clusters, infinitely many infinite ``$-$''-clusters and infinitely many infinite contours. This completes the proof of Part I.

\subsection{Proof of Part II of \Cref{ipl}.}

 We first assume that $\mu^f$ is $\mathrm{Aut}(\LL_2)$-ergodic.
Since $\mu^f$ is also symmetric in switching ``$+$'' and ``$-$'' states,  $\mu^f$-a.s.\ the number of infinite ``$+$''-clusters and the number of infinite ``$-$''-clusters are equal. Then the conclusion follows from \Cref{p118}, which implies that if the number of infinite ``$+$''-clusters and the number of infinite ``$-$''-clusters are equal a.s., then both numbers are infinite.  

By \Cref{labc}, if one of the conditions (a), (b), (c) and (d) in \Cref{labc} holds, then $\mu^f$ almost surely there are infinitely many infinite ``$+$''-clusters and infinitely many infinite ``$-$''-clusters. 

\subsection{Proof of \Cref{c34}.}
 By Proposition 3.2 (i) of \cite{HJL02}, if (\ref{jj3}) holds, then there is a unique infinite-volume Gibbs measure for the Ising model on $\LL_2$ with coupling constant $J$. Since
\begin{eqnarray*}
p_{c,2}^{w}\leq p_{c,2}^{f}\leq p_{u,2}^{f},
\end{eqnarray*}
(\ref{jj3}) implies Condition (c). Then \Cref{ipl} II(c) implies $\mu^f$ a.s. there are neither infinite ``$+$''-clusters nor infinite ``$-$''-cluster.
The corollary now follows from the uniqueness of the infinite-volume Gibbs measure.

\subsection{Proof of Part III of \Cref{ipl}.}

Since $p=1-e^{-2J}$, when (\ref{jj1}) holds, we have $p\geq p_{u,2}^w$. By Corollary 3.7 of \cite{HJL02} (see also (\ref{wrc})), there exists a unique infinite open cluster $\tau$ in the random cluster representation of the Ising model with wired boundary conditions $WRC_{p,2}$-a.s.

By the correspondence of random-cluster configurations and Ising configurations, each infinite cluster in the random cluster representation must be a subset of an infinite cluster in the Ising model. Hence if $WRC_{p,2}$-a.s. there is a unique infinite open cluster, $\mu^+$ a.s.\ there exists an infinite ``$+$''-cluster in the Ising model, and $\mu^{-}$-a.s.\ there exists an infinite ``$-$''-cluster in the Ising model.

Let $\tau$ be the unique infinite open cluster in the random cluster configuration $\omega$. We define a site percolation configuration $\xi$ on $V(\LL_2)$, by letting all the vertices in $\tau$ have state 1, and all the other vertices have state 0. By \Cref{sz}, a.s.\ $\xi$ has no-infinite 0-clusters.  Again by the correspondence of the random cluster configuration and the Ising configuration and \cref{p118}, we obtain $\mu^{+}(\sA_+)=1$, and $\mu^{-}(\sA_-)=1$.

\subsection{Proof of Part IV of \Cref{ipl}.}
The identities (\ref{+1}) and (\ref{-1})  follows Part I and the fact that
\begin{eqnarray*}
p_{u,2}^{w}\leq p_{u,2}^f;
\end{eqnarray*}
and (\ref{hmf}) follows from the fact that when (\ref{jj2}) hold,
\begin{eqnarray*}
\mu^f=\frac{\mu^++\mu^-}{2};
\end{eqnarray*}
The decomposition of $\mu^f$ as convex combination of the extremal measures $\mu^+$ and $\mu^-$ it is a classical results in the case of regular lattices and for our lattices it was proven in Theorem 4.2 of \cite{RS01} and expressions (17) (18) of \cite{HJL02}.

\section{Proof of \Cref{coii,xorc}}\label{pxorc}

In this section, we prove \Cref{coii,xorc}.

\bigskip

\noindent\textbf{Proof of \cref{coii}.} We first prove Part I of the theorem. Let $s_{+}$ (resp.\ $s_{-}$) be the number of infinite ``$+$''-clusters (resp.\ infinite ``$-$''-clusters). By ergodicity, $\mathrm{Aut}(\LL_2)$-invariance and symmetry in ``$+$'' and ``$-$'' of $\mu_{1,f}\times \mu_{2,f}$, as well as \Cref{lbs}, one of the following cases occurs:
\begin{enumerate}[label=(\roman*)]
\item $\mu_1^f\times \mu_2^f((s_+,s_-)=(0,0))=1$; or
\item $\mu_1^f\times \mu_2^f((s_+,s_-)=(1,1))=1$; or
\item $\mu_1^f\times \mu_2^f((s_+,s_-)=(\infty,\infty))=1$
\end{enumerate} 
Case (i) is impossible to occur by \Cref{nzz}.  Case (ii) is impossible to occur by \Cref{ozz}. This completes the proof of Part I.

Now we show that Assumption II implies Assumption I. This follows from Theorem 4.1 of \cite{Sch99}, and the fact that the XOR Ising measure is the product measure of two i.i.d Ising models.

The facts that Assumption IV implies Assumption III and Assumption III implies Assumption II follows from \Cref{labc}. This completes the proof of the theorem.
$\hfill\Box$

\bigskip

Before proving \Cref{xorc}, we shall first introduce the following definition and  proposition in \cite{LS99} (see Theorem 3.3, Remark 3.4).

\begin{definition}\label{idd}Let $G=(V,E)$ be a graph and $\Gamma$ a transitive group acting on $G$. Suppose that $X$ is either $V$, $E$ or $V\cup E$. Let $Q$ be a measurable space and $\Omega:=2^V\times Q^X$. A probability measure $\mathbf{P}$ on $\Omega$ will be called a \textbf{site percolation with scenery} on $G$. The projection onto $2^V$ is the underlying percolation and the projection onto $Q^X$ is the scenery. If $(\omega,q)\in \Omega$, we set $\Pi_v(\omega,q)=(\Pi_v \omega,q)$. We say the percolation with scenery $\mathbf{P}$ is \textbf{insertion-tolerant} if $\mathbf{P}(\Pi_v\mathcal{B})>0$ for every measurable $\mathcal{B}\subset \Omega$ with positive measure. We say that $\mathbf{P}$ has \textbf{indistinguishable infinite clusters} if for every $\mathcal{A}\subset 2^V\times 2^V\times Q^X$ that is invariant under diagonal actions of $\Gamma$, for $\mathbf{P}$-a.e. $(\omega,q)$, either all infinite clusters $C$ of $\omega$ satisfy $(C,\omega,q)\in \mathcal{A}$, or they all satisfy $(C,\omega,q)\notin \mathcal{A}$.
 \end{definition}
 
 \begin{proposition}\label{idc}Let $\mathbf{P}$ be a site percolation with scenery on a graph $G=(V,E)$ with state space $\Omega:=2^V\times Q^X$, where $Q$ is a measurable space and $X$ is either $V$, $E$ or $V\cup E$. If $\mathbf{P}$ is $\Gamma$-invariant and insertion tolerant, then $\mathbf{P}$ has indistinguishable infinite clusters.
 \end{proposition}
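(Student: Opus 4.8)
The statement is the ``percolation with scenery'' version of the indistinguishability theorem of Lyons and Schramm (\cite{LS99}, Theorem 3.3 and Remark 3.4), so the plan is to follow that argument. In outline, I would first reduce the problem to \emph{robust} cluster properties --- ones whose validity on a given cluster is insensitive to finite modifications of the configuration --- and then rule those out by a random-walk argument.

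For the reduction, I would pass from an arbitrary $\Gamma$-invariant property $\mathcal{A}$ that distinguishes two infinite clusters with positive probability to a robust one, exactly as in \cite{LS99}: one replaces $\mathcal{A}$ by the refined property ``$\mathcal{A}$ holds and continues to hold after every finite sequence of insertions'', which is robust by construction, and uses insertion tolerance together with a union bound over the (countably many) finite insertion patterns to check that this refined property is still non-trivial whenever $\mathcal{A}$ is. It then suffices to show that no robust $\Gamma$-invariant property can distinguish two infinite clusters.

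For that, I would argue by contradiction: assume a robust $\Gamma$-invariant $\mathcal{A}$ has, with positive probability, some infinite cluster in $\mathcal{A}$ and some infinite cluster not in $\mathcal{A}$. Conditioning on the origin $o$ lying in an infinite cluster, I would run delayed simple random walk $(X_n)_{n\ge 0}$ started at $o$ on that cluster and follow the configuration-and-scenery seen from $X_n$; by $\Gamma$-invariance and unimodularity of $\Gamma$ (automatic for the groups occurring in this paper; see \cite{LP}) this environment process is stationary and the mass-transport principle applies. By the L\'evy $0$--$1$ law along the filtration generated by $(X_0,\dots,X_n)$ and the configuration in balls of slowly growing radius about them, the conditional probability that $C(o)\in\mathcal{A}$ converges $\mathbf{P}$-a.s.\ to $\mathbf{1}[C(o)\in\mathcal{A}]$. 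On the other hand, robustness allows one to modify the explored part of the configuration by finitely many insertions without changing membership in $\mathcal{A}$, so insertion tolerance can be used to ``bridge'' the cluster being walked in onto a $\Gamma$-translate of a fixed reference cluster, which forces the above a.s.\ limit to be the same for all infinite clusters. Hence $\mathbf{1}[C\in\mathcal{A}]$ is a.s.\ constant over infinite clusters $C$, the desired contradiction. The step I expect to be the main obstacle is precisely this last ``bridging'': showing that the limiting conditional probability of ``$C\in\mathcal{A}$'' does not depend on $C$ is where insertion tolerance enters essentially, where the scenery (transported passively by the walk) must be tracked carefully, and where unimodularity is needed for the stationarity used above. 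The complete argument is in \cite{LS99}.
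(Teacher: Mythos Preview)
Your sketch is correct and follows the Lyons--Schramm argument, which is exactly what the paper does: the paper does not give its own proof of this proposition but simply cites \cite{LS99}, Theorem~3.3 and Remark~3.4. So your approach coincides with the paper's, only with more detail filled in.
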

 
 \noindent\textbf{Proof of \Cref{xorc}} Let $\Lambda=(V_{\Lambda},E_{\Lambda})$ be a subgraph of $\LL_2$ consisting of faces of $\LL_2$. Let $\Lambda_{*}=(V_{\Lambda_*},E_{\Lambda_*})$ be the dual graph of $\Lambda$, such that there is a vertex in $V_{\Lambda}$ corresponding to each triangle face in $\Lambda$, as well as the unbounded face; the edges in $E_{\Lambda}$ and $E_{\Lambda_*}$ are in 1-1 correspondence by duality.
 
 Consider an XOR Ising model on $\Lambda$ with respect to two i.i.d. Ising models $\sigma_3$, $\sigma_4$ with free boundary conditions and coupling constants $J\geq 0$ satisfying the assumption of \Cref{coii}. The partition function of the XOR Ising model can be computed by
 \begin{eqnarray*}
 Z_{\Lambda,f}=\sum_{\sigma_3,\sigma_4\in \{\pm 1\}^{V_{\Lambda}}}\prod_{(u,v)\in E_{\Lambda}}e^{J(\sigma_{3,u}\sigma_{3,v}+\sigma_{4,u}\sigma_{4,v})}.
 \end{eqnarray*}
 
 Following the same computations as in \cite{bd14}, we obtain
 \begin{eqnarray}
 Z_{\Lambda,f}= C_1\sum_{P_*\in \mathcal{P}_{*},P\in \mathcal{P},P\cap P_*=\emptyset}\left(\frac{2 e^{-2J}}{1+e^{-4J}}\right)^{|P_*|}\left(\frac{1-e^{-4J}}{1+e^{-4J}}\right)^{|P|}.\label{zlf}
 \end{eqnarray}
 where $\mathcal{P}_*$ (resp.\ $\mathcal{P}$) consists of all the contour configurations on $E_{\Lambda_*}$ (resp.\ $E_{\Lambda}$) such that each vertex of $V_{\Lambda_*}$  (resp.\ $V_{\Lambda}$) has an even number of incident present edges, and $C_1=2^{|V_{\Lambda}|-|E_{\Lambda}|+2}(e^{2j}-e^{-2J})^{|E_{\Lambda}|}$ is a constant. 
 
 When $J,K$ satisfies (\ref{jkr}), we have
 \begin{eqnarray*}
 \frac{2e^{-2J}}{1+e^{-4J}}&=&\frac{1-e^{-4K}}{1+e^{-4K}};\\
 \frac{2e^{-2K}}{1+e^{-4K}}&=&\frac{1-e^{-4J}}{1+e^{-4J}}.
 \end{eqnarray*}
Thus the partition function $Z_{\Lambda,f}$, up to a multiplicative constant, is the same as the partition function of the XOR Ising model on $\Lambda_*$ with coupling constant $K$. 

Recall that there is exactly one vertex $v_{\infty}\in V_{\Lambda_{*}}$ corresponding to the unbounded face in $\Lambda$. The XOR Ising model $\sigma_{XOR}=\sigma_1\sigma_2$ on $\Lambda_*$, corresponds to an XOR Ising model on $\Lambda_{*}\setminus \{v_{\infty}\}$ (which is a subgraph of $\LL_1$) with the boundary condition that all the boundary vertices have the same state in $\sigma_1$ and all the boundary vertices have the same state in $\sigma_2$. Hence the boundary condition must be a mixture of $++$, $+-$, $-+$ and $--$. However, each one of the 4 possible boundary conditions gives the same distribution of contours in the XOR Ising model. From the expression (\ref{zlf}), we can see that there is a natural probability measure on the set of contours $\Phi=\{(P,P_*):P\in\mathcal{P},P_*\in \mathcal{P}_*,P\cap P_*=\emptyset\}$, such that the probability of each pair of contours $(P,P_*)\in \Phi$ is proportional to $\left(\frac{2 e^{-2J}}{1+e^{-4J}}\right)^{|P_*|}\left(\frac{1-e^{-4J}}{1+e^{-4J}}\right)^{|P|}$, and the marginal distribution on $\mathcal{P}$ is the distribution of contours for the XOR Ising model on $\LL_2$ with coupling constant $J$ and free boundary conditions, while the marginal distribution on $\mathcal{P}_*$ is the distribution of contours for the XOR Ising model on $\LL_1$ with coupling constant $K$ and $++$ boundary conditions. 

We let $\Lambda$ and $\Lambda_*\setminus \{v_{\infty}\}$ increase and approximate the graph $\LL_2$ and $\LL_1$, respectively. If with a positive $\mu_{++}$ probability, there exists exactly one infinite contour $C$ in $\LL_2$, then $\mu_f$-a.s. there exists an infinite cluster in $\LL_2$ containing $C$, since contours in $\LL_1$ and $\LL_2$ are disjoint. Consider the XOR Ising spin configuration as a site percolation on $\LL_2$, with scenery given by contour configurations of $\LL_2$ within the ``$+$'' clusters of $\LL_2$. In the notation of \Cref{idd}, $Q=\{0,1\}$, and $X=E(\LL_2)$. An edge in $E(\LL_2)$ is present (has state ``1'') if and only if it is in a ``$+$''-cluster of the XOR Ising configuration on $\LL_2$ and present in the contour configuration of $\LL_2$. This way we obtain an automorphism-invariant and insertion-tolerant percolation with scenery. Let $\mathcal{A}\subset 2^V(\LL_2)\times 2^V(\LL_2)\times 2^E(\LL_2)$ be the triple $(C,\omega, q)$ such that 
\begin{itemize}
\item $\omega$ is an XOR Ising spin configuration on $\LL_2$; and
\item $C$ is an infinite ``$+$''-cluster; and
\item $q$ is the $\LL_2$-contour configuration within ``$+$''-clusters of $\omega$; and
\item $C$ contains an infinite contour in $q$.
\end{itemize}
We can see that $\mathcal{A}$ is invariant under diagonal actions of automorphisms. 
By \Cref{coii}, $\mu_f$-a.s. there exists infinitely many infinite ``$+$''-clusters in $\LL_2$. By \Cref{idc}, either all the infinite clusters are in $\mathcal{A}$, or no infinite clusters are in $\mathcal{A}$. Similar arguments applies for ``$-$''-clusters of $\LL_2$. Hence almost surely the number of infinite contours in $\LL_2$ is 0 or $\infty$. Since the distribution of infinite contours in $\LL_2$ is exactly that of contours for the XOR Ising model on $\LL_1$ with coupling constant $K$ and $++$ (or $+-$, $-+$, $--$) boundary condition, the theorem follows.
 $\hfill\Box$

\section{Proof of \Cref{chi,lth}}\label{p412}

In this section, we prove \Cref{chi,lth}.

Consider the XOR Ising model with spins located on vertices of the hexagonal lattice $\HH$ with coupling constants $J_a$, $J_b$, $J_c$ on horizontal, NW/SE, NE/SW edges, see \Cref{htst} for an embedding of $\HH$ into the plane such that all the edges are either horizontal, NW/SE or NE/SW.

\begin{figure}
\centering
\subfloat[$\HH$ and $\TT$]{\includegraphics[width=.45\textwidth]{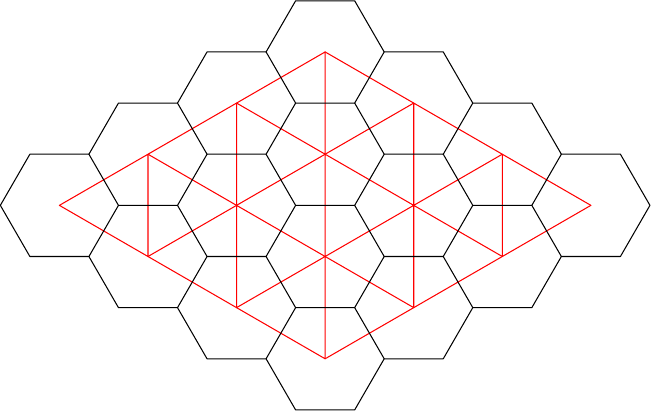}}\qquad
\subfloat[$\TT$ and $\HH'$]{\includegraphics[width = .45\textwidth]{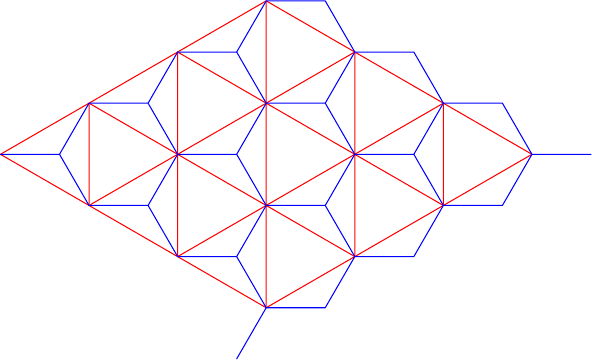}}
\caption{Hexagonal lattice $\HH$ (represented by black lines), dual triangular lattice $\TT$ (represented by red lines) and hexagonal lattice $\HH'$ obtained from the star-triangle transformation (represented by blue lines).}
\label{htst}
\end{figure}

A [4,6,12] lattice is a graph that can be embedded into the Euclidean plane $\RR^2$ such that each vertex is incident to 3 faces with degrees 4, 6, and 12, respectively. See \Cref{4612}.  We shall explain the relation between perfect matchings on the [4,6,12] lattice and constrained percolation configurations in the [3,4,6,4] lattice as discussed in \Cref{xorh}.

\begin{figure}
\centering
\includegraphics[width=.5\textwidth]{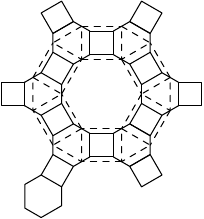}
\caption{[3,4,6,4] lattice (represented by dashed lines) and the [4,6,12] lattice $\mathbb{A}$ (represented by black lines).}
\label{4612}
\end{figure}

A \textbf{perfect matching}, or a \textbf{dimer configuration} on a [4,6,12] lattice is a subset of edges such that each vertex of the [4,6,12] lattice is incident to exactly one edge in the subset.  

A \df{Type-I} edge of a [4,6,12] lattice is an edge of a square face. Any other edge of the [4,6,12] lattice is a \df{Type-II} edge. We say two Type-II edges $e_1,e_2$ are \df{adjacent} if there exists a Type-I edge $e_3$, such that both $e_1$ and $e_2$ share a vertex with $e_3$ in the [4,6,12] lattice. A subset of Type-II edges is \df{connected} if for any two edges $e$ and $f$ in the subset, there exist a sequence of Type-II edges $e_0 (=e),\ e_1,\ \ldots,\ e_n(=f)$ in the subset, such that $e_i$ and $e_{i-1}$ are adjacent, for $1\leq i\leq n$. A \df{Type-II cluster} is a maximal connected set of present Type-II edges in  a perfect matching.

The [3,4,6,4] lattice is constructed as follows. A vertex of the [3,4,6,4] lattice is placed at the midpoint of each Type-II edge of the [4,6,12] lattice. Two vertices of the [3,4,6,4] lattices are joined by an edge if and only if they are midpoints of two adjacent Type-II edges. See \Cref{4612}. The restriction of any dimer configuration on the [4,6,12] lattice to Type-II edges naturally correspond to a constrained percolation configuration on the [3,4,6,4] lattice in $\Omega$. A Type-II edge is present in a dimer configuration if and only if its midpoint has state ``1'' in the corresponding constrained percolation configuration. It is straightforward to check that this way we obtain a bijection between restrictions to Type-II edges of dimer configurations on the [4,6,12] lattice and constrained percolation configurations on the [3,4,6,4] lattice in $\Omega$. Recall that constrained percolation configurations on the [3,4,6,4] lattice induces contour configurations on the hexagonal lattice $\HH$ and the triangular lattice $\TT$ by a 2-to-1 mapping $\phi:\Omega\rightarrow\Phi$. See \Cref{3464contour}.

From the connection of the [4,6,12] lattice and the [3,4,6,4] lattice, as well as the connection of the [3,4,6,4] lattice with the hexagonal lattice $\HH$ and the dual triangular lattice $\TT$ as described in \Cref{xorh}, we can see that each square face of the $[4,6,12]$ lattice is crossed by a unique edge of $\HH$ and a unique edge of $\TT$. Each vertex of $\HH$ is located at the center of a hexagon face of the [4,6,12] lattice, and each vertex of $\TT$ is located in the center of a degree-12 face of the [4,6,12] lattice.

Note that the $\HH$ is a bipartite graph; i.e. all the vertices can be colored black and white such that the vertices of the same color are not adjacent. Let $\Gamma$ be the translation group of the hexagonal lattice generated by translations along two different directions, such that the set of black vertices and the set of white vertices form two distinct orbits under the action of $\Gamma$. Note that $\Gamma$ acts transitively on the dual triangular lattice $\TT$.

In order to define a probability measure for perfect matchings on the [4,6,12] lattice, we introduce edge weights. We assign weight 1 to each Type-II edge, and weight $w_e$ to the Type-I edge $e$. Assume that the edge weights of the [4,6,12] lattice satisfy the following conditions.
 \begin{enumerate}[label=({B}{\arabic*})]
\item The edge weights are $\Gamma$-invariant.
\item If $e_1$, $e_2$ are two parallel Type-I edges around the same square face,  then $w_{e_1}=w_{e_2}$.
\item If $e_1$, $e_2$ are two perpendicular Type-I edges around the same square face, then $w_{e_1}^2+w_{e_2}^2=1$.
\end{enumerate}

The reason we assume (B1) is to define a $\Gamma$-translation-invariant probability measure. The reason we assume (B2) and (B3) is to define a probability measure for dimer configurations of the [4,6,12] lattice, which, under the connection described above to constrained percolation configurations in $\Omega$, will induce a probability measure on $\Omega$ satisfying the symmetry assumption (A3).

Under (B1)--(B3), the edge weights are described by three independent parameters. We may sometimes assume that the parameters satisfy the identity below, which reduces the number of independent parameters to two.
\begin{enumerate}[label=({B}{\arabic*})]
\setcounter{enumi}{3}
\item Let 
\begin{eqnarray}
h(x,y,z)=x+y+z+xy+xz+yz-xyz-1.\label{hxyz}
\end{eqnarray}
For each edge $e$ of the hexagonal lattice $\HH$, let $e_1$ (resp.\ $e_2$) be a Type-I edge of the [4,6,12] lattice parallel (resp.\ perpendicular) to $e$. Let
\begin{eqnarray*}
t_e=\frac{1-w_{e_1}}{w_{e_2}},
\end{eqnarray*}
where $w_{e_1}$ (resp.\ $w_{e_2}$) is the edge weight of $e_1$ (resp.\ $e_2$) for dimer configurations on the [4,6,12] lattice. Under the assumption (B1), $t_e$ is uniquely defined independent of the $e_1$, $e_2$ chosen - as long as $e_1$ is parallel to $e$ and $e_2$ is perpendicular to $e$.  Let $e_a, e_b, e_c$ be three edges of $\HH$ with distinct orientations in the embedding of $\HH$ into $\RR^2$. Then $h(t_{e_a},t_{e_b},t_{e_c})=0$.
\end{enumerate}

In \cite{KOS06}, the authors define a probability measure for any bi-periodic, bipartite, 2-dimensional lattice. Specializing to our case, let $\mu_{n,D}$ be the probability measure of dimer configurations on a toroidal $n\times n$ [4,6,12] lattice $\mathbb{A}_n$ (see \cite{KOS06} for details). Let $\mathcal{M}_n$ be the set of all perfect matchings on $\mathbb{A}_n$, and let $M\in \mathcal{M}_n$ be dimer configuration, then
\begin{eqnarray}
\mu_{n,D}(M)=\frac{\prod_{e\in M}w_{e}}{\sum_{M\in \mathcal{M}_n}\prod_{e\in M}w_e},\label{mnd}
\end{eqnarray}
where $w_e$ is the weight of the edge $e$. As $n\rightarrow\infty$, $\mu_{n,D}$ converges weakly to a translation-invariant measure $\mu_{D}$ (see \cite{KOS06}).

\begin{theorem}\label{m31}For the dimer model on the [4,6,12] lattice.
\begin{enumerate}
\item If the edge weights satisfy (B1)-(B4), $\mu_D$ almost surely there are neither infinite Type-II clusters nor infinite contours.
\item If the edge weights satisfy (B1)-(B3), $\mu_D$ almost surely there exists at most one infinite contour.
\end{enumerate}
\end{theorem}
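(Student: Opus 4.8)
The plan is to deduce \Cref{m31} from \Cref{m21} via the correspondence of \Cref{db}. The $[3,4,6,4]$ lattice is the amenable $[m,4,n,4]$ lattice with $(m,n)=(3,6)$, so in the notation of \Cref{xorh} we have $\LL_1=\HH$ (hexagonal, trivalent) and $\LL_2=\TT$ (triangular). Restriction to Type-II edges is a bijection between dimer configurations on the $[4,6,12]$ lattice and configurations in $\Omega$ on the $[3,4,6,4]$ lattice, under which a Type-II cluster is exactly a $1$-cluster (equivalently, by the symmetry verified below, a $0$-cluster) and contours are exactly the $\LL_1$- and $\LL_2$-contours; hence $\mu_D$ pushes forward to a probability measure $\mu$ on $\Omega$, and it suffices to check that $\mu$ meets the hypotheses of the relevant part of \Cref{m21}.

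So the core of the proof is checking (A1)--(A7) for $\mu$. Assumption (B1) makes the weights invariant under the translation group $\Gamma$, which acts cocompactly on $[4,6,12]$, on $[3,4,6,4]$, and on $\HH$ and $\TT$; thus $\mu_D$, and therefore $\mu$, is $\Gamma$-invariant, giving (A1) (the proofs behind \Cref{m21} use only a cocompact, unimodular subgroup of automorphisms, which is all that is available here since the weights need not be fully symmetric). The measure $\mu_D$ of \cite{KOS06} is $\Gamma$-ergodic, and every $\Gamma_i$-invariant event is $\Gamma$-invariant, so (A2) holds. For (A3), I would lift the state-flip $\theta$ on $\Omega$ to an involution on dimer configurations whose effect on the weight is a product of local factors, one per square face: flipping the four Type-II states around a square interchanges the two admissible dimer patterns crossing that square, and the identity $w_{e_1}^2+w_{e_2}^2=1$ of (B3) together with the equality of opposite weights (B2) makes each local factor $1$, so $\mu$ is $\theta$-invariant. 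Conditions (A4)--(A5) follow from the Gibbs property of $\mu_D$: all edge weights are strictly positive, so any admissible local rearrangement of a perfect matching has positive conditional probability, and the switch of contour states around a hexagon of $\HH$ or a triangle of $\TT$ is realized by such a rearrangement. Finally, (A6) follows from $\Gamma$-ergodicity of $\mu_D$ together with the symmetry (A3) (which precludes the spontaneous asymmetry between $0$- and $1$-clusters that would otherwise destroy ergodicity of the auxiliary spin field $\lambda_2$), while (A7) is the delicate one: under the substitution $t_e=(1-w_{e_1})/w_{e_2}$, the identity $h(t_{e_a},t_{e_b},t_{e_c})=0$ of (B4) is a criticality condition for the auxiliary model, and it is this criticality that yields the ergodicity of $\lambda_1$. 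Granting (A1)--(A7), \Cref{m21}(3) gives that $\mu$-a.s.\ there are no infinite $\LL_1$- or $\LL_2$-contours and no infinite $0$- or $1$-clusters; transporting this back through the bijection is part (1).

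For part (2) one assumes only (B1)--(B3), which yields (A1)--(A6) but not (A7) (were all of (A1)--(A7) available, (B4) would be redundant in part (1)). Then \Cref{m21}(1) already forces $\mu$-a.s.\ the absence of infinite $\LL_2=\TT$-contours, so every infinite contour is an $\LL_1=\HH$-contour and, $\HH$ being trivalent, a doubly-infinite self-avoiding path. To rule out two or more such paths I would invoke the combinatorial facts of \Cref{ctcl}: by planarity and the non-crossing of $\LL_1$- and $\LL_2$-contours, the region enclosed between two infinite $\HH$-contours must carry an infinite $\TT$-contour (already excluded) or else violate the local constraint; combined with a Burton--Keane argument on the amenable lattice $[3,4,6,4]$ this rules out both finitely many ($\ge 2$) and infinitely many infinite $\HH$-contours. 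Hence $\mu_D$-a.s.\ there is at most one infinite contour.

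I expect the main obstacle to be the verification of (A3) and of the finite-energy conditions (A4)--(A5) at the level of perfect matchings — checking that flips of Type-II states and switches of contour states around faces of $\HH$ and $\TT$ lift to admissible, weight-controlled local rearrangements of dimer configurations on the $[4,6,12]$ lattice — together with confirming that it is exactly the ergodicity (A7) of $\lambda_1$ that fails under (B1)--(B3) alone and identifying precisely how the criticality encoded in (B4) restores it. The Burton--Keane step in part (2), though technical, parallels the $\ZZ^2$ argument of \cite{HL16}.
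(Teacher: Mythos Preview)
The genuine gap is in your verification of (A6) and (A7). Your claim that (A6) ``follows from $\Gamma$-ergodicity of $\mu_D$ together with the symmetry (A3)'' is unjustified: $\lambda_1$ is the law of a spin field reconstructed from the $\HH$-contour marginal $\nu_1$ together with an independent coin flip, and ergodicity of $\mu_D$ does not automatically pass to this reconstruction (taking a marginal and then randomizing could in principle yield a non-ergodic measure). The paper does not obtain (A6)/(A7) abstractly; it invokes the identification of \cite{bd14}: the $\HH$-contour marginal of $\widetilde\mu_D$ is the contour law of an XOR Ising model on $\TT$, and the $\TT$-contour marginal is the contour law of an XOR Ising model on $\HH$. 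Conditions (B2)--(B3) force these two XOR Ising models to be Kramers--Wannier dual, so generically one is in the high-temperature regime (ergodic by \Cref{he}) and the other in the low-temperature regime; the criticality condition (B4) is exactly the case where both are critical, and then \Cref{ec} supplies ergodicity of both. That is why (B4) is precisely what is needed to have (A6) and (A7) simultaneously and hence apply \Cref{m21}(3). (Incidentally, you have the labels swapped: (A7) concerns $\lambda_2$, not $\lambda_1$.) The paper also derives (A4)--(A5) from this XOR Ising identification rather than from the dimer Gibbs property; your route via local dimer rearrangements may be workable but would require checking that the hexagon/triangle contour flips lift to admissible dimer moves on the \emph{marginals} $\nu_1,\nu_2$, which is not immediate.

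This misidentification also breaks your Part~(2). Under (B1)--(B3) alone it is not true that (A6) always holds and (A7) fails; rather, \emph{at least one} of (A6)/(A7) holds, depending on which of the two dual XOR Ising models is high-temperature. The paper therefore applies the corresponding half of \Cref{m21} (part~(1) or~(2)) to exclude infinite contours on one of $\TT,\HH$, and then invokes the standalone uniqueness lemma (\Cref{l81} or \Cref{l82}, which require only (A1)(A2)(A5) resp.\ (A1)(A4)) to bound the number of infinite contours on the other lattice by one. Your proposed combinatorial substitute---that the region between two infinite $\HH$-contours must contain an infinite $\TT$-contour---is false as stated (nothing in the constraint forces a $\TT$-contour there), so that shortcut does not close the argument.
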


\begin{proof}Let $\widetilde{\mu}_D$ be the marginal distribution of $\mu_D$ restricted on Type-II edges. Recall that the restriction of dimer configurations to Type-II edges on the [4,6,12] lattice, are in 1-1 correspondence with constrained percolation configurations on the $[3,4,6,4]$ lattice in $\Omega$, as described before. See also \Cref{4612}. 

Also recall that $\widetilde{\mu}_D$ is the weak limit of measures on larger and larger tori; since the edge weights are translation-invariant, the measures on tori are translation-invariant. Hence $\widetilde{\mu}_D$ satisfies (A1) if edge weights satisfies (B1).

The measure $\widetilde{\mu}_D$ is both translation-invariant and mixing (see \cite{KOS06}), hence $\widetilde{\mu}_D$ is totally ergodic and satisfies (A2).

If the edge weights satisfy (B2) and (B3), the measures on tori are symmetric under $\theta$. Hence $\widetilde{\mu}_D$ satisfies (A3).

By the results in \cite{bd14}, the marginal distribution of contours in $\HH$ (resp.\ $\TT$) under $\widetilde{\mu}_D$ is the same as the distribution of contours of an XOR Ising model $\sigma_{XOR,\TT}$ (resp. $\sigma_{XOR,\HH}$) with spins located on vertices of $\TT$ (resp.\ $\HH$), if the dimer edge weights and Ising coupling constants satisfy the following conditions:
\begin{itemize}
\item each Type-II edge has weight 1;
\item each Type-I edge parallel to an edge of $e$ with coupling constants $J_e$ has weight $w_e$ such that $w_e=\frac{1-e^{-4J_e}}{1+e^{-4J_e}}$;
\item each Type-I edge perpendicular to an edge $e$ with coupling constants $J_e$ has weight $w_e$ such that $w_e=\frac{2e^{-2J_e}}{1+e^{-4J_e}}$.
\end{itemize}

Moreover, when the edge weights satisfy (B1)-(B3), $\sigma_{XOR,\TT}$ and $\sigma_{XOR,\HH}$ are dual to each other, i.e.\ the coupling constants $J_{\tau}$ and $K_{\tau}$ on a pair of dual edges $e\in \HH,e^*\in \TT$ satisfy (\ref{dua}). The finite energy assumptions (A4) and (A5) follows from the finite energy of $\sigma_{XOR,\HH}$ and $\sigma_{XOR,\TT}$.

Since $\sigma_{XOR,\HH}$ and $\sigma_{XOR,\TT}$ are dual to each other, one of the following cases might occur
\begin{enumerate}
\item $\sigma_{XOR,\HH}$ is in the low-temperature state, and $\sigma_{XOR,\TT}$ is in the high-temperature state;
\item $\sigma_{XOR,\HH}$ is in the high-temperature state, and $\sigma_{XOR,\TT}$ is in the low-temperature state;
\item both $\sigma_{XOR,\HH}$ and $\sigma_{XOR,\TT}$ are in the critical state.
\end{enumerate}
See \Cref{xori} for definitions of the low-temperature state, high-temperature state and critical state for XOR Ising models.

In Case III, both (A6) and (A7) are satisfied because of the ergodicity of measures for the critical XOR Ising model on $\HH$ and $\TT$; see \Cref{ec}. Moreover, Case III occurs if and only if the edge weights satisfy (B4). Hence when the edge weights satisfy (B1)-(B4), $\widetilde{\mu}_D$ satisfy (A1)-(A7). \Cref{m31} I follows from \Cref{m21}II(c).

Note that the measures for the high-temperature XOR Ising models on $\HH$ and $\TT$ are also ergodic; see \Cref{he}. Therefore, in each case of I, II and III, at least one of (A6) and (A7) is satisfied. Then \Cref{m31} II follows from \Cref{m21} II(a)(b) and \Cref{l81,l82}.

\end{proof}

In order to prove the \Cref{chi,lth}, we prove the following lemmas.

\begin{lemma}\label{ec}The measure for the critical XOR Ising model on $\HH$ (resp.\ $\TT$), obtained as the weak limit of measures on tori, is ergodic. 
\end{lemma}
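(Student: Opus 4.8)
The statement to prove is Lemma \ref{ec}: the measure for the critical XOR Ising model on $\HH$ (or $\TT$), obtained as a weak limit of finite-volume measures on tori, is ergodic under the relevant translation group. The plan is to trace the XOR measure back through the combinatorial correspondences used elsewhere in the paper. Recall that an XOR Ising configuration is the product $\sigma_{XOR}=\sigma_1\sigma_2$ of two i.i.d.\ Ising models; by the correspondence of \cite{Dub,bd14} (used already in \Cref{db} and \Cref{pm31}), the contours of the critical XOR Ising model on $\HH$ agree in law with the level lines of the height function of a dimer model on the decorated $[4,6,12]$ lattice $\mathbb{A}$, and the relevant dimer measure is the Gibbs measure $\mu_D$ of \cite{KOS06} obtained as the weak limit $\mu_{n,D}\to\mu_D$ of measures on the toroidal $\mathbb{A}_n$. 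The key input I want to invoke is the established fact (\cite{KOS06}, as quoted in \Cref{pm31}) that $\mu_D$ is \emph{mixing} — in particular totally ergodic — under translations.

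**Main steps.** First I would state precisely the group $\Gamma$ with respect to which ergodicity is claimed: the translation group generated by the two lattice vectors of $\HH$ (equivalently of $\mathbb{A}$, and of $\TT$). Second, I would recall from \cite{KOS06} that $\mu_D$ is translation-invariant and mixing, hence ergodic, under $\Gamma$; this is exactly the property already cited in \Cref{pm31} ("$\widetilde\mu_D$ is totally ergodic and satisfies (A2)"). Third, I would note that the XOR Ising spin measure (and its contour measure) on $\HH$ is a \emph{factor} of $\mu_D$: it is the image of $\mu_D$ under a $\Gamma$-equivariant measurable map (the map sending a dimer cover to the associated height function modulo a choice of reference, then to the induced spin configuration on $\HH$; concretely this is the composition of the restriction to Type-II edges with the maps $\phi$ and the spin-reconstruction discussed in \Cref{db}). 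A factor of an ergodic measure-preserving system is ergodic, so the XOR measure on $\HH$ is $\Gamma$-ergodic. Fourth, for $\TT$ I would use the planar duality between $\sigma_{XOR,\HH}$ and $\sigma_{XOR,\TT}$ (relation (\ref{dua})) together with the fact — used in \Cref{pm31} — that at criticality both are simultaneously critical, so the same dimer measure $\mu_D$ on $\mathbb{A}$ simultaneously presents the contours of both; the $\TT$-XOR measure is likewise a $\Gamma$-equivariant factor of $\mu_D$, hence ergodic.

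**Alternative route and the main obstacle.** An alternative, if one wants to avoid the dimer factor argument, is to invoke a mixing (or tail-triviality) statement directly for the critical Ising model on $\HH$ and $\TT$ under periodic boundary conditions: the weak limit of finite-volume Gibbs measures with periodic boundary conditions at any temperature is a translation-invariant extremal Gibbs state when the spontaneous magnetization vanishes, hence ergodic; the product of two ergodic (indeed tail-trivial / mixing) measures is ergodic; and a measurable factor (the spinwise product $\sigma_1\sigma_2$) of an ergodic measure is ergodic. The critical point is exactly where $\HH$ and $\TT$ have vanishing spontaneous magnetization (as noted in \Cref{xori}, citing \cite{JL72,Ai80,ZL12,DC13}), so the finite-volume periodic limit is a single ergodic state. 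The main obstacle — and the place I would be most careful — is pinning down that the weak limit defining "the measure for the critical XOR Ising model obtained as weak limit of measures on tori" really coincides with the product of two copies of the translation-ergodic limiting Ising measure (or with the factor of $\mu_D$), i.e.\ that the order of taking products, taking XOR, and taking the thermodynamic limit does not matter; this is a routine but necessary consistency check, handled by the explicit finite-volume partition-function identities already set up in \Cref{pxorc} (equations (\ref{zlf}) ff.) and by uniqueness of the infinite-volume critical Ising state. Once that identification is in place, ergodicity is immediate from "factor of an ergodic system is ergodic."
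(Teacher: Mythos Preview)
Your main route contains a genuine gap: the XOR Ising \emph{spin} measure on $\HH$ is not a $\Gamma$-equivariant factor of the dimer measure $\mu_D$. The correspondence from \cite{bd14} equates the XOR Ising \emph{contour} law with a marginal of $\mu_D$ (via the restriction to Type-II edges and the map $\phi$); lifting contours back to spins requires the additional independent bit at a reference vertex (exactly the construction preceding (A6)--(A7)), so the spin measure is a two-point \emph{extension} of a factor of $\mu_D$, not a factor. Ergodicity of an extension does not follow from mixing of the base. Indeed, your argument, if valid, would apply verbatim at any temperature, since the contour correspondence and the mixing of $\mu_D$ from \cite{KOS06} hold for all coupling constants; but at low temperature the periodic-limit XOR measure is the nontrivial mixture $\tfrac12\nu_{++}+\tfrac12\nu_{+-}$ of the pushforwards of $\mu^+\times\mu^+$ and $\mu^+\times\mu^-$ under the product map, hence not ergodic. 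This shows the factor claim must fail.

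Your alternative route is essentially correct and is what the paper actually does, though the paper makes the key input explicit rather than citing uniqueness abstractly. The paper reduces the question (via Lemma~10.2 of \cite{HL16}) to showing $\langle\sigma(u)\sigma(v)\rangle\to 0$ for the single-copy critical Ising model on $\HH$ and on $\TT$, and then establishes this decay directly: the two-point function equals a connection probability in the $q=2$ random-cluster representation; on $\TT$ the critical connection probability tends to zero by Theorem~4 of \cite{BR07}; the result is transferred to $\HH$ via the star--triangle transformation; uniqueness of even correlations across translation-invariant Gibbs measures \cite{Leb77} handles the boundary-condition identification. Your ``extremal $\Rightarrow$ tail-trivial $\Rightarrow$ mixing $\Rightarrow$ product ergodic $\Rightarrow$ factor ergodic'' chain is the abstract version of the same mechanism, and it does require mixing (not mere ergodicity) of the single-copy measure, which you correctly flag.
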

\begin{proof}Let $\rho=\frac{\sigma+1}{2}$, where $\sigma:V_{\HH}\rightarrow\{\pm 1\}$ is the spin configuration for an Ising model on $\HH$.
Following the same arguments as in the proof of Lemma 10.2 in \cite{HL16}, it suffices to show that for the critical Ising model on $\HH$ (resp.\ $\TT$), we have
\begin{eqnarray*}
\lim_{|u-v|\rightarrow\infty} |\langle\rho(u)\rho(v) \rangle-\langle\rho(u)\rangle\langle\rho(v)\rangle|=0,
\end{eqnarray*}
which is equivalent to show that 
\begin{eqnarray}
\lim_{|u-v|\rightarrow\infty}\langle \sigma(u)\sigma(v) \rangle=0.\label{vs}
\end{eqnarray}
Note that $\langle\sigma(u)\sigma(v)\rangle$ is an even spin correlation function (i.e.\ the expectation of the product of spins on an even number of vertices), and hence for all the infinite-volume, translation-invariant Gibbs measures of the Ising model on $\HH$ (resp.\ $\TT$) corresponding to the given coupling constant, $\langle\sigma_u\sigma_v\rangle$ has a unique value; see \cite{Leb77}. 

Consider the FK random cluster representation of the Ising model with $q=2$, the two-point spin correlation $\langle\sigma(u)\sigma(v)\rangle$ is exactly the connectivity probability of $u$ and $v$ in the random cluster model, up to a multiplicative constant; see Chapter 1.4 of \cite{GrGrc}. Therefore in order to show (\ref{vs}), it suffices to show that the connectivity probabilities of two vertices in the corresponding random cluster model, as the distances of the two vertices go to infinity, converge to zero. 

 By Theorem 4 of \cite{BR07}, we infer that the connectivity probabilities of two vertices in the $q=2$ random cluster model corresponding to the critical Ising model on the triangular lattices with coupling constants $K_a,K_b,K_c$ satisfying $g(K_a,K_b,K_c)=0$ converge to zero as the distances of the two vertices go to infinity. 

Note that the hexagonal lattice is a bipartite graph, i.e., all the vertices can be colored black and white such that vertices of the same color can never be adjacent.  
 Recall that the star-triangle transformation is a replacement of each black vertex of $\HH$, as well as its incident edges, into a triangle. The resulting graph is a triangular lattice $\TT'$; see the right graph of \Cref{htst}. The parameters of the random cluster model on $\HH$ and the random cluster model $\TT'$ satisfy certain identities, such that the probabilities of connections of any two adjacent vertices in $\TT'$ (which are also vertices in $\HH$) internal to each triangle face in $\TT'$ which has a black vertex of $\HH$ in the center are the same for the random cluster model on $\HH$ and the random cluster model on $\TT'$; see Page 160-161 of \cite{GrGrc}.
 
 Using a star-triangle transformation (see the right graph of \Cref{htst}), and (6.69) of \cite{GrGrc}, we deduce that the connectivity probabilities of two vertices in the $q=2$ random cluster model on the hexagonal lattice corresponding to the critical Ising model on $\HH$ with coupling constants $J_a,J_b,J_c$ satisfying $f(J_a,J_b,J_c)=0$ converge to zero as the distances of the two vertices go to infinity. Note that the weak limit of of measures with free boundary conditions is known to exist and translation-invariant, see Theorem (4.19) of \cite{GrGrc}. By the uniqueness of $\langle \sigma(u)\sigma(v) \rangle$ under all the translation-invariant measures, we obtain that (\ref{vs}) holds under the measure obtained as the weak limit of measures with periodic boundary conditions. 
\end{proof}

\begin{lemma}\label{he}The measure for the high-temperature XOR Ising model on $\HH$ (resp.\ $\TT$), obtained as the weak limit of measures on tori, is ergodic. 
\end{lemma}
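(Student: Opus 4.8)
The plan is to adapt the proof of \Cref{ec} essentially verbatim; the only new ingredient is the decay of the Ising two-point function in the subcritical regime, which is softer than the critical estimate used in \Cref{ec}. As there, write $\mu$ for the law of a single Ising model on $\HH$ (resp.\ $\TT$) with the prescribed coupling constants, obtained as the weak limit of the torus Gibbs measures, and let $\sigma$ be a sample. The XOR Ising measure is the pushforward of $\mu\times\mu$ under $(\sigma_1,\sigma_2)\mapsto\sigma_1\sigma_2$, and for finite vertex sets $S_1,S_2$ one has $\langle\prod_{v\in S_1}\sigma_{XOR}(v)\prod_{v\in S_2}\sigma_{XOR}(v)\rangle=\langle\prod_{v\in S_1\triangle S_2}\sigma(v)\rangle^{2}$; hence, exactly as in the proof of \Cref{ec} and Lemma 10.2 of \cite{HL16}, ergodicity of the XOR measure follows once we show
\[
\lim_{|u-v|\to\infty}\langle\sigma(u)\sigma(v)\rangle=0
\]
under the weak limit of the torus Ising measures.

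To prove this decay I would identify the high-temperature state of the XOR model with strict subcriticality of the underlying Ising model. The XOR model on $\HH$ (resp.\ $\TT$) is in the high-temperature state when $f(J_a,J_b,J_c)>0$ (resp.\ $g(K_a,K_b,K_c)>0$), and $f=0$ (resp.\ $g=0$) is precisely the self-dual, hence critical, condition for the Ising model on $\HH$ (resp.\ $\TT$); thus $f>0$ (resp.\ $g>0$) places the coupling constants strictly below the critical curve, so the associated $q=2$ random-cluster model has $p<p_c$. As recalled in the proof of \Cref{ec}, $\langle\sigma(u)\sigma(v)\rangle$ equals, up to a fixed positive constant, the probability that $u$ and $v$ belong to one open cluster of this random-cluster model (Chapter 1.4 of \cite{GrGrc}); by the sharpness of the phase transition in the subcritical regime of planar random-cluster models (Chapter 6 of \cite{GrGrc}) this connectivity decays exponentially to $0$, so $\langle\sigma(u)\sigma(v)\rangle\to0$. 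This decay then transfers to the weak limit of the torus measures as in \Cref{ec}: $\langle\sigma(u)\sigma(v)\rangle$ is an even correlation, hence takes the same value under every translation-invariant infinite-volume Gibbs measure for the given coupling constants (\cite{Leb77}), and in the high-temperature state the Gibbs measure is in fact unique (\cite{JL72,Ai80,ZL12,DC13}), so the value obtained in the random-cluster picture is the one under the torus limit. This proves the lemma for both $\HH$ and $\TT$.

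The only point requiring care, rather than a genuine obstacle, is the identification in the previous paragraph: one must verify that $f>0$ (resp.\ $g>0$) is equivalent to the Ising coupling constants lying strictly below the critical curve, equivalently to $p<p_c$ for the $q=2$ random-cluster model, so that the subcritical-decay input applies. Everything else is a line-by-line repetition of \Cref{ec}; in fact the high-temperature case is easier, since it uses only the robust subcritical exponential-decay estimate rather than the delicate critical connectivity bound of \cite{BR07}.
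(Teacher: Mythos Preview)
Your proposal is correct and follows essentially the same route as the paper: reduce ergodicity of the XOR measure, exactly as in \Cref{ec}, to the vanishing of the Ising two-point function $\langle\sigma(u)\sigma(v)\rangle$ as $|u-v|\to\infty$ in the high-temperature regime. The paper's own proof is even terser---it simply cites \cite{ZL12,DC13} for this decay---whereas you supply the mechanism via the $q=2$ random-cluster representation and subcritical sharpness; but the logical skeleton is identical.
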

\begin{proof}The identity (\ref{vs}) holds under the measure for the high-temperature Ising model on $\HH$ (resp.\ $\TT$); see \cite{ZL12,DC13}.
\end{proof}

\noindent\textbf{Proof of \Cref{chi}.} We first show that in the critical XOR Ising model on $\HH$ or $\TT$, almost surely there are no infinite contours. It is proved in \cite{bd14} that the contours of XOR Ising model with spins located on $\HH$ (resp.\ $\TT$) have the same distribution as contours in $\TT$ (resp.\ $\HH$) for the Type-II clusters of dimer configurations on the $[4,6,12]$ lattice, if the coupling constants of the XOR Ising model and the edge weights of the $[4,6,12]$ lattice satisfy certain conditions. It is not hard to check that when the coupling constants of the XOR Ising model on $\TT$ (resp.\ $\HH$) are critical, then the edge weights of the corresponding dimer model on the [4,6,12] lattice satisfy (B1)-(B4). By \Cref{m31} I, almost surely there are no infinite contours in the critical XOR Ising model on $\TT$ or $\HH$.

Now we prove that almost surely there are no infinite clusters for the critical XOR Ising model on $\HH$ or $\TT$. We write down the proof for the critical XOR Ising model on $\HH$ here, the case for the XOR Ising model on $\TT$ can be proved in a similar way. 

Let $\sA$ be the event that there exists an infinite cluster for the XOR Ising model on $\HH$. Assume that $\mu(\sA)>0$; we will obtain a contradiction. By translation-invariance of $\sA$ and \Cref{ec}, if $\mu(\sA)>0$ then $\mu(\sA)=1$. Let $\sA_1$ (resp. $\sA_2$) be the event that there exists an infinite ``$+$''-cluster (resp.\ ``$-$''-cluster)
for the XOR Ising model on $\HH$, then
\begin{eqnarray}
\mu(\sA_1\cup\sA_2)=1.\label{u12}
\end{eqnarray}
By symmetry $\mu(\sA_1)=\mu(\sA_2)$. By translation-invariance of $\sA_1$, $\sA_2$ and \Cref{ec}, either $\mu(\sA_1)=\mu(\sA_2)=1$, or $\mu(\sA_1)=\mu(\sA_2)=0$. By (\ref{u12}), we have $\mu(\sA_1)=\mu(\sA_2)=1$, hence $\mu(\sA_1\cap\sA_2)=1$, i.e. $\mu$-a.s.\ there exist both an infinite ``$+$''-cluster and an infinite ``$-$''-cluster in the critical XOR Ising configuration on $\HH$.

Let $\phi_{\TT}$ be the contour configuration associated to the critical XOR Ising configuration on $\HH$. Let $\omega\in \phi^{-1}(\phi_{\TT})$ be a constrained percolation configuration on the $[3,4,6,4]$ lattice whose contour configuration is $\phi_{\TT}$. It is not hard to check that in $\omega$ there exist both an infinite 1-cluster and an infinite 0-cluster if in the original XOR Ising model on $\HH$, there exist both an infinite ``$+$''-cluster and an infinite ``$-$''-cluster. By \Cref{io}, $\mu$-a.s.\ there exists an infinite contour in $\phi_{\TT}$. The contradiction implies that $\mu$-a.s. there are no infinite clusters in the critical XOR Ising model on $\HH$.
$\hfill\Box$

\bigskip
\noindent\textbf{Proof of \Cref{lth}.} By the correspondence between contours in an XOR Ising model with spins located on vertices of $\HH$ (resp.\ $\TT$) and contours on $\TT$ (resp.\ $\HH$) for the Type-II clusters of dimer configurations on the $[4,6,12]$ lattice, as proved in \cite{bd14}, as well as correspondence between Type-II clusters of dimer configurations on the [4,6,12] lattice and clusters of constrained configurations on the $[3,4,6,4]$ lattice and \Cref{m21}, it suffices to show that the probability measure for the low-temperature XOR Ising model on $\HH$ (resp.\ $\TT$) satisfies (A1)-(A5) and (A7) (resp.\ (A1)-(A6)), for $\LL_1=\HH$.

 It is straightforward to verify (A1)-(A5). The assumption (A6) (resp.\ (A7)) follows from \Cref{he}.
$\hfill\Box$

\appendix
\section{Deterministic Results about Contours and Clusters}\label{ctcl}

In this section, we prove deterministic results concerning contours and clusters for the constrained percolation model on the $[m,4,n,4]$ lattice in preparation to prove the main theorems.

Let $P$ be the underlying plane into which the $[m,4,n,4]$ lattice is embedded. Recall that when $\frac{1}{m}+\frac{1}{n}=\frac{1}{2}$, $P$ is the Euclidean plane $\RR^2$ and the graph $G$ is amenable. When $\frac{1}{m}+\frac{1}{n}<\frac{1}{2}$, $P$ is the hyperbolic plane $\HH^2$ and the graph $G$ is non-amenable.

We consider an embedding of the $[m,4,n,4]$ lattice into $P$ in such a way that each edge has length 1.
Let $\phi\in\Phi$ be a contour configuration, and let $C$ be a contour in $\phi$. To each component of $P\setminus \phi$, we associate an \df{interface}, which is a closed set consisting of all the points in the component whose distance to $C$ is $\frac{1}{4}$. Here by distance, we mean either Euclidean distance or hyperbolic distance depending on whether $P$ is $\RR^2$ or $\HH^2$. Obviously each interface is either a self-avoiding cycle or a doubly infinite self-avoiding walk. See \Cref{3464interface} for an example of interfaces on the $[3,4,6,4]$ lattice.

\begin{figure}
\includegraphics[width=.5\textwidth]{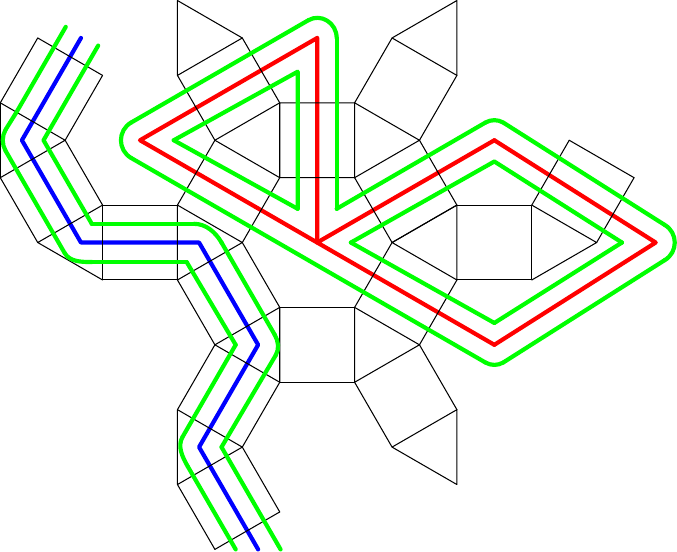}
\caption{Contour configuration and interfaces. Blue lines represent contours of $\HH$. Red lines represent contours of $\TT$. Green lines represent interfaces.}
\label{3464interface}
\end{figure}

Note that when $\frac{1}{m}+\frac{1}{n}<\frac{1}{2}$ and $\min\{m,n\}\geq 3$, both $\LL_1$ and $\LL_2$ are vertex-transitive, non-amenable, planar graphs with one end. 

\begin{lemma}\label{rl}Whenever we have an interface $I$, let $F_{I}$ be the set consisting of all the vertices of $G$ whose (Euclidean or hyperbolic) distance to the interface is $\frac{1}{4}$. Then all the vertices of $F_{I}$ are in the same cluster. If $I$ is a doubly-infinite self-avoiding path, then $F_{I}$ is part of an infinite cluster.
 \end{lemma}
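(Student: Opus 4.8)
\textbf{Proof proposal for Lemma \ref{rl}.}

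The plan is to unwind the definitions of contour configuration, interface, and cluster at the local level, and then propagate the resulting local statements along the interface. First I would fix an interface $I$ attached to a component $\Lambda$ of $P\setminus\phi$, so that $I$ consists of all points of $\Lambda$ at distance exactly $\frac14$ from the relevant contour $C$. I would then look at the local picture near a single edge $e\in C$: such an edge crosses a pair of opposite edges of a black square face $b$ of $G$, with vertices $v_1,v_2,v_3,v_4$, where $v_1,v_2$ lie on one side of $e$ and $v_3,v_4$ on the other. By the definition of the map $\phi:\Omega\to\Phi$, the presence of $e$ forces $v_1,v_2$ to share a common state, $v_3,v_4$ to share a common state, and $v_1,v_3$ to have opposite states; hence $v_1$ and $v_2$ are neighbours in $G$ with the same state, so they lie in one cluster, and similarly $v_3,v_4$ lie in one (other) cluster. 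The key geometric observation is that $F_I$, the set of vertices of $G$ at distance $\frac14$ from $I$, consists precisely of those vertices lying on the $\Lambda$-side of the contour edges traced by $C$; near $e$ this is exactly the pair among $\{v_1,v_2\}$ or $\{v_3,v_4\}$ that lies in $\Lambda$.

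The second step is to move from one contour edge to the next. Consecutive edges of the contour $C$ meet at a vertex $w$ of $\LL_1$ (or $\LL_2$), and $w$ sits at the center of a face of $G$; the two (or more) contour edges incident to $w$ cross edges of $G$ bounding faces that surround that center face. I would check, by inspecting the six allowed local configurations around each black face in Figure \ref{lcc}, that as one walks along the interface $I$ past such a corner, the vertices of $G$ at distance $\frac14$ from $I$ stay on the same side and retain a common state: two vertices $x,y\in F_I$ adjacent along the walk either coincide, are joined by an edge of $G$ lying inside $\Lambda$, or are two vertices of a consecutive black/white face pair both lying in $\Lambda$, and in each case the constraint forces $x$ and $y$ to have equal state. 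Chaining these local equalities along the connected set $I$ shows all of $F_I$ lies in a single cluster of $\omega$. If $I$ is a doubly infinite self-avoiding path, then $F_I$ is an infinite set of vertices, all in the same cluster, so that cluster is infinite; this gives the final assertion.

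The main obstacle I anticipate is the bookkeeping at the corners of $I$: one must be careful that the ``distance $\frac14$'' set $F_I$ is traced out consistently as a connected sequence of vertices following $I$, rather than jumping across a contour edge to the other side (which would connect a $0$-cluster to a $1$-cluster and break the argument). This is where one genuinely uses that $I$ is the $\frac14$-offset of a \emph{single} contour $C$ and that $C$ is a connected tree-like object with no crossings between $\LL_1$- and $\LL_2$-edges, so $I$ never touches a contour edge of the opposite type; combined with the explicit list of six allowed configurations around a black face, this pins down the local picture in all cases. Once the local case analysis is in place, the propagation along $I$ and the conclusion about infinite clusters are routine.
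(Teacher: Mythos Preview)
Your approach is correct and essentially the same as the paper's: walk along the interface $I$ and verify that successive vertices of $F_I$ encountered along the walk lie in a common cluster, then observe that $|F_I|=\infty$ when $I$ is doubly infinite.

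The paper organizes the walk slightly differently. Rather than separating black faces (where contour edges cross) from the ``corners'' at white faces, it simply lists the faces $\{S_j\}_{j\in J}$ of $G$ crossed by $I$ in order, and for each $S_j$ notes that the vertices of $F_I\cap\partial S_j$ are connected by a subpath of $\partial S_j$ disjoint from $I$. This treats black and white faces uniformly and sidesteps any explicit inspection of the six allowed configurations; your black/white decomposition reaches the same conclusion with a bit more bookkeeping. One small point worth tightening: you phrase the local analysis in terms of edges of the contour $C$, but $C$ can have vertices of degree $\geq 4$ in $\LL_1$ or $\LL_2$, so walking along $C$ is not literally a path; as you yourself note, it is cleaner to walk along $I$, which is always a self-avoiding cycle or doubly-infinite self-avoiding path, and this is exactly what the paper does.
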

 \begin{proof}Recall that the interface $I$ is either a self-avoiding cycle or a doubly-infinite self-avoiding walk. Give $I$ a fixed direction. Moving along $I$ following the fixed direction, let $\{S_j\}_{j\in J}$ be the set of faces crossed by $I$ in order, where $J\subseteq \ZZ$ is a set of integers, such that for any $j_1<j_2$, $j_1,j_2\in J$, $I$ crosses $S_{j_1}$ first, and then crosses $S_{j_2}$. Note that it is possible to have $S_{j_1}=S_{j_2}$ for $j_1\neq j_2$.
 
For any two vertices $u,v\in F_{I}$, we can find a sequence of indices $j_1<j_2<\ldots<j_k\in J$, and a sequence of vertices of $G$,
\begin{eqnarray}
u=v_{j_1,1},v_{j_1,2}(=v_{j_2,1}),v_{j_2,2}(=v_{j_3,1})=\ldots=v_{j_k,2}=v,\label{vss}
\end{eqnarray}
such that for any $1\leq i\leq k$, $v_{j_i,1}$ and $v_{j_i,2}$ are two vertices (which may not be distinct) in $F_I\cap \partial S_{j_i}$, and there exists a path $\ell_{v_{j_i,1}v_{j_i,2}}$ connecting $v_{j_i,1}$ and $v_{j_i,2}\subset \partial S_{j_i}$, and $\ell_{v_{j_i,1}v_{j_i,2}}\cap I=\emptyset$.  Note that any two consecutive vertices in (\ref{vss}) are in the same cluster, therefore $u$ and $v$ are in the same cluster.

 If $I$ is a doubly-infinite self-avoiding path, then $I$ crosses infinitely many faces.  Each face crossed by $I$ has at least one boundary vertex in $F_I$. Each vertex of $F_I$ is a boundary vertex of at most 4 faces. Therefore $|F_I|=\infty$. Since all the vertices in $F_I$ are in the same cluster, $F_I$ is part of an infinite cluster.
 \end{proof}
 
 In the following lemma, contours may be primal or dual as usual.

\begin{lemma}\label{cc}If there exist at least two infinite contours, then there exists an infinite 0-cluster or an infinite 1-cluster. Moreover, if $C_1$ and $C_2$ are two infinite contours, then there exists an infinite cluster incident to $C_1$.
\end{lemma}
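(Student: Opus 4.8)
\textbf{Proof proposal for Lemma~\ref{cc}.}

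The plan is to exploit the structure of interfaces established in Lemma~\ref{rl} together with the topological fact that any infinite contour is an infinite, locally finite tree whose complement in $P$ consists of unbounded components separated by doubly-infinite interfaces. First I would recall that each contour $C$ is an even-degree connected subgraph of $\LL_1$ or $\LL_2$, hence (since present edges never cross and each vertex of $\LL_i$ has an even number of incident present edges) it is a tree in which every vertex has degree $2$ or $4$; in particular $C$ has no cycles, so $P\setminus C$ has no bounded component. An infinite such contour therefore borders at least two distinct components of $P\setminus C$, and the interfaces associated to these components (the sets of points at distance $\frac14$ from $C$ inside each component) are doubly-infinite self-avoiding walks rather than cycles, because $C$ is unbounded.

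The first main step is: if $C$ is any infinite contour, then $C$ is incident to an infinite cluster. Indeed, pick a component $\Lambda$ of $P\setminus C$ that is adjacent to $C$ along infinitely many crossed faces (such a $\Lambda$ exists since $C$ crosses infinitely many square faces and each such face lies between two components, while $C$ has only finitely many — in fact at most countably many, but along any infinite ray of the tree the faces accumulate in at least one fixed component); let $I$ be the interface of $\Lambda$. Since $C$ is unbounded, $I$ is a doubly-infinite self-avoiding path, so by Lemma~\ref{rl} the set $F_I$ of vertices of $G$ at distance $\frac14$ from $I$ is infinite and contained in a single cluster. By construction the vertices of $F_I$ lie on the boundary of faces crossed by $C$, hence this infinite cluster is incident to $C$. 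Applying this to $C=C_1$ gives the ``moreover'' statement; applying it to $C_1$ (or $C_2$) also gives the first statement, since the resulting infinite cluster is either an infinite $0$-cluster or an infinite $1$-cluster.

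The point that needs the most care — and which I expect to be the main obstacle — is the claim that an infinite contour must be adjacent to a component of $P\setminus C$ along \emph{infinitely} many crossed faces, so that the resulting interface is genuinely doubly-infinite and Lemma~\ref{rl} applies to produce an \emph{infinite} cluster. One clean way to argue this: $C$ is an infinite locally finite tree, so it contains a self-avoiding doubly-infinite path or a self-avoiding ray $R$ (by König's lemma); walking along $R$ one crosses infinitely many square faces $S_1, S_2,\dots$, consecutive ones sharing a vertex of $\LL_i$, and the local geometry around each crossed square shows that on at least one fixed side of $R$ the faces $S_j$ all have a boundary vertex lying in one and the same component $\Lambda$ of $P\setminus C$ (the two sides of a contour are consistently defined because $C$ is a tree, so there is no monodromy obstruction). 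Then $I=I_\Lambda$ crosses infinitely many faces and Lemma~\ref{rl} finishes the argument. The only remaining subtlety is to ensure the two infinite contours $C_1,C_2$ in the ``moreover'' part do not force the cluster incident to $C_1$ to coincide with a cluster already counted elsewhere — but this is not needed, since the statement only asserts existence of an infinite cluster incident to $C_1$, which the above construction supplies directly.
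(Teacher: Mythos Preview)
Your argument contains a genuine gap: the claim that ``each contour $C$\dots is a tree in which every vertex has degree $2$ or $4$'' is false in the setting of Lemma~\ref{cc}. This lemma lives in Section~\ref{ctcl}, where the contours are even-degree subgraphs of the lattices $\LL_1$ and $\LL_2$ associated to the $[m,4,n,4]$ lattice. These $\LL_i$ are genuine planar lattices (each face of $\LL_1$ has degree $n$, each face of $\LL_2$ has degree $m$), not trees; you appear to be importing Lemma~\ref{cit}, which applies only to the square-tiling model of Section~\ref{sthp}, where the $\LL_i$ are regular trees of degree $4$. In the $[m,4,n,4]$ setting a contour can certainly contain cycles (a single triangle in $\LL_2=\TT$ is already a finite contour), so $P\setminus C$ can have bounded components and an interface of an infinite contour can be a self-avoiding cycle rather than a doubly-infinite path. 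Your appeal to Lemma~\ref{rl} therefore does not go through at the point where you need the interface $I$ to be doubly-infinite.

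This is exactly the obstacle the paper's proof is designed to overcome, and it is where the \emph{second} infinite contour $C_2$ is used---not merely as decoration. The paper picks a minimal path $p_{xy}$ from $x\in C_1$ to $y\in C_2$ meeting no infinite contour in its interior, takes the first vertex $v$ of $G$ along $p_{xy}$ from $x$, and looks at the interface $\ell_u$ of $C_1$ through the nearby point $u$. If $\ell_u$ is doubly-infinite, Lemma~\ref{rl} gives the infinite cluster. If $\ell_u$ is a cycle, then the bounded side $Q_v$ of $\ell_u$ contains $v$; one then argues that $y$, and hence all of $C_2$, would also have to lie in $Q_v$ (else $p_{xy}$ would recross $C_1$), contradicting that $C_2$ is infinite. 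Your proposal never invokes $C_2$ in this way, so it has no mechanism to exclude the cycle case. To repair your approach you must either reinstate this use of $C_2$, or prove independently (as in Lemma~\ref{icic}) that an infinite contour always borders an infinite component of $G\setminus C$ along a doubly-infinite interface---and that requires a separate argument, not the false tree claim.
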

\begin{proof}If there exist at least two infinite contours, then we can find two distinct infinite contours $C_1$ and $C_2$, two points $x\in C_1$ and $y\in C_2$ and a self-avoiding path $p_{xy}$, consisting of edges of $G$ and two half-edges, one starting at $x$ and the other ending at $y$, and connecting $x$ and $y$, such that $p_{xy}$ does not intersect any infinite contours except at $x$ and at $y$. Indeed, we may take any path intersecting two distinct contours, and then take a minimal subpath with this property.

Let $v\in V$ be the first vertex of $G$ along $p_{xy}$ starting from $x$. Let $u$ be the point along the line segment $[v,x]$ lying on an interface of $C_1$. Let $\ell_u$ be the interface of $C_1$ containing $u$. Then $\ell_u$ is either a doubly-infinite self-avoiding path or a self-avoiding cycle.

We consider these two cases separately. Firstly, if $\ell_u$ is a doubly-infinite self-avoiding path, then we claim that $v$ is in an infinite (0 or 1-)cluster of the constrained site configuration on $G$. Indeed, this follows from \Cref{rl}.

Secondly, if $\ell_u$ is a self-avoiding cycle, then $P\setminus \ell_u$ has two components, $Q_v$ and $Q_v'$, where $Q_v$ is the component including $v$. Since $\ell_u$ is a cycle, exactly one of $Q_v$ and $Q_v'$ is bounded, the other is unbounded. Since $C_1\subseteq Q_v'$, and $C_1$ is an infinite contour, we deduce that $Q_v'$ is unbounded, and $Q_v$ is bounded. Since $y\notin \ell_u$, either $y\in Q_v$, or $y\in Q_v'$. If $y\in Q_v'$, then any path, consisting of edges of $G$ and one half-edge incident to $y$, connecting $v$ and $y$ must cross $C_1$. In particular, $p_{xy}$ crosses $C_1$ not only at $x$, but also at some point other than $x$. This contradicts the definition of $p_{xy}$. Hence $y\in Q_v$. Since $C_1\cap C_2=\emptyset$, this implies $C_2\subseteq Q_v$; because if $C_2\cap Q_v'\neq \emptyset$, then $C_2\cap C_1\neq \emptyset$. But $C_2\subseteq Q_v$ is impossible since $C_2$ is infinite and $Q_v$ is bounded. Hence this second case is impossible.

Therefore we conclude that if there exist at least two infinite contours, then there exists an infinite (0 or 1)-cluster.
\end{proof}

\begin{lemma}\label{io} Let $x\in V$ be in the infinite 0-cluster, let $y\in V$ be in the infinite 1-cluster, and  let $\ell_{xy}$ be a path, consisting of edges of $G$ and connecting $x$ and $y$. Then $\ell_{xy}$ has an odd number of crossings with infinite contours in total.

In particular, if there exist both an infinite 0-cluster and an infinite 1-cluster in a constrained percolation configuration $\omega\in\Omega$, then there exists an infinite contour in $\phi(\omega)\in\Phi$.
\end{lemma}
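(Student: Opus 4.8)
The plan is to reduce the statement to a parity count along $\ell_{xy}$, separating finite from infinite contours and showing the finite ones are crossed an even number of times. I would first record the dictionary between $\omega$ and $\phi(\omega)$: an edge $g=(a,b)$ of $G$ is crossed by a present contour edge if and only if $\omega(a)\neq\omega(b)$, and then it is crossed by exactly one present contour edge. Indeed, each edge of $G$ is a side of exactly one square face $S$ of $G$, and of the two contour edges that can cross $S$ --- one in $E(\LL_1)$, one in $E(\LL_2)$ --- only the one whose pair of opposite sides contains $g$ can cross $g$, and by the local constraint that edge is present precisely when the endpoints of $g$ carry different states. Hence the total number of crossings of $\ell_{xy}$ with all contours equals the number of sign changes of $\omega$ along $\ell_{xy}$, which is odd because $\omega(x)=0$ and $\omega(y)=1$.

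Next I would show that every \emph{finite} contour $C$ is crossed by $\ell_{xy}$ an even number of times. As a connected component of the set of present edges, $C$ is a connected subgraph of $\LL_1$ or $\LL_2$ in which every vertex has even degree (two present edges sharing a vertex belong to the same contour, and each vertex of $\LL_i$ has an even number of incident present edges); being finite, $C$ is a bounded, even (Eulerian) subgraph of the plane $P$. Let $U$ be the unbounded component of $P\setminus C$ and set $\mathrm{int}(C)=P\setminus U$, a bounded set containing only finitely many vertices of $G$ (the embedding has unit edge-lengths). The key point is that no cluster of $\omega$ meets both $\mathrm{int}(C)$ and $U$: an edge of $G$ joining a vertex in a bounded component of $P\setminus C$ to a vertex of $U$ must cross $C$, hence crosses a present contour edge of $C$, so its endpoints have different states. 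Therefore every cluster meeting $\mathrm{int}(C)$ is finite, and since $x$ and $y$ lie in infinite clusters, both lie in $U$. Since $\ell_{xy}$ is transverse to $C$ and avoids the vertices of $C$ (which are face-centers of $G$), the parity of its number of crossings with $C$ depends only on its endpoints; joining $x$ to $y$ by a path inside the connected open set $U$ realizes this parity as $0$, so $\ell_{xy}$ crosses $C$ an even number of times.

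Summing over all contours then gives that the number of crossings of $\ell_{xy}$ with infinite contours equals (odd total) minus (even contribution of finite contours), hence is odd, which is the first assertion. For the ``in particular'' statement, if an infinite $0$-cluster and an infinite $1$-cluster both exist, pick $x$ in the former, $y$ in the latter, and any path $\ell_{xy}$ of edges of $G$; the crossing count just established is odd, hence positive, so $\phi(\omega)$ must contain an infinite contour.

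The step I expect to be the main obstacle is the planar-topology input used above: that a finite even subgraph $C$ of the embedded graph $\LL_i$ separates $P$ ($=\RR^2$ or $\HH^2$) into one unbounded and finitely many bounded regions, that an edge of $G$ joining the two types of regions genuinely crosses $C$, and that the mod-$2$ crossing number of a transverse path with $C$ is an invariant of the path's endpoints. Since $C$ need not be a single simple cycle but an arbitrary finite Eulerian subgraph, it is cleanest to argue via $C$ being a $\ZZ/2$-cycle bounding a $2$-chain in the simply connected surface $P$ (so that intersection numbers with it vanish on closed transverse loops), rather than invoking the Jordan curve theorem directly; this is standard but should be set up with care.
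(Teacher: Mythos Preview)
Your argument is correct. The paper itself does not give a proof here but simply writes ``Same as Lemma~2.8 of \cite{HL16}'', so there is no in-paper proof to compare against line by line; your write-up is precisely the standard parity argument one expects that reference to contain.

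The three ingredients you isolate are exactly the right ones: (i) along any edge $g=(a,b)$ of $G$, the unique potential contour edge crossing $g$ is present if and only if $\omega(a)\neq\omega(b)$ (this uses that every edge of $G$ borders exactly one square face and the six allowed local configurations), so the total number of contour crossings along $\ell_{xy}$ equals the number of sign changes and is odd; (ii) a finite contour $C$ is a finite even subgraph of a simply connected surface, hence a $\ZZ/2$-boundary, so the mod-$2$ crossing number with any transverse path depends only on the endpoints, and since an infinite cluster cannot enter a bounded complementary region of $C$ both $x$ and $y$ lie in the unbounded region, forcing even parity; (iii) subtracting gives odd parity for the infinite contours. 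Your self-identified ``obstacle'' is real but routine: the $\ZZ/2$-chain argument you sketch (rather than a naive Jordan curve appeal, since $C$ need not be a simple cycle) is the clean way to handle it, and works identically in $\RR^2$ and $\HH^2$.

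One small remark: in step (ii), when you say an edge of $G$ joining $\mathrm{int}(C)$ to $U$ ``must cross $C$'', you are implicitly using that edges of $G$ can meet edges of $\LL_1\cup\LL_2$ only in the canonical crossing pattern (and never through a vertex of $\LL_i$, which sits at a face center). This is true in the intended embedding and worth a one-line remark, but it is not a gap.
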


\begin{proof}Same as Lemma 2.8 of \cite{HL16}.
\end{proof}

\begin{lemma}\label{icic}Let $C_{\infty}$ be an infinite contour. Then each infinite component of $G\setminus C_{\infty}$ contains an infinite cluster that is incident to $C_{\infty}$.
\end{lemma}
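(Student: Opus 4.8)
The plan is to follow the same geometric strategy used in the proofs of \Cref{01c}, \Cref{ifc} and \Cref{cc}: replace the combinatorial object $C_\infty$ by its surrounding interfaces in the plane $P$, and exploit the fact that the components of $P\setminus C_\infty$ are all unbounded because $C_\infty$ is a tree. First I would recall from the combinatorial analysis (as in \Cref{cit} and the discussion preceding it) that $C_\infty$, being an infinite contour in $\LL_1$ or $\LL_2$, is an infinite tree each of whose vertices has degree $2$ or $4$; in particular $C_\infty$ contains no cycle. Consequently $P\setminus C_\infty$ has no bounded component, and there are at least two unbounded components — one on ``each side'' of $C_\infty$, separated locally by the edges of $C_\infty$ crossing black square faces of $G$.

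Next I would fix an infinite component $\Lambda$ of $G\setminus C_\infty$ and produce inside $\Lambda$ a doubly-infinite self-avoiding interface of $C_\infty$ abutting $\Lambda$. The point is that a component of $G\setminus C_\infty$ that is infinite (as a subgraph, hence uses infinitely many faces of $G$) must lie in an unbounded component $\Lambda'$ of $P\setminus C_\infty$ which has $C_\infty$ as part of its boundary; the interface $I$ associated to $\Lambda'$, defined as the set of points of $\Lambda'$ at distance $\tfrac14$ from $C_\infty$ (exactly as in \Cref{3464interface} and the paragraph defining interfaces), is then a doubly-infinite self-avoiding path, since if it were a finite self-avoiding cycle it would bound a bounded region, forcing $C_\infty$ to be finite, a contradiction. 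Then \Cref{rl} applies directly: the vertex set $F_I$ of $G$ at distance $\tfrac14$ from $I$ is infinite, all of $F_I$ lies in a single cluster of $\omega$, and hence $F_I$ is contained in an infinite cluster. Since each vertex of $F_I$ is a boundary vertex of a face of $G$ crossed by $C_\infty$, this infinite cluster is incident to $C_\infty$; and since $F_I\subseteq \Lambda'$ and clusters do not cross $C_\infty$, this infinite cluster lies in $\Lambda'$, hence in $\Lambda$. This is the required conclusion.

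The main obstacle I anticipate is the bookkeeping needed to show that an infinite component $\Lambda$ of the \emph{graph} $G\setminus C_\infty$ corresponds to a genuinely unbounded component $\Lambda'$ of the \emph{topological} complement $P\setminus C_\infty$ whose boundary meets $C_\infty$ along infinitely many black faces (so that the associated interface is doubly infinite rather than, a priori, a sequence of disjoint finite cycles). This requires carefully tracking which square faces of $G$ are crossed by edges of $C_\infty$ and checking that removing $C_\infty$ from $G$ disconnects exactly the vertex-pairs separated locally by those crossings — essentially the local analysis already encoded in Figure \ref{lcc} and in the definition of $\phi:\Omega\to\Phi$. Once that correspondence is in place, everything else is a direct citation of \Cref{cit} and \Cref{rl}, so I expect the proof to be short modulo this planarity/topology argument, which parallels the one already carried out in \Cref{01c,ifc,cc}.
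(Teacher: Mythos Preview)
Your overall strategy --- pass to interfaces of $C_\infty$ in the plane $P$, pick the one lying in the given infinite component, and invoke \Cref{rl} --- is the right one and is precisely what the paper has in mind (it simply cites the analogous Lemma 2.7 of \cite{HL16}). So the proposal is on track and would succeed with only minor cleaning.

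There is, however, a genuine misstatement you should fix. You invoke \Cref{cit} to conclude that $C_\infty$ is an infinite tree with vertex degrees $2$ or $4$. That lemma lives in \Cref{sthp}, where the auxiliary graphs $\LL_i$ are $4$-regular trees, so contours are automatically acyclic. Here, in \Cref{ctcl}, we are on the $[m,4,n,4]$ lattice, and $\LL_1$, $\LL_2$ are the planar lattices described in \Cref{xorh}: they have vertex degrees $m$ and $n$ respectively and certainly contain cycles. An infinite contour in $\LL_2$ (e.g.\ on the triangular lattice when $(m,n)=(3,6)$) can contain cycles, so $P\setminus C_\infty$ may well have bounded components, and vertices of $C_\infty$ need not have degree $\leq 4$.

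Fortunately none of this is actually needed. The lemma only concerns the \emph{infinite} components of $G\setminus C_\infty$, and your second and third paragraphs already work at that level: an infinite component $\Lambda$ sits inside an unbounded component $\Lambda'$ of $P\setminus C_\infty$, and since $C_\infty$ is connected and infinite, the portion of the interface of $C_\infty$ lying in $\Lambda'$ contains a doubly-infinite self-avoiding piece (any finite interface cycle in $\Lambda'$ bounds a bounded region whose boundary lies in $C_\infty$; connectivity and infiniteness of $C_\infty$ then force another, unbounded, interface component). Apply \Cref{rl} to that piece and conclude as you wrote. So drop the appeal to \Cref{cit} and the tree claim, keep the rest, and the proof goes through exactly as in \cite{HL16}.
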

\begin{proof} The lemma can be proved using similar technique as in Lemma 2.7 of \cite{HL16}.
\end{proof}

\begin{lemma}\label{icazo}Let $\omega\in \Omega$. Assume that there is exactly one infinite 0-cluster and exactly one infinite 1-cluster in $\omega$. Assume that there exist a vertex $x$ in the infinite 0-cluster,  a vertex $y$ in the infinite 1-cluster, and a path $\ell_{xy}$, consisting of edges of $G$ and joining $x$ and $y$, such that $\ell_{xy}$ crosses exactly one infinite contour, $C_{\infty}$. Then $C_{\infty}$ is incident to both the infinite 0-cluster and the infinite 1-cluster.

\end{lemma}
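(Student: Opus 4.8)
The plan is to use \Cref{icic} once we know that $C_\infty$ separates the infinite $0$-cluster from the infinite $1$-cluster. First I would record the elementary fact that an edge of $G$ is crossed by a present contour edge if and only if its two endpoints carry different states: this is read off the six allowed local configurations around a black square, since in each non-monochromatic case the sides of the square joining differently-stated vertices form precisely one opposite pair, and the contour edge crossing that pair is the one declared present. An immediate consequence is that no edge of $G$ with both endpoints in the infinite $0$-cluster (resp.\ the infinite $1$-cluster) is crossed by an edge of $C_\infty$; hence the infinite $0$-cluster is contained in a single component $Q_0$ of $G\setminus C_\infty$ and the infinite $1$-cluster in a single component $Q_1$, and both $Q_0,Q_1$ are infinite components, each containing an infinite cluster.

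Next I would show $Q_0\neq Q_1$. Let $f=(a,b)$ be the unique edge of $\ell_{xy}$ crossed by $C_\infty$, labelled so that the sub-path of $\ell_{xy}$ from $x$ to $a$ precedes $f$; this sub-path crosses $C_\infty$ not at all, so $a\in Q_0$, and symmetrically $b\in Q_1$. If $Q_0=Q_1$, then $a$ could be joined to $b$ by a path $\rho$ inside $G\setminus C_\infty$, and $f\cup\rho$ would be a loop in the plane crossing $C_\infty$ exactly once. But every contour configuration has an even number of present edges at each vertex, so $C_\infty$ is an edge-disjoint union of self-avoiding cycles and doubly-infinite self-avoiding paths, and a transverse loop in the plane meets each such piece --- hence $C_\infty$ itself --- an even number of times; this contradiction yields $Q_0\neq Q_1$.

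Finally, since $Q_0$ and $Q_1$ are infinite components of $G\setminus C_\infty$, \Cref{icic} furnishes an infinite cluster in $Q_0$ incident to $C_\infty$ and an infinite cluster in $Q_1$ incident to $C_\infty$. By hypothesis the only infinite clusters of $\omega$ are the infinite $0$-cluster and the infinite $1$-cluster, which lie in $Q_0$ and $Q_1$ respectively; since $Q_0\neq Q_1$, the infinite cluster in $Q_0$ incident to $C_\infty$ can only be the infinite $0$-cluster and the one in $Q_1$ can only be the infinite $1$-cluster. Therefore $C_\infty$ is incident to both, as claimed.

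I expect the delicate point to be the separation statement $Q_0\neq Q_1$, because an infinite contour need not be a simple bi-infinite arc --- it may branch at degree-$4$ vertices or carry finite sub-cycles --- so one must argue carefully that a single crossing really does put the endpoints of $f$ into distinct components; the even-degree/even-crossing observation above handles this cleanly. An alternative route to the whole lemma, bypassing \Cref{icic}, runs through the interface construction: the interface of $C_\infty$ on the $x$-side of $f$ cannot be a finite self-avoiding cycle, for then $a$ would sit in the bounded disk it encloses and, as the $x$-to-$a$ sub-path of $\ell_{xy}$ avoids $C_\infty$, so would $x$, contradicting $x$ lying in an infinite cluster; being doubly-infinite, that interface borders an infinite cluster by \Cref{rl}, necessarily of the same state as $a$ and hence, by uniqueness, the infinite $0$-cluster, and symmetrically on the $y$-side. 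Everything else is routine bookkeeping.
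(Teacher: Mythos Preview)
Your overall strategy---show that the infinite $0$-cluster and the infinite $1$-cluster lie in different infinite components of $G\setminus C_\infty$, then invoke \Cref{icic}---is exactly the paper's, and the final paragraph is correct as written. There is, however, a genuine gap in the second paragraph: you have misread the hypothesis. ``$\ell_{xy}$ crosses exactly one infinite contour, $C_\infty$'' means that $C_\infty$ is the only \emph{distinct} infinite contour met by $\ell_{xy}$; it does \emph{not} say that $\ell_{xy}$ crosses $C_\infty$ only once. So there need not be a ``unique edge $f=(a,b)$ of $\ell_{xy}$ crossed by $C_\infty$'', and your construction of $a\in Q_0$, $b\in Q_1$ and of the loop $f\cup\rho$ with a single crossing breaks down.

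The repair is straightforward and recovers precisely the paper's argument. By \Cref{io}, $\ell_{xy}$ has an odd number of crossings with infinite contours in total; since $C_\infty$ is the only infinite contour it meets, $\ell_{xy}$ crosses $C_\infty$ an \emph{odd} number of times. Now argue directly with $x$ and $y$: if $Q_0=Q_1$ there is a path $\ell'_{xy}$ in $G\setminus C_\infty$ joining $x$ to $y$, crossing $C_\infty$ zero times. The concatenation $\ell_{xy}\cup\ell'_{xy}$ is a closed walk in $G$ crossing $C_\infty$ an odd number of times; but any closed walk bounds a finite $2$-chain of faces, and flipping along each face boundary changes the number of crossings with the even-degree subgraph $C_\infty$ by an even amount, so a closed walk must cross $C_\infty$ evenly. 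This contradiction gives $Q_0\neq Q_1$, and the rest of your proof goes through unchanged. (Your ``alternative route'' via interfaces has the same defect, since it also relies on the unique edge $f$; it can be patched the same way, taking $f$ to be the \emph{first} crossing of $C_\infty$ along $\ell_{xy}$ from $x$.)
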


\begin{proof}By \Cref{icic}, there is an infinite cluster in each infinite component of $G\setminus C_{\infty}$. Since there are exactly two infinite clusters, $G\setminus C_{\infty}$ has at most 2 infinite components. Since each infinite cluster lies in some infinite component of $G\setminus C_{\infty}$, the number of infinite components of $G\setminus C_{\infty}$ is at least one.

If $G\setminus C_{\infty}$ has exactly two infinite components, then we can construct two infinite connected set of vertices in the two infinite components of $G\setminus C_{\infty}$, as a consequence of \Cref{icic}, denoted by $W_1$ and $W_2$, such that $C_{\infty}$ is incident to both $W_1$ and $W_2$. Moreover, $W_1$ and $W_2$ are exactly part of the infinite 0-cluster and part of the infinite 1-cluster. Therefore $C_{\infty}$ is incident to both the infinite 0-cluster and the infinite 1-cluster.

If $G\setminus C_{\infty}$ has exactly one infinite component, denoted by $R$, then both the infinite 0-cluster and the infinite 1-cluster lie in $R$, and in particular $x,y\in R$. We can find a path $\ell'_{xy}$, connecting $x$ and $y$, using edges of $G$, such that $\ell'_{xy}$ does not cross $C_{\infty}$ at all. We can change path from $\ell'_{xy}$ to $\ell_{xy}$ by choosing finitely many faces $S_1$,$S_2$,\ldots, $S_k$ of $G$; along the boundary of each face, make every present edge in the path absent and every absent edge in the path present; and we perform this procedure for $S_1,\ldots,S_k$ one by one.  Such a the path modification procedure does not change the parity of the number of crossings of the path with $C_{\infty}$. Hence we infer that $\ell_{xy}$ intersects $C_{\infty}$ an even number of times. But this is a contradiction to Lemma \ref{io} which says $\ell_{xy}$ crosses $C_{\infty}$ an odd number of times, since we assume that $\ell_{xy}$ crosses exactly one infinite contour $C_{\infty}$.
\end{proof}

\begin{lemma}\label{isc}Assume that $\xi$ is an infinite cluster, and $C$ is an infinite contour. Assume that $x$ is a vertex of $G$ in  $\xi$, and let $y\in C$ be the midpoint of an edge of $G$. Assume that there exists a path $p_{xy}$ connecting $x$ and $y$, consisting of edges of $G$ and a half-edge incident to $y$, such that $p_{xy}$ crosses no infinite contours except at $y$.  Let $z$ be the first vertex of $G$ along $p_{xy}$ starting from $y$. Then $z\in\xi$.
\end{lemma}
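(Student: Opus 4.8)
The plan is to analyze the interface of $C$ that the path $p_{xy}$ must first encounter as it leaves the endpoint $y$, and to argue via Lemma \ref{rl} that $z$ lies on the infinite cluster determined by that interface, then to identify that cluster with $\xi$. First I would observe that $y$ is the midpoint of an edge $e_0$ of $G$ lying in the contour $C$; the contour configuration forces $e_0$ to cross a (black) square face $b$ of $G$, with the two pairs of vertices of $b$ on either side of $e_0$ having constant but opposite states. Moving along $p_{xy}$ from $y$, since $p_{xy}$ crosses no infinite contour except at $y$, the point $u$ where $p_{xy}$ first meets an interface must be a point on an interface $\ell_u$ of $C$ itself (the interface lying at distance $\frac14$ from $C$ on the side of $C$ containing $z$). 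This interface $\ell_u$ is, by the discussion preceding Lemma \ref{rl}, either a doubly-infinite self-avoiding path or a self-avoiding cycle.

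Next I would rule out the cycle case exactly as in the proof of Lemma \ref{cc}: if $\ell_u$ were a self-avoiding cycle, then $P\setminus\ell_u$ has one bounded and one unbounded component; since $C$ is an infinite contour and $C$ lies on the far side of $\ell_u$ from $z$, the component $Q_z$ containing $z$ would have to be the bounded one, so $C$ sits in the unbounded component $Q_z'$. But $x\in\xi$ is on an infinite cluster, and the path $p_{xy}$ from $x$ reaches $z$ without crossing any infinite contour except at the very endpoint $y$; tracing $p_{xy}$ backwards from $z$ shows $x$ lies in the closure of $Q_z$, which is bounded — contradicting the fact that the portion of $\xi$ reachable from $x$ without crossing $C$ is infinite (here one uses Lemma \ref{icic}, or directly that an infinite cluster incident to nothing forcing it across $C$ stays on one side). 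Hence $\ell_u$ is a doubly-infinite self-avoiding path.

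Then, with $\ell_u$ a doubly-infinite self-avoiding path, Lemma \ref{rl} applies: the set $F_{\ell_u}$ of vertices of $G$ at distance $\frac14$ from $\ell_u$ all lie in a single cluster, and this cluster is infinite. The vertex $z$, being the first vertex of $G$ encountered along $p_{xy}$ after crossing into the component bounded by $\ell_u$ on the $z$-side, lies in $F_{\ell_u}$ (it is the vertex of the face crossed just after $u$ that sits on the $z$-side, at distance $\frac14$ from $\ell_u$). Finally I would connect this infinite cluster to $\xi$: the subpath of $p_{xy}$ from $x$ to $z$ consists of edges of $G$ and crosses no contour at all (any contour crossing would be an interface crossing before reaching $u$, or $p_{xy}$ would meet $C$ in its interior), hence every vertex of $G$ along it — in particular $x$ and $z$ — lies in the same cluster, which is $\xi$. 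Therefore $z\in\xi$.

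The main obstacle I anticipate is the cycle-exclusion step: one must carefully keep track of which side of $\ell_u$ contains $z$ versus $C$ versus the bulk of $\xi$, and argue that the hypothesis ``$p_{xy}$ crosses no infinite contour except at $y$'' really does confine the relevant infinite pieces to the correct components. The bookkeeping is essentially the planar-topology argument already used in Lemma \ref{cc} and Lemma \ref{icazo}, so I would lean on those, but making the orientation conventions precise (the $\frac14$-distance side, the first vertex $z$, the face crossed just after $u$) is where the care is needed.
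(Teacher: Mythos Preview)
Your approach --- studying the interface $\ell_u$ of the infinite contour $C$ near $y$ --- is quite different from the paper's, but the final step contains a genuine gap. You claim that the subpath of $p_{xy}$ from $x$ to $z$ ``crosses no contour at all,'' but the hypothesis only forbids crossings with \emph{infinite} contours; $p_{xy}$ may cross any number of finite contours between $x$ and $z$, and each such crossing flips the state. So even granting that $z$ lies in some infinite cluster (via Lemma~\ref{rl} applied to $\ell_u$), nothing in your argument identifies that cluster with $\xi$: the path itself does not give a same-cluster connection from $x$ to $z$.

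This is precisely the obstacle the paper's proof is built to handle, and it does so by focusing on the finite contours rather than on $C$. One enumerates the finite contours $C_1,\ldots,C_m$ that $p_{xy}$ crosses, shows that both $x$ and $z$ lie in the unique unbounded component of $P\setminus\bigcup_i C_i$ (because $x$ is in the infinite cluster $\xi$ and $z$ is adjacent to the infinite contour $C$), takes the outer interfaces $D_1,\ldots,D_t$ of those finite contours, and reroutes $p_{xy}$ around the bounded regions they enclose to produce a new path $p'_{xy}$ from $x$ to $z$ that genuinely crosses no contour whatsoever. Only after this detour construction does $z\in\xi$ follow. Your interface-of-$C$ analysis does not substitute for this; in fact the obstacle you flagged (the cycle-exclusion step for $\ell_u$) is secondary --- the real missing idea is the treatment of the finite contours.
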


\begin{proof}The proof is an adaptation of the proof of Lemma 2.9 in \cite{HL16}. Lemma 2.9 in \cite{HL16} applies when the graph $G$ is a square grid embedded in the Euclidean plane $\RR^2$. This lemma applies when the graph $G$ is a general $[m,4,n,4]$ lattice embedded in either the Euclidean plane or the hyperbolic plane.

Since $p_{xy}$ crosses no infinite contours except at $y$, let $C_{1},\ldots,C_{m}$ be all the finite contours crossing $p_{xy}$.  We claim that $P\setminus \cup_{i=1}^{m}C_{i}$ has a unique unbounded component, which contains both $x$ and $y$. Indeed,  since $x\in \xi$ and $y\in C$; neither the infinite cluster $\xi$ nor the infinite contour $C$ can lie in a bounded component of $P\setminus \cup_{i=1}^{m}C_{i}$.

Let $I$ be the intersection of the union of the interfaces of $C_1,\ldots, C_m$ with the unique unbounded component of $P\setminus \cup_{i=1}^{m}C_{i}$. Since each $C_{i}$, $1\leq i\leq m$, is a finite contour, each interface of $C_{i}$ is finite. In particular, $I$ consists of finitely many disjoint self-avoiding cycles, denoted by $D_1,\ldots,D_t$. For $1\leq i\leq t$, $P\setminus D_i$ has exactly one unbounded component, and one bounded component.  Moreover, for $i\neq j$, $D_i$ and $D_j$ come from interfaces of distinct contours.

Let $B_i$ be the bounded component of $P\setminus D_i$.  We claim that each $B_i$ is simply-connected, and $B_i\cap B_j=\emptyset$, for $i\neq j$. Indeed, $B_i$ is simply connected, since the boundary of $B_i$, $D_i$ is a self-avoiding cycle, whose embedding on the plane is a simple closed curve, for $1\leq i\leq t$. Let $1\leq i<j\leq t$. Since $D_i$ and $D_j$ are disjoint, either $B_i\cap B_j=\emptyset$, or one of $B_i$ and $B_j$ is a proper subset of the other. Without loss of generality, assume $B_i$ is a proper subset of $B_j$. Then $D_i$ is a proper subset of $B_j$. Hence $D_i$ is in a bounded component of $P\setminus\cup_{k=1}^{m}C_k$, which contradicts the definition of $D_i$.

Let $R_i$ be the set of faces $F$ of $G$, for which $B_i\cap F\neq \emptyset$. Let $\widetilde{B}_i=\cup_{F\in R_i}F$. Note that for $1\leq i\leq t$, each $\widetilde{B}_i$ is a simply-connected, closed set. Let $B_i'$ be the interior of $\widetilde{B}_i$. Then each $B_i'$ is a simply-connected, open set; moreover, $B_i'\cap B_j'=\emptyset$, if $i\neq j$. This follows from the fact that for $i\neq j$, $D_i$ and $D_j$ come from interfaces of distinct contours, and the fact that $B_i\cap B_j=\emptyset$, for $i\neq j$.

Let $B'=\cup_{i=1}^{t}B'_i$. Then $B'$ is open, and $x,y,z\in P\setminus B'$, although $x$ and $z$ may be on the boundary of $B'$.

There is a path $p_{xy}'\subseteq [p_{xy}\cap (P\setminus B')]\cup \partial B'$, connecting $x$ and $y$, where $\partial B'$ is the boundary of $B'$. More precisely, $p_{xy}$ is divided by $\partial B'$ into  segments; on each  segment of $p_{xy}$ in $P\setminus B'$, $p'_{xy}$ follows the path of $p_{xy}$; for each segment of $p_{xy}$ in $B'$, $p'_{xy}$ follows the boundary of $B'$ to connect the two endpoints of the segment. This is possible since $B'$ consists of bounded, disjoint, simply-connected, open sets $B_i'$, for $1\leq i\leq t$, and both $x$ and $v$ are in the complement of $B'$ in $P$.

All the vertices along $p'_{xy}$ are in the same cluster. In particular, this implies that $x$ and $z$ are in the same infinite cluster $\xi$.
\end{proof}

\begin{lemma}\label{ct}If there exist exactly two infinite contours, then there exists an infinite cluster incident to both infinite contours. 
\end{lemma}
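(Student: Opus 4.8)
The plan is to join the two infinite contours by a path that meets no infinite contour along its interior, and then to transport an infinite cluster along this path from $C_1$ to $C_2$ by combining \Cref{rl}, \Cref{cc} and \Cref{isc}. First I would fix a point $x$ on $C_1$ and a point $y$ on $C_2$, each being a point where some edge of $G$ is crossed by the respective contour (denote these edges of $G$ by $e_x$ and $e_y$), together with a self-avoiding path $p_{xy}$ joining $x$ and $y$, consisting of edges of $G$ and two half-edges, one incident to $x$ and one incident to $y$, chosen so that $p_{xy}$ meets the infinite contours only at $x$ and at $y$. Since $C_1$ and $C_2$ are the only infinite contours, such a $p_{xy}$ is obtained from an arbitrary path between $C_1$ and $C_2$ by passing to a minimal subpath, exactly as in the proof of \Cref{cc}. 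I then write $z$ for the first vertex of $G$ along $p_{xy}$ starting from $x$, and $w$ for the first vertex of $G$ along $p_{xy}$ starting from $y$.

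Next I would argue that $z$ lies in an infinite cluster $\xi$ and that $\xi$ is incident to $C_1$. Incidence with $C_1$ is immediate, since $z$ is an endpoint of $e_x$ and $e_x$ is crossed by $C_1$; and the fact that $z$ lies in an infinite cluster is exactly what is established for the first vertex of such a path in the proof of \Cref{cc}. Indeed, $z$ lies in the same component $\Lambda$ of $P\setminus C_1$ as $y$ (the portion of $p_{xy}$ from $z$ to $y$ does not cross $C_1$), and $\Lambda$ is unbounded because it contains the connected infinite set $C_2$; hence the interface of $C_1$ that separates $z$ from $C_1$ is a doubly-infinite self-avoiding walk rather than a finite self-avoiding cycle (a finite cycle would, by the reasoning in the proof of \Cref{cc}, confine $C_2$ to a bounded region), and \Cref{rl} gives that $z$ belongs to an infinite cluster $\xi$. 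I would then apply \Cref{isc} to the infinite cluster $\xi$, the infinite contour $C_2$, the vertex $z\in\xi$, the point $y\in C_2$, and the sub-path of $p_{xy}$ running from $z$ to $y$, which consists of edges of $G$ and the half-edge incident to $y$ and crosses no infinite contour except at $y$. \Cref{isc} then yields that the first vertex of $G$ on this sub-path starting from $y$, namely $w$, also lies in $\xi$. Since $w$ is an endpoint of $e_y$ and $e_y$ is crossed by $C_2$, the cluster $\xi$ is also incident to $C_2$, so $\xi$ is an infinite cluster incident to both infinite contours, which is the assertion of \Cref{ct}.

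The step I expect to be the main obstacle is establishing that $z$ lies in an infinite cluster, i.e.\ that the relevant interface of $C_1$ is doubly infinite; this is precisely where the existence of a second infinite contour enters (it prevents a bounded interface cycle, which would have to enclose all of $C_2$), and one has to be careful, as in the proof of \Cref{cc}, that the choice of $p_{xy}$ truly isolates this dichotomy. Everything else is routine bookkeeping with \Cref{rl}, \Cref{cc} and \Cref{isc}, together with the elementary fact that an edge of $G$ is crossed by at most one contour, so that $e_x\neq e_y$ and the two infinite contours are genuinely separated along $p_{xy}$.
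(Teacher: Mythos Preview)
Your proposal is correct and follows essentially the same route as the paper: choose a minimal path $p_{xy}$ between the two infinite contours meeting them only at its endpoints, use the argument of \Cref{cc} to show the first vertex from the $C_1$-end lies in an infinite cluster $\xi$ incident to $C_1$, and then apply \Cref{isc} along the remainder of the path to conclude the first vertex from the $C_2$-end also lies in $\xi$. Your notation differs (you write $z,w$ where the paper writes $v,z$), and you spell out in more detail why the interface near $C_1$ cannot be a finite cycle, but the structure of the argument is identical.
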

\begin{proof}Let $C_1$, $C_2$ be the two infinite contours. Since there exist only two infinite contours, we can find two points $x\in C_1$, $y\in C_2$, and a self-avoiding path $p_{xy}$, consisting of edges of $G$ and two half-edges, one starting at $x$ and the other ending at $y$, and connecting $x$ and $y$, such that $p_{xy}$ does not intersect infinite contours except at $x$ and at $y$.

Let $v\in V$ be the first vertex of $G$ along $p_{xy}$ starting from $x$. By the proof of \Cref{cc}, $v$ is in an infinite cluster $\xi$ incident to $C_1$. Let $z$ be the first vertex of $G$ along $p_{xy}$. By \Cref{isc},  $z\in \xi$, and therefore $\xi$ is an infinite cluster incident to both $C_1$ and $C_2$.
\end{proof}

\begin{lemma}\label{cac}Let $\omega\in \Omega$. If there is exactly one infinite 0-cluster and exactly one infinite 1-cluster in $\omega$, then there exists an infinite contour that is incident to both the infinite 0-cluster and the infinite 1-cluster in $\omega$.
\end{lemma}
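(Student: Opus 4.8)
The plan is to exhibit an explicit infinite contour incident to both infinite clusters, by taking a path from the infinite $0$-cluster to the infinite $1$-cluster that crosses infinite contours as few times as possible and examining its first crossing. Write $\xi_0$ (resp.\ $\xi_1$) for the unique infinite $0$-cluster (resp.\ $1$-cluster), fix vertices $x\in\xi_0$ and $y\in\xi_1$, and let $\ell$ be a path in $G$ from $x$ to $y$ crossing infinite contours a minimal number $N$ of times; by \Cref{io}, $N$ is odd, so $N\geq 1$. Let $z$ be the first point where $\ell$ meets an infinite contour, say the infinite contour $C$, and let $a,b\in V$ be the vertices of $\ell$ just before and just after $z$. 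Since the portion of $\ell$ from $x$ to $z$ meets no infinite contour except at $z$, \Cref{isc} (applied with the infinite cluster $\xi_0$ and the contour $C$) yields $a\in\xi_0$; as $a$ is a vertex of a square face crossed by $C$, the contour $C$ is incident to $\xi_0$. It remains to show $C$ is incident to $\xi_1$ as well.

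First I would dispose of the easy case. By \Cref{icic}, every infinite component of $G\setminus C$ contains an infinite cluster incident to $C$, and since there are only two infinite clusters, $G\setminus C$ has at most two infinite components. If it has two, the one not containing $\xi_0$ contains an infinite cluster incident to $C$, necessarily $\xi_1$, so $C$ is incident to both and we are done. So assume $G\setminus C$ has exactly one infinite component $R$; then $\xi_0,\xi_1\subseteq R$, and it suffices to rule out the possibility that $C$ is incident to $\xi_0$ but not to $\xi_1$.

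Under that assumption $\xi_0$ is the only infinite cluster incident to $C$, so (by the interface argument: a vertex lying in $R$ and adjacent to the infinite contour $C$ lies on a doubly-infinite interface of $C$, hence, by \Cref{rl}, in an infinite cluster) every vertex of $R$ adjacent to $C$ lies in $\xi_0$. The vertex $b$ carries state $1$, opposite to $a$, and is adjacent to $C$, so $b\notin\xi_0$, which forces $b$ into a finite component of $G\setminus C$. Continuing along $\ell$ past $z$, the path remains in finite components of $G\setminus C$ until it first returns to $R$ through a later crossing $z'$ of an infinite contour (such a return exists since $\ell$ ends at $y\in R$); moreover, no infinite contour other than $C$ can reach a finite component of $G\setminus C$, since such a component is bounded while distinct infinite contours are connected, unbounded, vertex-disjoint, and have non-crossing edges. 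Hence $z'$ is again a crossing of $C$, the portion of $\ell$ from $a$ to the vertex $c\in V$ just after $z'$ crosses infinite contours at least twice (namely at $z$ and $z'$), and $c$ lies in $R$ and is adjacent to $C$, so $c\in\xi_0$. Since $a,c\in\xi_0$, there is a monochromatic path from $a$ to $c$ inside $\xi_0$, which crosses no contour at all; replacing the portion of $\ell$ between $a$ and $c$ by this path gives a path from $x$ to $y$ crossing infinite contours strictly fewer than $N$ times, contradicting minimality. Thus this case is impossible, and $C$ is incident to both $\xi_0$ and $\xi_1$.

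The main obstacle is the interface bookkeeping used in the last step: proving that a vertex of the unique infinite component $R$ that is adjacent to the infinite contour $C$ necessarily lies in an infinite cluster — equivalently, that the interface of $C$ through such a vertex is a doubly-infinite self-avoiding path rather than a finite cycle — and that no infinite contour other than $C$ intrudes into a finite component of $G\setminus C$, so that the rerouting really does remove crossings without creating new ones. Both should follow from the planarity of the lattice, the non-crossing property of contours, and the interface machinery already developed in \Cref{rl,cc,isc}, but they require care; granting them, the remainder is a short assembly of \Cref{io,isc,icic}.
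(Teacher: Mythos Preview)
Your minimal-crossing/re-routing strategy is a genuinely different approach from the paper's, which proceeds by induction on the length of a subpath of $\ell_{xy}$: the paper repeatedly locates the \emph{first} crossing with an infinite contour along a shrinking initial segment, and at each step uses \Cref{isc} together with the fact that two distinct infinite contours are in play (via \Cref{cc}) to force incidence. The virtue of the paper's argument is that it never needs the structural claim you flag as the main obstacle; the virtue of yours is that, once that claim is in hand, the remainder is a clean two-line contradiction.

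The gap you acknowledge is real and is precisely the claim that every vertex of the unique infinite component $R$ of $G\setminus C$ that lies on a face crossed by $C$ must belong to an infinite cluster. This does \emph{not} follow from \Cref{rl,cc,isc} as stated: the interface of $C$ through such a vertex need not a priori be doubly-infinite, and \Cref{icic} only produces \emph{some} infinite cluster in $R$ incident to $C$, not that every boundary vertex lies in it. The claim is nevertheless true, but the argument is an outer-boundary one rather than an interface one: if such a vertex $v$ lay in a finite cluster $\eta$, the outer boundary of $\eta$ would be a single cycle $\gamma$ contained in one contour $D$; since $v\in R$ one has $D\neq C$, yet $v$ inside $\gamma$ while the $C$-edge through the square face at $v$ lies outside $\gamma$ forces $\gamma$ and $C$ to cross that same square face, contradicting either disjointness (if $C,D$ lie in the same $\LL_i$) or the non-crossing constraint (if they lie in different $\LL_i$). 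You should supply this argument rather than appeal to \Cref{rl}.

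Your second flagged point, that no infinite contour other than $C$ enters a finite component of $G\setminus C$, is indeed routine (an infinite connected set disjoint from $C$ lies in a single component of $P\setminus C$, necessarily the unbounded one). With both points filled in, your proof is complete and arguably more transparent than the paper's induction; without them, the proposal is a correct outline with one substantive lemma left to prove.
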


\begin{proof} Let $x$ be a vertex in the infinite 0-cluster, and let $y$ be a vertex in the infinite 1-cluster. Let $\ell_{xy}$ be a path joining $x$ and $y$ and consisting of edges of $G$.

By \Cref{io}, $\ell_{xy}$ must cross infinite contours an odd number of times. By \Cref{icazo}, if $\ell_{xy}$ crosses exactly one infinite contour, $C_{\infty}$, then $C_{\infty}$ is incident to both the infinite 0-cluster and the infinite 1-cluster, and so the lemma is proved in this case.

Suppose that there exist more than one infinite contour crossing $\ell_{xy}$. Let $C_1$ and $C_2$ be two distinct infinite contours crossing $\ell_{xy}$. 

Let $u\in C_1\cap \ell_{xy}$ and $v\in C_2\cap \ell_{xy}$ (Here we interpret the contours and the paths as their embeddings to $P$, so that $u,v$ are points in $P$), such that the portion of $\ell_{xy}$ between $u$ and $v$, $p_{uv}$, does not cross any infinite contours except at $u$ and at $v$.  As in the proof of Lemma \ref{cc}, let $u_1$ be the first vertex of $G$ along $p_{uv}$, starting from $u$; and let $v_1$ be the first vertex of $G$ along $p_{uv}$ starting from $v$. Let $u_2$ (resp.\ $v_2$) be the point along the line segment $[u,u_1]$ (resp.\ $[v,v_1]$) lying on an interface. Following the procedure in the proof of Lemma \ref{cc}, we can find an infinite cluster $\xi_1$, such that $u_1\in \xi_1$.  The following cases might happen:
\begin{enumerate}[label=\Roman*]
\item $x\notin \xi_1$ and $y\notin \xi_1$;
\item $x\notin \xi_1$ and $y\in \xi_1$;
\item $x\in \xi_1$ and $y\notin \xi_1$;
\item $x\in \xi_1$ and $y\in\xi_1$.
 \end{enumerate}
 First of all, Case IV is impossible because we assume $x$ and $y$ are in two distinct infinite clusters. Secondly, if Case I is true, then there exist at least three infinite clusters, which is a contradiction to our assumption.
 
 Case II and Case III can be handled using similar arguments, and we write down the proof of Case II here.
 
 If Case II is true, first note that $y\in\xi_1$ implies that $C_1$ is incident to the infinite 1-cluster.  Let $z$ be the first point in $C_1\cap \ell_{xy}$ (again interpret edges as line segments), when traveling along $\ell_{xy}$ starting from $x$. Let $p_{xz}$ be the portion of $\ell_{xy}$ between $x$ and $z$. 

Next, we will prove the following claim by induction on the number of complete edges of $G$ along $p_{xz}$ (in contrast to the half edge along $p_{xz}$ with an endpoint $z$).
\begin{claim}\label{cl}
 Under Case II, there is an infinite contour  
 incident to both the infinite 0-cluster and the infinite 1-cluster.
\end{claim}
Assume that the number of complete edges of $G$ along $p_{xz}$ is $n$, where $n=0,1,2,\ldots$.

First of all, consider the case when $n=0$. This implies that $C_1$ is incident to the infinite 0-cluster at $x$. Recall that $C_1$ is also incident to the infinite 1-cluster at $y$, and so \Cref{cl} is proved.

We make the following induction hypothesis:
\begin{itemize}
\item Claim \ref{cl} holds for $n\leq k$, where $k\geq 0$.
\end{itemize}

Now we consider the case when $n=k+1$.
The \df{interior points} of $p_{xz}$ are all points along $p_{xz}$ except $x$ and $z$. We consider two cases:
\begin{enumerate}[label=(\alph*)]
\item at interior points, $p_{xz}$ crosses only finite contours but not infinite contours;
\item at interior points, $p_{xz}$ crosses infinite contours.
\end{enumerate}

 We claim that if Case (a) occurs, then $C_1$ is incident to both the infinite 0-cluster and the infinite 1-cluster. It suffices to show that $C_1$ is incident to the infinite 0-cluster.

Let $z_1$ be the first vertex in $V$ along $p_{xz}$ starting from $z$. According to \Cref{isc}, both  $x$ and $z_1$ are in the infinite 0-cluster.  We infer that $C_1$ is incident to the infinite 0-cluster at $x$, if $p_{xz}$ intersects only finite contours at interior points. 

Now we consider Case (b). Let $C_3$ be an infinite contour crossing $p_{xz}$ at interior points. Obviously, $C_3$ and $C_1$ are distinct, because $C_1$ crosses $p_{xz}$ only at $z$. Let $w$ be the last point in $C_3\cap p_{xz}$, when traveling along $p_{xz}$, starting from $x$, and let $p_{wz}$ be the portion of $p_{xz}$ between $w$ and $z$. Assume $p_{wz}$ does not cross infinite contours at interior points.

Let $w_1$ be the first vertex of $G$ along $p_{wz}$, starting from $w$, and let $w_2$ be the midpoint of $w$ and $w_1$. According to the proof of Lemma \ref{cc},  we can find an infinite cluster $\xi_3$ including $w_1$. The following cases might happen:
\begin{enumerate}[label=\roman*]
\item $x\notin \xi_3$, and $y\notin\xi_3$;
\item $x\in \xi_3$, and $y\notin\xi_3$;
\item $x\notin \xi_3$, and $y\in\xi_3$;
\item $x\in \xi_3$, and $y\in\xi_3$.
\end{enumerate} 

First of all, Case iv is impossible because we assume $x$ and $y$ are in two distinct infinite clusters. Secondly, if Case i is true, then there exist at least three infinite clusters, which is a contradiction to the assumption that there exists exactly one infinite 0-cluster and one infinite 1-cluster. 

If Case ii is true, then $C_3$ is incident to the infinite 0-cluster including $x$. Since $w_1\in \xi_3$, and $p_{wz}$ does not cross infinite contours except at $w$ and $z$, by \Cref{isc}, we infer that $z\in \xi_3$, and $\xi_3$ is exactly the infinite 0-cluster including $x$. We conclude that $C_1$ is incident to the infinite 0-cluster including $x$ as well, and Claim \ref{cl} is proved.

If Case iii is true, then $C_3$ is incident to the infinite 1-cluster including $y$. Let $t$ be the first vertex in $p_{xz}\cap C_3$, when traveling from $p_{xz}$, starting at $x$, and let $p_{xt}$ be the portion of $p_{xz}$ between $x$ and $t$. We explore the path $p_{xt}$ as we have done for $p_{xz}$. Since the length of $p_{xz}$ is finite, and the number of full edges of $G$ along $p_{xt}$ is less than that of $p_{xz}$ by at least 1, we apply the induction hypothesis with $C_1$ replaced by $C_3$, $C_2$ replaced by $C_1$, $\xi_1$ replaced by $\xi_3$, $p_{xz}$ replaced by $p_{xt}$, and we conclude that there exists an infinite contour adjacent to both the infinite 0-cluster and infinite 1-cluster.
\end{proof}

\begin{lemma}\label{2c2c}Let $C_1$ and $C_2$ be two infinite contours, and let $\xi_0$ and $\xi_1$ be two infinite clusters. The following two cases cannot occur simultaneously.
\begin{itemize}
\item $\xi_0$ is incident to both $C_1$ and $C_2$;
\item $\xi_1$ is incident to both $C_1$ and $C_2$.
\end{itemize}
\end{lemma}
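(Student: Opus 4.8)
The plan is to argue by contradiction via the combinatorics of the components of $P\setminus\phi$. Observe first that a $G$-edge is crossed by a present contour edge precisely when its two endpoints carry different states: each $\LL$-edge meets exactly three faces of $G$ — one square and two non-square faces — so no contour edge passes through a $G$-vertex, and inside each square face the present contour edge separates the two monochromatic pairs of corners. Consequently the state function is locally constant on $P\setminus\phi$, so every component of $P\setminus\phi$ is monochromatic, its $G$-vertices form a single cluster, and distinct clusters lie in distinct components. Let $U_0$ (resp.\ $U_1$) be the component containing the infinite $0$-cluster $\xi_0$ (resp.\ the infinite $1$-cluster $\xi_1$); then $U_0\neq U_1$ and they have states $0$ and $1$. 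The assumption that $\xi_i$ is incident to $C_j$ amounts to $\partial U_i\cap C_j\neq\emptyset$ (a $G$-vertex of $\xi_i$ bordering an edge of $C_j$ lies in a sector of $P\setminus\phi$ contained in $U_i$ whose boundary meets $C_j$). So, assuming both cases of the lemma hold, all of $\partial U_0\cap C_1$, $\partial U_0\cap C_2$, $\partial U_1\cap C_1$, $\partial U_1\cap C_2$ are nonempty, and we seek a contradiction.

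Suppose first that $C_1$ and $C_2$ are doubly-infinite self-avoiding paths, hence simple curves, and write $P\setminus C_1=H^+\sqcup H^-$ and $P\setminus C_2=K^+\sqcup K^-$, with all four parts connected, open and unbounded. The key point is that the state of the piece of $P\setminus\phi$ bordering $C_1$ from a fixed side is the same all along $C_1$: at each degree-two vertex $x$ of $C_1$, the two edges of $C_1$ at $x$ cut the non-square face of $G$ centred at $x$ into two sectors lying in $P\setminus\phi$, and the one on the $H^+$ side borders both of these edges, so (state being locally constant on $P\setminus\phi$) the $H^+$-side state does not change across $x$; with $C_1$ connected, it is therefore constant. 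Hence $U_0$ and $U_1$, of opposite states and both bordering $C_1$, lie on opposite sides of $C_1$; relabelling so that $H^+$ is the side of state $0$ yields $U_0\subseteq H^+$, $U_1\subseteq H^-$, and in the same way $U_0\subseteq K^+$, $U_1\subseteq K^-$.

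Since $C_2$ is connected and disjoint from $C_1$, it lies in $H^+$ or in $H^-$. If $C_2\subseteq H^+$, then $H^-$ is connected, disjoint from $C_2$, and contains $U_1\subseteq K^-$, so $H^-\subseteq K^-$; taking closures in $P$ and using $\overline{H^-}=H^-\cup C_1$, $\overline{K^-}=K^-\cup C_2$ and $C_1\cap C_2=\emptyset$, this forces $C_1\subseteq K^-$. But $K^-$ is open and $\partial U_0$ meets $C_1\subseteq K^-$, so $U_0\cap K^-\neq\emptyset$, contradicting $U_0\subseteq K^+$. The case $C_2\subseteq H^-$ is symmetric with $U_0$ and $U_1$ interchanged: it gives $H^+\subseteq K^+$, hence $C_1\subseteq K^+$, and then $\partial U_1$ meeting the open set $C_1\subseteq K^+$ contradicts $U_1\subseteq K^-$. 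In either case we have a contradiction; observe that all four incidence hypotheses were used.

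The main obstacle is the general case, where a contour supported on the triangular auxiliary lattice can have vertices of degree $4$ or $6$ and so fails to be a simple curve; then $P\setminus C_i$ may have many complementary regions, some bounded, and the separation step above no longer applies directly. The plan is to replace $C_i$ by a doubly-infinite self-avoiding interface. By \Cref{rl} the interface of $C_i$ contained in $U_0$ is a disjoint union of self-avoiding cycles and doubly-infinite self-avoiding paths, all lying in $U_0$ and hence disjoint from the interface of $C_i$ contained in $U_1$; and the state bordering a given interface component is well defined and constant along it, exactly as before. What must be verified is that an infinite cluster incident to an infinite contour is separated from the escaping part of that contour by a doubly-infinite — not merely cyclic — interface component; this is the point at which the infiniteness of $\xi_0$, $\xi_1$ and of the $C_i$ is used. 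Granting this, the argument of the two preceding paragraphs goes through verbatim with these interfaces in place of $C_1$ and $C_2$.
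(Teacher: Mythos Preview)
Your argument for the case when both $C_1$ and $C_2$ are doubly-infinite self-avoiding paths is correct and cleanly executed; the key observation that the component of $P\setminus\phi$ bordering a simple contour from a fixed side is \emph{unique} (not merely of constant state) is exactly what drives the separation argument. One small remark: you tacitly assume $\xi_0$ and $\xi_1$ carry opposite states. This is how the lemma is applied throughout the paper, but the statement does not say so; in the simple-curve case your uniqueness observation actually forces any two same-state clusters incident to $C_1$ to coincide, so the assumption is harmless there.

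The genuine gap is in the general case, and it is not optional: the lemma is invoked in settings where at least one contour lies in an auxiliary lattice of vertex degree $\ge 4$ (e.g.\ $\LL_2=\TT$ in the $[3,4,6,4]$ argument, and both $\LL_1,\LL_2$ in the $[m,4,m,4]$ case with $m\ge 5$), so contours with branch points genuinely occur. You honestly flag that one must still show the relevant interface component is doubly-infinite rather than cyclic. But the further claim that ``the argument of the two preceding paragraphs goes through verbatim with these interfaces in place of $C_1$ and $C_2$'' is not justified and, as written, does not work. An interface of $C_1$ lies \emph{inside} $U_0$, so $U_0$ straddles it rather than lying on one side; and you now have four interfaces (one for each cluster--contour incidence), not two separating curves, so there is no direct substitute for the single Jordan curve $C_1$. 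To repair this you would have to argue, for each $C_i$, that the two doubly-infinite interfaces (in $U_0$ and in $U_1$) can be assembled into a single doubly-infinite simple curve separating $\xi_0$ from $\xi_1$, and then verify that the two resulting curves are disjoint so that the nesting argument of your second and third paragraphs applies. The paper itself does not spell this out either: it defers to Lemma~6.3 of \cite{HL16}, where the argument is carried out on $\ZZ^2$ (already with degree-$4$ contour vertices) and adapts to the present lattices.
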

\begin{proof}Same arguments as Lemma 6.3 of \cite{HL16}.
\end{proof}

\begin{lemma}\label{l147}Let $\LL_2$ be the regular tiling of the hyperbolic plane with triangles, such that each vertex has degree $n\geq 7$. Let $\omega\in\{0,1\}^{V(\LL_2)}$ be a site percolation configuration on $\LL_2$.  Assume that there exists an infinite 1-cluster $\eta\subseteq\omega$ with infinitely many ends. Then there exist at least two infinite 0-clusters in $\omega$.
\end{lemma}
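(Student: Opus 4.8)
The plan is to work with the contour configuration $\phi(\omega)\subseteq E(\LL_1)$, where $\LL_1$ is the dual graph of $\LL_2$ (the $[n,n,n]$ regular tiling of $\HH^2$), so $\LL_1$ has vertex‑degree $3$ and every contour is either a finite simple cycle or a doubly‑infinite simple self‑avoiding path. Since $\LL_2$ is a triangulation, every neighbour of a vertex of $\eta$ is either again in $\eta$ (if it has state $1$) or has state $0$; hence the edge‑boundary $\partial\eta$ — the set of edges of $\LL_1$ dual to an $\LL_2$‑edge joining a vertex of $\eta$ to a $0$‑vertex — is a genuine subset of $\phi(\omega)$. The first, routine step is to check that $\partial\eta$ is in fact a union of \emph{complete} contours: at a degree‑$3$ vertex $w$ of $\LL_1$ carrying two present edges the corresponding triangle of $\LL_2$ is bichromatic, and a short case analysis shows that if one of the two present edges at $w$ lies in $\partial\eta$ then so does the other (the ``extra'' $1$‑vertex involved is adjacent to a vertex of $\eta$, hence itself lies in $\eta$).

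The heart of the argument — and the step I expect to be the main obstacle — is to show that $\partial\eta$ contains at least two \emph{distinct infinite} contours $C_1,C_2$. I would argue by contradiction. Using that $\eta$ has more than one end (indeed infinitely many, though three ends already suffice), one extracts from $\eta$ a properly embedded ``tripod'' $Y=R_1\cup R_2\cup R_3\cup T'$: a finite connected set $T'\subseteq\eta$ together with three pairwise vertex‑disjoint rays $R_1,R_2,R_3$ of $\eta$ lying in three distinct infinite components of $\eta\setminus T'$. A properly embedded tripod cuts $\HH^2$ into three unbounded ``sectors'', and by planarity the two rays bounding a given sector cannot be joined inside that sector by a path of $\eta$‑vertices (otherwise two of the components would coincide). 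Tracking how $\partial\eta$ must accompany the three rays out to infinity, one then shows that $\partial\eta$ cannot be contained in (finitely many finite cycles together with) a single doubly‑infinite path $C$: such a $C$ would force $\eta$ to lie on one side of the simple curve $C$ and, up to finitely many finite ``holes'', to be all of that half‑plane, hence one‑ended, contradicting the existence of the tripod. The delicate point here is the topological bookkeeping when $\eta$ has infinitely many ends that may accumulate on the boundary circle $\partial\HH^2$; I expect to handle this by localising everything to a large ball containing $T'$ and the finitely many finite contours relevant to the argument.

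Once $C_1,C_2\subseteq\partial\eta$ are two distinct infinite contours, the remainder is short. Each $C_i$, being a doubly‑infinite simple path, splits $\HH^2$ into two unbounded components, and by the argument of Lemma~\ref{nzz} — equivalently by Lemma~\ref{icic} transported to the present setting through the realisation of $\omega$ on a $[3,4,n,4]$ lattice as in the proof of Lemma~\ref{ozz} — each side carries an infinite cluster incident to $C_i$; on the side across $C_i$ from $\eta$ every vertex adjacent to $C_i$ has state $0$, so that cluster is an infinite $0$‑cluster $\zeta_i$. Finally $\zeta_1\neq\zeta_2$: if they coincided, the resulting infinite $0$‑cluster would be incident to both $C_1$ and $C_2$, while the infinite $1$‑cluster $\eta$ is also incident to both $C_1$ and $C_2$, contradicting Lemma~\ref{2c2c} (again transported via the $[3,4,n,4]$ realisation). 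Hence $\omega$ has at least two infinite $0$‑clusters, as claimed.
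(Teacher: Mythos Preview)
Your plan is sound, but the paper takes a rather different and more direct route. Instead of working with the full contour boundary $\partial\eta$ and invoking Lemma~\ref{2c2c}, the paper fixes a finite box $B$ with $\eta\setminus B$ having two distinct infinite components $X,Y$, and considers the \emph{relative} boundaries $\partial X,\partial Y\subset E(\LL_1)$ (edges dual to an $\LL_2$-edge from $X$, resp.\ $Y$, to a vertex of $\LL_2\setminus B$ outside $X$, resp.\ $Y$). These are not full contours but collections of paths whose degree-$1$ vertices sit on $\partial B$; the relevant pieces are the ``Type~III'' components, i.e.\ singly-infinite self-avoiding paths issuing from $\partial B$. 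Taking a larger box $B_1\supset B$, each of the two arcs of $\partial B_1$ running from $X$ to $Y$ must cross a Type~III component of $\partial X$ and one of $\partial Y$; the $0$-vertices along the non-$X$ side of the two Type~III pieces of $\partial X$ give two infinite $0$-clusters, shown to be distinct because the Type~III pieces of $\partial Y$ separate them. Compared with your argument, the paper's approach is entirely self-contained (no transport to the $[3,4,n,4]$ lattice, no Lemma~\ref{2c2c}), uses only two ends rather than a tripod, and sidesteps the global topological step you flag as delicate --- that a connected region bounded by a single doubly-infinite path plus possibly infinitely many finite holes is one-ended in $\HH^2$ --- by localising everything to the annulus between $B$ and $B_1$ from the outset. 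Your route is more conceptual and reuses earlier machinery nicely, but you do still owe that one-endedness justification; the paper simply avoids the issue.
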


\begin{proof}Since $\eta\subseteq \omega$ has infinitely many ends, there exists a finite box $B$ of $\LL_1$, such that $\eta\setminus B$ has at least two distinct infinite components. Let $X$, $Y$ be two distinct infinite components of $\eta\setminus B$. Define the boundary of $X$ (resp.\ Y), $\partial X$ (resp.\ $\partial Y$) to be the set of all edges in $\LL_1$, such that each dual edge has exactly one endpoint in $X$ (resp.\ Y), and one endpoint in $\LL_2\setminus B$ and not in $X$ (resp.\ not in $Y$). Then $\partial X$ and $\partial Y$ are part of contours - each vertex of $\LL_1$ in $\partial X$ and $\partial Y$ has degree 1 or 2, whose degree-1 vertices are along $\partial B$ (here $\partial B$ consists of all the edges of $\LL_1$ on the boundary of the finite box $B$).

Each component of $\partial X$ or $\partial Y$ (here we assume that points on $\partial B$ are not included in $\partial X$ or $\partial Y$) must be one of the following three cases:
\begin{enumerate}
\item a finite component; or
\item a doubly infinite self-avoiding path which does not intersect $\partial B$;
\item a singly-infinite self-avoiding path starting from a vertex along $\partial B$.
\end{enumerate}

Let $B_1\supset B$ be a box of $\LL_2$ containing $B$. Then the embedding of $\partial B_1$ into $\HH^2$ is a simple closed curve consisting of edges in $\LL_2$. Since $X$ and $Y$ are two infinite components of $\eta\setminus B$, we deduce that $\partial B_1\cap X\neq \emptyset$ and $\partial B_1\cap Y\neq \emptyset$. Since $\partial B_1$ is a closed curve, there exist $x_1,x_2\in \partial B_1\cap X$ and $y_1,y_2\in \partial B_1\cap Y$, such that there are segments $p_{x_1y_1}\subset \partial B_1$ joining $x_1$ and $y_1$, and $p_{x_2y_2}\subset \partial B_1$ joining $x_2$ and $y_2$ such that $p_{x_1y_1}$ and $p_{x_2y_2}$ does not intersect each other except possibly at $x_1,x_2,y_1,y_2$; $p_{x_1y_1}$ does not intersect $X\cup Y$ except at $x_1,y_1$; and that  $p_{x_2y_2}$ does not intersect $X\cup Y$ except at $x_2,y_2$.

Then we claim that both $p_{x_1y_1}$ and $p_{x_2y_2}$ cross a component of $\partial X$ of Type III, and a component of $\partial Y$ of Type III. To see why that is true, assume that $p_{x_1y_1}$ crosses only components of $\partial X$ of Type I or II, then we can find a path $q_{x_1y_1}$ consisting of edges in $\LL_2\setminus B$ and joining $x_1$ and $y_1$ such that $q_{x_1y_1}$ does not cross $\partial X$ at all. Then $Y$ and $X$ are the same component of $\eta\setminus B$. The contradiction implies that $p_{x_1y_1}$ must cross a component of $\partial X$ of Type III. Similarly, $p_{x_1y_1}$ must cross a component of $\partial Y$ of Type III;  $p_{x_2y_2}$ must cross a component of $\partial X$ of Type III, and a component of $\partial Y$ of Type III.

Let $\ell_1$ (resp.\ $\ell_2$) be a component of $\partial X$ of Type III crossed by $p_{x_1y_1}$ (resp.\ $p_{x_2y_2}$). Let $V_1$ (resp.\ $V_2$) consist of all the vertices of $\LL_2$ on a triangle face crossed by $\ell_1$ (resp.\ $\ell_2$) but not in $X$. Then $V_1$ is part of an infinite 0-cluster $\xi$, and $V_2$ is part of an infinite 0-cluster $\zeta$; and moreover, $\xi$ and $\zeta$ are distinct since both $p_{x_1y_1}$ and $p_{x_2y_2}$ also cross components of $\partial Y$ of Type III. This completes the proof.
 \end{proof}
 
 \begin{lemma}\label{01c}Let $G=(V,E)$ be a square tiling of the hyperbolic plane satisfying I and II. Let $\omega\in \Omega$ be a constrained percolation configuration on $G$. If there exists a contour in the corresponding contour configuration of $\omega$, then there exist at least one infinite 1-cluster and at least one infinite 0-cluster in $\omega$.
\end{lemma}

\begin{proof}Let $C$ be a contour in the corresponding contour configurations. By \Cref{cit}, $C$ is an infinite tree in which each vertex is incident to 2 or 4 edges. Since $C$ has no cycles, the complement $\HH^2\setminus C$ of $C$ in the hyperbolic plane $\HH^2$ has no bounded components.

We claim that each unbounded component of $\HH^2\setminus C$ contains at least one infinite cluster. Let $\Lambda$ be an unbounded component of $\HH^2\setminus C$. Let $V_{\Lambda,C}\subset V$ consist of all the vertices in $\Lambda$ that are also in a face of $G$ intersecting $C$. Then all the vertices in $V_{\Lambda,C}$ are in the same cluster of $\omega$ and $|V_{\Lambda,C}|=\infty$. Hence there exists an infinite cluster in $\omega$ containing every vertex in $V_{\Lambda,C}$.

Let $e\in C$ be an edge crossing a pair of opposite edges of a black face $b$ of $G$. Let $v_1,v_2,v_3,v_4$ be the 4 vertices of $b$. Assume that $v_1$ and $v_2$ are on one side of $e$ while $v_3$ and $v_4$ are on the other side of $e$. By the arguments above, $v_1$ and $v_2$ are in an infinite cluster $\xi_1$ of $\omega$; similarly, $v_3$ and $v_4$ are in an infinite cluster $\xi_2$ of $\omega$. Moreover, $e\in C$ implies that $v_1$ and $v_3$ have different state; and therefore exactly one of $\xi_1$ and $\xi_2$ is an infinite 0-cluster, and the other is an infinite 1-cluster.
\end{proof}

\begin{lemma}\label{ifc}Let $G=(V,E)$ be a square tiling of the hyperbolic plane satisfying I and II. Let $\omega\in \Omega$ be a constrained percolation configuration on $G$, and let $\phi$ be the corresponding contour configuration of $\omega$. Then each component of $\HH^2\setminus \phi$ contains an infinite cluster in $\omega$.
\end{lemma}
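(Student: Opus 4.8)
The plan is to reduce the statement about an arbitrary component $\Lambda$ of $\HH^2\setminus\phi$ to the analysis already carried out in \Cref{01c}, by treating the boundary $\partial\Lambda$ one contour at a time. First I would observe that $\partial\Lambda$ is a union of (portions of) contours, each of which by \Cref{cit} is an infinite tree in which every vertex has degree $2$ or $4$. If $\Lambda=\HH^2$, i.e.\ there are no contours at all in $\phi$, then every vertex of $G$ has the same state and the single cluster is infinite, so the statement is trivial; hence I may assume $\partial\Lambda\neq\emptyset$.

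Next, fix any edge $e$ of a contour $C$ lying on $\partial\Lambda$. Let $b$ be the black face of $G$ crossed by $e$, with vertices $v_1,v_2,v_3,v_4$ labeled so that $v_1,v_2$ lie on one side of $e$ and $v_3,v_4$ on the other. One of these two sides is the side facing into $\Lambda$; say $v_1,v_2$ lie in (the closure of) $\Lambda$. I would then run the same argument as in the proof of \Cref{01c}: consider the set $V_{\Lambda,C}$ of vertices of $G$ that lie in $\Lambda$ and also lie on a face of $G$ meeting $C$. Because $C$ has no cycles, following $C$ shows that $V_{\Lambda,C}$ is infinite, and because the constraint forces all vertices of $b$ on a given side of $e$ to agree, and more generally forces consecutive such ``boundary-hugging'' vertices along $C$ to agree, all of $V_{\Lambda,C}$ lies in a single cluster $\xi$ of $\omega$; thus $\xi$ is an infinite cluster, and $\xi\subseteq\Lambda$ since every vertex of $V_{\Lambda,C}$ is in $\Lambda$ by construction. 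This produces the desired infinite cluster inside $\Lambda$.

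The routine bookkeeping that has to be checked carefully is exactly the claim that the vertices in $V_{\Lambda,C}$ all share a state: one walks along the tree $C$, and at each edge the six allowed local configurations around the crossed black face guarantee that the two vertices on the $\Lambda$-side of that edge agree, while adjacency of consecutive black faces along $C$ (sharing a white face, hence sharing a vertex of $G$ on the $\Lambda$-side) propagates the equality; this is precisely the mechanism already used in \Cref{01c} and \Cref{01c}'s ``each unbounded component contains an infinite cluster'' claim, now applied to the distinguished component $\Lambda$ rather than to an arbitrary unbounded component of $\HH^2\setminus C$ for a single contour $C$. The one point requiring a sentence of care is that distinct contours on $\partial\Lambda$ do not interfere: since present edges of different $\LL_i$'s never cross and each contour is a tree, $\Lambda$ is still a well-defined connected open set and each of its bounding contours yields an infinite cluster by the argument above; I only need one of them, so no compatibility between them is needed.

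I do not expect a serious obstacle here — the lemma is essentially a restatement of the internal claim in the proof of \Cref{01c} with ``unbounded component of $\HH^2\setminus C$'' upgraded to ``component of $\HH^2\setminus\phi$.'' The only mild subtlety is orientation/side bookkeeping (identifying which side of $e$ faces into $\Lambda$ consistently along $C$), which is handled by the planarity of the embedding and the fact that $C$, being a tree embedded in $\HH^2$, separates the plane into components each of whose boundary is a sub-forest of $C$.
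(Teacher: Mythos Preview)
Your proposal is correct and follows essentially the same route as the paper's proof. The paper defines $V_{\Lambda,\phi}$ (vertices in $\Lambda$ lying on a face of $G$ meeting $\phi$) and asserts without further detail that this set is infinite and lies in a single cluster; you instead fix one contour $C\subseteq\partial\Lambda$ and work with the subset $V_{\Lambda,C}$, and you handle the degenerate case $\phi=\emptyset$ separately (which the paper omits). The only extra bookkeeping your version needs is that the portion of $C$ bordering the component $\Lambda'\supseteq\Lambda$ of $\HH^2\setminus C$ is itself infinite (the boundary walk of a complementary region of an infinite tree with minimum degree $2$ is bi-infinite) and that the sector of each white face between consecutive boundary edges $e,e'$ contains no other contour edges---both of which you identify and dispose of correctly.
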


\begin{proof}By \Cref{cit}, $\phi$ is the disjoint union of infinite trees, in which each vertex has degree 2 or 4. Since $\phi$ contains no cycles, each component of $\HH^2\setminus \phi$ is unbounded.

Let $\Lambda$ be an unbounded component of $\HH^2\setminus \phi$. Let $V_{\Lambda,\phi}\subset V$ consist of all the vertices in $\Lambda$ that are also in a face of $G$ intersecting $\phi$. Then all the vertices in $V_{\Lambda,\phi}$ are in the same cluster of $\omega$ and $|V_{\Lambda,\phi}|=\infty$. Hence there exists an infinite cluster in $\omega$ containing every vertex in $V_{\Lambda,\phi}$. Since every vertex in $V_{\Lambda,\phi}$ is in $\Lambda$, and any cluster intersecting $\Lambda$ is completely in $\Lambda$, we conclude that $\Lambda$ contains an infinite cluster of $\omega$.
\end{proof}

\bigskip
\noindent\textbf{Acknowledgements.} ZL thanks Alexander Holroyd for stimulating discussions in the preparation of the paper, and Geoffrey Grimmett and Russ Lyons for comments. ZL is grateful for anonymous reviewers' careful reading of the paper and valuable suggestions to improve the readability.  ZL's research is supported by Simons Foundation grant 351813 and National Science Foundation grant 1608896.

\bibliography{cph}
\bibliographystyle{amsplain}

\end{document}